\begin{document}

\author{Dragomir \v Sari\' c}
\thanks{}

\address{Department of Mathematics, Queens College of CUNY,
65-30 Kissena Blvd., Flushing, NY 11367}
\email{Dragomir.Saric@qc.cuny.edu}

\address{Mathematics PhD. Program, The CUNY Graduate Center, 365 Fifth Avenue, New York, NY 10016-4309}

\theoremstyle{definition}

 \newtheorem{definition}{Definition}[section]
 \newtheorem{remark}[definition]{Remark}
 \newtheorem{example}[definition]{Example}

\newtheorem*{notation}{Notation}

\theoremstyle{plain}

 \newtheorem{proposition}[definition]{Proposition}
 \newtheorem{theorem}[definition]{Theorem}
 \newtheorem{corollary}[definition]{Corollary}
 \newtheorem{lemma}[definition]{Lemma}

\def\H{{\mathbb H}}
\def\F{{\mathcal F}}
\def\R{{\mathbb R}}
\def\Q{{\mathbb Q}}
\def\Z{{\mathbb Z}}
\def\E{{\mathcal E}}
\def\N{{\mathbb N}}
\def\X{{\mathcal X}}
\def\Y{{\mathcal Y}}
\def\C{{\mathbb C}}
\def\D{{\mathbb D}}
\def\G{{\mathcal G}}

\title[train tracks]{Train tracks and measured laminations on infinite surfaces}

\subjclass{}

\keywords{}
\date{\today}

\maketitle

\begin{abstract} Let $X$ be an infinite Riemann surface equipped with its conformal hyperbolic metric such that the action of the fundamental group $\pi_1(X)$ on the universal covering $\tilde{X}$ is of the first kind. We first prove that any geodesic lamination on $X$ is nowhere dense. Given a fixed geodesic pants decomposition of $X$ we define a family of train tracks on $X$ such that any geodesic lamination on $X$ is weakly carried by at least one train track. 
The set of measured laminations on $X$ carried by a train track is in a one to one correspondence with the set of edge weight systems on the  train track. Furthermore, the above correspondence is a homeomorphism when we equipped the measured laminations (weakly carried by a train track) with the weak* topology and the edge weight systems with the topology of pointwise (weak) convergence.

The space $ML_b(X)$ of bounded measured laminations appears prominently when studying the Teichm\"uller space $T(X)$ of $X$. If $X$ has a bounded pants decomposition, a measured lamination on $X$ weakly carried by a train track is bounded if and only if the corresponding edge weight system has a finite supremum norm. The space $ML_b(X)$ is equipped with the uniform weak* topology. The correspondence between bounded measured laminations weakly carried by a train track and their edge weight systems is a homeomorphism for the uniform weak* topology on $ML_b(X)$ and the topology induced by supremum norm on the edge weight system.
\end{abstract}

\section{Introduction}

A Riemann surface $X$ is said to be {\it infinite} if the fundamental group $\pi_1(X)$ of $X$ is infinitely generated. An infinite Riemann surface $X$ has a unique conformal hyperbolic metric and all geometric notions in the paper are with respect to this metric. Let $\tilde{X}$ be the universal covering equipped with the hyperbolic metric such that the covering map $\tilde{X}\to X$ is a local isometry. The universal covering $\tilde{X}$ is isometric to the hyperbolic plane $\mathbb{H}$ and the ideal boundary $\partial_{\infty}\tilde{X}$ is homeomorphic to the unit circle $S^1$. The fundamental group $\pi_1(X)$ is identified with a subgroup of isometries of $\tilde{X}$ such that $\tilde{X}/\pi_1(X)=X$. We introduce a family of train tracks on $X$ starting from a fixed geodesic pants decomposition $\{ P_n\}$ in order to give local coordinates to the space $ML(X)$ of measured laminations on $X$ in terms of their edge weight systems on the train tracks. We prove that basic properties of these local coordinates are the same as for closed surfaces (see Thurston \cite{Thurston}, Bonahon \cite{Bonahon}, Penner-Harer \cite{PennerHarer}). In addition, we also give local coordinates to a subspace $ML_b(X)$ of bounded measured laminations which naturally relates to the quasiconformal deformations of the Riemann surface $X$ and its Teichm\"uller space.

Recently, there is a considerable interest in studying infinite hyperbolic surfaces, their Teichm\"uller spaces and quasiconformal Teichm\"uller mapping class groups and the big mapping class groups (for example, see \cite{ALPS}, \cite{Matsuzaki}, \cite{FujikawaMatsuzaki}, \cite{Vlamis}, \cite{Aramoyana}, \cite{Bavard}, \cite{Patel}, \cite{Kin}, \cite{Durham}, etc). 
The topology of an infinite surface is determined by its genus and the space of ends (see B. K\' er\' ekj\' arto \cite{Kerekjarto} and I. Richards \cite{Richards}). The geometric structure of an infinite Riemann surface equipped with a conformal hyperbolic metric is described by Alvarez-Rodriguez \cite{AR} (see also \cite{BasmajianSaric} and Section 2).

An arbitrary infinite hyperbolic surface $X$ can be obtained by isometrically gluing countably many geodesic pairs of pants along their cuffs and by adding an at most countably many hyperbolic funnels and an at most countably many closed half-planes (see \cite{AR}, \cite{BasmajianSaric} and Section 2).  
A {\it geodesic lamination} $\lambda$ on $X$ is a closed subset of $X$ equipped with a foliation by complete geodesics. Unlike for closed surfaces, $\lambda$ could possibly foliate a subset with non-empty interior. This happens when $X$ has an end that is a hyperbolic funnel or a half-plane. We first prove that the existence  of such ends is the only reason for this phenomenon (see Proposition \ref{prop:support-gl}). 

Indeed, let $\Lambda (\pi_1(X))$ be the limit set of $\pi_1(X)$ on the boundary $\partial_{\infty}\tilde{X}$ and let $\mathcal{C}(\Lambda (\pi_1(X)))$ be the convex core of $\Lambda (\pi_1(X))$ in $\tilde{X}$. Then $\mathcal{C}(X):=\mathcal{C}(\Lambda (\pi_1(X)))/\pi_1(X)$ is the convex core of $X$. Note that $\pi_1(X)$ is of the first kind if and only if $X=\mathcal{C}(X)$. We prove

\begin{theorem}
Let $X$ be an infinite Riemann surface equipped with its conformal hyperbolic metric and let $\lambda$ be a geodesic lamination contained in the convex core $\mathcal{C}(X)$ of $X$. Then $\lambda$ is nowhere dense in $X$.

In particular, if $X=\mathcal{C}(X)$ then any geodesic lamination $\lambda$ in $X$ is nowhere dense.
\end{theorem}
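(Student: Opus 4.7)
I would argue by contradiction, supposing that $\lambda\subset\mathcal{C}(X)$ has a nonempty interior $U$ in $X$. Lifting $U$ to a connected open set $\tilde U\subset\mathcal{C}(\Lambda(\pi_1(X)))\subset\tilde X=\H$, the containment $\tilde U\subset\tilde\lambda$ together with the lamination property forces $\tilde U$ to be foliated by pairwise disjoint complete geodesics, one through every point. Parameterizing these leaves by a short transversal $\tau\subset\tilde U$, the map $q\mapsto\ell_q$ is continuous and injective into the space of unoriented geodesics of $\H$, so its image is a nondegenerate arc; projecting to $S^1$, at least one of the two endpoint coordinates sweeps out an open arc $I\subset S^1$, and because leaves in the convex core have endpoints in the limit set, $I\subset\Lambda(\pi_1(X))$.

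I would then invoke the standard dichotomy that a non-elementary Fuchsian limit set is either nowhere dense in $S^1$ or all of $S^1$. The quick argument is that if the discontinuity set $\Omega=S^1\setminus\Lambda(\pi_1(X))$ were nonempty, orbits of points in $\Omega$ accumulate only on $\Lambda(\pi_1(X))$, and the openness of $I\subset\Lambda(\pi_1(X))$ forces some orbit point to land in $I$, contradicting the $\pi_1(X)$-invariance of $\Omega$. Hence $\Lambda(\pi_1(X))=S^1$, $\pi_1(X)$ is of the first kind, and $X=\mathcal{C}(X)$, so the remaining contradiction must be extracted purely from this first-kind hypothesis.

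For the first-kind contradiction, fix a leaf $\ell_0\subset\tilde\lambda$ through $\tilde U$ with endpoints $(a,b)$ and let $V\subset S^1\times S^1\setminus\Delta$ denote the nonempty open set of pairs linked with $(a,b)$. Only countably many leaves through $\tilde U$ can be axes of hyperbolic elements of $\pi_1(X)$, so I can choose a leaf $\ell_{c,d}$ through $\tilde U$ whose endpoint pair $(c,d)$ is not the fixed-point pair of any hyperbolic element of $\pi_1(X)$. For a first-kind non-elementary Fuchsian group, topological transitivity of the geodesic flow on $UT(X)$ (via Hopf--Tsuji--Sullivan recurrence) transfers to topological transitivity of the $\pi_1(X)$-action on $S^1\times S^1\setminus\Delta$, producing some $g\in\pi_1(X)$ with $(g(c),g(d))\in V$. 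The leaf $g\cdot\ell_{c,d}\subset\tilde\lambda$ then has endpoints linked with $(a,b)$ and must cross $\ell_0$ in $\H$, contradicting the fact that distinct leaves of $\tilde\lambda$ are disjoint.

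The main obstacle is rigorously establishing, for an infinitely generated first-kind Fuchsian group, that such a non-axial orbit in $S^1\times S^1\setminus\Delta$ must meet the linked-pair set $V$. While this is classical in the cocompact case, the infinite-type generality requires a careful transfer from the topological transitivity of the geodesic flow on $UT(X)$ (which holds in the first-kind setting by Hopf--Tsuji--Sullivan) to transitivity of the $\pi_1(X)$-action on the product boundary; doing this cleanly is the key technical point.
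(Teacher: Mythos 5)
Your reduction to the first-kind case (via the foliated lift, the transversal giving an open arc $I\subset\Lambda(\pi_1(X))$ of endpoints, and the nowhere dense/all of $S^1$ dichotomy for limit sets) is sound, but the decisive step is exactly the one you flag as unfinished, and as stated it does not go through. First, the appeal to Hopf--Tsuji--Sullivan is misplaced: that theorem characterizes divergence type (equivalently ergodicity/conservativity of the geodesic flow), which is strictly stronger than being of the first kind; there are infinitely generated first-kind Fuchsian groups of convergence type, so ``first kind $\Rightarrow$ HTS recurrence'' is false. Second, and more importantly, even granting topological transitivity of the $\pi_1(X)$-action on $S^1\times S^1\setminus\Delta$, transitivity only yields a dense orbit for \emph{some} pair (a residual set of pairs); it says nothing about the orbit of the particular pair $(c,d)$ you selected. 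Non-axiality of $(c,d)$ does not force its orbit to meet the linked set $V$: for instance, a geodesic spiraling onto closed geodesics at both ends is non-axial yet has a small, nowhere dense orbit closure. Worse, no leaf of $\tilde\lambda$ can have a dense orbit at all, since its orbit stays inside the closed invariant set of pairwise disjoint geodesics $\tilde\lambda$; so what you actually need is precisely the assertion that the orbit of some leaf-pair enters $V$, and no mechanism for that is supplied. This is a genuine gap, not a routine technicality.

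The paper closes exactly this step by an elementary north--south dynamics argument instead of transitivity: having shown that a whole boundary interval (your arc $I$) consists of endpoints of leaves and lies in the limit set, it uses the density of fixed-point pairs of hyperbolic elements in $\Lambda(\pi_1(X))$ to choose $\kappa\in\pi_1(X)$ with attracting fixed point inside that interval and repelling fixed point away from the opposite endpoint interval; a high iterate $\kappa^{i}$ then drags both endpoints of a suitable leaf $g_0$ into the small interval, so $\kappa^{i}(g_0)$ must transversely cross another leaf whose endpoint lies between them, contradicting invariance and disjointness of $\tilde\lambda$. Your argument would be repaired by replacing the transitivity/recurrence step with this kind of explicit construction, which uses only the open arc $I\subset\Lambda(\pi_1(X))$ you already produced and the density of hyperbolic fixed points in the limit set (note also that the paper never needs the reduction to $\Lambda=S^1$: it runs the same argument inside the limit set for an arbitrary infinite surface).
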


The set of (unoriented) geodesics $G(\tilde{X})$ of $\tilde{X}$ is identified with $(\partial_{\infty}\tilde{X}\times \partial_{\infty}\tilde{X}-\Delta )/(\mathbb{Z}/2\mathbb{Z})$, where $\Delta$ is the diagonal (see Bonahon \cite{Bonahon}). The topology on $G(\tilde{X})$ is given by the product topology. A geodesic lamination $\lambda$ on $X$ lifts to a $\pi_1(X)$-invariant geodesic lamination $\tilde{\lambda}$ of $\tilde{X}$. Conversely, a $\pi_1(X)$-invariant geodesic lamination of $\tilde{X}$ projects to a geodesic lamination of $X$ (see \cite{Bonahon}).   

A homeomorphism $h:S_1\to S_2$ of two compact surfaces $S_1$ and $S_2$ with genus $g>1$
induces a natural homeomorphism $\widetilde{h}:G(\tilde{S}_1)\to G(\tilde{S}_2)$ of the spaces of geodesics of their universal coverings that is $\pi_1(S_1)$- and $\pi_1(S_2)$-equivariant (for example, see \cite{Bonahon}). For infinite hyperbolic surfaces a homeomorphism between two surfaces does not necessarily induce a (natural) homeomorphism between the spaces of geodesics of their universal covers. In fact, one surface can have a funnel end or a closed half-plane end while the other surface might not. Then  the two spaces of geodesics of the universal coverings are not naturally homeomorphic. However, we show that this is the only reason why a homeomorphism of two surfaces might not induce  a homeomorphism of spaces of geodesics of their universal coverings (see Theorem \ref{thm:home_geodesics}). 

\begin{theorem}
Let $X_1$ and $X_2$ be two infinite Riemann surfaces equipped with their conformal hyperbolic metrics such that $X_1=\mathcal{C}(X_1)$ and $X_2=\mathcal{C}(X_2)$. Let $G(\tilde{X}_i)$ be the space of geodesics of the universal covering $\tilde{X}_i$ of $X_i$, for $i=1,2$.  A homeomorphism
$$
h:X_1\to X_2
$$
induces a $\pi_1(X_1)$- and $\pi_1(X_2)$-equivariant homeomorphism
$$
\widetilde{h}:G(\tilde{X}_1)\to G(\tilde{X}_2).
$$
\end{theorem}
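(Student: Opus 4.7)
The plan is to define $\tilde h$ first on the dense subset of axes of hyperbolic elements via the induced isomorphism of fundamental groups, and then extend by continuity using the combinatorics of a pants decomposition transported by $h$.

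I would begin by lifting $h$ to a homeomorphism $\hat h : \tilde X_1 \to \tilde X_2$; this induces an isomorphism $h_* : \pi_1(X_1) \to \pi_1(X_2)$ characterized by $\hat h \circ g = h_*(g)\circ \hat h$ for every $g \in \pi_1(X_1)$. Because every puncture of a hyperbolic Riemann surface is a cusp of the conformal hyperbolic metric, and the parabolic conjugacy classes of $\pi_1(X_i)$ correspond exactly to small loops around such punctures, and $h$ sends punctures to punctures as topological features, the isomorphism $h_*$ preserves the parabolic-vs-hyperbolic dichotomy. For a hyperbolic $g\in\pi_1(X_1)$ the axis $A_g\subset\tilde X_1$ is the geodesic whose endpoints are the two fixed points of $g$ on $\partial_\infty\tilde X_1$, and I set $\tilde h(A_g):=A_{h_*(g)}$. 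The identities $A_{kgk^{-1}}=k(A_g)$ in $\tilde X_1$ and $A_{h_*(kgk^{-1})}=h_*(k)(A_{h_*(g)})$ in $\tilde X_2$ make this assignment $\pi_1$-equivariant on the set of axes.

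Since $\pi_1(X_i)$ is of the first kind, the fixed-point pairs of its hyperbolic elements are dense in $(\partial_\infty\tilde X_i\times\partial_\infty\tilde X_i - \Delta)/(\mathbb Z/2\mathbb Z)=G(\tilde X_i)$, so the axes form a dense subset $\mathcal A_i\subset G(\tilde X_i)$. It therefore suffices to check that the map $\tilde h:\mathcal A_1\to\mathcal A_2$ extends to a continuous bijection of $G(\tilde X_1)$ onto $G(\tilde X_2)$, the inverse being constructed symmetrically from $h^{-1}$. The main obstacle is precisely this continuity statement: one must guarantee that whenever a sequence of axes $A_{g_n}$ converges to a (possibly non-axis) geodesic $\gamma\in G(\tilde X_1)$, the sequence $A_{h_*(g_n)}$ converges in $G(\tilde X_2)$ to a geodesic that depends only on $\gamma$.

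To control this I would use a geodesic pants decomposition $\{P_n\}$ of $X_1$, push its cuffs forward by $h$ to obtain essential simple closed curves in $X_2$, and straighten each to its unique geodesic representative; this produces a geodesic pants decomposition of $X_2$ whose cuffs correspond bijectively under $h_*$ to those of $\{P_n\}$. Lifting to universal covers yields two locally finite, pairwise disjoint families of geodesics in equivariant bijection and with matching combinatorial incidence. A geodesic $\gamma\subset\tilde X_1$ that is not itself a cuff lift is then determined by its bi-infinite sequence of transverse crossings with cuff lifts, and because cuff lifts are locally finite in $\tilde X_1$, convergence in $G(\tilde X_1)$ translates into eventual agreement of initial segments of such crossing sequences. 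Transporting the crossing sequence of $\gamma$ via the cuff bijection singles out a unique geodesic in $\tilde X_2$, which I take to be $\tilde h(\gamma)$; continuity of $\tilde h$ and of its inverse then follows from the combinatorial nature of the correspondence, and $\pi_1$-equivariance extends by density from the axis case already treated.
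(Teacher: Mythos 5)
The strategy of defining $\tilde h$ on axes and extending by continuity is reasonable, but the extension mechanism you propose breaks down: the assertion that a geodesic of $\tilde X_1$ which is not a cuff lift is determined by its bi-infinite sequence of transverse crossings with cuff lifts is false. Take two lifts of cuffs $\tilde\alpha$ and $\tilde\beta$ on the boundary of the same lift of a pair of pants, let $a$ be an endpoint of $\tilde\alpha$ and let $y,y'$ be the two endpoints of $\tilde\beta$. The geodesics with endpoint pairs $(a,y)$ and $(a,y')$ are distinct and neither is a cuff lift, yet a lift of a cuff crosses one of them if and only if it crosses the other (lifts of cuffs are pairwise disjoint and cannot share ideal endpoints), so their crossing sequences coincide. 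More generally, any geodesic with an end that spirals onto a cuff, runs into a cusp, or stays in a single lift of a pair of pants crosses only finitely many cuff lifts in that direction, and the crossing data cannot pin down that ideal endpoint. Consequently ``transporting the crossing sequence'' does not single out a unique geodesic of $\tilde X_2$, the map is not well defined (nor injective) on such geodesics, and the continuity claim --- that convergence in $G(\tilde X_1)$ amounts to eventual agreement of initial crossing segments --- fails exactly for sequences of axes converging to these geodesics. This is not a peripheral case: the theorem is about all of $G(\tilde X_1)$, which contains all such geodesics.

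The paper sidesteps this by working on the ideal boundary rather than on the space of geodesics. The lift of $h$ conjugates $\pi_1(X_1)$ to $\pi_1(X_2)$ and therefore induces an order-preserving bijection between the dense subsets of $\partial_{\infty}\tilde X_1$ and $\partial_{\infty}\tilde X_2$ formed by fixed points of hyperbolic elements (density of both the domain and the image is where the first-kind hypothesis $X_i=\mathcal{C}(X_i)$ enters). A monotone map between dense subsets of the circle with dense image extends, by the elementary increasing-sequence argument carried out in the paper, to a homeomorphism of the circles; it is this circle homeomorphism, acting on pairs of endpoints, that yields the equivariant homeomorphism $G(\tilde X_1)\to G(\tilde X_2)$. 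Monotone extension is precisely what handles the boundary points your combinatorics cannot see (endpoints of cuff lifts, parabolic fixed points, and other points lying in the ``gaps'' of the crossing data). To repair your argument you would have to replace the crossing-sequence encoding by the cyclic order on endpoints of axes and prove a monotone-extension statement on the circle --- at which point you are reproducing the paper's proof rather than giving an alternative to it.
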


In the case of a closed surface $S$ of genus greater than $1$, train tracks on $S$ were used to give local coordinates to the space of measured (geodesic) laminations $ML(S)$ (see \cite{Thurston}, \cite{PennerHarer}, \cite{Bonahon}). Our main result is an extension of the idea of a train track to infinite surfaces in order to better understand the space of measured laminations on these surfaces.

From now on we assume that $X$ is an infinite hyperbolic surface with $X=\mathcal{C}(X)$ which is equivalent to $\pi_1(X)$ being of the first kind. In this case $X$ does not contain funnels or closed half-planes. (For completeness, we note here that a surface with funnels is homeomorphic to a surface where each funnel is replaced by a cusp. Each half-plane can be ``eliminated'' from a hyperbolic surface by choosing appropriate twists on the cuffs of the fixed pants decomposition, see \cite{BasmajianSaric}. Thus any hyperbolic surface $X$ is homeomorphic to a surface $X'$ without funnels and half-plane ends, i.e. $X'=\mathcal{C}(X')$.) 

 Let $\{ P_n\}$ be a fixed geodesic pants decomposition of $X$.
We choose a standard Dehn-Thurston train track in each $P_n$ which meets cuffs at fixed basepoints. The complementary regions of the standard train tracks in the pairs of pants are triangles and punctured monogons. A {\it pants train track} $\Theta$ on $X$ is obtained by taking choices of the standard train tracks in each pair of pants with cuffs being additional edges of the train track $\Theta$. Different choices of standard train tracks in $P_n$ and different choices of smoothing at the basepoints on the cuffs give rise to a whole family of train tracks starting from a fixed pants decomposition $\{ P_n\}$. A bi-infinite edge path $\gamma$ in a pants train track $\Theta$ determines a unique simple geodesic $g(\gamma )$ of $X$.  We will say that $g(\gamma )$ is {\it weakly carried} by $\Theta$ (see Section 4). Let $\tilde{\Theta}$ be the lift of $\Theta$ to the universal covering $\tilde{X}$.

Given an edge $e$ of $\tilde{\Theta}$, denote by $G(e)$ the set of geodesics in $\tilde{X}$ whose corresponding bi-infinite edge paths contain the edge $e$. Let $\mu$ be a measured lamination on $X$ and $\tilde{\mu}$ its lift to $\tilde{X}$. Let $E(\Theta )$ be the set of edges of $\Theta$ and $E(\tilde{\Theta})$ the set of edges of $\tilde{\Theta}$. We define an edge weight system 
$$
f_{\tilde{\mu}}:E(\tilde{\Theta})\to\mathbb{R}
$$
by
$$
f_{\tilde{\mu}}(e):=\tilde{\mu}(G(e)).
$$
The set $G(e)$ is a pre-compact subset of the space $G(\tilde{X})$ of geodesics of $\tilde{X}$ and thus $\tilde{\mu}(G(e))<\infty$ (see Section 5). 
At each vertex of $\tilde{\Theta}$ the edge weight system $f_{\tilde{\mu}}$ satisfies the switch relation as for closed surfaces (see \cite{Bonahon}, \cite{PennerHarer} and Section 5). The edge weight system $f_{\tilde{\mu}}$ is $\pi_1(X)$-invariant and it projects to an edge weight system $f_{{\mu}}:E(\Theta )\to\mathbb{R}$. 

We prove that (see Theorems 5.3 and 6.5).

\begin{theorem}
Let $X$ be an infinite hyperbolic surface such that $X=\mathcal{C}(X)$. Let $\Theta$ be a pants train track. Then the space $ML(X,\Theta )$ of all measured laminations (equipped with the weak* topology) that are weakly carried by $\Theta$ is homeomorphic to the space $\mathcal{W}({\Theta} ,[0,\infty ))$ of all edge weight systems on $\Theta$, when $\mathcal{W}({\Theta} ,[0,\infty ))$ is given the topology of pointwise convergence.
\end{theorem}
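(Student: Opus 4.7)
The plan is to verify that $\Phi : ML(X,\Theta)\to\mathcal{W}(\Theta,[0,\infty))$, $\mu\mapsto f_\mu$, is a bijection and bi-continuous. Well-definedness is already in the paragraphs preceding the theorem: $G(e)$ is pre-compact in $G(\tilde X)$ so $f_\mu(e)<\infty$, and the switch relations at each vertex of $\tilde\Theta$ hold exactly as in the closed case (\cite{Bonahon}, \cite{PennerHarer}). Injectivity is a measure-theoretic uniqueness: if $f_\mu=f_\nu$ then $\tilde\mu$ and $\tilde\nu$ agree on every $G(e)$, and, via the switch relations, on all finite intersections $G(e_1)\cap\cdots\cap G(e_k)$ along consecutive edges; since a geodesic weakly carried by $\tilde\Theta$ is determined by its bi-infinite edge path, these sets form a $\pi$-system generating the relevant Borel $\sigma$-algebra, forcing $\tilde\mu=\tilde\nu$.

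For surjectivity, I would proceed pants by pants. Given $w\in\mathcal{W}(\Theta,[0,\infty))$, in each $P_n$ the standard Dehn--Thurston train track is finite with triangular and punctured-monogon complementary regions, so the classical finite-dimensional construction of Thurston \cite{Thurston} and Penner--Harer \cite{PennerHarer} produces a measured lamination $\mu_n$ in $P_n$ realizing the prescribed weights. At any cuff shared by $P_n$ and $P_m$ the switch condition at the cuff edge forces the transverse measures on the two sides to coincide, which is exactly what is needed to concatenate partial leaves into complete simple bi-infinite geodesics of $X$. Assembling the $\mu_n$ over the locally finite decomposition $\{P_n\}$ yields a measured geodesic lamination $\mu$ on $X$ with $f_\mu=w$.

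For continuity of $\Phi$, weak$^*$ convergence $\mu_n\to\mu$ implies $f_{\mu_n}(e)\to f_\mu(e)$ by the Portmanteau theorem, since $\partial G(e)$ consists of geodesics through an endpoint of $e$ -- a set transverse to the leaves of any measured geodesic lamination and therefore of $\tilde\mu$-measure zero. For continuity of $\Phi^{-1}$, let $\varphi\in C_c(G(\tilde X))$. Local finiteness of $\tilde\Theta$ (inherited from that of $\{P_n\}$) implies that only finitely many edges $e$ satisfy $G(e)\cap\mathrm{supp}\,\varphi\neq\emptyset$; on this finite sub-train-track the integral $\int\varphi\,d\tilde\mu$ depends continuously on the finitely many relevant weights by the closed-surface case, so pointwise convergence $w_n\to w$ yields $\int\varphi\,d\tilde\mu_n\to\int\varphi\,d\tilde\mu$, i.e.\ weak$^*$ convergence.

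I expect the hardest step to be the surjectivity: not matching masses across cuffs (that is immediate from the switch relations), but verifying that the concatenated leaves form a genuine closed subset of $X$ foliated by complete geodesics, uniformly across the infinite decomposition. Each individual gluing is a standard closed-surface fact using only compact data in finitely many pants at a time; the remaining work is to check that local finiteness of $\{P_n\}$ is enough to globalise these finite-range statements into a well-defined measured geodesic lamination on all of $X$.
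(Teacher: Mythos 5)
Your outline of the bijection (switch relations, $\pi$-system uniqueness, pants-by-pants assembly) is broadly in line with the paper's Section 5, but both halves of your continuity argument have genuine gaps. For continuity of $\mu\mapsto f_\mu$: you apply Portmanteau directly to $G(e)$, claiming $\partial G(e)$ consists of geodesics through an endpoint of $e$. That reads $G(e)$ as the set of geodesics geometrically crossing the arc $e$, which it is not: $G(e)$ is the set of geodesics whose bi-infinite edge path contains $e$ (the carrying is only weak, and leaves need not meet $e$ at all). As a subset of $G(\tilde{X})$, $G(e)$ is contained in $G(\tilde{\Theta})$ and has empty interior, so its topological boundary contains $G(e)$ itself and the hypothesis $\tilde{\mu}(\partial G(e))=0$ fails whenever $f_{\tilde{\mu}}(e)>0$. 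The correct route, and the reason the paper proves Propositions \ref{prop:carrying_box_connector} and \ref{prop:carrying_box_cuff}, is to realize $G(e)$ (after splitting a connector edge into finitely many maximal connector paths) as $G(\tilde{\Theta})\cap Q$ for a box of geodesics $Q$ with $\delta Q\cap G(\tilde{\Theta})=\emptyset$; since the supports of all the measures lie in $G(\tilde{\Theta})$, one gets $\tilde{\mu}(\delta Q)=0$ and only then can Lemma \ref{lem:box_conv} be invoked.

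For continuity of the inverse the gap is more serious. Your finiteness claim is false: any carried geodesic in $\mathrm{supp}\,\varphi$ lies in $G(e)$ for every one of the infinitely many edges of its bi-infinite path, so infinitely many $e$ satisfy $G(e)\cap\mathrm{supp}\,\varphi\neq\emptyset$. What is true (and what the paper proves inside Lemma \ref{lem:conv_general_box}, via a compact arc joining two lifts of cuffs flanking the box) is that finitely many sets $G(e)$ \emph{cover} $\mathrm{supp}\,\varphi\cap G(\tilde{\Theta})$; this yields only a uniform mass bound, not finite-dimensionality. Indeed the restriction of $\tilde{\mu}$ to a box is not a function of finitely many weights: how the weight of an edge distributes among sub-boxes is governed by $\tilde{\mu}(G(\gamma))$ for arbitrarily long finite paths $\gamma$, hence by infinitely many coordinates, and pointwise convergence of weights gives convergence of each such term but not of their infinite sum without a tail estimate. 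So "continuity on a finite sub-train-track by the closed-surface case" does not exist as a step. This is precisely the core of the paper's Section 6: partition a box with cuff-endpoint vertices into countably many sets $G(\gamma_{i,j})$ and $G(\gamma^Q_r)$, each equal to $G(\tilde{\Theta})\cap Q'$ with $\delta Q'$ missing $G(\tilde{\Theta})$, use the uniform mass bound and weak* compactness (Lemma \ref{lem:weak*compact}) to extract a limit of a putative bad subsequence, identify it with $\tilde{\mu}$ piece by piece via the piecewise-linear dependence of $\tilde{\mu}(G(\gamma))$ on edge weights (Lemma \ref{lem:convergence_paths}), and conclude by countable additivity before approximating $\varphi$ by step functions. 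Without an argument of this type your deduction of weak* convergence from pointwise convergence does not go through. (Minor additional point: in your surjectivity sketch, equality of masses across a cuff does not by itself determine the gluing; the twisting is encoded in the cuff-edge weights and the chosen smoothing, which the paper's fattened-train-track construction handles automatically.)
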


The train tracks are used to give local coordinates for measured laminations on infinite surfaces. We also establish that each measured lamination is weakly carried by at least one pants train track constructed from a fixed geodesic pants decomposition of $X$ (see Proposition \ref{prop:carrying}).

Thurston \cite{Thurston1} proved that any homeomorphic deformation of a hyperbolic surface $X$ can be obtained by an earthquake map along a measured lamination.  
The Teichm\"uller space $T(X)$ of a Riemann surface $X$ is the space of quasiconformal deformations of $X$ modulo post-compositions by conformal maps and homotopy relative ideal endpoints. It is known that an earthquake map induces a quasiconformal deformation of a hyperbolic surface $X$ if and only if it is performed along a bounded measured lamination on $X$ (see \cite{Sa1}, \cite{Sa2}, \cite{GHL}, \cite{Thurston1}). Moreover, the bijective correspondence between the space $ML_b(X)$ of bounded measured laminations and the Teichm\"uller space $T(X)$ given by earthquake maps is a homeomorphism for the uniform weak* topology on $ML_b(X)$ and the topology introduced by the Teichm\"uller distance on $T(X)$ (see \cite{MS}). For the definition of the uniform weak* topology see \cite{MS}, \cite{BonahonSaric}, \cite{Sa} and Section 5. In view of the action of the mapping class group it is perhaps even more important  to mention that the Thurston boundary of $T(X)$ is identified with the space $PML_b(X)$ of projective bounded measured laminations on $X$ (see \cite{BonahonSaric} and \cite{Sa}). 

It is therefore of interest to understand the space $ML_b(X)$ of bounded measured lamination on the hyperbolic surface $X$. We restrict our attention to surfaces $X$ with  bounded pants decompositions (for the definition see \cite{Shi}, \cite{ALPS}, \cite{Sa4} and Section 7) where the question  is more tractable. We prove (see Theorems 7.4, 8.6 and 9.4)

\begin{theorem}
Let $X$ be an infinite Riemann surface with a bounded pants decomposition such that $\mathcal{C}(X)=X$ and $\Theta$ a pants train track constructed from the pants decomposition. The space of all bounded edge weight systems $\mathcal{W}_b(\Theta ,[0,\infty ))$ is parametrizing the space $ML_b(X,\Theta )$ of all bounded measured laminations on $X$ that are weakly carried by $\Theta$, where $f\in\mathcal{W}_b(\Theta ,[0,\infty ))$ if $f\in\mathcal{W}(\Theta ,[0,\infty ))$ and $\|f\|_{\infty}<\infty$. 

In addition, the bijective correspondence
$$
ML_b(X,\Theta )\to \mathcal{W }_b(\Theta ,[0,\infty ))
$$
is a homeomorphism when $ML_b(X,\Theta )$ is endowed with the uniform weak* topology and $\mathcal{W}_b(\Theta ,[0,\infty ))$ with the topology induced by the supremum norm.
\end{theorem}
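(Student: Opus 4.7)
The plan is to bootstrap from the bijection $ML(X,\Theta)\to\mathcal{W}(\Theta,[0,\infty))$ already established in the previous theorem (a homeomorphism for the pointwise convergence topologies) and promote it to a homeomorphism on the bounded subspaces by exploiting the \emph{uniform bounded geometry} supplied by the bounded pants decomposition. Concretely, the hypothesis that all cuff lengths lie in a fixed interval $[L^{-1},L]$ ensures that each Dehn--Thurston model in $P_n$ is, up to a compact family of choices, an isometric copy of a fixed model pants; consequently the edges of $\Theta$ and their lifts to $\tilde\Theta$ have uniformly controlled lengths, transverse spacings, and branching data.

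First I would settle the equivalence ``$\mu$ bounded $\iff$ $\|f_\mu\|_\infty<\infty$''. Each edge $e$ of $\tilde\Theta$ is, up to universal constants, a transverse arc of hyperbolic length in an interval $[\ell_-,\ell_+]$ depending only on $L$, and $f_\mu(e)=\tilde\mu(G(e))$ is the transverse mass through it. A unit-length transverse arc to $\mu$ meets $O_L(1)$ edges of $\tilde\Theta$, while each edge is itself covered by $O_L(1)$ unit transverse arcs, so summation yields constants $c_1(L), c_2(L)>0$ with
$$
c_1(L)\,\|f_\mu\|_\infty\ \leq\ \|\mu\|_b\ \leq\ c_2(L)\,\|f_\mu\|_\infty.
$$
This proves the parametrization statement and moreover gives the quantitative comparison that underlies the homeomorphism.

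For the direction $\mu_n\to\mu$ in the uniform weak$^*$ topology $\Rightarrow$ $f_{\mu_n}\to f_\mu$ in sup-norm, the key is to produce, for every edge $e$ of $\tilde\Theta$, a pair of continuous compactly supported test functions $\phi_e^\pm$ on $G(\tilde X)$ that sandwich $\chi_{G(e)}$ from above and below, whose supports lie in a uniformly bounded neighborhood of $G(e)$, and whose $C^0$-norms and moduli of continuity are bounded by constants independent of $e$. Such a uniformly equicontinuous family exists precisely because $\tilde\Theta$ has uniform combinatorics; testing uniform weak$^*$ convergence against $\{\phi_e^\pm\}_e$ then yields $\sup_e|f_{\mu_n}(e)-f_\mu(e)|\to 0$.

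The reverse direction, $f_{\mu_n}\to f_\mu$ in sup-norm $\Rightarrow$ $\mu_n\to\mu$ in uniform weak$^*$, is the main technical obstacle. Given an equicontinuous compactly supported test function $\phi$, I would approximate it by a finite sum $\sum_e a_e(\phi)\,\chi_{G(e)}$ and write
$$
\int\phi\,d\tilde\mu_n - \int\phi\,d\tilde\mu\ =\ \sum_e a_e(\phi)\bigl(f_{\mu_n}(e)-f_\mu(e)\bigr)\ +\ R_n(\phi).
$$
The explicit sum is controlled by $\|f_{\mu_n}-f_\mu\|_\infty$ times the uniformly bounded number of edges meeting $\mathrm{supp}(\phi)$, while $R_n(\phi)$ is controlled by the modulus of continuity of $\phi$ and by $\|\mu_n\|_b+\|\mu\|_b$, which is bounded by the first step. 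The difficulty lies in making all these estimates hold \emph{uniformly} over the $\pi_1(X)$-translates of $\phi$, since the uniform weak$^*$ topology demands suprema over translates; the bounded pants decomposition is exactly what makes the approximation scheme, the local edge count, and the modulus-of-continuity bounds $\pi_1(X)$-translation invariant and therefore uniform.
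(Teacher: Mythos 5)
Your overall architecture matches the paper's (first show $\mu$ bounded $\iff \|f_\mu\|_\infty<\infty$, then prove both directions of the uniform weak* equivalence), but the first step contains a genuine gap: you treat $f_\mu(e)=\tilde\mu(G(e))$ as the ``transverse mass through $e$,'' i.e.\ you implicitly assume that every geodesic of $G(e)$ passes within uniformly bounded distance of the edge $e$, and conversely that a geodesic meeting a unit ball has its edge path passing through a uniformly bounded set of nearby edges. But ``weakly carried'' only means that the geodesic and its bi-infinite edge path share endpoints on $\partial_\infty\tilde X$; the paper points out explicitly (proof of Proposition \ref{prop:carrying}) that the homotopy between a carried geodesic and its edge path need not be bounded. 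Establishing this uniform fellow-traveling is precisely the content of Sections 7 and 8: one builds the geodesic spine $\Theta_0$ of cuffs and orthogonal arcs, proves via the hexagon/pentagon formulas that its complementary pieces have uniformly bounded sides (Lemmas \ref{lem:seams_bound}, \ref{lem:one_cusp}, \ref{lem:two_cusps}), shows $\tilde\Theta_0$ is quasi-isometric to $\tilde X$ (Lemma \ref{lem:quasiisometry}), uses Bishop's biLipschitz maps to replace $\Theta$ by a homotopic pants train track at bounded distance from $\Theta_0$ (Lemma \ref{lem:quasi2}), and in the cusp case works in the horoball-truncated $\tilde X'$ with Lemmas \ref{lem:dist_bdd} and \ref{lem:horocycle-geodesic}. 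Your appeal to ``uniformly controlled lengths, transverse spacings, and branching data'' of the edges does not substitute for this chain of arguments; without it the inequality $\|\mu\|_b\le c_2(L)\|f_\mu\|_\infty$ in particular is unjustified.

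The uniform weak* part also has two concrete problems. In the direction ``uniform weak* $\Rightarrow$ sup-norm,'' sandwiching $\chi_{G(e)}$ by test functions $\phi_e^\pm$ leaves the term $\int(\phi_e^+-\phi_e^-)\,d\tilde\mu$, which need not be small uniformly in $e$; the paper closes this by the nested boxes $Q'\subset Q$ of Propositions \ref{prop:carrying_box_connector} and \ref{prop:carrying_box_cuff}, whose buffer $Q\setminus Q'$ contains no carried geodesics and has Liouville size bounded below uniformly (this is where bounded pants decomposition enters), together with a compactness argument on translated boxes. In the reverse direction, your plan to make the estimates ``$\pi_1(X)$-translation invariant'' misidentifies the difficulty: the seminorms of the uniform weak* topology take the supremum over \emph{all} isometries $\varphi\in\mathbb{H}(\tilde X)$ of the hyperbolic plane, and over $\pi_1(X)$ the integrals are unchanged by invariance of $\tilde\mu$, so that case is vacuous. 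The real issue is arbitrary sequences $\varphi_j$ of isometries, under which $\varphi_j(\tilde\Theta)$ is not a translate of $\tilde\Theta$; the paper (Proposition \ref{prop:edge_unif}) handles this by extracting Hausdorff limits of the translated cuff configurations and standard train tracks, decomposing the limiting boxes into countably many limiting edge-path sets, and using the piecewise-linear dependence of path weights on edge weights to force the two limit measures to agree. Your direct approximation $\sum_e a_e(\phi)\chi_{G(e)}$ plus a remainder does not address how the box masses are expressed through edge weights uniformly along such sequences of isometries, so as written the argument would not close.
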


\vskip .2 cm

\noindent {\it Acknowledgements.} I am greatly indebted to Francis Bonahon for our illuminating conversations. I am also grateful to an anonymous referee for various suggestions and questions, and for giving us a short proof of Lemma \ref{lem:dist_bdd} which significantly improved the paper.

\section{Pants decompositions of infinite conformally hyperbolic surfaces}

A {\it topological pair of pants} is a bordered surface homeomorphic to a sphere 
minus three open disks. A {\it geodesic pair of pants} is a topological pair of pants equipped with a metric of constant curvature $-1$ such that the boundary consist of 3 closed geodesics (called {\it cuffs}) with possibly
1 or 2 geodesics degenerated to have length $0$--i.e., a cusp.

A topological surface is said to be {\it infinite } if its fundamental group is infinitely generated.
A Riemann surface is {\it conformally hyperbolic } if it supports a unique metric of constant curvature $-1$ in its conformal class called the {\it hyperbolic metric}. By the Uniformization Theorem every infinite Riemann surface is conformally hyperbolic. 

Given an infinite Riemann surface $X$ we endow it with the unique hyperbolic metric in its conformal class which makes $X$ a complete Riemannian two manifold without boundary. The universal covering $\tilde{X}$ of $X$ is conformally equivalent to the unit disk and the hyperbolic metric on $X$ induces a hyperbolic metric on $\tilde{X}$; in this metric $\tilde{X}$ is isometric to the hyperbolic plane. The boundary at infinity $\partial_{\infty}\tilde{X}$ of the universal covering $\tilde{X}$ is identified with the unit circle. The fundamental group $\pi_1(X)$ acts by isometries on $\tilde{X}$ and $\tilde{X}/\pi_1(X)$ is isometrically identified with $X$. Denote by $\Lambda (\pi_1(X))$ the limit set on $\partial_{\infty}\tilde{X}$ of the action of $\pi_1(X)$. The convex core $\mathcal{C}(\Lambda (\pi_1(X)))$ of $\Lambda (\pi_1(X))$ is the smallest convex subset of $\tilde{X}$ that has $\Lambda (\pi_1(X))$ as its ideal boundary. The convex core $\mathcal{C}(X)$ of $X$ is the smallest convex subset of $X$ that has the same homotopy type--equivalently, $\mathcal{C}(X)=\mathcal{C}(\Lambda (\pi_1(X)))/\pi_1(X)$ (see Maskit \cite{Maskit}).

Alvarez and Rodriguez \cite{AR} (see also \cite{BasmajianSaric}) proved that each infinite conformally hyperbolic surface $X$ can be constructed by isometrically gluing a countable set $\{ P_n\}_n$ of geodesic pairs of pants along their cuffs (boundary geodesics) $\{\alpha_k\}_k$ and by attaching to this union an at most countable set of hyperbolic funnels $\{ F_i\}_i$ and an at most countable set of closed hyperbolic half-planes $\{H_j\}_j$. The hyperbolic funnels $\{ F_i\}$ are attached to the boundary geodesics of $\{P_n\}$  that end up not being glued to any other boundary geodesics. The half-planes are attached to the open geodesics that are accumulated by the cuffs $\{\alpha_k\}_k$. Note that the open geodesics are not in the union of the pairs of pants but rather in the closure of the union.  

A first example of completing a countable union of geodesic pairs of pants by attaching a  hyperbolic half-plane was given by Basmajian \cite{Bas}. To better understand the situation, we fix a point $x\in\cup_nP_n$ and consider the set of all geodesic rays in $(\cup_nP_n)\cup (\cup_i F_i)$ starting at $x$. This set of geodesic rays is naturally identified with the unit circle since each tangent vector at $x$ defines a unique geodesic ray tangent to it. If a geodesic ray $g_x$ starting at $x$ has a finite length then there is an open interval $I_j$ of geodesic rays containing $g_x$ that have finite length (see \cite{BasmajianSaric}). The geodesic rays corresponding to the endpoints of the open interval $I_j$ of finite length geodesic rays have infinite length. To each such open interval $I_j$ a closed hyperbolic half-plane $H_j$ is added and the two boundary geodesic rays of $I_j$ are asymptotic to the boundary geodesic of $H_j$ (see \cite{BasmajianSaric}).  Each closed half-plane $H_j$ in $X$ lifts to countably many closed half-planes in the universal covering $\tilde{X}$ and the ideal boundary of each half-plane in $\tilde{X}$ is an open interval on $\partial_{\infty}\tilde{X}$. 
Since $\partial_{\infty}\tilde{X}$ is homeomorphic to  the unit circle it follows that $\partial_{\infty}\tilde{X}$ can contain at most countably many disjoint open subintervals. We conclude that an at most countably many closed hyperbolic half-planes are added to $X$.

A {\it locally finite topological pants decomposition} of an infinite Riemann surface $X$ is a decomposition of $X$ into topological pairs of pants such that any two pairs of pants are either disjoint or meet along a common  boundary component and each compact subset of $X$ meets at most finitely many pairs of pants. A theorem of K\' er\' ekj\' arto \cite{Kerekjarto}  and Richards \cite{Richards} implies that any infinite surface has a locally finite topological pants decomposition. By replacing each boundary curve of a locally finite topological pants decomposition with the simple closed geodesic in its homotopy class we obtain a locally finite geodesic pants decomposition of the convex core $\mathcal{C}(X)$ minus the set of open geodesics on its boundary (see \cite{BasmajianSaric}). The hyperbolic surface $X$ is obtained  from its convex core $\mathcal{C}(X)$ by attaching hyperbolic funnels to the closed geodesics of the boundary and by attaching half-planes to the infinite (open) geodesics of the boundary of its convex core (see \cite{BasmajianSaric}).

A conformal structure of an infinite Riemann surface $X$ can be changed by twisting along the boundary geodesics $\{\alpha_k\}_k$ of a locally finite geodesic pants decomposition $\{P_n\}_n$ (for example, see Alessandrini-Liu-Papadopoulos-Su \cite{ALPS}). In fact, there is a choice of real twists along  $\{\alpha_k\}_k$ such that the new conformally hyperbolic Riemann surface $X'$ is equal to its convex core union of an at most countably many hyperbolic funnels and no half-planes (see \cite{BasmajianSaric}).  Since $X'$ is obtained from $X$ by twisting along $\{\alpha_k\}$ it follows that $X'$ is homeomorphic to $X$. However, $X'$ is not quasiconformal to $X$. Indeed, a quasiconformal mapping between $X$ and $X'$ lifts to a quasiconformal mapping of their universal covers and it extends to a homeomorphism of the limit sets of the covering groups. The limit set of $\pi_1(X')$ is homeomorphic to the unit circle while the limit set of $\pi_1(X)$ is not connected which is a contradiction.

For the most part we will be interested in infinite Riemann surfaces whose hyperbolic metrics are such that they are equal to their convex cores. In this case we do not have  funnels and half-planes, and any locally finite topological pants decomposition straightens to a locally finite geodesic pants decomposition of the whole surface. The geodesic pairs of pants can have at most two cusps since the cusps are not glued to other pairs of pants. In the terminology of \cite{BasmajianSaric} such conformally hyperbolic Riemann surfaces are said to have no visible ends.  The discussion above shows that any infinite Riemann surface is homeomorphic to a Riemann surface with no visible ends by performing twists along a geodesic pants decomposition and by replacing funnels with cusps (see \cite{BasmajianSaric}).

\section{Geodesic laminations on infinite Riemann surfaces}

Let $X$ be a conformally hyperbolic Riemann surface equipped with its unique hyperbolic metric. Let $\tilde{X}$ be the universal covering of $X$ with the hyperbolic metric induced by the hyperbolic metric on $X$. Each oriented geodesic $g$ of $\tilde{X}$ is uniquely determined by the pair of its endpoints $(a,b)\in\partial_{\infty}\tilde{X}\times \partial_{\infty}\tilde{X}-\Delta$, where $\Delta$ is the diagonal of $\partial_{\infty}\tilde{X}\times \partial_{\infty}\tilde{X}$, $a$ is the initial point of $g$ and $b$ is the end point of $g$. The space of oriented geodesics of $\tilde{X}$ is identified with $\partial_{\infty}\tilde{X}\times \partial_{\infty}\tilde{X}-\Delta$ and the topology is given by the product topology.

The {\it space of (unoriented) geodesics} $G(\tilde{X})$ on the universal covering $\tilde{X}$ is identified with $(\partial_{\infty}\tilde{X}\times \partial_{\infty}\tilde{X}-\Delta)/(\mathbb{Z}/2\mathbb{Z})$, where the action of $\mathbb{Z}/2\mathbb{Z}$ permutes the components. The topology (and, in particular, the convergence) on $G(\tilde{X})$ is the quotient topology of the product topology on $\partial_{\infty}\tilde{X}\times \partial_{\infty}\tilde{X}-\Delta$. Note that $G(\tilde{X})$ is not compact; a sequence of  pairs of points that approaches the diagonal $\Delta$ has no accumulation points (see Bonahon \cite{Bonahon} and \cite{BonahonSaric}). 

\begin{remark} \label{rem:local}
All constructions in the paper are for the unoriented geodesics. We will use subsets of $\partial_{\infty}\tilde{X}\times \partial_{\infty}\tilde{X}-\Delta$ as local coordinates for the space of unoriented geodesics.
\end{remark}

We define a geodesic lamination on a conformally hyperbolic surface $X$.

\begin{definition}
\label{def:g_lamination}
Let $X$ be a conformally hyperbolic surface. A {\it geodesic lamination} $\lambda$ on $X$ consists of a  closed subset of $X$ together with its foliation by simple, pairwise disjoint complete geodesics of $X$. By a {\it foliation of a closed subset} $\lambda$ of $X$ by geodesics we mean a decomposition of $\lambda$ into a pairwise disjoint  simple complete geodesics such that each point $x\in\lambda$ has a neighborhood homeomorphic to $T\times I$ where $T$ is  homeomorphic to a closed subset of a compact geodesic arc and $I$ is an open interval corresponding to open arcs on geodesics. 
\end{definition}

\begin{remark}
The lift $\tilde{\lambda}$ of $\lambda$ to the universal covering $\tilde{X}$ is a $\pi_1(X)$-invariant closed subset of $\tilde{X}$ that is foliated by pairwise disjoint complete geodesics. 
Note that $\tilde{\lambda}$ being closed as a subset of $\tilde{X}$ is equivalent to it being closed as a subset of $G(\tilde{\lambda})$. \end{remark}

Unlike for compact hyperbolic surfaces, a geodesic lamination of a conformally hyperbolic infinite Riemann surface $X$ can foliate a subset of $X$ with non-empty interior. Indeed, since hyperbolic half-planes can be foliated by geodesic laminations a Riemann surface $X$ that contains a hyperbolic half-plane has this property. The same is true for Riemann surfaces with funnels because a funnel contains a hyperbolic half-plane.

 On the other hand, we show that no  subset of $X$ with non-empty interior can be foliated by a 
geodesic lamination if $X$ is equal to its convex core $\mathcal{C}(X)$. 
This follows by proving a result for the convex core of $X$.

\begin{proposition}
\label{prop:support-gl}
Let $X$ be an infinite Riemann surface equipped with its conformal hyperbolic metric. Any geodesic lamination ${\lambda}$ of the convex core $\mathcal{C}(X)$ is nowhere dense.  \end{proposition}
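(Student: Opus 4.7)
The plan is to suppose, for contradiction, that $\lambda$ has non-empty interior in $\mathcal{C}(X)$, and then to derive a contradiction from the combined constraints that distinct leaves are disjoint, that $\pi_1(X)$ acts equivariantly, and that endpoints of leaves lie in $\Lambda := \Lambda(\pi_1(X))$. Choose a connected open set $V$ contained in $\lambda$ and an interior point $p \in V$. Since a geodesic lamination has the local product structure $T \times I$ with $T$ a closed subset of a transverse arc, and $T$ contains a non-degenerate subinterval near any interior point of $\lambda$, I pick a short transversal $\tau \subset V$ which parametrizes continuously and injectively a family $\{\ell_q\}_{q \in \tau}$ of pairwise distinct leaves of $\lambda$.

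Next, pass to the universal cover. Lift $\tau$ to $\tilde\tau \subset \tilde X$ and write $\tilde\ell_q$ for the leaf of $\tilde\lambda$ through $q \in \tilde\tau$. Because $\tilde\lambda \subset \mathcal{C}(\Lambda)$, both endpoints of the complete geodesic $\tilde\ell_q$ lie in $\Lambda$. The map $q \mapsto \tilde\ell_q$ is continuous and injective into $G(\tilde X) = (\partial_\infty\tilde X \times \partial_\infty\tilde X - \Delta)/(\mathbb{Z}/2\mathbb{Z})$, so its image is a non-degenerate embedded arc $A \subset L(\tilde\lambda)$, where $L(\tilde\lambda) \subset G(\Lambda,\Lambda) := (\Lambda\times\Lambda - \Delta)/(\mathbb{Z}/2\mathbb{Z})$ denotes the set of leaves of $\tilde\lambda$. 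Using that $\tilde\lambda$ is closed in $\tilde X$ and that a limit of leaves cannot cross any other leaf transversely, I check that $L(\tilde\lambda)$ is a closed, $\pi_1(X)$-invariant subset of $G(\tilde X)$.

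I would then split into two cases by the type of $\pi_1(X)$. If $\pi_1(X)$ is of the second kind, then $\Lambda$ is a perfect, closed, nowhere-dense subset of the circle $\partial_\infty\tilde X$, hence homeomorphic to a Cantor set, so $G(\Lambda,\Lambda)$ is totally disconnected; a totally disconnected space cannot contain the non-degenerate arc $A$, contradiction. If $\pi_1(X)$ is of the first kind, then $\Lambda = \partial_\infty\tilde X$ and $G(\Lambda,\Lambda) = G(\tilde X)$, and the $\pi_1(X)$-action on $G(\tilde X)$ is topologically transitive: every orbit is dense, because the pairs of attracting/repelling fixed points of hyperbolic elements are dense in $\partial_\infty\tilde X \times \partial_\infty\tilde X$ for non-elementary Fuchsian groups. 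The $\pi_1(X)$-orbit of the arc $A$ is thus dense in $G(\tilde X)$, and by closedness and invariance, $L(\tilde\lambda) = G(\tilde X)$. This would mean every geodesic of $\tilde X$ is a leaf of $\tilde\lambda$, which is absurd since two transversely crossing geodesics of $\mathbb{H}$ cannot both be leaves of the same geodesic lamination.

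The main obstacle I expect is the first-kind case. The density/minimality input for the $\pi_1(X)$-action on $G(\tilde X)$ must be invoked carefully, reducing to the standard density of hyperbolic fixed-point pairs for non-elementary Fuchsian groups. A secondary technical point is confirming that $L(\tilde\lambda)$ is genuinely closed in $G(\tilde X)$: since $G(\tilde X)$ is non-compact (sequences can approach the diagonal $\Delta$), one must rule out the possibility that a sequence of leaves of $\tilde\lambda$ accumulates onto a complete geodesic that intersects some other leaf transversely. This is handled using the closedness of $\tilde\lambda$ in $\tilde X$ together with the disjointness of distinct leaves, exactly as in the analogous argument for closed surfaces.
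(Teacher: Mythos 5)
Your reduction to the universal cover, the passage to an arc $A$ of leaves in $G(\tilde X)$ through a transversal, and the observation that all endpoints lie in $\Lambda(\pi_1(X))$ all match the paper's setup, and your second-kind case (an arc cannot live in the totally disconnected set $(\Lambda\times\Lambda-\Delta)/(\mathbb{Z}/2\mathbb{Z})$) is a valid and even simpler observation than what the paper does there. But the first-kind case -- which is the heart of the proposition, since the intended application is $X=\mathcal{C}(X)$ -- rests on a false dynamical claim. You assert that the $\pi_1(X)$-action on $G(\tilde X)$ is ``topologically transitive: every orbit is dense.'' For a Fuchsian group of the first kind the action on $G(\tilde X)$ is topologically transitive (there exists a dense orbit) but it is emphatically not minimal: the orbit of the axis of any hyperbolic element, i.e.\ the set of lifts of a closed geodesic of $X$, is a closed discrete subset of $G(\tilde X)$, hence not dense. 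Density of attracting/repelling fixed-point pairs in $\partial_\infty\tilde X\times\partial_\infty\tilde X$ gives transitivity, not minimality, and it does not by itself give density of the orbit of your arc $A$ either: applying powers of a single hyperbolic element $h$ to a leaf drives \emph{both} endpoints toward the attracting fixed point, so you cannot steer a leaf into an arbitrary box $U\times V$ this way. Consequently the step ``the orbit of $A$ is dense, hence $L(\tilde\lambda)=G(\tilde X)$'' is unsupported, and the proof collapses exactly where the real work is needed.

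The missing idea is the one the paper's proof is built around. From the arc $A$ one gets two disjoint nondegenerate closed arcs $I_0,J_0\subset\partial_\infty\tilde X$ such that every point of $I_0$ is an endpoint of a leaf crossing the transversal whose other endpoint lies in $J_0$ (and at least one of $I_0,J_0$ is nondegenerate). Choose a hyperbolic $\kappa\in\pi_1(X)$ with attracting fixed point in the interior of $I_0$ and repelling fixed point outside $J_0$ (possible by density of fixed-point pairs in $\Lambda\times\Lambda$). For a leaf $g_0$ with endpoints in $I_0\times J_0$ avoiding the fixed points, a high iterate $\kappa^i(g_0)$ is again a leaf, now with \emph{both} endpoints inside $I_0$; any point of $I_0$ strictly between these two endpoints is the endpoint of a leaf whose other endpoint lies in $J_0$, so that leaf crosses $\kappa^i(g_0)$ transversely, contradicting pairwise disjointness and $\pi_1(X)$-invariance. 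Without this (or an equivalent) argument, your first-kind case is a genuine gap; with it, your proof becomes essentially the paper's, with your Cantor-set remark as a shortcut in the second-kind case. The minor unproved assertions (continuity and injectivity of $q\mapsto\tilde\ell_q$, closedness of $L(\tilde\lambda)$ in $G(\tilde X)$) are standard and acceptable as stated.
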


\begin{remark}
If $X=\mathcal{C}(X)$ then any geodesic lamination $\lambda$ of $X$ is nowhere dense in $X$. If $X\neq \mathcal{C}(X)$ then there are geodesic laminations of $X$ whose supports can have non-empty interiors. \end{remark}

\begin{proof}

Let $\tilde{\lambda}$ be the lift of ${\lambda}$. By our assumption, $\tilde{\lambda}$ is a subset of the convex core $\mathcal{C}(\Lambda (\pi_1(X)))$ of the limit set $\Lambda (\pi_1(X))\subset \partial_{\infty}\tilde{X}$.
We assume on the contrary that there exists a closed hyperbolic disk $U\subset \mathcal{C}(X)$ which is covered by the geodesics of ${\lambda}$ and seek a contradiction.  Let $\tilde{U}$ be a single component of the lift of $U$ to $\tilde{X}$. Then $\tilde{U}$ is a closed hyperbolic disk in $\tilde{X}$ that is covered by $\tilde{\lambda}$.

\begin{figure}[h]
%\ShowGrid
\leavevmode \SetLabels
\L(.3*.18) $a$\\
\L(.39*.055) $x$\\
\L(.25*.3) $y$\\
\L(.49*.0) $\kappa^{i}(g_0)$\\
\L(.685*.195) $a_1$\\
\L(.52*.3) $g_0$\\
\L(.475*.3) $g_4$\\
\L(.42*.2) $\kappa$\\
\L(.63*.57) $\tilde{U}$\\
\L(.53*.995) $b_1$\\
\L(.34*.91) $b$\\
\endSetLabels
\begin{center}
\AffixLabels{\centerline{\epsfig{file =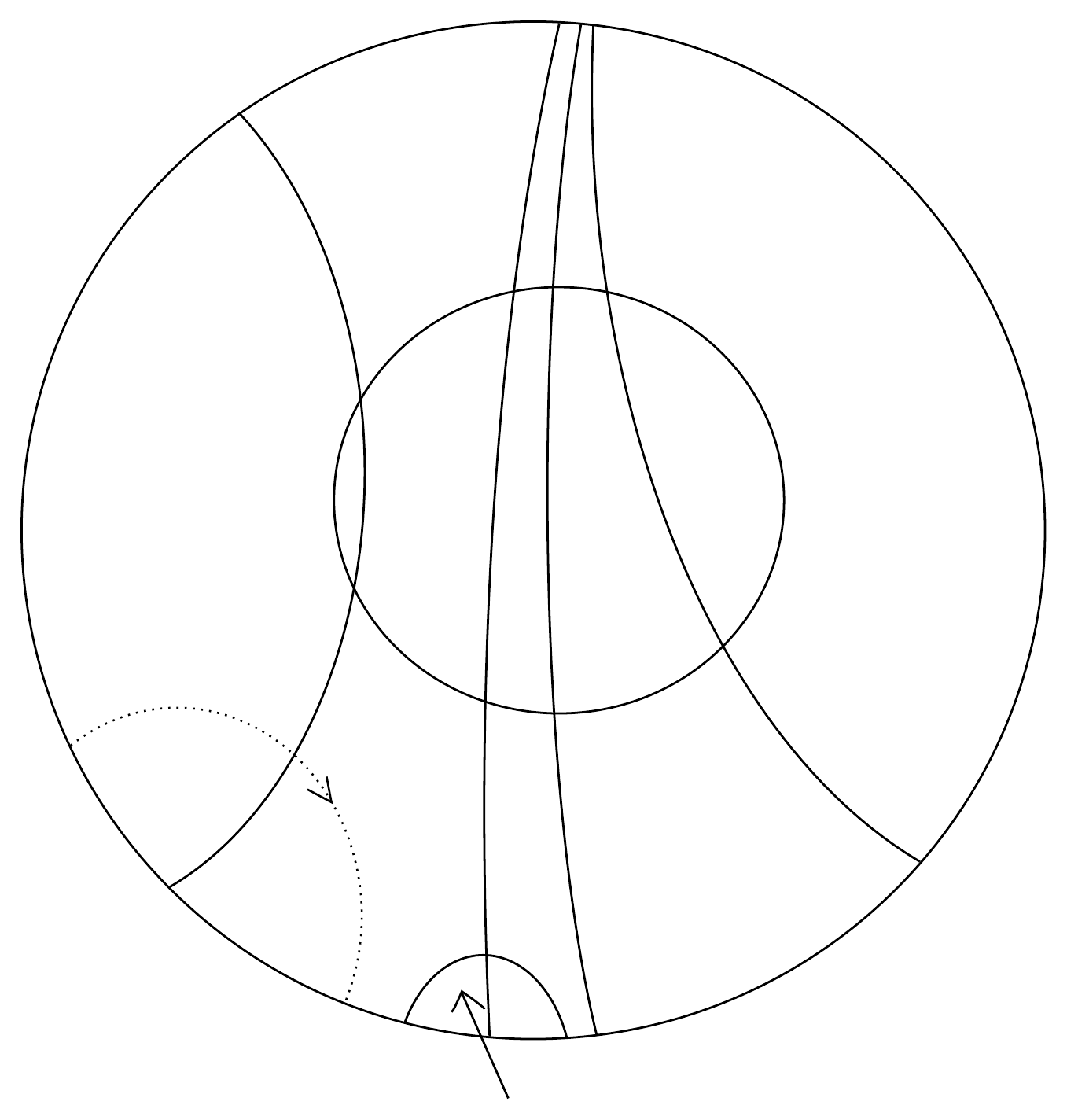,width=8.0cm,angle=0} }}
\vspace{-20pt}
\end{center}
\caption{The dotted geodesic is an axis of $\kappa\in\pi_1(X)$ and, in the general case, does not belong to $\tilde{\lambda}$. $\kappa^i(g_0)$ belongs to $\tilde{\lambda}$.} 
\end{figure}

A closed hyperbolic disk in the hyperbolic plane is said to be {\it regular} if for any three disjoint complete geodesics intersecting it one geodesic separates the other two. It is an elementary fact that there exists $r>0$ such that any closed disk of radius at most $r$ is regular. By decreasing $\tilde{U}$ if necessary, we can assume that it is regular.

Note that $\partial_{\infty}\tilde{X}$ has a natural orientation induced by the orientation of $\tilde{X}$. By definition $]a,b[\subset \partial_{\infty}\tilde{X}$ is the set of all $x\in \partial_{\infty}\tilde{X}$ such that $a$, $x$ and $b$ are in the order of this orientation, and $[a,b]=]a,b[\cup\{ a,b\}$.

Let $g=(a,b)$ and  $g_1=(a_1,b_1)$ be two geodesics of $\tilde{\lambda}$ that intersect $\tilde{U}$. Without loss of generality we can assume that they are in the relative position as in Figure 1. Since $\tilde{U}$ is regular then any geodesic of $\tilde{\lambda}$ between $g$ and $g_1$ separates them. In other words, it has one endpoint in $[a,a_1]$ and other endpoint in $[b_1,b]$. 

Let $K$ be the union of all endpoints of the geodesics of $\tilde{\lambda}$ that intersect $\tilde{U}$. Since $\tilde{\lambda}$ is a closed set and $\tilde{U}$ is a compact set it follows that $K$ is a closed subset of $\partial_{\infty}\tilde{X}$. 
Therefore each component of either $[a,a_1]\setminus K$ or $[b_1,b]\setminus K$ is an open arc if either of the two sets is non-empty.

Assume that $]a_2,a_3[$ is a component of $[a,a_1]\setminus K$. Let $g_2$ and $g_3$ be  geodesics of $\tilde{\lambda}$ with endpoints $a_2$ and $a_3$ such that their other two endpoints in $[b_1,b]$ are the closest to each other even possibly equal to each other. (We require this since either $a_2$ or $a_3$ could be endpoints of more than one geodesic of $      \tilde{\lambda}$.) Then the part of $\tilde{X}$ between $g_2$ and $g_3$ does not contain geodesics of $\tilde{\lambda}\cap [a,a_1]\times [b_1,b]$  and yet $\tilde{\lambda}$ covers $\tilde{U}$. This is a contradiction with $\tilde{U}$ being regular. Therefore the set $K$ covers $[a,a_1]$ and an analogous argument  shows that it covers $[b_1,b]$ as well.

Since $\tilde{\lambda}\subset \mathcal{C}(\Lambda (\pi_1(X)))$ it follows that $$[a,a_1], [b_1,b]\subset \Lambda (\pi_1(X)).$$
It is possible that either $[a,a_1]$ or $[b_1,b]$ is a single point. However not both intervals can be points since $\tilde{U}$ is covered by $\tilde{\lambda}$.

Assume first that $[a,a_1]$ is not a point. 
The set of endpoints of the lifts of closed geodesics of $X$ is dense in the limit set $\Lambda (\pi_1(X))$ (for example, see Maskit \cite{Maskit}). 
Let $\kappa\in \pi_1(X)$ be a hyperbolic translation with the attracting fixed point $x\in ]a,a_1[$ and the  repelling point $y\notin [b_1,b]$. (A hyperbolic element with these properties exist since $\pi_1(X)$ is not cyclic.) Let $g_0$ be a geodesic of $\tilde{\lambda}$ in $]a,a_1[\times [b_1,b]$ such that both its endpoints are different from $x$ and $y$. A high enough iterate $\kappa^{i}$ of $\kappa$ will attract both endpoint of $g_0$ into the interval $]a,a_1[$ (see Figure 1). Then $\kappa^{i}(g_0)$ transversely intersects a geodesic $g_4$ of $ \tilde{\lambda}$ which is in $[a,a_1]\times [b_1,b]$. However since  $\tilde{\lambda}$ is a geodesic lamination that is invariant under $\kappa$ we get a contradiction.
Thus $\tilde{\lambda}$ cannot cover a subset of $\mathcal{C}(\Lambda (\pi_1(X)))$ with non-empty interior and ${\lambda}$ cannot cover a subset of $\mathcal{C}(X)$ with non-empty interior. The argument is analogous when $[b_1,b]$ is not a point.
\end{proof}

A homeomorphism between two closed hyperbolic surfaces induces a natural homeomorphism between the spaces of geodesics of their universal coverings that is equivariant under the covering groups (see Bonahon \cite{Bonahon}, \cite{Bonahon-book}). 
In general,
a homeomorphism between two infinite Riemann surfaces does not induce an equivariant  homeomorphism between the spaces of geodesics of their universal coverings. This is easily seen by considering a homeomorphism which sends a cusp onto a funnel. However, we prove that when the hyperbolic metrics are such that the surfaces are equal to their convex cores then a homeomorphism of surfaces induces an equivariant homeomorphism of the spaces of geodesics of their universal coverings.

\begin{theorem}
\label{thm:home_geodesics}
Let $X$ and $X_1$ be two infinite Riemann surfaces which are equal to their convex cores. A homeomorphisms $f:X\to X_1$ induces a natural homeomorphism $\tilde{f}: G(\tilde{X})\to G(\tilde{X}_1)$ which projects to a well-defined map $f:G(X)\to G(X_1)$. Furthermore,  simple closed geodesics of $X$ are mapped onto simple closed geodesics of $X_1$ in the homotopy classes of their image curves under $f$.
\end{theorem}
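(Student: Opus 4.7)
The plan is to produce the desired homeomorphism on $G(\tilde X)$ by first constructing a boundary homeomorphism $\tilde f_\infty:\partial_\infty\tilde X\to\partial_\infty\tilde X_1$ and then setting $\tilde f(a,b)=(\tilde f_\infty(a),\tilde f_\infty(b))$. The essential hypothesis to exploit is $X=\mathcal{C}(X)$ and $X_1=\mathcal{C}(X_1)$, which is equivalent to $\pi_1(X)$ and $\pi_1(X_1)$ being of the first kind, i.e.\ $\Lambda(\pi_1(X))=\partial_\infty\tilde X$ and likewise for $X_1$. This is what will allow us to extend by density.

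First I would lift $f$ to an equivariant homeomorphism $\tilde f:\tilde X\to\tilde X_1$ satisfying $\tilde f\circ\gamma=f_\ast(\gamma)\circ\tilde f$ for all $\gamma\in\pi_1(X)$, where $f_\ast:\pi_1(X)\to\pi_1(X_1)$ is the induced isomorphism. For every hyperbolic $\gamma\in\pi_1(X)$ with attracting/repelling fixed points $p^{\pm}_\gamma\in\partial_\infty\tilde X$, the element $f_\ast(\gamma)$ is again hyperbolic (any free homotopy class of a simple closed curve on $X_1$ is represented by a closed geodesic because $X_1=\mathcal{C}(X_1)$ has no funnels or half-planes, so closed curves cannot retract to ends) with fixed points $p^{\pm}_{f_\ast(\gamma)}$. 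Define $\tilde f_\infty(p^{\pm}_\gamma):=p^{\pm}_{f_\ast(\gamma)}$, where the sign is chosen consistently using that $\tilde f$ conjugates $\gamma$ to $f_\ast(\gamma)$ (so it must send the attracting fixed point to the attracting fixed point). Well-definedness on shared fixed points follows because two hyperbolic elements of a discrete group sharing one fixed point share both, and generate a common cyclic subgroup whose image under $f_\ast$ has the same property.

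Next I would show that the domain of $\tilde f_\infty$ is dense in $\partial_\infty\tilde X$: this is precisely the statement that hyperbolic fixed points are dense in the limit set (Maskit), combined with $\Lambda(\pi_1(X))=\partial_\infty\tilde X$. The key geometric step is to show that $\tilde f_\infty$ preserves the cyclic order on the circle (reversing it globally if $f$ is orientation-reversing). For this, I would pick four distinct hyperbolic fixed points $p_1,p_2,p_3,p_4$; they arise as endpoints of two axes $A_{12}, A_{34}$ in $\tilde X$, and their cyclic order is encoded by whether these axes intersect, link, or nest inside $\tilde X$. Since $\tilde f$ is a homeomorphism of $\tilde X\cong\mathbb H$, it preserves (or uniformly reverses) this linking/nesting pattern for the images, which are topological lines that, by the conjugation relation above, are properly homotopic to the axes of $f_\ast$ of the respective elements. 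Hence the four points $\tilde f_\infty(p_i)$ have the same cyclic order (or the reverse). Since an order-preserving injection between dense subsets of $S^1$ extends uniquely to a continuous map of $S^1$, and since the same construction applied to $f^{-1}$ yields a two-sided inverse, $\tilde f_\infty$ is a homeomorphism.

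Finally I would define $\tilde f:G(\tilde X)\to G(\tilde X_1)$ by $\tilde f(\{a,b\}):=\{\tilde f_\infty(a),\tilde f_\infty(b)\}$. Continuity is immediate from continuity of $\tilde f_\infty$ and the product topology definition of $G(\tilde X)$; $\pi_1$-equivariance descends from that of $\tilde f_\infty$ (which is equivariant by construction, since $\tilde f_\infty(p^{\pm}_{\gamma\gamma'\gamma^{-1}})=p^{\pm}_{f_\ast(\gamma)f_\ast(\gamma')f_\ast(\gamma)^{-1}}$). The equivariance gives a well-defined projection $f:G(X)\to G(X_1)$. The last assertion on simple closed geodesics is essentially tautological: a simple closed geodesic $c\subset X$ lifts to axes of some conjugacy class $[\gamma]\subset\pi_1(X)$; by definition $\tilde f$ sends these axes to the axes of $[f_\ast(\gamma)]$, whose projection in $X_1$ is a simple closed geodesic freely homotopic to $f(c)$. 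I expect the principal obstacle to be verifying cyclic-order preservation of $\tilde f_\infty$ on hyperbolic fixed points without assuming any metric control on $\tilde f$; the hypothesis $X=\mathcal{C}(X)$ is exactly what makes this work, because all boundary points are limits of such fixed points so no continuous extension is wasted on non-limit arcs (such as those bounding half-plane lifts).
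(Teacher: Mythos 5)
Your proposal follows essentially the same route as the paper: lift $f$ to an equivariant homeomorphism of universal covers, use that the groups are of the first kind so hyperbolic fixed points are dense in the full boundary circles, define the boundary map on these fixed points, check it is (cyclic-)order preserving, extend to a circle homeomorphism, and then pass to the induced equivariant map on geodesics and to simple closed geodesics. The only differences are matters of bookkeeping — you supply a linking-of-axes argument for the order-preservation step that the paper merely asserts, and you invoke as a standard fact the extension of an order-preserving bijection between dense subsets of $S^1$ (together with the inverse coming from $f^{-1}$), which the paper instead proves by hand via lifting to $\mathbb{R}$ and monotone limits — so this counts as the same proof.
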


\begin{proof}
A lift $\tilde{f}:\tilde{X}\to\tilde{X}_1$ of the homeomorphism $f:X\to X_1$ conjugates the action of $\pi_1(X)$ onto the action of $\pi_1(X_1)$. Since the sets of fixed points of hyperbolic elements of $\pi_1(X)$ and $\pi_1(X_1)$ are dense in $\partial_{\infty}\tilde{X}$ and $\partial_{\infty}\tilde{X}_1$ it follows that $\tilde{f}$ extends to an order preserving injective map $h$ from a dense subset of $\partial_{\infty}\tilde{X}$ onto a dense subset of $\partial_{\infty}\tilde{X}_1$. 

The universal covers  $\mathbb{R}\to \partial_{\infty}\tilde{X}$ and $\mathbb{R}\to\partial_{\infty}\tilde{X}_1$ are given by the exponential maps.
We lift the map $h$ to an increasing map $\tilde{h}$ from a dense subset of $\mathbb{R}$ onto a dense subset of $\mathbb{R}$. We claim that $\tilde{h}$ can be 
extended to a homeomorphism of $\mathbb{R}$. Indeed, let $x\in\mathbb{R}$ be a point where $\tilde{h}$ is not defined. Then there exists an increasing sequence 
$x_n$ that converges to $x$ such that $\tilde{h}$ is defined on $x_n$. 
Let $a$ and $b$ be the points of $\mathbb{R}$ on which $\tilde{h}$ is defined such that $a<x<b$. We can assume that $a<x_n<b$. 
Since $\tilde{h}$ is an increasing map we
have $\tilde{h}(a)<\tilde{h}(x_n)<\tilde{h}(b)$. Thus $\{\tilde{h}(x_n)\}$ is a bounded increasing sequence in $\mathbb{R}$ and therefore it has a limit $y\in\mathbb{R}$.
If $x_n'$ is another increasing sequence that converges to $x$ on whose elements $\tilde{h}$ is defined then $\tilde{h}(x_n')$ 
converges to some $y'$. For every $x_n'$ there exists $x_{k(n)}$ such that $x_n'<x_{k(n)}$ which implies $\tilde{h}(x_n')<\tilde{h}(x_{k(n)})<y$. By letting $n\to\infty$ we obtain $y'\leq y$ and by changing the roles of $x_n$ and $x_n'$ we obtain $y\leq y'$. Thus $y=y'$ and we have a well-defined extension. 

Therefore we extended $\tilde{h}$ to a map of $\mathbb{R}$ into $\mathbb{R}$ which can easily be seen to be an increasing map. It remains to be proved that $\tilde{h}$ is onto. Let $z\in\mathbb{R}$. Since $\tilde{h}(\mathbb{R})$ is dense in $\mathbb{R}$ there exists an increasing sequence $y_n=\tilde{h}(x_n)$ that converges to $z$. Since $\tilde{h}$ is increasing it follows that the sequence $x_n$ is increasing and let $x$ be its limit. Then $\tilde{h}(x)=z$ by the definition of $\tilde{h}$ and we established that $\tilde{h}$ is onto. Therefore $\tilde{h}$ is a continuous bijection. Finally $\tilde{h}$ is invariant under the covering transformations on a dense subset of $\mathbb{R}$ and therefore it is invariant under covering transformations on all of $\mathbb{R}$. Therefore $\tilde{h}$ projects to a continuous bijection between  $\partial_{\infty}\tilde{X}$ and $\partial_{\infty}\tilde{X}_1$ which is a homeomorphism by the invariance of domains theorem.

Finally a homeomorphism of boundaries $\partial_{\infty}\tilde{X}$ and $\partial_{\infty}\tilde{X}_1$  extends to a homeomorphism $\tilde{f}:G(\tilde{X})\to G(\tilde{X}_1)$ which is invariant under the actions of $\pi_1(X)$ and $\pi_1(X_1)$. Thus $f$ induces a natural bijection between $G(X)$ and $G(X_1)$. Since simple closed geodesics on $X$ are mapped by $f$ to simple closed homotopically non-trivial curves on $X_1$ the last statement of the theorem follows.
\end{proof}

\section{Train tracks from the pants decompositions}

The theory of train tracks on closed surfaces was started by Thurston \cite{Thurston} and further developed by various authors: Penner-Harer \cite{PennerHarer}, Bonahon \cite{Bonahon-book} to name a few. 
In this section we introduce train tracks for infinite hyperbolic surfaces (that are equal to their convex cores) and in doing so we use the ideas which are developed for closed surfaces. Unlike for closed surfaces, the Milnor-\v Svarc lemma does not hold for infinite surfaces. Our results rely upon a fact that edge paths of the constructed train track when lifted to the universal covering converge to well-defined ideal boundary points of the universal covering. This fact needs to hold for an arbitrary surface without a priori control of the geometry. For this reason we adopt standard Dehn-Thurston train tracks on closed surfaces to infinite hyperbolic surfaces starting from a fixed geodesic pants decomposition. When developing basic facts for these train tracks we mostly use the approach and terminology of Bonahon's book (see \cite{Bonahon-book}) and Penner-Harer's book (see \cite{PennerHarer}). 

Throughout this section we assume that $X$ is an infinite Riemann surface equipped with its conformal hyperbolic metric such that $\mathcal{C}(X)=X$. Therefore the Riemann surface $X$ has no funnels and no half-plane ends.  
We fix  a locally finite geodesic pants decomposition $\{ P_n\}_n$ of $X$. Denote by $\{\alpha_k\}_k$ the collection of cuffs  of the pants decomposition $\{ P_n\}_n$. 
Any simple geodesic of $X$ either intersects a cuff, or it is a cuff, or it belongs to a single pair of pants and accumulates at the cuffs at both of its ends.
We introduce an uncountable family of train tracks on $X$ starting from the fixed geodesics pants decomposition $\{ P_n\}$ by connecting the boundary geodesics (cuffs)
$\{\alpha_k\}_k$ with ``standard train tracks'' inside the pairs of pants as follows.

On each cuff $\alpha_k$ we choose a base point $a_k\in\alpha_k$. The base points $a_k$ will belong to the set of vertices of the train track on $X$ that we will construct below. There will be additional vertices in each pair of pants but no additional vertices on $\alpha_k$. Each cuff $\alpha_k$ is an edge of the train track that has both ends equal to $a_k$ called a {\it cuff edge}. 
If a pair of pants $P_n$ has $3$ cuffs $\alpha_1^n$, $\alpha_2^n$ and $\alpha_3^n$, then we have a choice of four standard train tracks inside $P_n$ that meet cuffs at $a_1^n$, $a_2^n$ and $a_3^n$ as in Figure 2 (see also \cite{PennerHarer}).
In addition, at each vertex $a_i^n$ we have two possible choices of smoothing and Figure 2 represents one such choice. This gives a total of $32$ choices on a single pair of pants with $3$ cuffs. 

\begin{figure}[h]
%\ShowGrid
\leavevmode \SetLabels
\L(.3*.9) $\alpha^n_1$\\
\L(.26*.75) $\alpha^n_2$\\
\L(.69*.62) $\alpha^n_3$\\
\L(.17*.69) $a^n_1$\\
\L(.259*.67) $a^n_2$\\
\L(.345*.67) $a^n_3$\\
\L(.6*.88) $\alpha^n_1$\\
\L(.73*.8) $a^n_1$\\
\L(.18*.3) $\alpha^n_1$\\
\L(.25*.42) $a^n_1$\\
\L(.76*.3) $\alpha^n_1$\\
\L(.56*.4) $a^n_1$\\
\L(.7*.28) $\alpha^n_3$\\
\L(.65*.13) $a^n_3$\\
\L(.59*.13) $\alpha^n_2$\\
\L(.537*.22) $a^n_2$\\
\L(.27*.13) $\alpha^n_2$\\
\endSetLabels
\begin{center}
\AffixLabels{\centerline{\epsfig{file =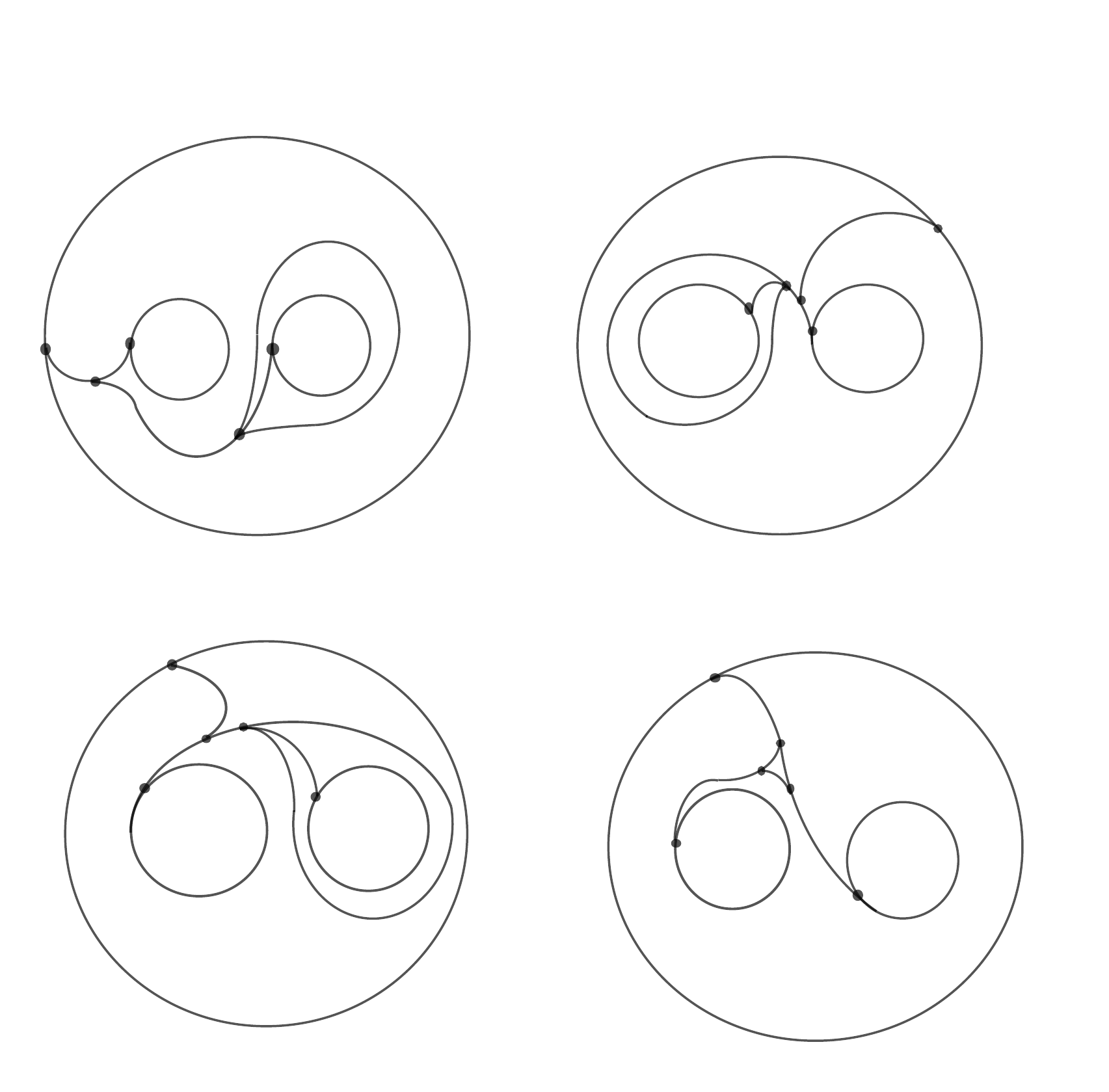,width=10.0cm,angle=0}}}
\vspace{-20pt}
\end{center}
\caption{The four standard train tracks on a pair of pants with $3$ cuffs. The smoothing at each cuff is chosen arbitrarily.} 
\end{figure}

If a pair of pants $P_n$ has $2$ cuffs $\alpha_1^n$ and $\alpha_2^n$, then there are two possible configurations of the ``standard train tracks'' in $P_n$ illustrated in the first two cases of  Figure 3. In addition, at each vertex $a_i^n$ we have two possible choices of smoothing which gives a total of $4$ choices. Finally if a pair of pants $P_n$ has $1$ cuff $\alpha^n_1$, then there is one possible configurations of the ``standard train track'' in $P_n$ illustrated in the last case of Figure 3 and we have two possible choices of smoothing at $a^n_1$ which gives a total of $2$ choices.

\begin{figure}[h]
%\ShowGrid
\leavevmode \SetLabels
\L(.26*.9) $\alpha^n_1$\\
\L(.18*.7) $a^n_1$\\
\L(.29*.72) $\alpha^n_2$\\
\L(.61*.5) $a^n_1$\\
\L(.78*.8) $\alpha^n_1$\\
\L(.62*.787) $\alpha^n_2$\\
\L(.35*.2) $\alpha^n_1$\\
\L(.42*.4) $a^n_1$\\
\endSetLabels
\begin{center}
\AffixLabels{\centerline{\epsfig{file =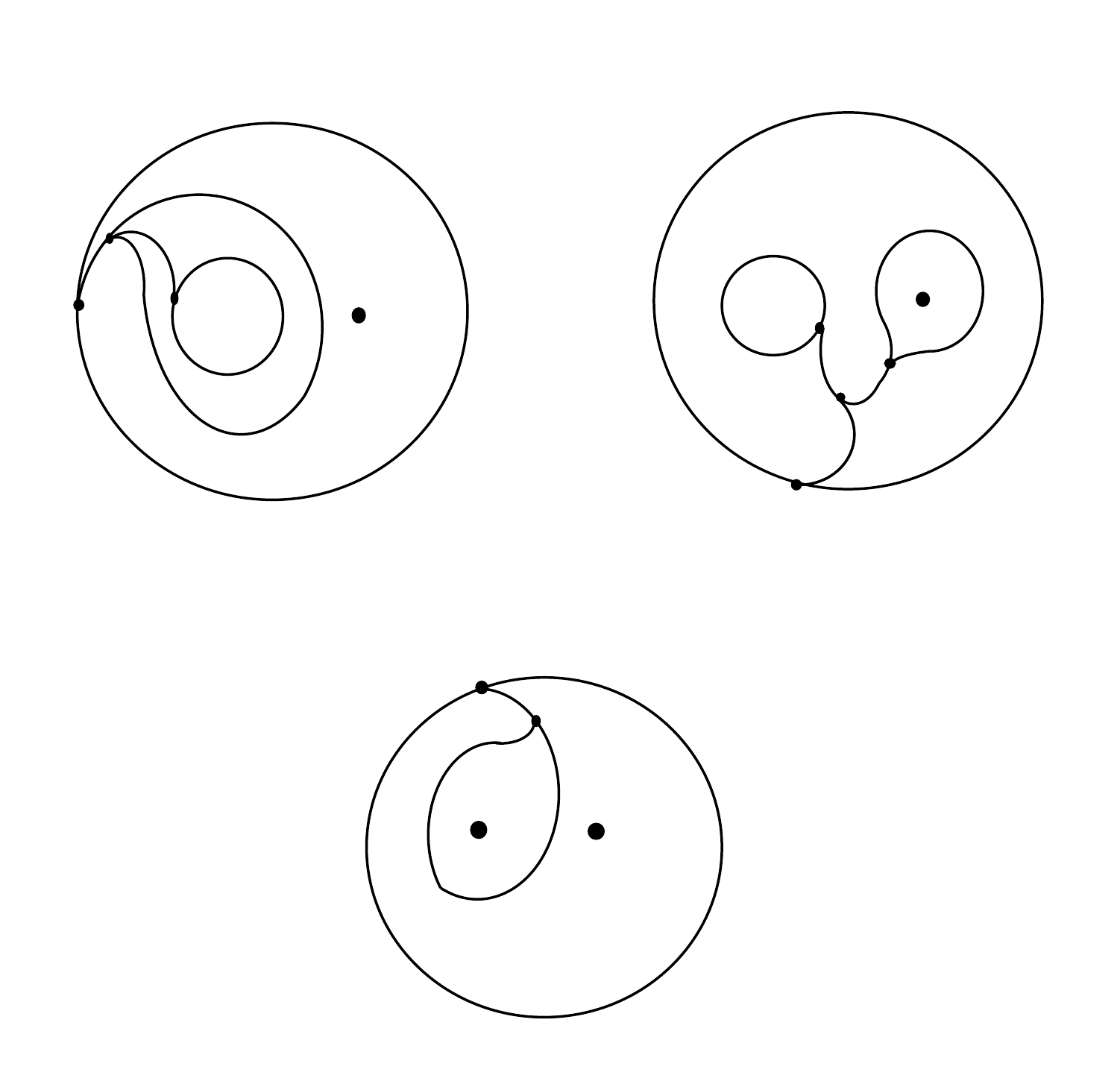,width=10.0cm,angle=0} }}
\vspace{-20pt}
\end{center}
\caption{The standard train tracks when pairs of pants have $2$ or $1$ cuff.} 
\end{figure}

We form a train track $\Theta$ on $X$ as follows.
In the interior of each pair of pants $P_n$ we choose a standard train track as in Figures 2 and 3. The two standard train tracks can meet only at a base point of a cuffs on the boundary of the two pairs of pants. The edges of the standard train tracks in the interior of $P_n$  are called  {\it connector} edges and each cuff gives exactly one edge called a { cuff} edge. The vertices of $\Theta$ are formed by all the basepoints $a_k$ on the cuffs $\alpha_k$ and up to three vertices from the standard train tracks in the interior of each pair of pants (see Figures 2 and 3).

Let $e$ be a connector edge with one vertex $v$ at a cuff $\alpha_k$. We orient $e$ such that $v$ is the end point for the orientation. We further orient $\alpha_k$ such that the unit tangent vectors to $\alpha_k$ and $e$ at the vertex $v$ agree.
Then we will say that the connector edge $e$ is {\it left tangent} to $\alpha_k$ if $e$ is on the left of the oriented cuff $\alpha_k$. Otherwise the connector edge $e$ is {\it right tangent} to $\alpha_k$. Note that the notions of left and right tangent to a cuff are independent of any a priori orientation of the cuff and depends only on the orientation of the surface.

For each cuff $\alpha_k$, 
we introduce a requirement that the two connector edges (on opposite sides and) meeting $\alpha_k$ are either both left or both right tangent to $\alpha_k$.
By the definition of standard train tracks in geodesic pairs of pants, each complementary region of $\Theta$ is either a Jordan domain with piecewise smooth boundary and exactly three non-smooth points (called a triangle) or a  Jordan domain minus a point with smooth boundary except at one point (called a punctured monogon).

A train track $\Theta$ on $X$ obtained by making consistent choices of smoothing at each vertex on a cuff is called a {\it pants train track}. In fact, 
there are uncountably many pants train tracks on $X$  defined using  a fixed pants decomposition $\{ P_n\}$ of $X$.  

We fix one pants train track $\Theta$ on $X$ and denote by $\tilde{\Theta}$ the lift to $\tilde{X}$. The set of edges of $\Theta$ is denoted by $E(\Theta )$ and the set of edges of $\tilde{\Theta}$ is denoted by $E(\tilde{\Theta})$.  An {\it edge path} in $\tilde{\Theta}$ is a finite or infinite or bi-infinite sequence of edges of $\tilde{\Theta}$ such that the consecutive edges meet smoothly  at each vertex.

 We will need the following lemma. 
 
\begin{lemma}
\label{lem:lift_standard}
Let $\tilde{\alpha}_k$ be a single lift of a cuff and $\tilde{a}_k\in \tilde{\alpha}_k$ a lift of the base point $a_k$. Then the number of lifts of cuffs on one side and connected to $\tilde{\alpha}_k$ by finite edge paths starting at $\tilde{a}_k$ which are not crossing lifts of cuffs is at most four.
\end{lemma}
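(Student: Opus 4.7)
\bigskip

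\noindent\emph{Proof plan.}
The chosen side of $\tilde\alpha_k$ is bounded by a simply connected region $\tilde P\subset\tilde X$, namely the lift of the pair of pants $P_n\subset X$ adjacent to $\alpha_k$ on that side. A finite edge path in $\tilde\Theta$ starting at $\tilde a_k$ and not crossing any lift of a cuff must therefore stay in $\tilde P\cup\partial\tilde P$, so the question becomes: how many distinct components of $\partial\tilde P$ (other than $\tilde\alpha_k$) can be reached from $\tilde a_k$ by such a smooth edge path? My plan is to bound this count by a careful local analysis at the vertices of the lifted train track.

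First, I would record the local combinatorics. Restricted to $P_n$, the standard train track of Figure~2 or Figure~3 is trivalent at each cuff vertex (two cuff half-edges plus one ``stub'' connector into $P_n$) and trivalent at each interior switch, with complementary regions that are triangles (and punctured monogons when cusps are present); this whole pattern lifts $\pi_1(P_n)$-equivariantly to $\tilde P$.

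Second, I would analyse smoothness at a cuff vertex $\tilde b$ on a boundary cuff $\tilde\beta$ of $\tilde P$. The two cuff half-edges at $\tilde b$ lie in opposite tangent classes along $\tilde\beta$. The consistent-smoothing convention forces the $\tilde P$-side connector and the opposite-side connector at the corresponding vertex of $X$ to be both left (or both right) tangent to the cuff; unwinding the paper's definition, this means the two connector half-edges at $\tilde b$ have tangent vectors in \emph{opposite} directions along $\tilde\beta$. Consequently, at $\tilde b$ an edge path in $\tilde P\cup\partial\tilde P$ that arrives on the $\tilde P$-side connector is forced by smoothness to continue along $\tilde\beta$ in one specific direction; arrivals at a subsequent basepoint along $\tilde\beta$ in the opposite direction are also forced to continue and cannot be redirected into $\tilde P$.

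Third, I would trace the admissible edge paths from $\tilde a_k$ and enumerate the cuff lifts reached. Any such path either
\begin{enumerate}
\item[(i)] enters $\tilde P$ immediately via the $\tilde P$-side connector stub at $\tilde a_k$, traverses a bounded number of interior switches, and exits at a basepoint on a boundary cuff; the structure of the standard train track, together with the two-way choice at the first interior switch, limits this terminal cuff to one of at most two distinct lifts on $\partial\tilde P$; or
\item[(ii)] first walks along $\tilde\alpha_k$ in the admissible direction to an adjacent basepoint and then performs an excursion of type (i) from there, reaching at most two further boundary cuff lifts (displaced by the deck transformation of $\alpha_k$).
\end{enumerate}
Once such an excursion lands on a boundary cuff, the forced continuation direction along that cuff prevents any subsequent smooth re-entry into $\tilde P$ toward a new cuff lift. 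Adding the two possibilities in (i) and the two in (ii) gives the asserted bound of at most four. Carrying out the same enumeration for the 2-cuff and 1-cuff configurations of Figure~3 gives a strictly smaller count, so the overall bound is four.

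The main obstacle is the bookkeeping in step~(ii): one has to verify that after an excursion lands on a boundary cuff, the smoothness class of the $\tilde P$-side connector at each subsequent basepoint of that cuff, together with the forced direction of the walk, genuinely rules out further distinct cuff lifts being reached. This reduces to a finite check at each cuff vertex using the consistent-smoothing convention together with the trivalent local model at the adjoining interior switches.
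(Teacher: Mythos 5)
Your preliminary reductions are fine: the paths in question stay in $\tilde P\cup\partial\tilde P$ for the single lift $\tilde P$ of the adjacent pair of pants, and your smoothness analysis at a boundary cuff (the two connector half-edges at a basepoint point in opposite directions along the cuff, so a path that reaches a boundary cuff of $\tilde P$ from inside cannot re-enter $\tilde P$) is correct and is exactly Remark \ref{rem:leave}. The gap is in the enumeration. The whole difficulty of the lemma, which the paper's proof singles out, is that some of the standard train tracks contain an essential closed loop (homotopic to a cuff of $P_n$); its lift to $\tilde P$ is an infinite path, and a priori a connector-edge path starting at $\tilde a_k$ could wind along this lifted loop and peel off at arbitrarily many translated stubs, reaching infinitely many distinct lifts of cuffs. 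The bound ``four'' comes precisely from checking how the chosen smoothings at the switches of that loop forbid all but finitely many such exits. Your step (i) simply asserts that the path ``traverses a bounded number of interior switches'' and that ``the two-way choice at the first interior switch'' limits the terminal cuff to at most two lifts; neither claim is justified, and the second is false for the loop-type tracks: Figure 4 shows four distinct lifts of cuffs reached from a single vertex by connector-edge paths alone (two of them being other lifts of the same cuff $\alpha^n_1$, reached by exiting the lifted loop at different lifts of the same switch). So the case that carries the actual content of the lemma is missing from your argument.

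Your case (ii) does not repair this and introduces its own problem. If paths that first run along $\tilde\alpha_k$ through cuff edges and then turn into $\tilde P$ were admitted, your own tangency analysis shows such a path, travelling in the admissible direction, could exit into the $\tilde P$-side connector at \emph{every} basepoint it passes (all these connectors are deck-translates with the same tangency), not only at the adjacent one; that would produce infinitely many lifts of cuffs, so restricting to ``an adjacent basepoint'' is arbitrary and the $2+2$ bookkeeping collapses. The lemma, as the paper's proof states and as it is used later (e.g.\ in Case 2 of the proof of Proposition \ref{prop:convergence_edge_path}, where the conclusion for all basepoints on $\tilde\alpha_1$ is obtained by applying the cyclic stabilizer to the finitely many lifts from one basepoint), concerns paths lifted from the standard train track inside the adjacent pair of pants, i.e.\ connector-edge paths from the fixed vertex $\tilde a_k$; your case (ii) should not occur, and the count of four must come from the analysis of winding along the lifted closed loop under the smoothing constraints, which your proposal never carries out.
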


\begin{proof}
Note that the finite edge paths that we consider are obtained by lifting of the standard train tracks in the pair of pants adjacent to $\alpha_k$ corresponding to the given side of $\tilde{\alpha}_k$. We note that some standard train tracks have closed loops and a single lift of a closed loop can connect $\tilde{\alpha}_k$ to infinitely many lifts of cuffs. By considering how different smoothing effectively restrict closed curves in the standard train tracks to correspond to more than two edge paths in the lift, we conclude that the total number is at most four (see Figure 4).
\end{proof}

\begin{figure}[h]
%\ShowGrid
\leavevmode \SetLabels
\L(.43*.4) $\tilde{\alpha}^n_1$\\
\L(.48*.9) $\tilde{\alpha}^n_2$\\
\L(.63*.78) $\tilde{\tilde{\alpha}}^n_1$\\
\L(.7*.4) $\tilde{\alpha}^n_3$\\
\L(.6*.1) $\tilde{\tilde{\tilde{\alpha}}}_n^1$\\
\endSetLabels
\begin{center}
\AffixLabels{\centerline{\epsfig{file =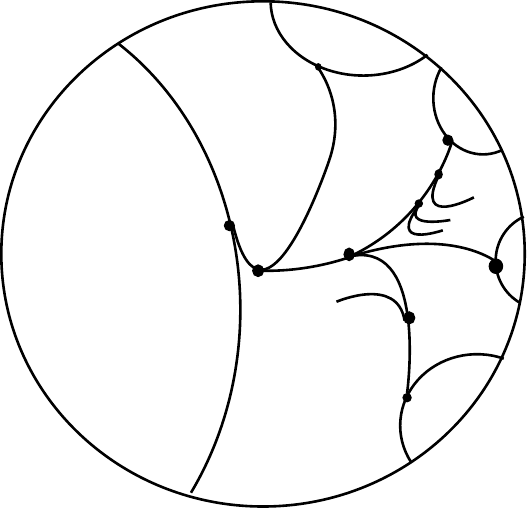,width=8.0cm,angle=0}}}
\vspace{-20pt}
\end{center}
\caption{The lift of a standard train track from upper left of Figure 2. There are four lifts of cuffs connected by finite edge paths to a single vertex of the lift of a cuff on the left side of the figure.} 
\end{figure}

\begin{remark} 
\label{rem:leave} By our choice connector edges of $\Theta$ that meet a cuff are either all left tangent or all right tangent to the cuff and the same is true for the lifted train track $\tilde{\Theta}$ and lifts of cuffs. Let $\tilde{\gamma} =(e_1,e_2,\ldots )$ be an edge path of $\tilde{\Theta}$. We orient edges $e_i$ such that the end point of $e_i$ is the initial point of $e_{i+1}$.  Assume that the connector edge $e_1$ starts on a lift $\tilde{\alpha}_1$ of a cuff and let $e_i$ be
the next connector edge of $\tilde{\gamma}$ that intersects a lift of a cuff
$\tilde{\alpha}_2$. The lifts of cuffs $\tilde{\alpha}_1$ and $\tilde{\alpha}_2$ are on the boundary of a single component $\tilde{P}$ of the lift of a pair of pants $P$ from the fixed pants decomposition of $X$. Then either all other edges of $\tilde{\gamma}$ after $e_i$  remain on $\tilde{\alpha}_2$ or, an at most finitely many edges $(e_i,\ldots ,e_k)$ of $\tilde{\gamma}$ remain on $\tilde{\alpha}_2$ and the edge $e_{k+1}$ leaves $\tilde{P}$. This follows because an edge path in $\tilde{\Theta}$ cannot come in at a lift of a cuff and leave it on the same side of the lift of a cuff since connector edges meeting $\tilde{\alpha}_2$  are either all left tangent or all right tangent to $\tilde{\alpha}_2$. In other words, an edge path $\tilde{\gamma}$ cannot ``bounce off'' from one boundary of $\tilde{P}$ back inside $\tilde{P}$ to meet another boundary component. 
\end{remark}

Let $\tilde{\gamma} =(e_1,e_2,e_3,\ldots , e_n,\ldots )$ be an infinite edge path of the train track $\tilde{\Theta}$. We will say that $\tilde{\gamma}$ {\it crosses} a lift $\tilde{\alpha}_k$ of a cuff $\alpha_k$ of the fixed pants decomposition $\{ P_n\}$ if $\tilde{\gamma}$ has edges in both complementary half-planes of $\tilde{\alpha}_k$ in $\tilde{X}$. We prove the following technical lemma for the later use.

\begin{lemma}
\label{lem:crossing}
Let $\tilde{\gamma}$ be an infinite edge path on the train track $\tilde{\Theta}$ such that no tail of $\tilde{\gamma}$ lies on a single lift $\tilde{\alpha}_k$ of a cuff $\alpha_k$. Then $\tilde{\gamma}$ crosses infinitely many lifts of cuffs and if $\tilde{\gamma}$ crosses $\tilde{\alpha}_k$ it cannot return to it later.
\end{lemma}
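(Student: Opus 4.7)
My plan is to establish the two assertions of the lemma in succession, with Remark~\ref{rem:leave} as the central input in both steps.

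For the first assertion, I would argue by contradiction. If $\tilde{\gamma}$ crosses only finitely many cuff lifts, then some tail $\tilde{\gamma}'$ crosses no cuff lift at all, and thus is confined to the closure $\overline{\tilde{P}}$ of a single lifted pair of pants (the components of $\tilde{X}\setminus\bigcup_k\tilde{\alpha}_k$ are the interiors of the lifted pants). The hypothesis on $\tilde{\gamma}$ rules out a tail consisting entirely of cuff edges (consecutive cuff edges share a basepoint and so live on a common cuff lift), so $\tilde{\gamma}'$ contains a connector edge. Applying Remark~\ref{rem:leave} with this connector edge playing the role of $e_1$ forces either all later edges of $\tilde{\gamma}'$ to eventually remain on a single cuff lift of $\partial\tilde{P}$ (contradicting the no-single-cuff-tail hypothesis) or the path to exit $\tilde{P}$ by crossing a cuff lift (contradicting the assumption that $\tilde{\gamma}'$ crosses none).

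For the second assertion, the key structural observation is that the lifted pants decomposition of $\tilde{X}$ has a tree as its dual graph (vertices are the lifted pants, edges are the cuff lifts; it is a tree because $\tilde{X}$ is simply connected). The sequence of pants traversed by $\tilde{\gamma}$ defines a walk on this tree, and $\tilde{\gamma}$ crosses a cuff lift twice exactly when the walk traverses the corresponding dual edge twice. Since the dual graph is a tree, a walk repeats an edge only if it backtracks at some point. Hence it suffices to show the walk is non-backtracking: whenever $\tilde{\gamma}$ enters a pants $\tilde{P}$ through a cuff lift $\tilde{\alpha}$, it must exit through a different cuff lift $\tilde{\alpha}_2\neq\tilde{\alpha}$ of $\partial\tilde{P}$. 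Remark~\ref{rem:leave} already tells us that $\tilde{\gamma}$ exits $\tilde{P}$ through the first cuff lift it meets along a connector edge after entering, so the remaining task is to show this cuff lift $\tilde{\alpha}_2$ is not $\tilde{\alpha}$ itself. I would verify this by inspecting each of the standard train tracks in Figures 2 and 3: for configurations whose connector edges join basepoints on two different cuffs of $P$, their lifts join basepoints on two different cuff lifts, giving $\tilde{\alpha}_2\neq\tilde{\alpha}$ directly; and for configurations containing a loop around a cusp of $P$, the deck transformation representing such a loop lies outside the cyclic stabilizer of any cuff lift, so the loop's lift is an arc with endpoints on two distinct cuff lifts.

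The main obstacle is this last non-backtracking claim: Remark~\ref{rem:leave} by itself does not rule out $\tilde{\alpha}_2=\tilde{\alpha}$, and ruling this out requires a short inspection of the standard configurations. Once non-backtracking is in hand, the second assertion is immediate, and combined with the first assertion it also promotes ``infinitely many cuff-lift crossings'' to ``infinitely many \emph{distinct} cuff lifts crossed.''
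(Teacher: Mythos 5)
Your argument is correct in substance and follows the same skeleton as the paper's proof: both assertions rest on Remark \ref{rem:leave}, and your dual-tree, non-backtracking formulation of the second assertion is the paper's observation that the cuff lifts met by the path form a nested, separating sequence, merely phrased combinatorially. Two points deserve attention. First, the order of the two assertions matters: your extraction, in the first assertion, of a tail that crosses no cuff lift from the hypothesis that only finitely many cuff lifts are crossed tacitly uses the no-return property, since without it the path could a priori have edges on both sides of one fixed cuff lift cofinally (crossing it back and forth), in which case no tail is crossing-free. As your proof of non-backtracking does not use the first assertion, this is repaired by simply proving the second assertion first, which is what the paper does. Second, your explicit verification that the path cannot exit a lifted pair of pants through the same cuff lift it entered is precisely the detail the paper leaves implicit in Remark \ref{rem:leave}, and spelling it out is a genuine improvement; note, however, that the inspection must also cover connector paths that return to the same cuff of $P$ while encircling another cuff (these occur in three of the four standard tracks on a three-cuffed pair of pants in Figure 2), not only loops around cusps. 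Your stabilizer argument applies verbatim there: such a path represents an element of $\pi_1(P)\subset\pi_1(X)$ that is not a power of the cuff class, hence lies outside the cyclic stabilizer of the cuff lift, so its lift terminates on a different lift of that cuff.
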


\begin{proof}
We first prove the second claim.
Orient each edge $e_n$ of $\tilde{\gamma}$ such that its end point coincide with the initial point of $e_{n+1}$.  Let $e_{n_0}\in \tilde{\gamma}$ be a connector edge with the initial point on a lift $\tilde{\alpha}_k$ of a cuff $\alpha_k$ (necessarily $e_{n_0}\nsubseteq\tilde{\alpha}_k$) and assume on the contrary that   $\tilde{\gamma}$ meets $\tilde{\alpha}_k$ in an another connector edge $e_{n_1}$ for some $n_1>n_0$. Consider the sequence $\tilde{\alpha}_k,\tilde{\alpha}_{k+1},\ldots \tilde{\alpha}_l$ of lifts of cuffs that have points in common with the edge path $(e_{n_0},e_{n_0+1},\ldots ,e_{n_1})$. By Remark \ref{rem:leave} each $\tilde{\alpha}_{j+1}$ separates $\tilde{\alpha}_j$ and $\tilde{\alpha}_{j+2}$ and thus the edge path $(e_{n_0},e_{n_0+1},\ldots ,e_{n_1})$ cannot return to $\tilde{\alpha}_k$ which proves the second claim.

We prove the first claim next. The edge path $\tilde{\gamma}$ does not cross infinitely many lifts of cuffs if and only if its tail remains in the closure of a single component $\tilde{P}_n$ of the lift of a pair of pants $P_n$. If the tail stays on a lift of the same cuff then the assumptions in the lemma are violated. Therefore we can assume that the tail has edges on the infinitely many lifts of cuffs which are on the boundary of $\tilde{P}_n$. Thus the tail contains a finite path of connector  edges $\{ e_i,e_{i+1},\ldots ,e_j\}$ connecting a lift $\tilde{\alpha}_k$ of a cuff to the lift $\tilde{\alpha}_{k+1}$ of a cuff.
By Remark \ref{rem:leave} the tail either stays forever on $\tilde{\alpha}_{k+1}$ or it leaves $\tilde{P}_n$ through $\tilde{\alpha}_{k+1}$ which is a contradiction. Thus the first claim follows.
\end{proof}

The above proof gives more detailed information on the nested sequence of lifts of cuffs that are crossed by an infinite edge path of $\tilde{\Theta}$ which we state as a separate lemma.

\begin{lemma}
\label{lem:nested}
Let $\tilde{\gamma}=(e_1,e_2,e_3,\ldots ,e_n,\ldots )$ be an infinite edge path in $\tilde{\Theta}$ and let $\{\tilde{\alpha}_k\}_{k=1}^{\infty}$ be the nested sequence of lifts of cuffs that are crossed by $\tilde{\gamma}$ in the given order. Each consecutive pair $(\tilde{\alpha}_k,\tilde{\alpha}_{k+1})$ of lifts of cuffs is on the boundary of a single component of a  lift of a pair of pants. Moreover $\tilde{\alpha}_k$ is connected to $\tilde{\alpha}_{k+1}$  by a unique finite edge path of $\tilde{\gamma}$ which consists of only connector edges that are lifts of a standard train track in a single pair of pants.
\end{lemma}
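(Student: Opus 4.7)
The plan is to refine the argument already used in the proof of Lemma \ref{lem:crossing} so as to pin down exactly where the pieces of $\tilde{\gamma}$ between consecutive crossings live. For a fixed index $k$, I would first single out the relevant sub-edge path of $\tilde{\gamma}$ by letting $v_k$ be the last vertex of $\tilde{\gamma}$ lying on $\tilde{\alpha}_k$, and $v_{k+1}$ be the first vertex of $\tilde{\gamma}$ occurring after $v_k$ that lies on $\tilde{\alpha}_{k+1}$. The vertex $v_k$ is well-defined because Lemma \ref{lem:crossing} guarantees that $\tilde{\gamma}$ does not return to $\tilde{\alpha}_k$ once it has crossed it, and $v_{k+1}$ is well-defined because $\tilde{\gamma}$ does cross $\tilde{\alpha}_{k+1}$ by definition of the nested sequence. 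The candidate finite edge path is the subpath of $\tilde{\gamma}$ from $v_k$ to $v_{k+1}$, and uniqueness is then automatic since this is a subpath of the given path $\tilde{\gamma}$.

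Next I would identify the lift of a pair of pants containing the subpath. The edge of $\tilde{\gamma}$ immediately following $v_k$ cannot be a cuff edge on $\tilde{\alpha}_k$, since $v_k$ is the last vertex on $\tilde{\alpha}_k$ met by $\tilde{\gamma}$; therefore this edge is a connector edge and enters the interior of a unique lift $\tilde{P}_n$ of a pair of pants adjacent to $\tilde{\alpha}_k$. Applying Remark \ref{rem:leave} to $\tilde{P}_n$: whenever $\tilde{\gamma}$ reaches a boundary lift of a cuff of $\tilde{P}_n$ it either remains on that lift forever (impossible here because the subpath terminates at $v_{k+1}$ in finitely many edges) or, after at most finitely many cuff edges on that cuff, leaves $\tilde{P}_n$ by crossing it. Consequently the first boundary lift of a cuff of $\tilde{P}_n$ that the subpath reaches after $v_k$ is actually crossed by $\tilde{\gamma}$, so by the definition of the nested sequence it must be $\tilde{\alpha}_{k+1}$. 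In particular $\tilde{\alpha}_k$ and $\tilde{\alpha}_{k+1}$ both lie on $\partial \tilde{P}_n$.

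Finally I would rule out cuff edges inside the subpath. Any edge of $\tilde{\Theta}$ contained in $\overline{\tilde{P}_n}$ is either a connector edge coming from a lift of the standard train track in $P_n$ or a cuff edge on one of the three boundary lifts $\tilde{\alpha}_k$, $\tilde{\alpha}_{k+1}$, and a third one $\tilde{\alpha}'$ of $\tilde{P}_n$. A cuff edge on $\tilde{\alpha}_k$ after $v_k$ is excluded by the choice of $v_k$; a cuff edge on $\tilde{\alpha}_{k+1}$ before $v_{k+1}$ is excluded by the choice of $v_{k+1}$; and a cuff edge on $\tilde{\alpha}'$ would force $\tilde{\gamma}$ to reach $\tilde{\alpha}'$ before reaching $\tilde{\alpha}_{k+1}$, and then by the preceding paragraph $\tilde{\gamma}$ would cross $\tilde{\alpha}'$, inserting it into the nested sequence in position $k+1$ instead of $\tilde{\alpha}_{k+1}$, a contradiction. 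Hence every edge of the subpath is a connector edge lifted from the standard train track in the single pair of pants $P_n$, as claimed. I do not expect any serious obstacle: the main care needed is the bookkeeping around the endpoints $v_k$ and $v_{k+1}$ to exclude incidental cuff edges, since all the substantive geometric input is already supplied by Remark \ref{rem:leave} and Lemma \ref{lem:crossing}.
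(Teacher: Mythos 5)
Your argument is correct and is essentially the paper's: the paper offers no separate proof, stating the lemma as a direct byproduct of the proof of Lemma \ref{lem:crossing} together with Remark \ref{rem:leave}, which are exactly the two ingredients you use. Your write-up merely makes explicit the bookkeeping (the vertices $v_k$, $v_{k+1}$ and the exclusion of intermediate cuff edges) that the paper leaves implicit.
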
 

We prove that two lifts of cuffs in $\tilde{X}$ (which need not be adjacent) are connected by an at most one edge path of $\tilde{\Theta}$. We will say that a sequence of lifts $\{\tilde{\alpha}_j\}_j$ is {\it nested} if for each $j_0$ a half-plane with $\tilde{\alpha}_{j_0}$ on its boundary contains $\tilde{\alpha}_j$ for all $j>j_0$.

\begin{lemma}
\label{lem:unique_connection}
Let $\tilde{\Theta}$ be the lift to the universal cover $\tilde{X}$ of the above constructed pants train track $\Theta$. Then for any two different lifts $\tilde{\alpha}_1$ and $\tilde{\alpha}_2$ of cuffs there is an at most one finite edge path of $\tilde{\Theta}$ that connects them.
\end{lemma}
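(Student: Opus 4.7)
The plan is to argue by contradiction, using the Jordan curve theorem in the simply connected universal cover $\tilde X$ together with a combinatorial cusp count on the resulting disk.

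Assuming $\tilde\gamma_1$ and $\tilde\gamma_2$ are two distinct finite edge paths of $\tilde\Theta$ connecting $\tilde\alpha_1$ and $\tilde\alpha_2$, with initial vertices $v_i \in \tilde\alpha_1$ and terminal vertices $u_i \in \tilde\alpha_2$, I would first close them into a loop. Let $\sigma$ be the cuff-edge subpath of $\tilde\alpha_1$ from $v_2$ to $v_1$ and $\tau$ the cuff-edge subpath of $\tilde\alpha_2$ from $u_1$ to $u_2$. Then $\Gamma := \tilde\gamma_1 \cdot \tau \cdot \tilde\gamma_2^{-1} \cdot \sigma$ is a nontrivial closed edge loop in $\tilde\Theta \subset \tilde X$. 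Since $\tilde X$ is simply connected, $\Gamma$ bounds a closed topological disk $D\subset\tilde X$. Because every lifted punctured monogon contains an ideal point on $\partial_\infty \tilde X$, the compact disk $D$ is a finite union of $k\ge 1$ triangular complementary regions of $\tilde\Theta$.

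The main step is a combinatorial Gauss--Bonnet count on the cell decomposition of $D$. Let $V_\partial, V_\circ$ and $E_\partial, E_\circ$ denote the numbers of vertices and edges of $\tilde\Theta$ on $\partial D$ and in the interior of $D$, respectively. The identities $V_\partial = E_\partial$ (cycle condition on $\Gamma$), $3k = E_\partial + 2E_\circ$ (each triangle has three edge-incidences), and $\chi(D)=1$ combine to give $2V_\circ + V_\partial - k = 2$. Counting triangle-corners, each triangle has corners at three \emph{distinct} vertices of $\tilde\Theta$; at each interior vertex of $\tilde\Theta$ the number of triangle-corners is at most $4$ (attained at a cuff vertex where six edges meet in two tangent groups, giving four cuspy wedges), and at each vertex $v\in V_\partial$ the inside-$D$ wedge is a single wedge contributing at most one triangle-corner. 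Since each of the four subpaths $\tilde\gamma_1, \tilde\gamma_2, \sigma, \tau$ is smooth by construction, $\partial D$ can be cuspy (from inside $D$) only at the four junctions of $\Gamma$. Combining $3k \le 4V_\circ + V_\partial$ with the Euler relation forces $k + V_\partial \le 4$.

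Finally I would dispatch the small cases. The lifted train track $\tilde\Theta$ has no self-loops (cuffs lift to bi-infinite geodesics) and no double edges (distinct edges of $\Theta$ lift to distinct edges), which rules out the cases $V_\partial\le 2$; the cases $V_\partial=1$ with $k=3$ and $V_\partial=0$ with $k=4$ are likewise excluded. The only remaining possibility is $k=1, V_\partial=3$, where $\Gamma$ bounds a single triangular complementary region. Unpacking the definition $\Gamma = \tilde\gamma_1\cdot\tau\cdot\tilde\gamma_2^{-1}\cdot\sigma$, at least two of the three vertices of $\Gamma$ must lie on one of the cuff lifts $\tilde\alpha_1, \tilde\alpha_2$, and these would appear as two corners of the bounding triangular region. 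This contradicts the fact that every triangular complementary region of $\tilde\Theta$ has its three corners on three distinct cuff lifts, a property of the standard Dehn--Thurston train track within a single pair of pants that is preserved under lifting to $\tilde X$. The resulting contradiction proves the lemma. The hardest part will be to justify the local cusp-count bound $t_v\le 4$ at every interior vertex, which requires a careful case-by-case analysis of the wedges at each type of switch using the ``both left or both right tangent'' smoothing constraint, and to verify that a cuspy inside-$D$ wedge at a boundary vertex corresponds precisely to a non-smooth junction in $\Gamma$.
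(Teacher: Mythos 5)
Your overall strategy (close the two paths into a loop, fill the disk with complementary regions, and run an Euler-characteristic/index count) is in the same spirit as the index argument the paper uses, but as written it has several genuine gaps. First, the central identity $3k=E_\partial+2E_\circ$ treats each complementary ``triangle'' as a combinatorial $3$-gon, which it is not: a triangle here is a Jordan domain with exactly three \emph{cusps}, but its boundary traverses arbitrarily many branches of $\tilde{\Theta}$ (including entire cuff edges) and passes smoothly through many switches, so the number of edge-incidences per face is unbounded. Consequently the derived relation $2V_\circ+V_\partial-k=2$, and everything after it, is unsound. The companion bound ``at most one triangle-corner at each boundary vertex'' also fails: at a smooth boundary switch, branches of $\tilde{\Theta}$ can leave $\partial D$ into the interior of $D$ tangentially (e.g.\ two connector ends on the $D$-side of a cuff vertex), producing two cusp sectors inside $D$ at that vertex. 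Second, even a corrected Poincar\'e--Hopf count cannot by itself give the contradiction you want: the loop $\Gamma$ has at most four cusps, and a four-cusped disk \emph{can} be tiled by two trigons, so the count only bounds $k$; the contradiction must come from structural properties of the pants track. The structural fact you invoke in the final step --- that every complementary trigon has its three cusps on three distinct cuff lifts --- is unproved and false for general standard tracks: apart from the track with three pairwise connectors, the standard tracks have up to three switches in the interior of a pair of pants, and there are complementary trigons with cusps at these interior switches (and trigons with two cusps over the same cuff). Finally, the Jordan-curve step assumes $\Gamma$ is embedded, whereas $\tilde{\gamma}_1$ and $\tilde{\gamma}_2$ could share edges or cross, so an innermost-subloop reduction is needed before any filling argument.

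For comparison, the paper avoids the global count altogether: it first proves uniqueness for paths of \emph{connector} edges joining two cuff lifts inside a single lifted pair of pants, where the two paths plus cuff arcs bound a two-cusped Jordan domain tiled by finitely many trigons and the standard Poincar\'e--Hopf argument applies directly; it then globalizes using the no-backtracking/nesting structure (Remark \ref{rem:leave}, Lemma \ref{lem:nested}), which forces two edge paths joining $\tilde{\alpha}_1$ and $\tilde{\alpha}_2$ to cross the same nested family of cuff lifts, hence to have the same connector subpaths, with the cuff-edge portions then determined. If you want to keep your global approach you would need to redo the count with the correct cusped index (faces contribute $1-\tfrac{3}{2}$, not a naive edge count), handle boundary switches with interior branches, and then rule out the residual one- and two-trigon configurations using exactly the kind of pants-by-pants structure the paper exploits --- at which point you have essentially reproduced the paper's reduction.
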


\begin{proof}
Any two lifts of cuffs on a single component of a lift of a pair of pants are connected by an at most one path of connector edges. Indeed, if they are connected by two  paths of connector edges then these two paths together with two subarcs of the lifts of cuffs form a Jordan domain with smooth boundary except at two points with zero angle. This domain is union of the complementary triangles of $\tilde{\Theta}$. A standard application of Poincar\'e-Hopf theorem gives a contradiction (see \cite[Page 24]{Bonahon-book} or \cite[Page 7]{PennerHarer}). Therefore an at most one path of connector edges connects two lifts of cuffs in a single component of the lift of a pair of pants. 

Assume that lifts of cuffs $\tilde{\alpha}_1$ and $\tilde{\alpha}_2$ are connected by two finite edge paths $\tilde{\gamma}_1$ and $\tilde{\gamma}_2$ in $\tilde{\Theta}$. The nested families of lifts of cuffs that $\tilde{\gamma}_1$ and $\tilde{\gamma}_2$ are crossing are identical. Indeed, if they are not identical then one edge path would have two different subpaths connecting two lifts of cuffs on a single component of the lift of a pair of pants which is impossible by the above paragraph.

Thus the connector edges of $\tilde{\gamma}_1$ and $\tilde{\gamma}_2$ are identical. The   cuff edges of $\tilde{\gamma}_1$ and $\tilde{\gamma}_2$ are determined uniquely by the connector edges and therefore $\tilde{\gamma}_1=\tilde{\gamma}_2$.
\end{proof}

We prove that each infinite edge path on $\tilde{\Theta}$ accumulates to a unique point on the boundary $\partial_{\infty} \tilde{X}$ of $\tilde{X}$ which is the key result for encoding simple geodesics using bi-infinite edge paths.

\begin{proposition}
\label{prop:infinite_edge_paths}
An infinite edge path $\tilde{\gamma}=(e_1,e_2,\ldots ,e_n,\ldots )$ in $\tilde{\Theta}$ has a unique accumulation point on $\partial_{\infty}\tilde{X}$. Moreover two infinite edge paths $\tilde{\gamma}$ and $\tilde{\gamma}_1$ in $\tilde{\Theta}$ have the same accumulation point if and only if the edge paths have the same tails up to renumbering. 
Thus a  bi-infinite edge path $\tilde{\gamma}=(\ldots ,e_{-n},\ldots ,e_{-1},e_0,e_1,\ldots ,e_n,\ldots )$ has two distinct accumulation points on $\partial_{\infty}\tilde{X}$. 
\end{proposition}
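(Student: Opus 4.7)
The plan is to prove the three parts of the proposition in order: (a) an infinite edge path has a unique accumulation point on $\partial_\infty \tilde X$; (b) two infinite edge paths have the same accumulation point if and only if they share a common tail up to renumbering; (c) a bi-infinite edge path has two distinct accumulation points.

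For (a), I would dichotomize on whether some tail of $\tilde\gamma$ lies on a single lift of a cuff. If so, smoothness at each lift of the basepoint $a_k$ forces monotone traversal of the cuff lift, and the tail converges to precisely one of its two endpoints on $\partial_\infty \tilde X$. Otherwise, Lemma \ref{lem:crossing} and Lemma \ref{lem:nested} produce an infinite nested sequence $\tilde\alpha_1, \tilde\alpha_2, \ldots$ of cuff lifts crossed by $\tilde\gamma$, whose corresponding tail-side arcs $I_j \subset \partial_\infty \tilde X$ satisfy $I_1 \supset I_2 \supset \cdots$. Every accumulation point of $\tilde\gamma$ lies in $\bigcap_j I_j$, so uniqueness reduces to showing this intersection is a single point.

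The main obstacle is ruling out a non-degenerate intersection $\bigcap_j I_j = [p, q]$ with $p \neq q$. I would argue by contradiction: in that case the endpoints of $\tilde\alpha_j$ would converge to $p$ and $q$ respectively, so $\tilde\alpha_j$ would converge to the geodesic $g_{pq}$ in $G(\tilde X)$. Fixing a basepoint $x_0 \in \tilde X$, the feet of perpendicular from $x_0$ to $\tilde\alpha_j$ stay in a bounded compact $K \subset \tilde X$ because $d(x_0, \tilde\alpha_j) \to d(x_0, g_{pq}) < \infty$. Projecting to $X$ and using local finiteness of the pants decomposition, their images must lie on a single cuff $\alpha$ for infinitely many $j$ after extracting a subsequence; proper discontinuity of the $\pi_1(X)$-action on $\tilde X$ then allows only finitely many distinct lifts of $\alpha$ to meet $K$, contradicting the distinctness of infinitely many $\tilde\alpha_j$. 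Hence $\bigcap_j I_j = \{p\}$ and (a) follows.

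For (b), suppose $\tilde\gamma_1, \tilde\gamma_2$ both accumulate at $p$. Any cuff lift whose tail-side arc is small enough to contain $p$ while excluding the starting edges of both paths must be crossed by each, so the nested sequences of cuff lifts crossed by the two paths eventually coincide; Lemma \ref{lem:unique_connection} then gives uniqueness of the finite edge path between any two consecutive crossed cuff lifts, so the tails agree up to renumbering. For (c), applying (a) to the forward and reversed-backward tails of a bi-infinite path yields accumulation points $p^+, p^-$. If $p^+ = p^-$, applying (b) forces the two tails to share $1$-cells eventually; the resulting palindromic identification of vertices either creates a forbidden U-turn at a center vertex (incompatible with train-track smoothness) or confines the tail to a bounded subset of $\tilde X$ (incompatible with accumulation at $\partial_\infty \tilde X$). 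Hence $p^+ \neq p^-$.
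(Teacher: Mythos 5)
Your overall route is essentially the paper's: split according to whether a tail eventually lies on a single lift of a cuff, use Lemmas \ref{lem:crossing} and \ref{lem:nested} to produce the nested sequence of crossed cuff lifts with nested ideal arcs, use Lemma \ref{lem:unique_connection} to upgrade ``same accumulation point'' to ``same tail'', and treat a bi-infinite path by splitting it into two halves. The one place you genuinely add content is the claim that $\bigcap_j I_j$ is a single point: the paper simply asserts that a nested sequence of lifts of cuffs accumulates to a single boundary point (citing $\mathcal{C}(\Lambda(\pi_1(X)))=\tilde{X}$ and local finiteness), whereas you prove it: if the intersection were a nondegenerate arc $[p,q]$, the cuff lifts $\tilde{\alpha}_j$ would converge to the geodesic with endpoints $p,q$ in $G(\tilde{X})$, so infinitely many distinct lifts of cuffs would meet a fixed compact set, contradicting local finiteness of the pants decomposition combined with the fact that only finitely many lifts of a given closed cuff meet a compact set. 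That argument is correct and makes explicit a step the paper leaves to the reader.

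Two caveats. In part (b) your separation argument tacitly assumes that the common accumulation point $p$ lies in the interior of arbitrarily small tail-side arcs of cuff lifts. This fails exactly when $p$ is an ideal endpoint of a lift $\tilde{\alpha}$ of a cuff: distinct cuff lifts are disjoint and cannot share ideal endpoints, so no cuff lift has a small arc containing $p$ while excluding the other endpoint of $\tilde{\alpha}$; in that situation ``the nested sequences of crossed cuff lifts eventually coincide'' carries no information. This is the paper's ``former case'', where one argues instead that both paths must eventually lie on $\tilde{\alpha}$ itself (a path accumulating at $p$ while crossing infinitely many cuff lifts would have to cross one separating $p$ from the other endpoint of $\tilde{\alpha}$, and no such cuff lift exists), after which the tails are the same sequence of cuff edges of $\tilde{\alpha}$; you should add this case. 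In part (c) your ``palindromic identification / U-turn at a center vertex or bounded tail'' dichotomy is looser than needed, since the two tails may only agree after an arbitrary finite middle segment, so there is no forced center vertex. The clean argument is structural: if the forward and backward tails agreed, the common tail either crosses some lift of a cuff, so the bi-infinite path would cross it and later return to it, contradicting Lemma \ref{lem:crossing}, or it lies on a single cuff lift, so the path would have to reverse along it or leave and re-enter it on the same side, contradicting smoothness and Remark \ref{rem:leave}. (The paper is also terse here, asserting only that the two halves have different tails, so this is a polish rather than a divergence from its method.)
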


\begin{proof}
Consider an infinite edge path $\tilde{\gamma}=(e_1,e_2,e_3,\ldots ,e_n,\ldots )$ in $\tilde{\Theta}$ and we orient each edge $e_n$ such that its endpoint is the initial point of $e_{n+1}$. If there is $n_0$ such that all edges $e_n$ for $n\geq n_0$ belong to a single lift $\tilde{\alpha}_k$ of a cuff $\alpha_k$ then the unique accumulation point of $\tilde{\gamma}$ is the appropriate endpoint of $\tilde{\alpha}_k$.

 A single lift $\tilde{\alpha}_k$ of a cuff $\alpha_k$ of the fixed pants decomposition of $X$ is a bi-infinite geodesic in $\tilde{X}$ and it divides $\tilde{X}$ into two hyperbolic half-planes. Since $\mathcal{C}(\Lambda (\pi_1(X)))=\tilde{X}$ and the fixed geodesic pants decomposition is locally finite, each nested sequence of lifts of cuffs accumulates to a single point on $\partial_{\infty}\tilde{X}$. 
 
By Lemma \ref{lem:crossing} an infinite edge path $\tilde{\gamma}$ whose tail does not lie on a single lift of a cuff intersects a nested sequence of lifts $\tilde{\alpha}_j$ of the cuffs without backtracking and therefore it accumulates to a single point on $\partial_{\infty}\tilde{X}$.  This proves the first statement. 
 
We prove the second statement in the proposition. If $\tilde{\gamma}$ and $\tilde{\gamma}_1$ have the same tails then they converge to the same point. 

Conversely assume that $\tilde{\gamma}$ and $\tilde{\gamma}_1$ accumulate to the same point  $x\in\partial_{\infty}\tilde{X}$. The point $x$ is either an endpoint of a lift of a cuff or the accumulation point of a nested sequence of lifts of cuffs. In the former case both $\tilde{\gamma}$ and $\tilde{\gamma}_1$ must eventually lie on the lift of the cuff because otherwise the separation and no backtracking  properties from Lemmas \ref{lem:crossing} and \ref{lem:unique_connection} would not allow this convergence. Thus $\tilde{\gamma}$ and $\tilde{\gamma}_1$ agree on their tails. 

Assume now that $x$ is the accumulation of a nested sequence of consecutive lifts $\{\tilde{\alpha}_k\}_{k=1}^{\infty}$ of cuffs. By Lemma \ref{lem:crossing}, $\tilde{\gamma}$ and $\tilde{\gamma}_1$ intersect the family $\{\tilde{\alpha}_k\}_{k=k_0}^{\infty}$ for some $k_0\geq 0$ because they have the same endpoint.
Then Lemma \ref{lem:unique_connection} implies that $\tilde{\gamma}$ and $\tilde{\gamma}_1$ agree on their tails. This finishes the proof of the second statement.

 Consider a bi-infinite edge path $\tilde{\gamma}=(\ldots ,e_{-n},\ldots ,e_{-1},e_0,e_1,\ldots ,e_n,\ldots )$ in $\tilde{\Theta}$. Divide it into two infinite edge paths $\tilde{\gamma}_1=(\ldots ,e_{-n},\ldots ,e_{-1},e_0)$ and $\tilde{\gamma}_2=(e_0,e_1,\ldots ,e_n,\ldots )$. The two paths have different tails and therefore they converge to different points on $\partial_{\infty}\tilde{X}$.
\end{proof}

Given a bi-infinite edge path $\tilde{\gamma}$ of $\tilde{\Theta}$ we denote by $G(\tilde{\gamma})$ the geodesic of $\tilde{X}$ whose endpoints on $\partial_{\infty}\tilde{X}$ are the two accumulation points of $\tilde{\gamma}$. We will say that a geodesic $\tilde{g}$ of $\tilde{X}$ is {\it weakly carried} by $\tilde{\Theta}$ if there exists a bi-infinite edge path $\tilde{\gamma}$ in $\tilde{\Theta}$ such that $G(\tilde{\gamma})=\tilde{g}$. 

\begin{proposition}
\label{prop:geodesics-edge_paths}
There is a one to one correspondence between bi-infinite edge paths of $\tilde{\Theta}$ and geodesics weakly carried by $\tilde{\Theta}$.
\end{proposition}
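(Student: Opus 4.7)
Surjectivity of the map $\tilde{\gamma}\mapsto G(\tilde{\gamma})$ is built into the definition of weakly carried, so the content is injectivity. Suppose two bi-infinite edge paths $\tilde{\gamma}_1,\tilde{\gamma}_2$ of $\tilde{\Theta}$ satisfy $G(\tilde{\gamma}_1)=G(\tilde{\gamma}_2)=\tilde{g}$, with endpoints $a,b\in\partial_{\infty}\tilde{X}$. The plan is to split each $\tilde{\gamma}_i$ at a central edge into a forward half-infinite path accumulating at $b$ and a backward one accumulating at $a$, and to apply Proposition \ref{prop:infinite_edge_paths} to each pair of halves. Both pairs share common tails, so after a harmless reindexing there exist edges $e^-$ and $e^+$ lying in both $\tilde{\gamma}_1$ and $\tilde{\gamma}_2$ such that the paths agree on all edges past $e^+$ (toward $b$) and past $e^-$ (toward $a$); any disagreement is confined to the finite subpaths $P_1\subset\tilde{\gamma}_1$ and $P_2\subset\tilde{\gamma}_2$ running from $e^-$ to $e^+$.

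To finish, I would show $P_1=P_2$. Shrinking the window if needed, we may take $e^-$ to be the last common edge before a disagreement and $e^+$ the first common edge after it, so $P_1$ and $P_2$ share no interior edges. If $P_1\ne P_2$, then $P_1$ concatenated with the reverse of $P_2$ is a closed edge cycle $L$ in $\tilde{\Theta}$. The only non-smooth points of $L$ are the two branch vertices adjacent to $e^-$ and to $e^+$; at each such switch, two distinct edges attached on the same tangent side meet with zero angle, so $L$ has exactly two cusp corners and is smooth elsewhere. Since $\tilde{X}$ is simply connected, $L$ bounds a compact region $D$ whose interior is a finite union of complementary regions of $\tilde{\Theta}$, each a triangle or a lifted punctured monogon.

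Now I would run the same Poincar\'e-Hopf/Gauss-Bonnet computation as in the proof of Lemma \ref{lem:unique_connection} (see also \cite[Page 24]{Bonahon-book} and \cite[Page 7]{PennerHarer}) applied to $D$: each interior triangle and each interior lifted punctured monogon contributes a definite negative amount to the combinatorial index, while $D$ is a topological disk carrying only two zero-angle boundary corners. These two numbers cannot be reconciled, forcing $P_1=P_2$, and together with the matching tails this gives $\tilde{\gamma}_1=\tilde{\gamma}_2$ as bi-infinite edge paths (up to reindexing). The main obstacle is precisely this index computation: Lemma \ref{lem:unique_connection} treats the case where the two competing paths run between two distinct lifts of cuffs, so the bounding cycle includes two cuff subarcs carrying the two cusp corners, whereas here the branch vertices are interior to the train track and the cycle $L$ has no cuff subarcs. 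One must check that the cusp corners still sit on $L$ in the correct way and that the possible presence of interior lifted punctured monogons (contributing additional cusp points inside $D$) only strengthens the sign obstruction, so the same contradiction applies verbatim.
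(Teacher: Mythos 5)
Your first half coincides with the paper's argument: both use Proposition \ref{prop:infinite_edge_paths} to force the two bi-infinite paths to agree on both tails, so any disagreement is confined to a finite window. Where you part ways is in how that window is killed, and here the paper has a shorter route that you missed: since each common tail either eventually lies on a single lift of a cuff or crosses infinitely many lifts of cuffs (Lemma \ref{lem:crossing}), one may enlarge the finite window along the \emph{common} tails until both of its ends lie on lifts of cuffs; the two competing finite subpaths then connect the same pair of lifts of cuffs, and Lemma \ref{lem:unique_connection} (at most one finite edge path between two lifts of cuffs) forces them to coincide. No new index computation is needed.

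Your direct bigon argument can be completed, but as written it has two unjustified steps. First, the assertion that $L=P_1\cup\overline{P_2}$ ``bounds a compact region $D$'' requires $L$ to be embedded, and nothing you say rules out $P_1$ and $P_2$ meeting at interior switches without sharing edges (they cannot link, having the same ideal endpoints, but they can touch); one must pass to an innermost sub-bigon between consecutive meeting points and check that it is a disk with at most two zero-angle corners before applying the Poincar\'e--Hopf count. Second, the remark about punctured monogons is off target: a complementary punctured monogon of $\Theta$ lifts to a \emph{non-compact} component of $\tilde{X}\setminus\tilde{\Theta}$ with an ideal vertex at a parabolic fixed point, so it simply cannot sit inside the compact disk $D$; it is excluded for that reason, not because its index ``strengthens the sign obstruction.'' With these repairs the count (a disk with at most two cusps has index $\geq 0$, while each complementary triangle contributes $-1/2$) does yield the contradiction, so your route is viable, but it is longer and carries more to verify than the paper's reduction to Lemma \ref{lem:unique_connection}.
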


\begin{proof}
Assume that $G(\tilde{\gamma})=G(\tilde{\gamma}_1)=g$ for two bi-infinite edge path in $\tilde{\Theta}$. Then $\tilde{\gamma}$ and $\tilde{\gamma}_1$ have to agree on both of their tails by Proposition \ref{prop:infinite_edge_paths}. Therefore we have at most a finite subpath of $\tilde{\gamma}$ and a finite subpath of $\tilde{\gamma}_1$ that have the same end edges but might not agree on the interior edges. This is not possible by Lemma \ref{lem:unique_connection}. Thus $\tilde{\gamma}=\tilde{\gamma}_1$ after possible renumbering the sequences. 
\end{proof}

Let ${\gamma}$ be an edge path in $\Theta$ and $\tilde{\gamma}$ a single component lift of ${\gamma}$ to the universal covering. Recall that $G(\tilde{\gamma} )$ is a geodesic whose endpoints are equal to the endpoints of $\tilde{\gamma}$. For any $\kappa\in\pi_1(X)$ we have that $\kappa (G(\tilde{\gamma} ))=G(\kappa (\tilde{\gamma} ))$. Define $G({\gamma})$ to be the projection of $G(\tilde{\gamma} )$ onto $X$. We will say that a geodesic $g=G(\gamma )$ is {\it weakly carried} by $\Theta$. Let $G(\Theta )$ denote the set of all geodesics weakly carried by $\Theta$.
Proposition \ref{prop:geodesics-edge_paths} immediately gives

\begin{proposition}
\label{prop:geodesics-edge_paths_on_X}
There is a one to one correspondence between bi-infinite edge paths of ${\Theta}$ and geodesics weakly carried by ${\Theta}$.
\end{proposition}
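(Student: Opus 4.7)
The plan is to reduce Proposition 4.7 to the already-established Proposition 4.6 by carefully passing between $\Theta$ on $X$ and its lift $\tilde{\Theta}$ on $\tilde{X}$, using the $\pi_1(X)$-equivariance of the construction $\gamma\mapsto G(\gamma)$. Surjectivity of the correspondence is immediate from the definition of ``weakly carried'': by definition a geodesic $g$ is weakly carried by $\Theta$ exactly when $g=G(\gamma)$ for some bi-infinite edge path $\gamma$ in $\Theta$. So all the real content is injectivity.

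For injectivity, I would proceed as follows. Suppose $\gamma_1$ and $\gamma_2$ are bi-infinite edge paths in $\Theta$ with $G(\gamma_1)=G(\gamma_2)=g$. Choose any lift $\tilde{\gamma}_1$ of $\gamma_1$ to $\tilde{\Theta}$; then $G(\tilde{\gamma}_1)$ is a lift of $g$ in $\tilde{X}$. Next, choose a lift $\tilde{\gamma}_2$ of $\gamma_2$ whose associated geodesic $G(\tilde{\gamma}_2)$ projects to $g$ as well. Since $G(\tilde{\gamma}_1)$ and $G(\tilde{\gamma}_2)$ are two lifts of the same geodesic on $X$, there exists $\kappa\in\pi_1(X)$ with $G(\tilde{\gamma}_1)=\kappa\bigl(G(\tilde{\gamma}_2)\bigr)$. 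By the equivariance noted just before the proposition, $\kappa\bigl(G(\tilde{\gamma}_2)\bigr)=G(\kappa(\tilde{\gamma}_2))$, so
\[
G(\tilde{\gamma}_1)=G(\kappa(\tilde{\gamma}_2)).
\]
Both $\tilde{\gamma}_1$ and $\kappa(\tilde{\gamma}_2)$ are bi-infinite edge paths in $\tilde{\Theta}$ producing the same geodesic, so Proposition 4.6 forces $\tilde{\gamma}_1=\kappa(\tilde{\gamma}_2)$ up to renumbering of the index. Projecting this identity down to $\Theta$ gives $\gamma_1=\gamma_2$ up to renumbering, which is exactly the desired bijectivity statement.

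The only point that requires a moment of care is the initial matching of lifts: one must be able to pick lifts $\tilde{\gamma}_1$ and $\tilde{\gamma}_2$ whose $G$'s cover the same lift of $g$, and this is guaranteed by the transitivity of $\pi_1(X)$ on lifts of any geodesic together with the fact that translating the edge path by $\kappa$ translates the associated endpoints on $\partial_\infty\tilde{X}$ in the same way. I do not anticipate any serious obstacle here; the proof is essentially an equivariance bookkeeping exercise that converts Proposition 4.6 into its $\pi_1(X)$-quotient statement on $X$.
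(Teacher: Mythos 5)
Your argument is correct and is exactly the deduction the paper intends: the paper states Proposition \ref{prop:geodesics-edge_paths_on_X} as an immediate consequence of Proposition \ref{prop:geodesics-edge_paths}, using the equivariance $\kappa(G(\tilde{\gamma}))=G(\kappa(\tilde{\gamma}))$ noted just beforehand, which is precisely the lift-matching bookkeeping you carry out. Nothing in your write-up deviates from or adds a gap to the paper's (unwritten but implicit) proof.
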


We describe the convergence of geodesics in terms of the corresponding bi-infinite edge paths in $\tilde{\Theta}$. Denote by $G(\tilde{\Theta})$ the set of all geodesics of $\tilde{X}$ that are weakly carried by $\tilde{\Theta}$. 

\begin{proposition}
\label{prop:convergence_edge_path}
Let $g_n, g\in G(\tilde{X})$ be weakly carried by a train track $\tilde{\Theta}$. Denote by $\tilde{\gamma}_n,\tilde{\gamma}$ the corresponding bi-infinite edge paths in $\tilde{\Theta}$. Then $g_n$ converges to $g$ as $n\to\infty$ if and only if for each finite subpath $\tilde{\gamma}'$ of $\tilde{\gamma}$ there is $n_0\geq 0$ such that $\tilde{\gamma}’$ is contained in the path $\tilde{\gamma}_n$ for all $n\geq n_0$.
\end{proposition}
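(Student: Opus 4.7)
The plan is to use the fact that convergence in $G(\tilde{X})$ is convergence of endpoints in $\partial_{\infty}\tilde{X}\times \partial_{\infty}\tilde{X}-\Delta$, together with Proposition \ref{prop:infinite_edge_paths}: the endpoints of $g=G(\tilde{\gamma})$ arise either as accumulation points of nested sequences of lifts of cuffs crossed by the two tails of $\tilde{\gamma}$, or as endpoints of a single lift of a cuff on which a tail of $\tilde{\gamma}$ eventually lies. The two directions will be handled separately, and the main combinatorial tools are Lemma \ref{lem:unique_connection} (uniqueness of the edge path joining two lifts of cuffs) and Lemma \ref{lem:crossing} (no backtracking across a crossed lift of a cuff).

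For the forward direction, given a finite subpath $\tilde{\gamma}'$ of $\tilde{\gamma}$, I would enlarge it to a finite subpath $\tilde{\gamma}''\supseteq\tilde{\gamma}'$ whose first and last edges are incident to two distinct lifts of cuffs $\tilde{\beta},\tilde{\beta}'$ that $\tilde{\gamma}$ actually crosses. Each intermediate lift of a cuff $\tilde{\alpha}_{k_i}$ crossed by $\tilde{\gamma}''$ strictly separates the endpoints of $g$ in $\partial_{\infty}\tilde{X}$; strict separation is an open condition, so $g_n\to g$ implies that for $n$ large the endpoints of $g_n$ are also separated by each $\tilde{\alpha}_{k_i}$, forcing $\tilde{\gamma}_n$ to cross them in the same nested order by Lemma \ref{lem:nested}. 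In particular $\tilde{\gamma}_n$ contains a finite edge subpath joining $\tilde{\beta}$ to $\tilde{\beta}'$, which by Lemma \ref{lem:unique_connection} must coincide with $\tilde{\gamma}''$, and hence with $\tilde{\gamma}'$ as a subpath. The edge case in which $\tilde{\gamma}'$ contains cuff edges lying on a cuff $\tilde{\alpha}_k$ not crossed by $\tilde{\gamma}$ (so a tail of $\tilde{\gamma}$ sits on $\tilde{\alpha}_k$) is handled by combining this with the translate argument described for the backward direction below.

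For the backward direction, I would verify convergence of each endpoint of $g_n$ to the corresponding endpoint of $g$ separately; fix a neighborhood $U$ of the positive endpoint $a^+$ in $\partial_{\infty}\tilde{X}$. In the typical case, $a^+$ is the accumulation of a nested sequence $\tilde{\alpha}_{k_1},\tilde{\alpha}_{k_2},\ldots$ of lifts of cuffs crossed by the positive tail of $\tilde{\gamma}$; choose $J$ so that the closed arc of $\partial_{\infty}\tilde{X}$ cut off by $\tilde{\alpha}_{k_J}$ and containing $a^+$ is contained in $U$. For $n$ large, $\tilde{\gamma}_n$ contains the finite subpath of $\tilde{\gamma}$ reaching past the crossing of $\tilde{\alpha}_{k_J}$; by Lemma \ref{lem:crossing} the remainder of $\tilde{\gamma}_n$ is trapped in the closed half-plane bounded by $\tilde{\alpha}_{k_J}$ on the $a^+$-side, so the positive endpoint of $g_n$ lies in $U$.

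The main obstacle is the residual case where the positive tail of $\tilde{\gamma}$ lies eventually on a single lift $\tilde{\alpha}_k$ and $a^+$ is an endpoint of $\tilde{\alpha}_k$. Here $\tilde{\gamma}_n$ may share arbitrarily many cuff edges with $\tilde{\gamma}$ along $\tilde{\alpha}_k$ and then peel off into an adjacent lift of a pair of pants, so one must show that the resulting excursion is trapped near $a^+$. Letting $T\in\pi_1(X)$ be the hyperbolic element preserving $\tilde{\alpha}_k$ and translating toward $a^+$, observe that $T^N$ sends every non-fixed point of $\partial_{\infty}\tilde{X}$ toward $a^+$ as $N\to\infty$; combined with local finiteness of $\{P_n\}$ this implies that for $N$ large the union of ideal boundaries of all lifts of pairs of pants and lifts of cuffs accessible from the vertex $T^N(\tilde{a}_k)$ on either side of $\tilde{\alpha}_k$ lies in $U\cup\{a^+,a^-\}$. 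Applying the hypothesis to the finite subpath of $\tilde{\gamma}$ that contains the first $N$ cuff edges on $\tilde{\alpha}_k$, the tail of $\tilde{\gamma}_n$ past this subpath either stays on $\tilde{\alpha}_k$ forever (so its accumulation point is $a^+\in U$) or leaves at some $T^{N'}(\tilde{a}_k)$ with $N'\geq N$ and is then trapped by Lemma \ref{lem:crossing} in a region with ideal boundary in $U$, again producing an accumulation point in $U$. The analogous argument for the negative endpoint completes the proof that $g_n\to g$.
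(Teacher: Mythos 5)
Your ``only if'' direction is fine in the generic situation and matches the paper's Case 1: enlarge $\tilde{\gamma}'$ so that it runs between two lifts of cuffs crossed by $\tilde{\gamma}$, use that strict separation of endpoints is open to force $\tilde{\gamma}_n$ to cross the same cuffs, and invoke Lemma \ref{lem:unique_connection}. Your converse direction (which the paper does not even write out) is also a reasonable sketch: the nested-cuff case is immediate, and in the cuff-tail case the translation argument together with Lemma \ref{lem:lift_standard} and Lemma \ref{lem:crossing} does trap the endpoint of $g_n$ near $a^+$.

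The genuine gap is in the forward direction when a tail of $\tilde{\gamma}$ lies on a single lift of a cuff $\tilde{\alpha}$, i.e.\ exactly the cases the paper spends most of its proof on (its Cases 2--4). You dismiss this with one sentence, saying it is ``handled by combining this with the translate argument described for the backward direction,'' but that argument has the wrong logical shape here: it takes the containment of finite subpaths as hypothesis and deduces endpoint convergence, whereas in this case you must go the other way, from $x_n\to x$ (an endpoint of $\tilde{\alpha}$) to the conclusion that $\tilde{\gamma}_n$ eventually contains the prescribed cuff edges of $\tilde{\gamma}'$ on $\tilde{\alpha}$. That implication is not automatic and is where the real work lies: one must rule out that $x_n$ approaches $x$ from the side of the adjacent lift of pants $\tilde{P}_x^1$ carrying the other tail --- the paper does this by showing the separating cuffs $\tilde{\beta}_{n}$ would then have endpoints tending to $x$, hence (by Lemma \ref{lem:lift_standard}, since cuffs connected to $\tilde{\alpha}_1$ by connector paths accumulate only at the endpoints of $\tilde{\alpha}_1$, together with Remark \ref{rem:leave}) could not be joined to $\tilde{\alpha}_1$ by any edge path, contradicting that $g_n$ is weakly carried --- and then show that an approach from the other side forces $\tilde{\gamma}_n$ to enter $\tilde{\alpha}$ through the unique connector path and traverse a long run of cuff edges before exiting, which is what yields the containment of $\tilde{\gamma}'$. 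None of these ingredients (weak carrying of $g_n$, uniqueness of connector paths into $\tilde{\alpha}$, the no-bounce property of Remark \ref{rem:leave}) appear in your proposal, so as written the proof does not cover the cuff-tail cases, including the case $g=\tilde{\alpha}$ itself.
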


\begin{proof}
Assume $g_n\to g$ as $n\to\infty$. Each endpoint of $g$ corresponds to an infinite tail of $\tilde{\gamma}$. 
We fix a finite subpath $\tilde{\gamma}’$ of $\gamma$ and need prove that there exist $n_0\geq 0$ such that $\tilde{\gamma}_n$ contains $\tilde{\gamma}’$ for $n\geq n_0$.
The proof is divided into several cases.

\vskip .2 cm
\noindent {\it Case 1.} The first case is when both tails of $\tilde{\gamma}$ are crossing infinitely many lifts of cuffs. Denote by $\{\tilde{\alpha}_k\}_{k=-\infty}^{\infty}$ the lifts of cuffs that $\tilde{\gamma}$ intersects in the given order. Let $x\in\partial_{\infty}\tilde{X}$ be the limit of the nested sequence of cuffs $\{\tilde{\alpha}_k\}_{k=-\infty}^0$ and $y$ the limit  of the nested sequence of cuffs $\{\tilde{\alpha}_k\}_{k=0}^{\infty}$. 

We can assume without loss of generality that the finite subpath $\tilde{\gamma}'$ connects the lifts $\tilde{\alpha}_{-k_0}$ and $\tilde{\alpha}_{k_0}$ of cuffs.  Since $G(\tilde{\gamma}_n)=g_n$ converges to $G(\tilde{\gamma} )=g$ we have that the endpoints $x_n,y_n$ of $G(\tilde{\gamma}_n)$ converge to $x,y$. Thus  
there is $n_0\geq 0$ such that 
$x_n$ and $x$ are separated from $y$ by a half-plane with  boundary $\tilde{\alpha}_{-k_0}$, and points $y_n$ and $y$ are separated from $x$ by a half-plane with boundary $\tilde{\alpha}_{k_0}$ for $n\geq n_0$. Therefore the edge path $\tilde{\gamma}_n$ connects $\tilde{\alpha}_{-k_0}$ and $\tilde{\alpha}_{k_0}$ for $n\geq n_0$. This implies that $\tilde{\gamma}_n$ contains $\tilde{\gamma}'$ for $n\geq n_0$ by Lemma \ref{lem:unique_connection}.

\vskip .2 cm

\noindent {\it Case 2.}
Assume that one tail of $\tilde{\gamma}$ is on a single lift of a cuff $\tilde{\alpha}$ and the other tail crosses infinitely many lifts of cuffs. Then $g$ has one endpoint equal to an appropriate endpoint $x\in\partial_{\infty}\tilde{X}$ of $\tilde{\alpha}$ and the other endpoint $y$ of $g$ is accumulated by a  sequence of nested lifts of cuffs $\{\tilde{\alpha}_k\}_{k=1}^{\infty}$ such that  $\tilde{\alpha}_1$  and $\tilde{\alpha}$ are on the boundary of a component of the lift of a pair of pants from the decomposition. Since $g_n\to g$  endpoints $x_n,y_n\in\partial_{\infty}\tilde{X}$ of $g_n$ converge to the endpoints $x,y$ of $g$ as $n\to\infty$. 

If $x_n=x$ for $n\geq n_0\geq 0$ then the proof is similar to the Case 1. We assume now that $x_{n_k}\neq x$ for an infinite subsequence $x_{n_k}$ of $x_n$.
Denote by $\tilde{P}_x^1$ and $\tilde{P}_x^2$ the lifts of pairs of pants from the fixed pants decomposition that have $\tilde{\alpha}$ on their boundaries where $\tilde{P}_x^1$ contains $\tilde{\alpha}_1$ on its boundary and $\tilde{P}_x^2$ is on the other side of $\tilde{\alpha}$.
There are two 
possibilities: either $x_{n_k}$ contains an infinite  subsequence $x_{n_{k_j}}$ that is on the side of $\tilde{\alpha}$ that contains $\tilde{P}_x^1$ or the whole sequence $x_{n_k}$ is on the side of $\tilde{\alpha}$ that contains $\tilde{P}_x^2$. 

Assume first that $x_{n_{k_j}}$ is on the side of $\tilde{\alpha}$ that contains $\tilde{P}_x^1$. 
Since $x_{n_{k_j}}\to x$ and $x_{n_{k_j}}\neq x$
the edge path $\tilde{\gamma}_{n_{k_j}}$ intersects a boundary geodesic (a lift of a cuff) $\tilde{\beta}_{n_{k_j}}$ of  $\tilde{P}_x^1$ such that $x_{n_{k_j}}$ is separated from $x$ by the closed half-plane whose boundary in $\tilde{X}$ is $\tilde{\beta}_{n_{k_j}}$ and which does not contain $\tilde{P}_x^1$ (this includes the possibility that $x_{n_{k_j}}$ is an endpoint of $\beta_{n_{k_j}}$). 

For the convenience of the argument, we identify $\tilde{X}$ with the unit disk $\mathbb{D}$. We arrange the components of the complement of $\tilde{P}_x^1$ in the unit disk into a sequence $\{ C_l\}$ and note that their Euclidean size goes to zero as $l\to\infty$. Since $x_{n_{k_j}}\to x$ and $x_{n_{k_j}}\neq x$ it follows that $x_{n_{k_j}}\in C_{l(j)}$ such that $l(j)\to\infty$ as $j\to\infty$. Thus the Euclidean size of $\tilde{\beta}_{n_{k_j}}$ goes to zero and 
the endpoints of $\tilde{\beta}_{n_{k_j}}$ converge to $x$ as $j\to\infty$. 

Since $\tilde{\alpha}_1\neq \tilde{\alpha}$, the endpoints of $\tilde{\beta}_{n_{k_j}}$ do not converge to an endpoint of $\tilde{\alpha}_1$. By Lemma \ref{lem:lift_standard}, the set of all lifts of cuffs on the boundary of $\tilde{P}_x^1$ that are connected to $\tilde{\alpha}_1$ by finite edge paths is the orbit of finitely many lifts of cuffs under the cyclic stabilizer of $\tilde{\alpha}_1$ in the group $\pi_1(X)$. Consequently, any infinite sequence of distinct lifts of cuffs on the boundary of $\tilde{P}_x^1$  connected to $\tilde{\alpha}_1$ by finite edge paths consisting of connector edges converges to endpoints of $\tilde{\alpha}_1$.
Since the endpoints of $\tilde{\beta}_{n_{k_j}}$ do not converge to endpoints of $\tilde{\alpha}_1$, it follows that $\tilde{\beta}_{n_{k_j}}$ and $\tilde{\alpha}_1$ are not connected by a connector edge for $j$ large enough. Given that $\tilde{\beta}_{n_{k_j}}$ and $\tilde{\alpha}_1$ are on the boundary of $\tilde{P}_x^1$, Remark \ref{rem:leave} implies that no edge path in $\tilde{\Theta}$ connects them. Thus the geodesic $g_{n_{k_j}}$ for $j$ large is not weakly carried by $\tilde{\Theta}$ which is a contradiction and no subsequence $x_{n_{k_j}}$ is on the side of $\tilde{\alpha}$ that contains $\tilde{P}_x^1$.

Assume next that $x_{n_k}$ is on the same side of $\tilde{\alpha}$ as $\tilde{P}_x^2$. Let $\tilde{\beta}_{n_k}$ be the boundary side of $\tilde{P}_x^2$ different from $\tilde{\alpha}$ that $g_{n_k}$ (or equivalently the corresponding edge path $\tilde{\gamma}_{n_k}$) intersects. Then both endpoints of $\tilde{\beta}_{n_k}$ converge to $x$ by the same method as above. Since $\tilde{\gamma}_{n_k}$ enters $\tilde{P}_x^2$ through $\tilde{\alpha}$ it can leave it only through a boundary connected to $\tilde{\alpha}$ by a finite edge path of connector edges  of $\tilde{\Theta}$ by Lemma \ref{lem:nested}. Since the endpoints of $\tilde{\beta}_{n_k}$ converge to an endpoint $x$ of $\tilde{\alpha}$ it follows that the edge path $\tilde{\gamma}_{n_k}$ contains a finite edge path of $\tilde{\Theta}$ connecting $\tilde{\alpha}_1$ and $\tilde{\alpha}$ followed by a large number of cuff edges of $\tilde{\Theta}$ that lie on $\tilde{\alpha}$ and then followed by a finite edge path connecting $\tilde{\alpha}$ and $\tilde{\beta}_{n_k}$. Given a finite subpath $\tilde{\gamma}'$ of $\tilde{\gamma}$, we choose $k_0\geq 0$  such that all cuff edges of $\tilde{\gamma}'$ that are on $\tilde{\alpha}$ are contained in $\tilde{\gamma}_{n_k}$ for $k\geq k_0$. By choosing larger $k_0$ if necessary, the method in Case 1 gives that all edges on $\tilde{\gamma}'$ that are not on $\tilde{\alpha}$ are contained in $\tilde{\gamma}_{n_k}$ for $k\geq k_0$. Thus $\tilde{\gamma}_{n_k}$ contains a finite subpath of $\tilde{\gamma}'$ for $k\geq k_0$.

\vskip .2 cm

\noindent
{\it Case 3.} Assume that $\tilde{\gamma}$ consists of edges on a single lift $\tilde{\alpha}$ of a cuff. Let $x,y$ be the endpoints of $\tilde{\alpha}$ and let $x_n,y_n$ be the endpoints of $\tilde{\gamma}_n$ where $x_n\to x$ and $y_n\to y$ as $n\to\infty$.  Then the points $x_n$ and $y_n$ have to be on the opposite sides of $\tilde{\alpha}$ or we would have a contradiction similar to the Case 2. If they are on the opposite sides of $\tilde{\alpha}$ then the argument of the Case 2 shows that $\tilde{\gamma}_n$ contains a large number of cuff edges on 
 $\tilde{\alpha}$ which finishes the proof.
 
 \vskip .2 cm

\noindent 
{\it Case 4.} Assume that $\tilde{\gamma}$ has two tails in two different lifts of cuffs $\tilde{\alpha}_1$ and $\tilde{\alpha}_2$. Let $\tilde{\gamma}'$ be a finite subpath with an initial edge on $\tilde{\alpha}_1$ and a terminal edge on $\tilde{\alpha}_2$. By applying the method of Case 2 to $\tilde{\gamma}'$ we find $n_0\geq 0$ such that $\tilde{\gamma}_n$ contains the initial and terminal edges of $\tilde{\gamma}'$ for $n\geq n_0$. By the uniqueness of edge paths, $\tilde{\gamma}_n$ contains $\tilde{\gamma}'$ for $n\geq n_0$.
\end{proof}

\begin{definition}
Two bi-infinite edge paths $\tilde{\gamma} =(\ldots ,e_{-n},\ldots ,e_{-1},e_0,e_1\ldots ,e_n,\ldots )$ and $\tilde{\gamma}' =(\ldots ,e_{-n}',\ldots ,e_{-1}',e_0',e_1'\ldots ,e_n',\ldots )$ {\it cross each other} if, after possible reversing the orientation and renumbering, there exists $p<q$ such that $e_s=e_s'$ for $p< s< q$, and that  $e_p'$ and $e_q'$ lie on the opposite sides of $\tilde{\gamma}$. 
\end{definition}

It is clear that the geodesics represented by bi-infinite edge paths $\tilde{\gamma}$ and $\tilde{\gamma}'$ are intersecting if and only if $\tilde{\gamma}$ and $\tilde{\gamma}'$ cross each other. 
Then two geodesics on $X$ represented by edge paths $\gamma$ and $\gamma'$ of the train track $\Theta$  intersect each other if and only if their corresponding edge paths cross each other.
This also holds true when ${\gamma} ={\gamma}'$ and allow us to characterize simple geodesics as corresponding to bi-infinite edge paths that do not cross themselves.

 A geodesic lamination $\lambda$ of $X$ is {\it weakly carried} by $\Theta$ if every geodesic of $\lambda$ is weakly carried by $\Theta$. If $\lambda$ is weakly carried by $\Theta$ then its lift $\tilde{\lambda}$ is weakly carried by $\tilde{\Theta}$. The proposition below follows directly from previous discussions.

\begin{proposition}
\label{prop:geodesic_laminations}
The set of geodesic laminations on $X$ that are weakly carried by $\Theta$ is in a one to one correspondence with the families $\Gamma$ of bi-infinite edge paths that satisfy:
\begin{itemize}
\item Any two bi-infinite edge paths $\gamma$ and $\gamma'$ in $\Gamma$ do not cross, and
\item If $\gamma$ is a bi-infinite edge path such that for any finite edge subpath there is a bi-infinite edge path in $\Gamma$ that contains it, then $\gamma\in\Gamma$.
\end{itemize}
\end{proposition}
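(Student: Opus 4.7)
\medskip

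\noindent\textbf{Proof proposal.} The plan is to show that the two directions of the correspondence are furnished by the natural maps $\lambda\mapsto \Gamma_\lambda$ and $\Gamma\mapsto \lambda_\Gamma$, where $\Gamma_\lambda$ is the set of bi-infinite edge paths of $\Theta$ encoding the geodesics of $\lambda$ (using Proposition~\ref{prop:geodesics-edge_paths_on_X}), and $\lambda_\Gamma:=\{G(\gamma):\gamma\in\Gamma\}$. Since Proposition~\ref{prop:geodesics-edge_paths_on_X} already gives a bijection between bi-infinite edge paths of $\Theta$ and geodesics weakly carried by $\Theta$, the only work is to characterize which collections of such paths come from geodesic laminations.

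First I would verify that $\Gamma_\lambda$ satisfies the two bulleted conditions. The first condition is immediate from the remark preceding the proposition: two geodesics of $\lambda$ are disjoint, and pairwise disjointness of geodesics is equivalent to the non-crossing of the associated bi-infinite edge paths (in particular, the case $\gamma=\gamma'$ encodes simplicity of each leaf, which holds because leaves of $\lambda$ are simple). For the second condition, suppose $\gamma$ is a bi-infinite edge path each of whose finite subpaths is contained in some path of $\Gamma_\lambda$. Lifting to $\tilde\Theta$ and using Proposition~\ref{prop:convergence_edge_path}, one exhibits a sequence of geodesics $g_n\in\lambda$ whose edge paths eventually contain every finite subpath of $\tilde\gamma$, so $g_n\to G(\gamma)$; closedness of $\lambda$ in $X$ then forces $G(\gamma)\in\lambda$, i.e.\ $\gamma\in\Gamma_\lambda$.

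Conversely, given a family $\Gamma$ satisfying both conditions, I would check that $\lambda_\Gamma$ is a geodesic lamination weakly carried by $\Theta$. Each $G(\gamma)$ is a simple geodesic because the non-crossing condition applied to $\gamma=\gamma'$ precludes self-intersection, and distinct paths in $\Gamma$ yield disjoint geodesics by the same non-crossing condition applied to different pairs, giving a decomposition of $\lambda_\Gamma$ into pairwise disjoint simple complete geodesics. To see that $\lambda_\Gamma$ is closed in $X$, take a sequence $g_n\in\lambda_\Gamma$ converging to a geodesic $g$ of $X$; Proposition~\ref{prop:convergence_edge_path} says that (after lifting) the edge paths $\tilde\gamma_n$ eventually contain every finite subpath of the edge path $\tilde\gamma$ of $\tilde g$, so by the second bulleted condition (applied downstairs) $\gamma$ lies in $\Gamma$, whence $g\in\lambda_\Gamma$.

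What remains is to produce the local product structure required in Definition~\ref{def:g_lamination}. For a point $x\in\lambda_\Gamma$ lying on a leaf $g=G(\gamma)$, I would take a short geodesic arc $\tau$ transverse to $g$ at $x$ and consider $T:=\tau\cap\lambda_\Gamma$, which is closed in $\tau$ because $\lambda_\Gamma$ is closed in $X$. Using pairwise disjointness and simplicity of the leaves, together with the standard fact that a closed family of pairwise disjoint complete simple geodesics in a hyperbolic surface has no transverse self-accumulation at angles bounded away from $0$ on a small enough transversal, one obtains a neighborhood of $x$ homeomorphic to $T\times I$ with the leaves corresponding to the $I$-factors. This step, transferring the combinatorial no-crossing information into a topological product chart, is the only point that does not follow purely formally from the preceding propositions; the main obstacle is to control transversals so that the flow-box structure is genuinely a homeomorphism with $T$ embedding in a compact geodesic arc, and for this I would invoke the regularity of small hyperbolic disks used already in the proof of Proposition~\ref{prop:support-gl}. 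Finally, the maps $\lambda\mapsto\Gamma_\lambda$ and $\Gamma\mapsto\lambda_\Gamma$ are mutually inverse by the bijection of Proposition~\ref{prop:geodesics-edge_paths_on_X}, completing the proof.
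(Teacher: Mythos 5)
Your overall route---using Proposition \ref{prop:geodesics-edge_paths_on_X} for the bijection between bi-infinite edge paths and weakly carried geodesics, translating the first bullet into disjointness and simplicity of leaves via the crossing criterion, and the second bullet into closedness via Proposition \ref{prop:convergence_edge_path}---is exactly the route the paper intends (its own ``proof'' is only the remark that the statement follows directly from the previous discussion), and your direction $\lambda\mapsto\Gamma_\lambda$ is carried out correctly.

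There is, however, a genuine gap in the converse direction, at the closedness of $\lambda_\Gamma$. You take leaves $g_n\in\lambda_\Gamma$ converging to a geodesic $g$ and invoke Proposition \ref{prop:convergence_edge_path} to conclude that the paths $\tilde\gamma_n$ eventually contain every finite subpath of ``the edge path $\tilde\gamma$ of $\tilde g$''; but that proposition is stated for $g_n$ \emph{and} $g$ weakly carried by $\tilde\Theta$, so it presupposes that such a $\tilde\gamma$ exists---which is precisely what must be proved before the second bulleted condition can be applied. (Closedness of $\lambda_\Gamma$ in $X$ also concerns limits of \emph{points} of $\lambda_\Gamma$; one first extracts a limiting complete geodesic through the limit point, and that geodesic is simple and non-crossing with the leaves, but again not a priori weakly carried.) The missing step is the assertion that a limit of pairwise non-crossing geodesics weakly carried by $\tilde\Theta$ is again weakly carried. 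This is true but needs an argument, for instance: every lift of a cuff strictly separating the two endpoints of $g$ is eventually crossed by $g_n$; consecutive such lifts bound a common component of the lift of a pair of pants and are joined by the unique connector path contained in $\tilde\gamma_n$ (Lemmas \ref{lem:crossing}, \ref{lem:nested}, \ref{lem:unique_connection}), and these finite paths are independent of $n$ and concatenate into a bi-infinite edge path with the endpoints of $g$; the degenerate cases in which an endpoint of $g$ is an endpoint of a lift of a cuff (spiraling) or a cusp point are handled by the tail analysis of Proposition \ref{prop:infinite_edge_paths} as in the proof of Proposition \ref{prop:convergence_edge_path}. Once this is supplied your argument closes up; also, the final flow-box paragraph can simply appeal to the standard fact that a closed subset of a complete hyperbolic surface partitioned into pairwise disjoint simple complete geodesics automatically carries the local product structure required by Definition \ref{def:g_lamination} (the regularity of small disks used in Proposition \ref{prop:support-gl} is not the relevant tool there).
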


We prove that our special construction of pants train tracks is general enough for our purpose of giving a local parametrization of the space of measured geodesic laminations on $X$. Hence we will assume that geodesic laminations do not have leaves which spiral to the cuffs of the pants decomposition.

\begin{proposition}
\label{prop:carrying}
Fix a locally finite geodesic pants decomposition of a Riemann surface $X$ whose fundamental group $\pi_1(X)$ is of the first kind.
Given a geodesic lamination $\lambda$ on $X$ which does not have leaves spiraling to a cuff, there is a pants train track $\Theta$ such that $\lambda$ is weakly carried by $\Theta$. The pants train track $\Theta$ is constructed using the fixed pants decomposition.
\end{proposition}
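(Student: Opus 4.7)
The plan is to build $\Theta$ by making three compatible choices: the basepoints $a_k\in\alpha_k$, one of the admissible standard train tracks inside each pair of pants $P_n$, and the smoothing at each $a_k$. The argument is essentially local: once each cuff has a basepoint off $\lambda$ and each pair of pants has a standard train track that carries the arcs $\lambda\cap P_n$, the smoothings can be chosen so that these local pieces glue smoothly at every vertex.

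First, since $\lambda$ is nowhere dense in $X$ by Proposition~\ref{prop:support-gl}, the set $\lambda\cap\alpha_k$ is a closed nowhere dense subset of the cuff $\alpha_k$. I pick $a_k\in\alpha_k\setminus\lambda$, which ensures that no non-cuff leaf of $\lambda$ passes through any vertex of the future train track. Leaves of $\lambda$ that coincide with cuffs of the decomposition will be carried tautologically, namely by the bi-infinite edge path that repeats the corresponding cuff edge.

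Next, inside each $P_n$ the intersection $\lambda\cap P_n$ decomposes, under the no-spiraling hypothesis, into pairwise disjoint simple geodesic arcs whose endpoints lie on $\partial P_n$, together with possibly some boundary cuffs. Essential simple arcs in a pair of pants fall into finitely many isotopy classes rel $\partial P_n$, and a classical consequence of the Dehn--Thurston theory states that any pairwise disjoint family of such arcs is carried, after isotopy moving endpoints to the chosen basepoints, by one of the four standard train tracks in Figure~2 (respectively two in the first two cases of Figure~3, or the unique one in the last case). I choose that train track inside $P_n$.

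Finally, the smoothing at each $a_k$ must be selected so that, for every arc of $\lambda$ crossing $\alpha_k$, the two connector edges meeting $a_k$ on opposite sides of $\alpha_k$ are either both left or both right tangent to $\alpha_k$, as required in the definition of a pants train track. The two options correspond only to the direction of winding along $\alpha_k$; since all crossings of $\alpha_k$ by $\lambda$ are transverse and can be absorbed by traversing the cuff edge in the prescribed direction, either choice yields a valid pants train track. With all three choices in place, every leaf of $\lambda$ becomes a bi-infinite concatenation, across consecutive cuffs and pants, of the finite edge paths from the second step meeting smoothly at the vertices from the third, and by Proposition~\ref{prop:geodesics-edge_paths_on_X} this bi-infinite edge path recovers the leaf itself, so $\lambda$ is weakly carried by $\Theta$. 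The main obstacle is the Dehn--Thurston step: exhibiting a single standard train track inside each pair of pants that simultaneously carries all arc classes of $\lambda\cap P_n$. This is the only place where the hypothesis that $\lambda$ is a \emph{lamination} (pairwise disjoint leaves, no spiraling) enters in a nontrivial way, restricting the simultaneously appearing arc types to a compatible family.
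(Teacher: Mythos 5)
Your overall strategy (standard track in each pair of pants, then smoothings at the cuffs, then conclude carrying) is the same as the paper's, but two steps that you treat as automatic are exactly where the real content lies, and as written they are gaps. First, the smoothing. Your claim that ``either choice yields a valid pants train track'' conflates the structural requirement in the definition (both connector edges left tangent or both right tangent) with the requirement that $\Theta$ carry $\lambda$. The smoothing at $a_k$ forces every legal edge path that passes from one side of $\alpha_k$ to the other to traverse the cuff edge in one fixed direction only; hence a track smoothed one way carries only arcs whose twisting about $\alpha_k$ has essentially one sign, and a leaf of $\lambda$ that twists several times in the opposite direction about $\alpha_k$ is not realized by any legal edge path of that track. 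This is why the paper's proof runs through the Dehn--Thurston collar construction: it isotopes $\lambda\cap P_n$ into windows, defines the twist number of the arcs of $\lambda$ in a collar of $\alpha_k$ (disjointness makes these twist numbers differ by at most one, so the sign is coherent), and chooses the smoothing at $a_k$ according to that sign. Your proposal omits this $\lambda$-dependent choice, so the smoothing step cannot be dismissed as ``either choice works.''

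Second, the conclusion. Even with all choices made correctly, ``by Proposition \ref{prop:geodesics-edge_paths_on_X} this bi-infinite edge path recovers the leaf itself'' does not follow: that proposition only gives the bijection between bi-infinite edge paths and the geodesics they determine through the ideal endpoints of their lifts; it does not say that a geodesic which is merely homotopic to an edge path coincides with the geodesic that the path determines. On an infinite surface the homotopy between a leaf and its edge path need not be bounded (there is no Milnor--\v Svarc argument available), so it is not automatic that the lifted leaf and the lifted edge path share endpoints on $\partial_{\infty}\tilde{X}$. The paper closes this by arranging the homotopy to preserve each cuff setwise, so that the lift of the leaf and the lift of the edge path cross the same nested sequences of lifts of cuffs at both ends, and then invoking the fact (as in Proposition \ref{prop:infinite_edge_paths}, using local finiteness and $\pi_1(X)$ of the first kind) that such nested sequences converge to single boundary points; this is the step your proposal must supply. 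A smaller remark: when a cuff $\alpha_k$ is itself a leaf of $\lambda$ you cannot choose $a_k\in\alpha_k\setminus\lambda$, though this is harmless since, by the no-spiraling hypothesis, no other leaf meets such a cuff; the two issues above are the substantive ones.
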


\begin{proof}
Let $\{ P_n\}_n$ be a fixed locally finite geodesic pants decomposition of $X$ and $\{\alpha_k\}_k$ the family of all cuffs of $\{ P_n\}_n$. Consider a geodesic lamination $\lambda$ on $X$ which does not have a leaf spiraling to a cuff of $\{ P_n\}_n$. 
In each pair of pants $P_n$ the set of arcs ${\lambda}\cap P_n$ is divided into an at most three homotopy classes setwise fixing the cuffs. We choose a standard train track in $P_n$ that has finite edge paths homotopic to each arc in ${\lambda}\cap P_n$ setwise fixing the cuffs as in Figures 2 and 3 (see also \cite[Section 2.6]{PennerHarer}). The choice of a standard train track is unique in each $P_n$ where the number of homotopy classes of the arcs $\lambda\cap P_n$ is maximal. If the number of homotopy classes in $P_n$ is not maximal then there is a more than one choice of a standard train track and we fix one such choice. It remains to choose the smoothing at the cuffs.

The smoothing is done based on the standard construction of Dehn-Thurston coordinates for multi-curves and measured laminations on closed surfaces (see Penner-Harer \cite[Page 15]{PennerHarer}). We fix a closed arc $I_k$ on each cuff $\alpha_k$ which we call a {\it window} on $\alpha_k$ and denote its midpoint by $a_k$. Choose a regular neighborhood $U_k$ of each cuff $\alpha_k$ which is a hyperbolic collar of width equal half the standard collar width so that the closures of two different $U_k$ are disjoint.  On each boundary component $\partial_i U_k$ for $i=1,2$ of $U_k$ we denote by $I_k^i$ the closed arc that orthogonally project to $I_k$. The arcs $I_k^i$ are called {\it windows} of the regular neighborhood $U_k$ and denote their midpoints by $a_k^i$. Denote by $P_n^{*}$ the pair of pants obtained by deleting the neighborhoods $U_k$ of the cuffs of $P_n$.

In each $P_n$ we find an isotopy $H$ pointwise fixing the cuffs  that moves the family of arcs $\lambda\cap P_n$ into a family of arcs $\Lambda_n$ that enter one-sided regular neighborhoods of cuffs through the interior of their windows and once they enter these neighborhoods they can only leave them through the cuffs. In addition, we require that there exists a standard train track in $P_n^{*}$ with endpoints at $a_k^i$ such that for each arc $l'$ in $\Lambda_n\cap P_n^{*}$ there is a homotopy modulo windows $I_k^i$ onto a simple edge path of the standard track in $P_n^{*}$ and we choose this  standard train track on each $P_n^{*}$. 

 It remains to connect the standard train tracks in $P_n^{*}$ from their endpoints $a_k^i$ to the points $a_k\in\alpha_k$ by simple arcs inside one-sided neighborhoods (the half of $U_k$)  such that  they are either right or left tangent to $\alpha_k$ on both sides of $\alpha_k$.

Consider two-sided regular neighborhood $U_k$ of $\alpha_k$ and the family $\hat{\Lambda}_k$ of arcs in $U_k$  connecting the windows $I_k^1$ and $I_k^2$ obtained by the above isotopy $H$ of the arcs $\lambda\cap (P_n'\cup P_n'')$, where $P_n',P_n''$ are the two (geodesic) pairs of pants that have $\alpha_k$ on their boundaries. Let $c_k^i\in\partial I_k^i$ for $i=1,2$ be endpoints of the windows $I_k^i$ such that the shortest closed geodesic arc $b_k$ in $U_k$ that connects them is orthogonal to $\alpha_k$. For an arc $l\in\hat{\Lambda}_k$, the absolute value of the twist number equals the essential number of its intersections with $b_k$. Let $\hat{l}\subset U_k$ be an arc homotopic to $l$ modulo endpoints that realizes the twist number. The twist number of $l$ is positive if $\hat{l}$ goes to the right once it starts from the window $I_k^1$ and it is negative if it goes to the left. The twist is zero if there is no essential intersections. The twist number of any other $l'\in\hat{\Lambda}_k$ differs by at most one since arcs in $\hat{\Lambda}_k$ are disjoint.
  
The absolute value of the twist number around $\alpha_k$ is the maximum of the absolute value of the twist numbers over all arcs in $\tilde{\Lambda}_k$. The sign is positive if at least one arc has a positive twist number and it is negative if at least one arc has negative twist.

Since $I_k^1$ and $I_k^2$ orthogonally project to $I_k$, the set of points in $U_k$ whose orthogonal projection on $\alpha_k$ is inside $I_k$ forms a quadrilateral $Q_k$ whose two opposite sides are $I_k^1$ and $I_k^2$, and the other two opposite sides are geodesic arc $b_k$ and the geodesic arc connecting two endpoints of $I_k^1$ and $I_k^2$ different from $c_k^1$ and $c_k^2$. 
We connect the two standard train tracks in the pairs of pants $(P_n')^{*}$ and $(P_n'')^{*}$ at the point $a_k\in\alpha_k$ by connecting the points $a_k^1$ and $a_k^2$ inside $Q_k$. It is possible that $\alpha_k$ is on the boundary of a single pair of pants in which case we connect the standard train track to itself at the point $a_k$.  
The smoothing at $a_k$ is chosen such that the connector edge is coming from the right of $\alpha_k$ if the twist is positive and it is from the left if the twist is negative. If the twist is zero then the smoothing is arbitrary. Note that the incoming edges from both sides of $\alpha_k$ are simultaneously either right or left tangent. Since points $a_k^i$ became bivalent edges we can erase them and consider standard tracks in $P_n$ which have $a_k$ vertices on $\alpha_k$. 

The choices of isotopies in the pairs of pants minus regular neighborhoods of cuffs followed by the choices of homotopies in the regular neighborhood of cuffs guarantee that each geodesic of $\lambda$ is homotopic to an edge path in $\Theta$. In fact, we can arrange that this homotopy setwise fixes the cuffs $\alpha_k$. 
We need to show that each geodesic $g$ of the lift $\tilde{\lambda}$ to the universal cover $\tilde{X}$ of the geodesic lamination $\lambda$ has the same endpoints on $\partial_{\infty}\tilde{X}$ as an edge path in $\tilde{\Theta}$.

Let $\bar{g}$ be the geodesic of $\lambda$ whose one lift is $g$ and let $\bar{\gamma}$ be an edge path in $\Theta$ that is homotopic to $\bar{g}$. By lifting the homotopy between $\bar{g}$ and $\bar{\gamma}$ to the universal cover $\tilde{X}$ starting from the geodesic $g$ we obtain an edge path $\gamma$ in $\tilde{\Theta}$ which is a lift of $\bar{\gamma}$ and is homotopic to $g$. The homotopy of $g$ and $\gamma$ is not necessarily bounded in the hyperbolic metric of $\tilde{X}$.

Since $\bar{g}$ cannot accumulate to a cuff it follows that either both ends of $\bar{g}$ intersect cuffs infinitely many times or $\bar{g}$ is a cuff. Therefore $g$ is either a lift of a cuff or both ends of $g$ intersect two sequences of nested cuffs that accumulate to the ideal endpoints of $g$. If $g=\tilde{\alpha}_k$ then the construction of $\tilde{\Theta}$ allows us to take $\gamma$ to be the bi-infinte sequence of cuff edges of $\tilde{\alpha}_k$.

Assume that both ends of $g$ intersect nested sequences of lifts of cuffs. Since the homotopy between $\bar{g}$ and $\bar{\gamma}$ can be chosen such that it setwise fixes the cuffs, it follows that the homotopy between $g$ and $\gamma$ setwise preserves each lift of a cuff. Therefore $g$ and $\gamma$ intersect the same sequence of nested lifts of cuffs at both of their ends and they have the same endpoints on $\partial_{\infty}\tilde{X}$.
Thus we obtained that $g$ is weakly carried by $\tilde{\Theta}$. Since $g$ is a lift of an arbitrary geodesic of $\lambda$, it follows that $\lambda$ is weakly carried by $\Theta$.
\end{proof}

\section{Measured laminations carried by train tracks}

In this section we parametrize the set of all measured laminations on $X$ that are weakly carried by pants train tracks using the edge weight systems(see \cite{PennerHarer}, \cite{Bonahon} for the case of  a closed surface). 
We first introduce some standard definitions regarding measured laminations and train tracks analogous to the closed surfaces (see Bonahon \cite{Bonahon-book}).

A geodesic on the universal covering $\tilde{X}$ is uniquely determined by its two  ideal endpoints on $\partial_{\infty}\tilde{X}$. In Section 3, we identified the space of unoriented geodesics $G(\tilde{X})$ of the universal covering with $(\partial_{\infty}\tilde{X}\times\partial_{\infty}\tilde{X})/(\mathbb{Z}/2\mathbb{Z})$, where the action of $\mathbb{Z}/2\mathbb{Z}$ sends $(a,b)\in \partial_{\infty}\tilde{X}\times\partial_{\infty}\tilde{X}$ to $(b,a)$. 

\begin{definition}
A {\it geodesic current} $\mu$ on $X$ is a Radon measure $\tilde{\mu}$ on the space $G(\tilde{X})$ of unoriented  geodesics on the universal covering $\tilde{X}$ of $X$ that is invariant under the action of the covering group $\pi_1(X)$. Here a Radon measure is a Borel measure that satisfies $\tilde{\mu} (K)<\infty$ for any compact $K\subset G(\tilde{X})$.
\end{definition}

The space of geodesic currents carries a natural weak* topology(see \cite{BonahonSaric}). Namely, let $\xi :G(\tilde{X})\to\mathbb{R}$ be a continuous function with compact support.  The semi-norm $|\cdot |_{\xi}$ induced by the function $\xi$ is
$$
|\tilde{\mu} |_{\xi}=|\int_{G(\tilde{X})}\xi d\tilde{\mu}|
$$
for any geodesic current $\mu$. The {\it weak* topology} on the geodesic currents of $X$ is induced by the family of semi-norms $|\cdot |_{\xi}$, where $\xi$ runs through all continuous functions with compact support. We also note that the weak* topology on the space of geodesic currents is metrizable (see \cite{BonahonSaric}).

Denote by $\mathbb{H}(\tilde{X})$ the set of all isometries of $\tilde{X}$ for the hyperbolic metric induced from $X$. Given a continuous function $\xi :G(\tilde{X})\to\mathbb{R}$ with compact support, we define another semi-norm $\|\cdot\|_{\xi}$ by
$$
\|\tilde{\mu} \|_{\xi}=\sup_{\varphi\in\mathbb{H}(\tilde{X})}|\int_{G(\tilde{X})}\xi\circ\varphi d\tilde{\mu}|
$$
for any geodesic current $\mu$ (see \cite{BonahonSaric}). The space of {\it bounded geodesic currents} $\mathcal{C}_{b}(X)$ consists of all geodesic currents $\mu$ on $X$ such that $\|\tilde{\mu}\|_{\xi}<\infty$ for all continuous $\xi :G(\tilde{X})\to\mathbb{R}$ with compact support (see \cite{BonahonSaric}).  
The {\it uniform weak* topology} on $\mathcal{C}_{b}(X)$ is induced by the family of seminorms $\|\cdot \|_{\xi}$, where $\xi$ runs through all continuous functions with compact support. Note that the uniform weak* topology  is metrizable (see \cite{BonahonSaric}).

\begin{definition}
A {\it measured lamination} $\mu$ on $X$ {\it with support the geodesic lamination} ${\lambda}$ on $X$ is a geodesic current $\tilde{\mu}$ whose support is the lift $\tilde{\lambda}$ of  ${\lambda}$. A measured lamination $\mu$ with support ${\lambda}$ is {\it weakly carried} by $\Theta$ if the geodesic lamination $\tilde{\lambda}$ is weakly carried by $\tilde{\Theta}$. 
\end{definition}

The set $G(\tilde{\Theta})$ consists of all geodesics of $\tilde{X}$ that are weakly carried by $\tilde{\Theta}$.  
Given a finite edge path ${\gamma}$ of $\tilde{\Theta}$, we denote by $G({\gamma})$ the set of geodesics in $G(\tilde{\Theta})$ whose bi-infinite edge paths contain ${\gamma}$ as a subpath. For an edge ${e}\in E(\tilde{\Theta})$, denote by $G({e})$ the set of geodesics in $G(\tilde{\Theta})$ that contain ${e}$ in their bi-infinite edge paths on $\tilde{\Theta}$. 
If ${e}$ is an edge in ${\gamma }$ 
then 
$G({\gamma})\subset G({e})$. 

The following two propositions are technical tools needed in the rest of the paper. Before stating the propositions we define a special type of subsets of $G(\tilde{X})$.

\begin{definition}
Let $I$ and $J$ be two disjoint closed subarcs of $\partial_{\infty}\tilde{X}$. A {\it box of geodesics} $I\times J$ is the set of unoriented geodesics of $G(\tilde{X})$ with one endpoint in $I$ and the other endpoint in $J$.
\end{definition}

\begin{proposition}
\label{prop:carrying_box_connector}
Let $\gamma$ be a finite edge path in $\tilde{\Theta}$ that has only its initial and end vertex on lifts of cuffs $\tilde{\alpha}$ and $\tilde{\beta}$. Then there exist two boxes of geodesics $Q'=I'\times J'$ and $Q=I\times J$ such that $G(\gamma )=G(\tilde{\Theta})\cap Q'=G(\tilde{\Theta})\cap Q$ and $Q'$ is contained in the interior $Q^{\circ}$ of $Q$.

Let $I_1$ and $I_2$ be two components of $I\setminus I'$, and let $J_1$ and $J_2$ be the two components of $J\setminus J'$.  
If the lengths of the cuffs of the pants decomposition of $X$ are between $1/M$ and $M$, 
and the number of edges in $\gamma$ is at most $d>0$, then there exists $m=m(d,M)>0$ and a choice of $Q$ and $Q'$ such that the Liouville measure of each of the boxes of geodesics $I_1\times J$, $I_2\times J$, $I\times J_1$ and $I\times J_2$ is between $1/m$ and $m$.
\end{proposition}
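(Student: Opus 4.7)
The plan is to exploit that $\gamma$ lies entirely within a single lifted pair of pants $\tilde{P}$ (bounded by the lifts of cuffs $\tilde{\alpha}$, $\tilde{\beta}$, and a third cuff $\tilde{\gamma}_3$), because its interior vertices are required to lie off of cuffs and so must be interior vertices of the standard train track inside a single pair of pants. I define $I'$ to be the closed arc of $\partial_{\infty}\tilde{X}$ bounded by the two ideal endpoints of $\tilde{\alpha}$ lying on the side of $\tilde{\alpha}$ opposite to $\tilde{P}$, and analogously $J'$ the far-side arc of $\tilde{\beta}$; these are disjoint closed arcs. I then verify $G(\gamma)=G(\tilde{\Theta})\cap(I'\times J')$: for the forward inclusion, the extensions of an edge path containing $\gamma$ past $v_\alpha$ and $v_\beta$ either remain on $\tilde{\alpha}$ (respectively $\tilde{\beta}$) as cuff edges accumulating at an endpoint of $\tilde{\alpha}$ (respectively $\tilde{\beta}$), or exit via a connector into the pair of pants adjacent to $\tilde{\alpha}$ (respectively $\tilde{\beta}$) on the far side of $\tilde{P}$, so by Proposition \ref{prop:infinite_edge_paths} their accumulation points lie in $I'$ (respectively $J'$). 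For the reverse inclusion, a geodesic weakly carried by $\tilde{\Theta}$ with endpoints in $I'\times J'$ must cross both $\tilde{\alpha}$ and $\tilde{\beta}$ and hence its edge path contains a finite connector-edge subpath in $\tilde{P}$ joining $\tilde{\alpha}$ to $\tilde{\beta}$, which by Lemma \ref{lem:unique_connection} must be $\gamma$.

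Next, I build $Q=I\times J$ with $Q'\subset Q^{\circ}$ by enlarging $I'$ slightly past each ideal endpoint of $\tilde{\alpha}$ and $J'$ slightly past each ideal endpoint of $\tilde{\beta}$. The enlargement is guided by the cyclic stabilizer $\langle\kappa\rangle\subset\pi_1(X)$ of $\tilde{\alpha}$: its orbit translates $\tilde{P}$ along $\tilde{\alpha}$ and yields a sequence of translated lifts of cuffs whose endpoints accumulate at the ideal endpoints of $\tilde{\alpha}$ from the $\tilde{P}$ side. I place each enlargement point $q_\alpha^\pm$ strictly between an endpoint of $\tilde{\alpha}$ and the nearest endpoint of such a translate (and analogously for the $\tilde{\beta}$ side). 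The crucial verification that $G(\tilde{\Theta})\cap(I\times J)=G(\gamma)$ amounts to ruling out weakly carried geodesics with an endpoint in the buffer arcs $I_1,I_2,J_1,J_2$: any such geodesic's edge path would have to accumulate at its endpoint through a nested family of lifts of cuffs by Lemma \ref{lem:crossing} and Lemma \ref{lem:unique_connection}, but the choice of enlargement puts each buffer arc strictly inside the first such family, leaving no room for this.

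For the Liouville measure estimate under the assumption that cuff lengths lie in $[1/M,M]$ and $\gamma$ has at most $d$ edges, I use the cross-ratio formula $\mu_L([a,b]\times[c,d])=\log\frac{(b-c)(a-d)}{(a-c)(b-d)}$ after identifying $\partial_{\infty}\tilde{X}\setminus\{\ast\}$ with $\mathbb{R}$. The bounded-geometry hypothesis forces $\tilde{P}$ to have uniformly controlled hyperbolic size, the lengths of $\tilde{\alpha},\tilde{\beta}$ to lie in $[1/M,M]$, and the translation length of $\kappa$ to be bounded in terms of $M$, so the cross-ratios among the endpoints of $\tilde{\alpha}$, $\tilde{\beta}$, and the first translates of $\tilde{\beta}$ (respectively $\tilde{\alpha}$) under the stabilizer of $\tilde{\alpha}$ (respectively $\tilde{\beta}$) are bounded away from the singular values $0,1,\infty$ by constants depending only on $M$ and $d$. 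Choosing the enlargement points $q_\alpha^\pm,q_\beta^\pm$ at a fixed fraction of the arc distance on $\partial_{\infty}\tilde{X}$ between $p_\alpha^\pm$ (respectively $p_\beta^\pm$) and the nearest translate-cuff endpoint then produces the two-sided bound $\mu_L(I_i\times J),\mu_L(I\times J_j)\in[1/m,m]$ with $m=m(d,M)$. The main obstacle will be the rigorous verification in the second paragraph that the enlargement captures no extra weakly carried geodesics, since this requires combining Lemma \ref{lem:unique_connection} with a careful analysis of which points of $\partial_{\infty}\tilde{X}$ near the endpoints of $\tilde{\alpha},\tilde{\beta}$ can arise as accumulation points of edge paths in $\tilde{\Theta}$; once this is settled, the Liouville estimate reduces to a direct cross-ratio computation from bounded geometry.
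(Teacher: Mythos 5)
Your construction is, at bottom, the same as the paper's: the inner box $Q'=I'\times J'$ is the product of the two far-side ideal arcs of $\tilde{\alpha}$ and $\tilde{\beta}$, and your verification of $G(\gamma)=G(\tilde{\Theta})\cap Q'$ via Remark \ref{rem:leave}--type tangency considerations and Lemma \ref{lem:unique_connection} is exactly what the paper compresses into ``by the construction.'' Your enlargement is also essentially the paper's: since each cuff carries a single basepoint, the vertices of $\tilde{\Theta}$ on $\tilde{\alpha}$ adjacent to $v_1$ are precisely the $\kappa^{\pm1}$-translates of $v_1$, so the ``first translates'' you use coincide with the cuffs $\tilde{\alpha}',\tilde{\alpha}''$ (and $\tilde{\beta}',\tilde{\beta}''$) attached at the adjacent vertices, which is what the paper uses to delimit $I$ and $J$.

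The genuine gap is the step you yourself flag: the verification that the enlarged box satisfies $G(\tilde{\Theta})\cap Q=G(\gamma)$. Your heuristic (``the buffer arc sits strictly inside the first nested family, leaving no room'') misstates the problem. A single lift $\tilde{P}$ of a pair of pants is bounded by infinitely many lifts of its three cuffs, and each buffer arc contains endpoints of infinitely many of them (all those attached to $\tilde{\alpha}$ at vertices beyond $v_1'$, plus deeper nested lifts), so there are plenty of weakly carried geodesics with an endpoint in a buffer arc; there is no scarcity to exploit. What has to be shown is that no such geodesic has its \emph{other} endpoint in $J$ (respectively $I$), and Lemmas \ref{lem:crossing} and \ref{lem:unique_connection} alone do not yield this: one must combine Remark \ref{rem:leave} (an edge path reaching a lift of a cuff from inside $\tilde{P}$ cannot re-enter $\tilde{P}$, so its trace in $\overline{\tilde{P}}$ consists of cuff edges, at most one connector path, and cuff edges) with the structure of connector paths inside $\tilde{P}$ coming from Lemma \ref{lem:lift_standard} and the uniqueness argument, to conclude that any bi-infinite edge path joining the two regions determined by $I$ and $J$ must contain the unique $\tilde{\alpha}$--$\tilde{\beta}$ connector path, i.e. $\gamma$. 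This is precisely why the paper anchors the enlargement at the endpoints of the concretely chosen cuffs $\tilde{\alpha}',\tilde{\alpha}'',\tilde{\beta}',\tilde{\beta}''$: with that choice both the carrying identity and the quantitative part follow, the latter from two-sided distance bounds (e.g. the distance from $\tilde{\alpha}'$ to $\tilde{\beta}$ is at most $(d+6)M_1$) which translate into Liouville bounds. Your alternative of placing the cut points at ``a fixed fraction of the arc distance'' is an additional weakness: circle arc-length is not M\"obius-invariant, so the claimed bound $m=m(d,M)$ requires a normalization tied to geometrically defined data (as the endpoints of $\tilde{\alpha}',\tilde{\beta}'$ are), not to a Euclidean parametrization chosen after an unspecified identification of $\partial_\infty\tilde{X}$ with $\mathbb{R}$.
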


\begin{proof}
The lifts of cuffs $\tilde{\alpha}$ and $\tilde{\beta}$ are on the boundaries of two disjoint half-planes. Let $I'\subset\tilde{X}$ be the closed interval which is  the ideal boundary of the half-plane bounded by $\tilde{\alpha}$ and let $J'$ be the closed interval which is the ideal boundary of the half-plane bounded by $\tilde{\beta}$ (see Figure 5). 

Let $v_1$ be the vertex of $\gamma$ on $\tilde{\alpha}$ and $v_2$ the vertex of $\gamma$ on $\tilde{\beta}$. Let $v_1'$ and $v_1''$ be two vertices of $\tilde{\Theta}$ on $\tilde{\alpha}$ on the opposite sides of $v_1$ that are adjacent to $v_1$.  Let $v_2'$ and $v_2''$ be two vertices of $\tilde{\Theta}$ on $\tilde{\beta}$ on the opposite sides of $v_2$ that are adjacent to $v_2$. Let $\tilde{\alpha}'$ be a lift of a cuff between $\tilde{\alpha}$ and $\tilde{\beta}$ that is connected to $v_1'$ by a finite edge path not crossing any lift of a cuff. Similarly let $\tilde{\alpha}''$ be a lift of a cuff connected to $v_1''$ by a finite edge path not crossing any lift of a cuff and being in between $\tilde{\alpha}$ and $\tilde{\beta}$. In a similar fashion we define lifts of cuff $\tilde{\beta}'$ and $\tilde{\beta}''$ that are connected to $v_2'$ and $v_2''$ (see Figure 5). 

\begin{figure}[h]
%\ShowGrid
\leavevmode \SetLabels
\L(.16*.4) $I$\\
\L(.24*.6) $I'$\\
\L(.72*.3) $J'$\\
\L(.82*.6) $J$\\
\L(.337*.5) $v_1$\\
\L(.325*.38) $v_1''$\\
\L(.326*.625) $v_1'$\\
\L(.315*.7) $\tilde{\alpha}$\\
\L(.4*.82) $\tilde{\alpha}'$\\
\L(.39*.2) $\tilde{\alpha}''$\\
\L(.62*.48) $v_2$\\
\L(.635*.31) $v_2''$\\
\L(.64*.6) $v_2'$\\
\L(.58*.2) $\tilde{\beta}''$\\
\L(.567*.8) $\tilde{\beta}'$\\
\L(.67*.656) $\tilde{\beta}$\\
\L(.49*.57) $\gamma$\\
\endSetLabels
\begin{center}
\AffixLabels{\centerline{\epsfig{file =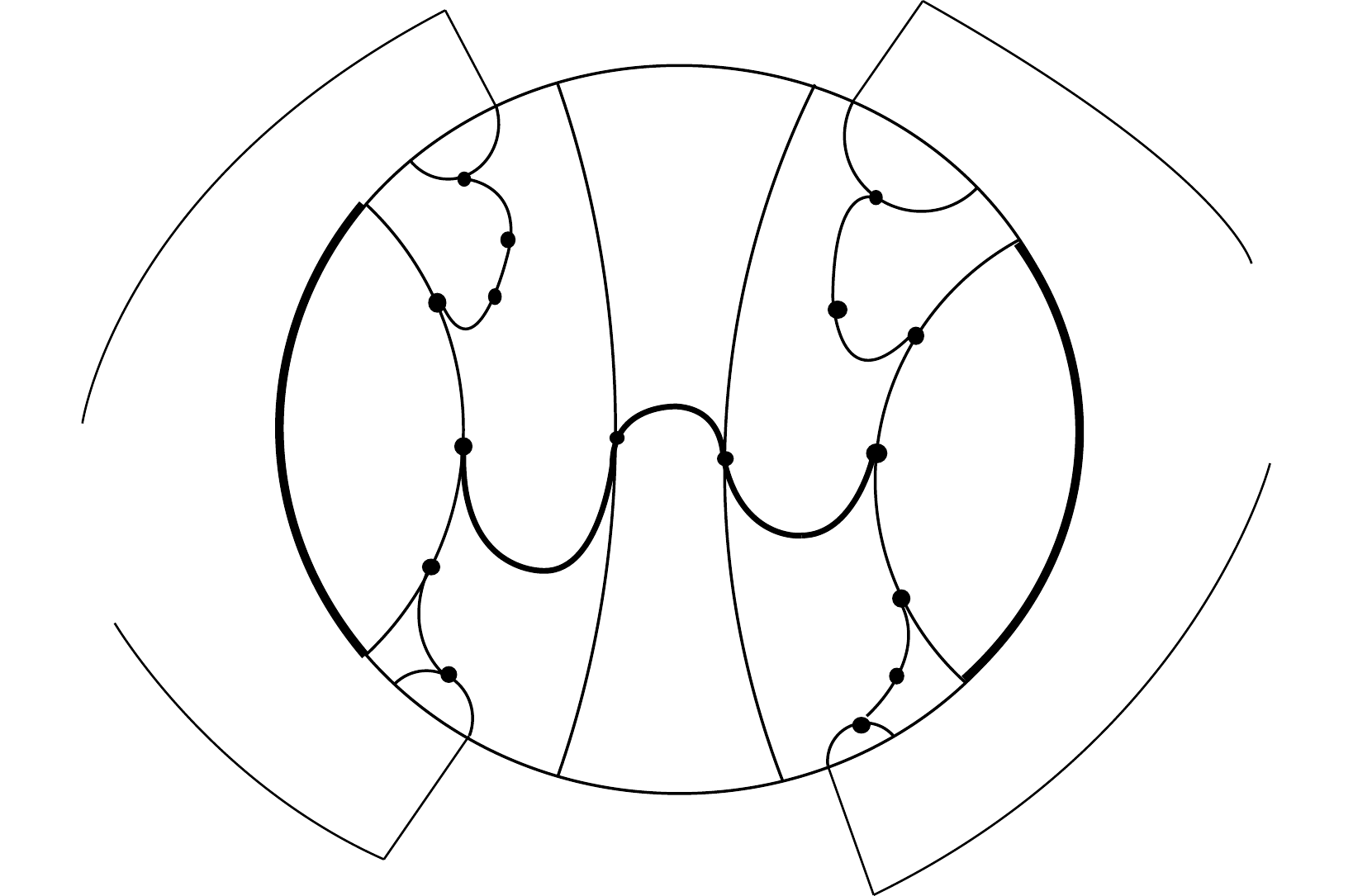,width=12.0cm,angle=0} }}
\vspace{-20pt}
\end{center}
\caption{The set $G(\gamma )$ inside a pair of nested boxes of geodesics when $\gamma$ connects two lifts of cuffs.} 
\end{figure}

We choose $I\supset I'$ to be the smallest closed interval on $\partial_{\infty}\tilde{X}$ that contains endpoints of $\tilde{\alpha}'$ and $\tilde{\alpha}''$ and we choose $J\supset J'$ to be the smallest closed interval on $\partial_{\infty}\tilde{X}$ that contains endpoints of $\tilde{\beta}'$ and $\tilde{\beta}''$. The obtained boxes of geodesics $Q=I\times J$ and $Q'=I'\times J'$ satisfy
$G(\gamma )=G(\tilde{\Theta})\cap Q'=G(\tilde{\Theta})\cap Q$ by the construction.

Assume that the lengths of the cuffs of the pants decomposition of $X$  are in between $1/M$ and $M$. Then the distance between adjacent lifts of cuffs and the lengths of cuffs edges of $\tilde{\Theta}$ are bounded between $1/M_1$ and $M_1$. Thus the distance between $\tilde{\alpha}$ and $\tilde{\beta}$ is at most $dM_1$. Further, the distance between $\tilde{\alpha}'$ and $\tilde{\beta}$ is at most $(d+6)M_1$. The same bound bound holds for the distance between $\tilde{\alpha}''$ and $\tilde{\beta}$, the distance between $\tilde{\beta}'$ and $\tilde{\alpha}$, and the distance between $\tilde{\beta}''$ and $\tilde{\alpha}$. This gives the bounds on the Liouville measures of $I_1\times J$, $I_2\times J$, $I\times J_1$ and $I\times J_2$. 
\end{proof}

\begin{proposition}
\label{prop:carrying_box_cuff}
Let $\gamma$ be a finite edge path in $\tilde{\Theta}$ that lies on a single lift of a cuff $\tilde{\alpha}$. Then there exist two boxes of geodesics $Q'=I'\times J'$ and $Q=I\times J$ such that $G(\gamma' )=G(\tilde{\Theta})\cap Q'=G(\tilde{\Theta})\cap Q$ and $Q'$ is contained in the interior of $Q$.

Let $I_1$ and $I_2$ be two components of $I\setminus I'$, and let $J_1$ and $J_2$ be the two components of $J\setminus J'$.  
If the lengths of the cuffs of $X$ are between $1/M$ and $M$, 
and the distance between the end vertices of $\gamma'$ is at most $d>0$, then there exists $m=m(d,M)>0$ and a choice of $Q$ and $Q'$ such that the Liouville measure of each of the boxes of geodesics $I_1\times J$, $I_2\times J$, $I\times J_1$ and $I\times J_2$ is between $1/m$ and $m$.
\end{proposition}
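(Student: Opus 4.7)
The proof follows the strategy of Proposition~\ref{prop:carrying_box_connector}, with the new ingredient that $\gamma$ lies on a single cuff lift $\tilde{\alpha}$. A bi-infinite edge path extending $\gamma$ may travel along $\tilde{\alpha}$ arbitrarily far, or never leave it, so the geodesic $\tilde{\alpha}$ itself (with ideal endpoints $x, y\in\partial_{\infty}\tilde{X}$) belongs to $G(\gamma)$ and the boxes must contain $(x,y)$. Label the end vertices of $\gamma$ as $v_1, v_2$ with $v_1$ closer to $x$. The smoothing requirement---that both connector edges at each vertex on $\tilde{\alpha}$ be left-tangent, or both right-tangent---implies that at each such vertex three of the four adjacent half-edges share a common tangent class. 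Consequently the backward extension of any edge path containing $\gamma$ may leave $\tilde{\alpha}$ only through connector edges into one fixed half-plane of $\tilde{\alpha}$ (call it side $A$), while the forward extension may leave only into the opposite half-plane $B$. Hence every $g\in G(\gamma)$ has backward endpoint either $x$ or in the closed sub-arc $\bar{A}\subset\partial_{\infty}\tilde{X}$ on side $A$, and forward endpoint either $y$ or in $\bar{B}$.

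\textbf{Construction of the boxes.} Let $\tilde{P}_A, \tilde{P}_B$ be the pair-of-pants lifts adjacent to $\tilde{\alpha}$ on sides $A, B$. By Lemma~\ref{lem:lift_standard}, only finitely many lifts of cuffs in $\partial\tilde{P}_A\setminus\tilde{\alpha}$ are connected to $v_1$ by finite paths of connector edges not crossing further cuff lifts; let $\tilde{\alpha}'$ be the one whose ideal endpoint on $\bar{A}$ is farthest from $x$. A combinatorial analysis inside $\tilde{P}_A$---using Lemma~\ref{lem:unique_connection} and the structure of the standard train track---shows that the set of backward ideal endpoints of geodesics in $G(\gamma)$ is contained in the closed sub-arc $I'\subset\partial_{\infty}\tilde{X}$ running from $x$ through $\bar{A}$ up to the far endpoint of $\tilde{\alpha}'$, while the backward endpoints of geodesics whose edge-path cuff-block on $\tilde{\alpha}$ starts strictly on the $y$-side of $v_1$ lie in a disjoint sub-arc of $\bar{A}$ starting strictly past that far endpoint. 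Take $I$ to be a closed sub-arc of $\partial_{\infty}\tilde{X}$ extending $I'$ slightly into the gap between these two disjoint sub-arcs, ending at the ideal endpoint of a nearby cuff lift $\tilde{\alpha}''\subset\partial\tilde{P}_A$ in the gap. Construct $\tilde{\beta}', \tilde{\beta}''\subset\partial\tilde{P}_B$ and $J'\subset J$ symmetrically using $v_2$, and set $Q':=I'\times J'$ and $Q:=I\times J$; then $Q'\subset Q^\circ$.

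\textbf{Verification, estimates, and main obstacle.} The inclusion $G(\gamma)\subset G(\tilde{\Theta})\cap Q'$ is immediate from the construction. Conversely, if $g\in G(\tilde{\Theta})\cap Q$ then the endpoints of $g$ lie on opposite sides of $\tilde{\alpha}$, so $g$ crosses $\tilde{\alpha}$, and its edge path contains a maximal consecutive block of cuff edges on $\tilde{\alpha}$; this block must contain $\gamma$, since otherwise the leftmost or rightmost turn-off vertex would lie strictly past $v_1$ or $v_2$, placing the associated ideal endpoint outside $I$ or $J$ by the gap-separation property. For the Liouville measure estimates, when cuff lengths lie in $[1/M,M]$ and the distance between the end vertices of $\gamma$ is at most $d$, the number of cuff edges of $\gamma$ is at most $Md$, and the lifts $\tilde{\alpha}', \tilde{\alpha}'', \tilde{\beta}', \tilde{\beta}''$ sit at bounded hyperbolic distance from $\tilde{\alpha}$; the same calculation as in Proposition~\ref{prop:carrying_box_connector} then yields the required bounds on the Liouville measures of $I_1\times J$, $I_2\times J$, $I\times J_1$, and $I\times J_2$. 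The delicate step is verifying the gap-separation property: one must show that ideal endpoints reachable from side-$A$ connectors at vertices strictly in $(v_1, y]\cap\tilde{\alpha}$ lie strictly beyond the far endpoint of $\tilde{\alpha}'$, and that the resulting gap contains a cuff-lift endpoint to serve as $\tilde{\alpha}''$. These facts follow from the lifted standard train-track combinatorics inside $\tilde{P}_A$ combined with Lemma~\ref{lem:unique_connection}.
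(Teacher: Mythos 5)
Your construction of the inner box follows the paper's own route: as in the paper you use the tangency convention (Remark \ref{rem:leave}) to see that the two ends of an edge path containing $\gamma$ can only leave $\tilde{\alpha}$ into opposite half-planes, and you bound the exit region at each end by the farthest cuff lift reachable from $v_1$ (resp.\ $v_2$) by connector paths, via Lemma \ref{lem:lift_standard}; this is exactly how the paper produces $I'$ and $J'$.

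However, there is a concrete gap in your enlargement of $Q'$ to $Q$: you extend $I$ beyond $I'$ only past the far endpoint of $\tilde{\alpha}'$ (into the ``gap''), and $J$ symmetrically, but you never extend past the ideal endpoints $x$ and $y$ of $\tilde{\alpha}$. With your choice, $I$ and $I'$ share the endpoint $x$ and $J$ and $J'$ share $y$, so $Q'$ is \emph{not} contained in the interior of $Q$, and $I\setminus I'$ and $J\setminus J'$ each have only one component --- whereas the statement requires two components on each side and a Liouville lower bound $1/m$ for all four boxes $I_1\times J$, $I_2\times J$, $I\times J_1$, $I\times J_2$. This is not cosmetic: the geodesic $\tilde{\alpha}$ itself lies in $G(\gamma)$ (it may even carry an atom of the measure), and in your construction it sits at the shared corner $(x,y)\in\delta Q$, which is precisely what the buffer $Q\setminus Q'$ is meant to exclude and what the later uniform weak* arguments rely on. The paper's proof fixes this by enlarging on \emph{both} ends of each interval: beyond the far endpoint of $\tilde{\beta}_1'$ it goes up to a cuff lift $\tilde{\beta}_1$ connected to $\tilde{\beta}_1'$ by a finite edge path with endpoints outside $I'\cup J'$ (this also gives the distance bound needed for the $m(d,M)$ estimate, which your ``nearby cuff lift in the gap'' does not automatically provide), and beyond $x$ it goes up to the endpoints of a cuff lift $\tilde{\beta}_1''$ on the \emph{opposite} side of $\tilde{\alpha}$ attached at $v_1$ (similarly $\tilde{\beta}_2''$ at $v_2$ for $J$). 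The extension past $x$ adds no weakly carried geodesics: by Remark \ref{rem:leave} and Lemma \ref{lem:unique_connection}, a carried geodesic with one endpoint in that added arc must have exited $\tilde{\alpha}$ (or avoided it) near its $x$-end, so its second endpoint also lies near $x$ and hence not in $J$; this is the step your one-sided construction omits, and without it the proposition as stated is not proved.
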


\begin{proof} 
Assume first that the connector edges are right tangent to $\alpha$. 
Fix an arbitrary orientation of $\tilde{\alpha}$ and assume that $\gamma$ is given the induced orientation. 
Let $v_1$ and $v_2$ be the initial and end vertex of $\gamma$. There is finitely many lifts of cuffs connected to $\tilde{\alpha}$ by a finite edge path with initial vertex $v_1$ that does not cross any lifts of cuffs and is on the right of $\tilde{\alpha}$ for the fixed orientation. Denote by $\tilde{\beta}_1'$ the lift of the cuff that is farthest away from the initial point of $\tilde{\alpha}$ out of the above finite set of lifts of cuffs (see Figure 6). 
The interval $I'$ is the smallest interval that contains the initial point of $\tilde{\alpha}$ and the endpoints of $\tilde{\beta}_1'$ but does not contain the end point of $\tilde{\alpha}$. We define $\tilde{\beta}_2'$ with respect to $v_2$ on the left side of $\tilde{\alpha}$ in analogous manner (see Figure 6). The interval $J'$ is defined using the end point of $\tilde{\alpha}$ and the endpoints of $\tilde{\beta}_2'$ analogously.

\begin{figure}[h]
%\ShowGrid
\leavevmode \SetLabels
\L(.2*.9) $I$\\
\L(.38*.91) $I'$\\
\L(.76*.08) $J$\\
\L(.55*.1) $J'$\\
\L(.52*.5) $\gamma$\\
\L(.505*.63) $v_1$\\
\L(.45*.4) $v_2$\\
\L(.32*.8) $\tilde{\beta}_1'$\\
\L(.205*.627) $\tilde{\beta}_1$\\
\L(.7*.3) $\tilde{\beta}_2$\\
\L(.55*.17) $\tilde{\beta}_2'$\\
\L(.5*.8) $\tilde{\alpha}$\\
\L(.68*.74) $\tilde{\beta}_1''$\\
\L(.33*.22) $\beta_1''$\\
\endSetLabels
\begin{center}
\AffixLabels{\centerline{\epsfig{file =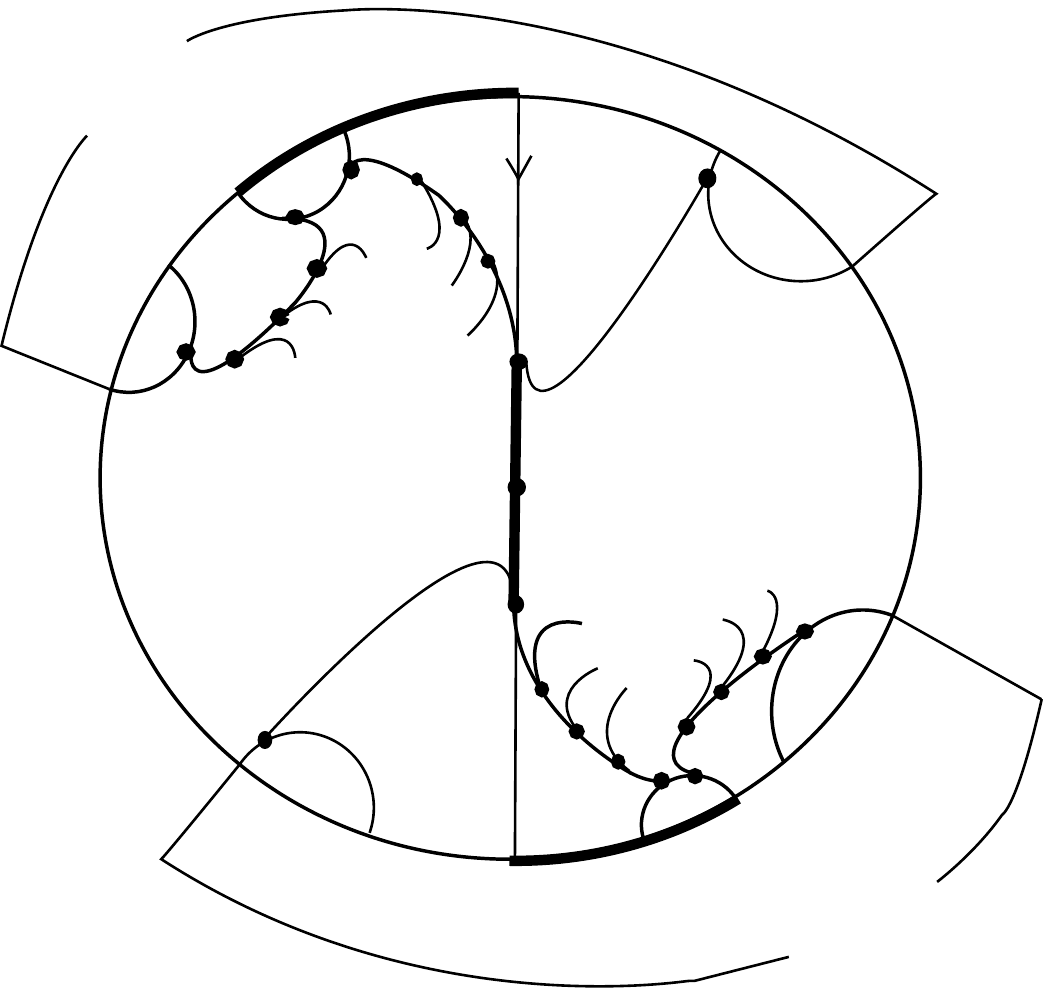,width=12.0cm,angle=0} }}
\vspace{-20pt}
\end{center}
\caption{The set $G(\gamma )$ inside a pair of nested boxes of geodesics when $\gamma$ consists of cuff edges.} 
\end{figure}

There is countably many lifts of cuffs connected to $\tilde{\beta}_1'$ by a finite edge path not crossing other lifts of cuffs that have endpoints in $\partial_{\infty}\tilde{X}\setminus (I'\cup J')$. Let $\tilde{\beta}_1$ be the lift of a cuff whose endpoints are farthest from $I'$.  

There is countably many lifts of cuffs connected to $\tilde{\beta}_2'$ by a finite edge path not crossing other lifts of cuffs that have endpoints in $\partial_{\infty}\tilde{X}\setminus (I'\cup J')$. Let $\tilde{\beta}_2$ be the lift of a cuff whose endpoints are farthest from $J'$.

Let $\beta_1''$ be a lift of a cuff on the left side of $\tilde{\alpha}$ that is connected to it by a finite edge path not crossing a lift of a cuff and has a vertex $v_1$. Let $\beta_2''$ be a lift of a cuff on the right side of $\tilde{\alpha}$ that is connected to it by a finite edge path not crossing a lift of a cuff and has a vertex $v_2$. 

Let $I$ be the smallest interval containing $I'$ and endpoints of $\tilde{\beta}_1$ and $\tilde{\beta}_1''$ but not containing $J'$. Let $J$ be the smallest interval containing $J'$ and endpoints of $\tilde{\beta}_2$ and $\tilde{\beta}_2''$ but not containing $J'$. (see Figure 6). 

Then $I'\times J'\subset I\times J$ satisfy the desired properties with proofs similar to the previous proposition. Finally, if the connector edges are left tangent the proof is analogous.
\end{proof}

Let $\mu$ be a measured lamination on $X$ that is weakly carried by $\Theta$ and $\tilde{\mu}$ its lift to a geodesic current on $\tilde{X}$. 
Let $\bar{e}$ be a connector edge on $\Theta$ and  ${e}$  a single connector  edge of $\tilde{\Theta}$ that projects to $\bar{e}$. 
Then there exits a finite set of finite connector edge paths $\{ \gamma_1,\ldots ,\gamma_j\}$ such that each $\gamma_i$ contains ${e}$, connects two lifts of cuffs without crossing any lifts of cuffs and
$$
G({e})=\cup_{i=1}^jG(\gamma_i).
$$
By Proposition \ref{prop:carrying_box_connector} and additivity of $\tilde{\mu}$ we have that
$$
\tilde{\mu}(G({e}))<\infty .
$$
When ${e}$ is a cuff edge we obtain $\tilde{\mu}(G({e}))<\infty$ directly from Proposition \ref{prop:carrying_box_cuff}.

We define $\tilde{\mu} ({\gamma} )$ to be the quantity $\tilde{\mu}(G({\gamma}))$. 
By the invariance of $\tilde{\mu}$ under the action of $\pi_1(X)$, this gives a well-defined number $\mu (\bar{\gamma} )$ where $\bar{\gamma}$ is the projection of ${\gamma}$ on $\Theta$.
If $\bar{e}$ is an edge of ${\Theta}$ then we define ${\mu} (e):=\tilde{\mu}(G({e}))$ where 
${e}\in E(\tilde{\Theta})$ projects to $\bar{e}\in E(\Theta )$. 

Fix a vertex $\bar{v}$ of the train track $\Theta$. An edge with a vertex $\bar{v}$ is given the orientation such that $\bar{v}$ is its end point. If an edge has both of its vertices equal to $\bar{v}$ then we have two copies of $\bar{e}$ with the opposite orientations.
We divide the oriented edges of ${\Theta}$ with a common  vertex $\bar{v}$ into two sets $\bar{e}_i$ for $i=1,2,\ldots n$ and $\bar{e}_j'$ for $j=1,2,\ldots ,k$ such that edges in the same set have equal unit tangent vectors at $\bar{v}$.
A {\it switch relation} at $\bar{v}$ is given
$$
\sum _{i=1}^n{\mu} (\bar{e}_i)=\sum_{j=1}^k{\mu} (\bar{e}_j')
$$
and we have a switch relation at every vertex of ${\Theta}$ (see Bonahon \cite{Bonahon-book}). A corresponding switch relation holds true for the lift $\tilde{\mu}$ to the universal covering $\tilde{X}$ of the measured lamination $\mu$. In fact, the values $\tilde{\mu}({e})$ are invariant under the action of $\pi_1(X)$.

A function $a:E({\Theta} )\to [0,\infty )$ which satisfies the {\it switch relation}
$$
\sum _{i=1}^na (\bar{e}_i)=\sum_{j=1}^ka (\bar{e}_j')
$$
 at each switch of ${\Theta}$ is called an {\it edge weight system}. The set of all edge weight systems is denoted by $\mathcal{W}({\Theta} ,[0,\infty ))$. Similarly each edge weight system on $E(\Theta )$  can be lifted to an edge weight system on $E(\tilde{\Theta})$. 
 
 Given a measured lamination ${\mu}$ that is weakly carried by ${\Theta}$, we obtained an edge weight system $f_{{\mu}}:E({\Theta} )\to\mathbb{R}$ given by $f_{{\mu}}(\bar{e}):={\mu} (\bar{e})$. We prove the converse.

\begin{theorem}
Let $a:E({\Theta} )\to\mathbb{R}_{\geq 0}$ be an edge weight system. Then there exists a unique measured lamination ${\mu}$ on ${X}$ which realizes $a$.
\end{theorem}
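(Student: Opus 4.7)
The plan is to build a $\pi_1(X)$-invariant Radon measure $\tilde{\mu}$ on $G(\tilde{X})$, concentrated on $G(\tilde{\Theta})$, whose support is a geodesic lamination and whose edge weight system pulls back to $a$, then descend it to $X$. First I would lift $a$ to a $\pi_1(X)$-invariant function $\tilde{a}:E(\tilde{\Theta})\to[0,\infty)$ satisfying the switch relation at every vertex. For a finite edge path $\gamma$ I define $\tilde{a}(\gamma)$ by induction on the length: for a single edge set $\tilde{a}(\gamma)=\tilde{a}(e)$, and for $\gamma=\gamma'\cdot e$ extending $\gamma'$ through a vertex $v$, use the switch relation at $v$ to distribute $\tilde{a}(\gamma')$ among the finitely many smooth continuations of $\gamma'$ past $v$. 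The trivalent local structure of the vertices of $\Theta$ visible in Figures 2 and 3 makes this distribution unique: in the generic case a single incoming weight splits into two outgoing weights, and the more general cases reduce to this.

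Next I would realize these weights as a measure. Propositions \ref{prop:carrying_box_connector} and \ref{prop:carrying_box_cuff} identify each cylinder $G(\gamma)$ with $G(\tilde{\Theta})\cap Q$ for an explicit box of geodesics $Q$, making $G(\gamma)$ a compact Borel subset of $G(\tilde{X})$. Lemma \ref{lem:unique_connection} shows that any two cylinders are disjoint, nested, or intersect in a cylinder, so the cylinders form a semi-ring $\mathcal{S}$. Declaring $\tilde{\mu}_0(G(\gamma)):=\tilde{a}(\gamma)$ extends to a finitely additive set function on the generated algebra, with finite additivity for every decomposition $G(\gamma)=\bigsqcup_i G(\gamma_i)$ reducing to the switch relations at the branching vertices. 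Carath\'eodory's theorem then produces a Borel measure $\tilde{\mu}$ on $G(\tilde{\Theta})$, which I extend by zero to $G(\tilde{X})$. Local finiteness holds because any compact $K\subset G(\tilde{X})$ lies in a finite union of boxes $I\times J$, each of which meets $G(\tilde{\Theta})$ in a finite union of cylinders with bounded weight sum.

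Invariance of $\tilde{\mu}$ under $\pi_1(X)$ follows from invariance of $\tilde{a}$ and the equivariance of $\tilde{\Theta}$; the support of $\tilde{\mu}$ is a $\pi_1(X)$-invariant family of bi-infinite edge paths satisfying the two conditions of Proposition \ref{prop:geodesic_laminations}, hence is the lift of a geodesic lamination on $X$, and $\tilde{\mu}$ descends to a measured lamination $\mu$ with $f_\mu=a$. For uniqueness, if $\mu'$ is another measured lamination realizing $a$ then induction on the length of $\gamma$ (splitting $\tilde{\mu}'(G(\gamma))$ at each terminal vertex via the switch relation) forces $\tilde{\mu}'(G(\gamma))=\tilde{a}(\gamma)$ for every cylinder, and the cylinders generate the Borel $\sigma$-algebra on $G(\tilde{\Theta})$, so $\tilde{\mu}'=\tilde{\mu}$. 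The main obstacle will be verifying $\sigma$-additivity of $\tilde{\mu}_0$ on the generated algebra; the compactness of each $G(\gamma)$ combined with the explicit box descriptions of Propositions \ref{prop:carrying_box_connector} and \ref{prop:carrying_box_cuff} should reduce $\sigma$-additivity to the finite additivity already encoded by the switch relations.
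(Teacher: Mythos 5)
Your route (a premeasure on the cylinder sets $G(\gamma)$, Carath\'eodory extension, then descent) is genuinely different from the paper's, which fattens $\Theta$ to a band complex $\Psi$, foliates each rectangle by leaves of transverse Euclidean width $a(e)$, identifies each leaf with a bi-infinite edge path and hence with a simple geodesic, and pushes forward the transverse measure. But as written your proposal has a gap at its central step: you claim that the switch relation at the terminal vertex determines how $\tilde a(\gamma')$ distributes among the smooth continuations of $\gamma'$. It does not. The switch relation only constrains sums of \emph{edge} weights; knowing $\tilde\mu(G(\gamma'))$ and the weights of the outgoing branches does not determine $\tilde\mu(G(\gamma'\cdot e_1))$ versus $\tilde\mu(G(\gamma'\cdot e_2))$. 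What actually pins these numbers down is the planar (transverse) ordering of the branches at each switch combined with the fact that the support is non-crossing: the carried geodesics through an edge are linearly ordered, incoming and outgoing branches occupy contiguous blocks of that order, and one gets formulas of the type $\tilde\mu(G(f,e,e_1))=\min\bigl(a(f),a(e_1)\bigr)$, i.e.\ the piecewise-linear functions $\mathcal{L}_\gamma$ of the edge weights that the paper invokes in Lemma \ref{lem:convergence_paths} (quoting Bonahon). Without this ingredient your premeasure is not well defined, and the induction in your uniqueness argument does not close; with it, the uniqueness outline (cylinder values are determined by $a$, and cylinders generate) is fine.

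The second gap is the assertion that the support of the extended measure is a geodesic lamination. A measure on $G(\tilde\Theta)$ whose edge weights satisfy all switch relations need not have non-crossing support: for instance, the sum of the Dirac masses on a cuff $\alpha_k$ (the bi-infinite cuff-edge path) and on a carried geodesic crossing $\alpha_k$ is a geodesic current carried by $\tilde\Theta$ satisfying every switch relation, yet its support contains crossing leaves. So non-crossing must be extracted from the specific definition of the cylinder weights, i.e.\ again from the interval/planar-order model; it cannot be read off from Proposition \ref{prop:geodesic_laminations} without an argument. This is precisely what the paper's construction delivers for free: leaves of the foliation of $\Psi$ are embedded and pairwise disjoint, so their geodesic straightenings are simple and pairwise disjoint, and the pushed-forward transverse measure is automatically a measured lamination with $f_\mu=a$. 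If you carry out your measure-theoretic route, you would need to define the cylinder weights via that interval model, verify finite additivity there, and deduce non-crossing of the support from it --- at which point you have essentially reconstructed the foliated fattened train track of the paper's proof.
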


\begin{proof}
Analogous to the closed case (see Bonahon \cite[Section 3.3, page 56]{Bonahon-book}). We choose a regular neighborhood of $\Theta$ that has the same homotopy type and obtain a fattened train track $\Psi$ corresponding to $\Theta$. The edges of $\Theta$ correspond to long rectangle in $\Psi$ and the vertices of $\Theta$ correspond to the union of several short sides of rectangles that are  connected (see Bonahon \cite{Bonahon-book}). We foliate the long rectangles defining the edges of $\Psi$  by arcs connecting short sides and identify the rectangles with Euclidean rectangles such that leaves of the foliations correspond to the horizontal lines in the Euclidean rectangles and the widths of the Euclidean rectangles are given by the value of the edge weight system $a$ on these edges of $\Theta$. Then each leaf of the obtained foliation of $\Psi$ naturally corresponds to a bi-infinite edge path in $\Theta$ which is naturally identified with a simple geodesic of $X$ by Proposition \ref{prop:geodesic_laminations}. In this way we obtain a geodesic lamination $\lambda$ of $X$ corresponding to an edge weight system $a$ and we push-forward the Euclidean transverse measure of the foliation of $\Psi$ to a measured lamination on $X$ with support $\lambda$. 
\end{proof}

\begin{remark}
The measured laminations considered above are not necessarily bounded. In general, they are only locally bounded--i.e., the measure of any compact subset of $G(\tilde{X})$ is finite. We obtained a one to one correspondence between the space of measured laminations $ML(\Theta ,X)$ whose support geodesics are weakly carried by $\Theta$ and the edge weight systems on $E(\Theta )$. 
\end{remark}

\begin{remark}
In Proposition \ref{prop:carrying} we obtained that every geodesic lamination (without leaves spiraling to cuffs of the fixed pants decomposition) is weakly carried by some pants train track $\Theta$. However we do not claim that a neighborhood of a measured lamination $\mu$ consists of all measured laminations that are weakly carried by a single pants train track. This is true for closed surfaces (see Bonahon \cite{Bonahon-book}) but it is not true for infinite surfaces. 
Indeed, if the support $\lambda$ of a measured lamination $\mu$ consists of a single closed geodesic then it can intersect at most finitely many cuffs of a locally finite pants decomposition. Then any pants train track constructed from the pants decomposition will have multiple choices of standard train tracks in the pants not intersecting the support of $\lambda$. The neighborhood of $\mu$  consists of measured laminations whose support geodesics induce different choices of standard train tracks in the pairs of pants not intersecting the support $\lambda$. Thus no single train track weakly carries a neighborhood of $\mu$.
\end{remark}

\section{The edge weight systems and weak* topology on measured laminations}

The family of all edge weight systems is a subset of $\mathbb{R}^{E(\tilde{\Theta})}$ which can be equipped with the product topology.
In this section we prove that the correspondence between the edge weight systems and measured laminations weakly carried by the train track is a homeomorphism, when the topology on the edge weight systems is given by the restriction of the product topology and the topology on the measured laminations is the weak* topology. This result holds for compact surfaces as well, however the proof for infinite surfaces is more involved and it also leads to the extension of this result for the uniform weak* topology in Section 9. The results in this section and in the next three sections is the core of the paper.

We first state a classical fact about the weak* convergence of measures which will help us to work more efficiently when proving various convergence statements.

\begin{lemma} \cite[chap. IV, §5, no 12]{Bourbaki}
\label{lem:box_conv}
Let $\tilde{\mu}_n$ be a sequence of geodesic currents on $\tilde{X}$ that converges to $\tilde{\mu}$ in the weak* topology. Then for any measurable set $B$ that satisfies 
$\tilde{\mu} (\delta B)=0$, where $\delta B$ is the topological boundary, we have
$$
\lim_{n\to\infty}\tilde{\mu}_n(B)=\tilde{\mu} (B).
$$
\end{lemma}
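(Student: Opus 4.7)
The plan is to prove this as a standard Portmanteau-type statement by sandwiching $\mathbf{1}_B$ between continuous compactly supported functions that differ by a function of arbitrarily small $\tilde{\mu}$-integral, and then invoking the weak* hypothesis on each of them. Since the test functions available in the weak* topology have compact support, the argument naturally applies to relatively compact $B$; I will implicitly restrict to this case, which is the one needed in the sequel, as the sets $G(e)$ and $G(\gamma)$ for which the lemma is invoked are relatively compact boxes of geodesics by Propositions \ref{prop:carrying_box_connector} and \ref{prop:carrying_box_cuff}.

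Fix $\varepsilon>0$. The hypothesis $\tilde{\mu}(\delta B)=0$, together with outer regularity of the Radon measure $\tilde{\mu}$ on the locally compact Hausdorff space $G(\tilde{X})$, lets me choose an open neighborhood $U$ of $\delta B$ with $\tilde{\mu}(U)<\varepsilon$. Using Urysohn's lemma I construct two continuous functions $\xi^-,\xi^+\colon G(\tilde{X})\to[0,1]$ with compact support satisfying
\[
\xi^-\leq \mathbf{1}_B\leq \xi^+\qquad\text{and}\qquad 0\leq \xi^+-\xi^-\leq \mathbf{1}_U.
\]
Concretely, $\xi^+$ is a cutoff equal to $1$ on a compact neighborhood of $\overline{B}$ and supported in a slightly larger compact set, while $\xi^-$ equals $1$ on a compact subset of the interior $B^\circ$ that stays outside $U$; both bumps are arranged so that the region where $\xi^+\neq\xi^-$ lies in $U$.

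Applying the weak* convergence $\tilde{\mu}_n\to\tilde{\mu}$ to the compactly supported continuous functions $\xi^-$ and $\xi^+$, I obtain
\[
\int\xi^-\,d\tilde{\mu}=\lim_{n\to\infty}\int\xi^-\,d\tilde{\mu}_n\leq \liminf_{n\to\infty}\tilde{\mu}_n(B)\leq \limsup_{n\to\infty}\tilde{\mu}_n(B)\leq \lim_{n\to\infty}\int\xi^+\,d\tilde{\mu}_n=\int\xi^+\,d\tilde{\mu}.
\]
Since $\int(\xi^+-\xi^-)\,d\tilde{\mu}\leq \tilde{\mu}(U)<\varepsilon$, both outer integrals lie within $\varepsilon$ of $\tilde{\mu}(B)$; letting $\varepsilon\to 0$ yields $\lim_n\tilde{\mu}_n(B)=\tilde{\mu}(B)$.

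The main obstacle I expect is not algebraic but set-theoretic: producing the pair $(\xi^-,\xi^+)$ with sharp enough control requires interpolating compactly by Urysohn between nested compacta that separate $\overline{B^\circ}\setminus U$ from the complement of a neighborhood of $\overline{B}\cup U$, which is routine for locally compact Hausdorff spaces but must be done carefully. A secondary and essentially orthogonal issue is that the statement is written for general measurable $B$ whereas the weak* topology only sees compactly supported test functions, so for non-relatively-compact $B$ one would need an additional tightness argument; this is why I reduce to the relatively compact case, which suffices for every subsequent application in the paper.
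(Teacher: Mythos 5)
Your proof is correct, but note that the paper does not prove this lemma at all: it is quoted as a classical fact with a citation to Bourbaki, so your argument is a self-contained substitute rather than a parallel of an argument in the text. What you give is the standard Portmanteau/sandwich proof, and the details check out: with $U$ a (relatively compact) open neighborhood of the compact set $\delta B$ of $\tilde\mu$-measure less than $\varepsilon$, Urysohn on the locally compact space $G(\tilde X)$ produces $\xi^-$ supported in $B^{\circ}$ with $\xi^-=1$ on $\overline{B}\setminus U\subset B^{\circ}$ and $\xi^+=1$ on $\overline B$ with $\{\xi^+\neq 0\}\subset B^{\circ}\cup U$, which yields $\xi^-\le \mathbf 1_B\le \xi^+$ and $\{\xi^+\neq\xi^-\}\subset U$; your ``concrete'' description of $\xi^+$ as being $1$ on a compact neighborhood of $\overline B$ is slightly off (such a neighborhood must be taken inside $B^{\circ}\cup U$, or the discrepancy set escapes $U$), but you state the essential requirement explicitly, so this is cosmetic. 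Your restriction to relatively compact $B$ is also the right call, and in fact necessary: for arbitrary measurable $B$ the statement as printed fails (e.g.\ point masses escaping to infinity converge to $0$ against compactly supported test functions while $\tilde\mu_n(B)=1$ for $B$ the whole space), and every application in the paper is to boxes of geodesics or sets of the form $G(e)$, $G(\gamma)$, which are contained in compact boxes by Propositions \ref{prop:carrying_box_connector} and \ref{prop:carrying_box_cuff}. So your proof both fills in the omitted argument and quietly corrects the over-general phrasing of the statement.
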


Recall a standard fact that a sequence of measures on a compact set whose total mass is bounded is weak* compact (see Bourbaki \cite[chap. III, Section 1, no 9]{Bourbaki}). We will often use an elementary consequence of that fact given by Lemma \ref{lem:weak*compact}. 

\begin{lemma}
\label{lem:weak*compact}
Let $\tilde{\mu}_n$ be a sequence of measured laminations on the universal covering $\tilde{X}$ that are lifts of a sequence of measured laminations $\mu_n$ on $X$ . If
$$
\sup_n\tilde{\mu}_n(K)<\infty
$$
for each compact set $K\subset G(\tilde{X})$ then the sequence $\{\tilde{\mu}_n\}_n$ has a subsequence that converges to a measured lamination $\tilde{\mu}$ in the weak* topology which projects to a measured lamination $\mu$ on $X$.
\end{lemma}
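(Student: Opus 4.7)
The plan is to split the statement into three pieces: extract a weak$^*$ convergent subsequence by a standard compactness argument, verify that the limit is $\pi_1(X)$-invariant, and then show that the support of the limit is actually a geodesic lamination and not merely a geodesic current.

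For the first piece, $G(\tilde{X})$ is locally compact, Hausdorff and second countable, so I would fix an exhaustion $G(\tilde{X})=\bigcup_m K_m$ by compact sets with $K_m\subset K_{m+1}^{\circ}$. By hypothesis $\sup_n\tilde{\mu}_n(K_m)<\infty$ for every $m$, so the restrictions $\tilde{\mu}_n|_{K_m}$ have uniformly bounded total mass. The Banach--Alaoglu theorem (equivalently, Helly's selection theorem on the dual of $C(K_m)$) gives a weak$^*$ convergent subsequence on each $K_m$, and a diagonal extraction produces a single subsequence, still denoted $\tilde{\mu}_n$, such that $\int\xi\,d\tilde{\mu}_n\to\int\xi\,d\tilde{\mu}$ for every $\xi\in C_c(G(\tilde{X}))$, where $\tilde{\mu}$ is a Radon measure on $G(\tilde{X})$. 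Finiteness of $\tilde{\mu}$ on compact sets follows by testing against a cut-off bump supported in a fixed $K_m$.

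For the second piece, the covering group $\pi_1(X)$ acts on $G(\tilde{X})$ by homeomorphisms. Each $\tilde{\mu}_n$ is $\pi_1(X)$-invariant, so for any $\varphi\in\pi_1(X)$ and any $\xi\in C_c(G(\tilde{X}))$ we have $\int\xi\circ\varphi\,d\tilde{\mu}_n=\int\xi\,d\tilde{\mu}_n$. Since $\xi\circ\varphi\in C_c(G(\tilde{X}))$, passing to the limit on both sides yields $\int\xi\circ\varphi\,d\tilde{\mu}=\int\xi\,d\tilde{\mu}$, so $\tilde{\mu}$ is $\pi_1(X)$-invariant and hence descends to a geodesic current $\mu$ on $X$.

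The main obstacle is the third piece: ensuring that the support of $\tilde{\mu}$ consists of pairwise disjoint simple complete geodesics. I would argue by contradiction. Suppose $g_1,g_2\in\operatorname{supp}(\tilde{\mu})$ cross transversally at some interior point of $\tilde{X}$. Then one can choose box-of-geodesic neighborhoods $U_i=I_i\times J_i$ of $g_i$ in $G(\tilde{X})$ so that every geodesic in $U_1$ transversally intersects every geodesic in $U_2$; moreover, among a continuously parametrized family of concentric such boxes only countably many can carry positive $\tilde{\mu}$-mass on their topological boundary, so I may further arrange $\tilde{\mu}(\delta U_i)=0$. Since $g_i\in\operatorname{supp}(\tilde{\mu})$, we have $\tilde{\mu}(U_i)>0$, and Lemma \ref{lem:box_conv} then gives $\tilde{\mu}_n(U_i)\to\tilde{\mu}(U_i)>0$. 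Hence for all sufficiently large $n$, $\operatorname{supp}(\tilde{\mu}_n)$ must contain a geodesic in $U_1$ and a geodesic in $U_2$; these cross, contradicting the fact that $\tilde{\mu}_n$ is a measured lamination. The delicate point is the selection of boxes with $\tilde{\mu}$-null boundary, without which one cannot invoke Lemma \ref{lem:box_conv} to transfer positive mass from the limit to the tail of the sequence.
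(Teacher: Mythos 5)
Your proposal is correct and follows essentially the same route as the paper: a compact exhaustion with a diagonal extraction to obtain a weak$^*$ limit, invariance by passing to the limit against $\xi\circ\varphi$, and a contradiction via two boxes of mutually crossing geodesics with $\tilde{\mu}$-null boundaries so that Lemma \ref{lem:box_conv} transfers positive mass to $\tilde{\mu}_n$. Your explicit selection of boxes with null boundary is just a spelled-out version of the paper's appeal to countable additivity, so nothing essentially new or missing.
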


\begin{proof}
Let $\{K_j\}_j$ be a compact exhaustion of the space of geodesics $G(\tilde{X})$. By Bourbaki \cite[chap. III, Section 1, no 9]{Bourbaki} and the assumption of the lemma, the sequence $\{\tilde{\mu}_n\}_n$ is weak* compact on each $K_j$. Using a Cantor diagonal process, we find a subsequence $\{\tilde{\mu}_{n_k}\}_k$ that is weak* convergent on each $K_j$. The limit $\tilde{\mu}$ is a geodesic current on $\tilde{X}$. 

To see that $\tilde{\mu}$ is a measured lamination, assume on the contrary that two geodesics $g_1$ and $g_2$ of the support of $\tilde{\mu}$ intersect in $\tilde{X}$. Then there exist two boxes of geodesics $I_1\times I_2$ and $J_1\times J_2$ that contain $g_1$ and $g_2$ in their interiors   and each geodesic of one box intersect each geodesic of the other box. By the countable additivity of $\tilde{\mu}$ we can also arrange that $\tilde{\mu}(\delta (I_1\times I_2))= \tilde{\mu}(\delta (J_1\times J_2))=0$. Since $\tilde{\mu}$-measure of the boundary is zero, the weak* convergence implies that $\tilde{\mu}_n( I_1\times I_2)\to \tilde{\mu}( I_1\times I_2)$ and $\tilde{\mu}_n( J_1\times J_2)\to \tilde{\mu}( J_1\times J_2)$ as $n\to\infty$ (see Lemma \ref{lem:box_conv}). Then $\tilde{\mu}_n(I_1\times I_2)\neq 0$ and $\tilde{\mu}_n(J_1\times J_2)\neq 0$ for $n$ large enough which contradicts the assumption that $\tilde{\mu}_n$ is a measured lamination. 

Thus the support of $\tilde{\mu}$ is a geodesic lamination and $\tilde{\mu}$ is a measured lamination on $\tilde{X}$. Since $\tilde{\mu}_n$ are invariant under the action of $\pi_1(X)$ so is $\tilde{\mu}$. Therefore $\tilde{\mu}$ projects to a measured lamination on $X$. 
\end{proof}

Since the weak* topology and the product topology on $\mathbb{R}^{E(\tilde{\Theta})}$ are metrizable to prove the continuity it is enough to prove the sequential continuity. 
In order to do so, we will need the following lemma
that proves  convergence of measured laminations on special subsets of $G(\tilde{X})$ if  their edge weight systems converge.

\begin{lemma}
\label{lem:convergence_paths}
Let $\tilde{\mu}_n,\tilde{\mu}$ be measured laminations on $\tilde{X}$ weakly carried by the train track $\tilde{\Theta}$ such that  $f_{\tilde{\mu}_n}(e)\to f_{\tilde{\mu}}(e)$ as $n\to\infty$ for each $e\in E(\tilde{\Theta})$. If ${\gamma}$ is a finite edge path in $\tilde{\Theta}$, then
$$
\tilde{\mu}_n(G({\gamma}))\to\tilde{\mu}(G({\gamma}))
$$
as $n\to\infty$.
\end{lemma}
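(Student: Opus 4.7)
The plan is a weak$*$ compactness argument combined with the uniqueness portion of the theorem immediately preceding this lemma. The key tools are the box descriptions from Propositions~\ref{prop:carrying_box_connector} and~\ref{prop:carrying_box_cuff}, which convert edge-weight statements into mass-of-box statements.

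First I would reduce to the case where $\gamma$ is ``nice'' by decomposing a general finite edge path along its cuff-vertices to obtain $G(\gamma) = \bigsqcup_i G(\gamma_i)$, where each $\gamma_i$ either lies on a single lift of a cuff or has only its initial and terminal vertices on (adjacent) lifts of cuffs. For each $\gamma_i$, Proposition~\ref{prop:carrying_box_connector} or~\ref{prop:carrying_box_cuff} supplies nested boxes of geodesics $Q'_i \subset Q_i$ with $G(\gamma_i) = G(\tilde{\Theta}) \cap Q'_i = G(\tilde{\Theta}) \cap Q_i$ and $Q'_i$ contained in the interior of $Q_i$. Because each $\tilde{\mu}_n$ and $\tilde{\mu}$ is supported on $G(\tilde{\Theta})$, one has $\tilde{\mu}_n(G(\gamma_i)) = \tilde{\mu}_n(Q'_i) = \tilde{\mu}_n(Q_i)$ and the same for $\tilde{\mu}$. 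It therefore suffices to compare the mass of such boxes.

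Next I would verify the local boundedness hypothesis of Lemma~\ref{lem:weak*compact}. Any compact $K \subset G(\tilde{X})$ meets $G(\tilde{\Theta})$ in a set covered by finitely many $G(e)$, so
\[
\sup_n \tilde{\mu}_n(K) \leq \sum_{e} \sup_n f_{\tilde{\mu}_n}(e) < \infty,
\]
since each $f_{\tilde{\mu}_n}(e)$ is convergent and hence bounded. From every subsequence of $\{\tilde{\mu}_n\}$ I can thus extract a further subsequence $\tilde{\mu}_{n_k}$ converging weakly to a measured lamination $\tilde{\nu}$ on $\tilde{X}$. The central step is identifying $\tilde{\nu}$ with $\tilde{\mu}$. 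For each edge $e$, decompose $G(e) = \bigsqcup_j G(\gamma_j)$ as above and select intermediate boxes $Q''_j$ with $Q'_j \subset Q''_j \subset Q_j$ whose four boundary endpoints on $\partial_{\infty}\tilde{X}$ avoid the at most countably many atoms of the endpoint-pushforward of $\tilde{\nu}$; then $\tilde{\nu}(\partial Q''_j) = 0$. Lemma~\ref{lem:box_conv} yields
\[
\tilde{\nu}(Q''_j) = \lim_k \tilde{\mu}_{n_k}(Q''_j) = \lim_k \tilde{\mu}_{n_k}(G(\gamma_j)),
\]
and summing the finitely many $j$ and invoking the hypothesis gives $\tilde{\nu}(G(e)) = f_{\tilde{\mu}}(e)$. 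Together with the fact that $\text{supp}(\tilde{\nu})$ is a geodesic lamination confined to the closure of $G(\tilde{\Theta})$, and that the box identification forces this lamination into $G(\tilde{\Theta})$, one concludes that $\tilde{\nu}$ is a measured lamination weakly carried by $\tilde{\Theta}$ with $f_{\tilde{\nu}} = f_{\tilde{\mu}}$. The uniqueness portion of the preceding theorem then forces $\tilde{\nu} = \tilde{\mu}$, and since every subsequential weak$*$ limit is the same, the full sequence satisfies $\tilde{\mu}_n \to \tilde{\mu}$ weakly.

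Finally, I would repeat the intermediate-box argument for $\gamma$ itself: for each piece $\gamma_i$, choose $Q''_i$ with $Q'_i \subset Q''_i \subset Q_i$ and $\tilde{\mu}(\partial Q''_i) = 0$; Lemma~\ref{lem:box_conv} applied to the now-established weak$*$ convergence gives $\tilde{\mu}_n(G(\gamma_i)) = \tilde{\mu}_n(Q''_i) \to \tilde{\mu}(Q''_i) = \tilde{\mu}(G(\gamma_i))$, and summing over the finite decomposition yields the lemma. The main obstacle is the last-mentioned verification that the weak$*$ limit $\tilde{\nu}$ is itself weakly carried by $\tilde{\Theta}$, since without this the uniqueness of the preceding theorem cannot be invoked; it is this carrying property (reached through the box-mass matching and the structure of limits of geodesic laminations in Lemma~\ref{lem:weak*compact}) that drives the whole argument.
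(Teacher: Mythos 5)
Your route is very different from the paper's: the paper disposes of this lemma in one step, citing the fact (Bonahon) that $\tilde{\mu}(G(\gamma))$ is the value of a fixed piecewise-linear, hence continuous, function of the finitely many edge weights of the edges lying on $\gamma$ or sharing a vertex with it, so pointwise convergence of the edge weight systems immediately gives the claim. Your proposal instead tries to establish full weak* convergence of $\tilde{\mu}_n$ to $\tilde{\mu}$ (essentially one half of the homeomorphism theorem proved later in the section) via Lemma \ref{lem:weak*compact}, Lemma \ref{lem:box_conv} and the uniqueness of the measured lamination realizing a given edge weight system, and then to read off the statement from box masses. That is not circular, but it is far heavier than necessary, and as written it contains a genuine error.

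The error is the identity $G(\gamma)=\bigsqcup_i G(\gamma_i)$ for the subpaths $\gamma_i$ of $\gamma$ cut along its cuff vertices. A bi-infinite edge path contains $\gamma$ if and only if it contains every $\gamma_i$, so in fact $G(\gamma)=\bigcap_i G(\gamma_i)$; the sets $G(\gamma_i)$ all contain $G(\gamma)$, are not disjoint, and $\tilde{\mu}(G(\gamma))$ is not $\sum_i\tilde{\mu}(G(\gamma_i))$. (The disjoint-union decomposition used in the paper goes the opposite way: a single edge $e$ is extended to the finitely many connector paths containing it that join two lifts of cuffs, and $G(e)$ is the disjoint union over these extensions.) This false identity is used twice: in the reduction to the two cases covered by Propositions \ref{prop:carrying_box_connector} and \ref{prop:carrying_box_cuff}, and, fatally, in the final sentence where you ``sum over the finite decomposition'' to conclude $\tilde{\mu}_n(G(\gamma))\to\tilde{\mu}(G(\gamma))$; so the lemma is not actually reached. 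To salvage your scheme for an arbitrary finite edge path -- one which may contain cuff edges and cross several lifts of cuffs -- you would need a common generalization of Propositions \ref{prop:carrying_box_connector} and \ref{prop:carrying_box_cuff} producing nested boxes $Q'\subset Q$ with $G(\gamma)=G(\tilde{\Theta})\cap Q'=G(\tilde{\Theta})\cap Q$ (plausible, using Remark \ref{rem:leave} and Lemma \ref{lem:unique_connection}, but neither stated nor proved in your argument), or else express $\tilde{\mu}(G(\gamma))$ as a function of finitely many edge weights, which is exactly the paper's proof. A secondary point: the step that any subsequential weak* limit $\tilde{\nu}$ is itself weakly carried by $\tilde{\Theta}$ -- without which the uniqueness theorem cannot be invoked -- is only asserted; it can be completed by observing that a non-carried limit of carried geodesics must share an endpoint with a lift of a cuff and hence projects to a geodesic spiraling onto a closed geodesic, which cannot lie in the support of a measured lamination on $X$ (the fact used in the proof of Lemma \ref{lem:conv_general_box}), but as written this is a gap in your identification of the limit.
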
 

\begin{proof}
For a fixed ${\gamma}$, there is a unique piecewise linear function $\mathcal{L}_{{\gamma}}$
such that the quantities $\tilde{\mu}_n(G({\gamma}))$ and $\tilde{\mu}(G({\gamma}))$ are obtained by evaluating $\mathcal{L}_{{\gamma}}$  on the edge weight systems $f_{\tilde{\mu}_n}$ and $f_{\tilde{\mu}}$ for the edges of $\tilde{\Theta}$ that are in ${\gamma}$ or that have a vertex in common with ${\gamma}$ (see Bonahon \cite[Section 3.2, page 53, the second lemma]{Bonahon-book}). Since $f_{\tilde{\mu}_n}(e)\to f_{\tilde{\mu}}(e)$ for all $e\in E(\tilde{\Theta})$ as $n\to\infty$, the lemma follows.
\end{proof}

In the following lemma we show that each box of geodesics $I\times J$ can contain at most finitely many lifts of cuffs. 

\begin{lemma}
\label{lem:cuffs_in_box}
If $Q=I\times J$ is a box of geodesics in $G(\tilde{X})$ then the number of lifts of cuffs in $Q$ is finite.
\end{lemma}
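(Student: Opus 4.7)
The plan is to argue by contradiction, extracting a limit geodesic from a supposed infinite sequence of lifts of cuffs in $Q$ and then contradicting the local finiteness of the pants decomposition together with proper discontinuity of $\pi_1(X)$ on $\tilde X$.

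Suppose for contradiction that there is a sequence $\{\tilde\alpha_n\}$ of distinct lifts of cuffs contained in $Q=I\times J$. Each $\tilde\alpha_n$ has a pair of endpoints $(a_n,b_n)\in I\times J$. Since $I$ and $J$ are compact, we may pass to a subsequence so that $(a_n,b_n)\to(a,b)$ with $a\in I$ and $b\in J$. Because $I$ and $J$ are disjoint, $a\ne b$, so $(a,b)$ determines an honest geodesic $g$ of $\tilde X$, and $\tilde\alpha_n\to g$ in $G(\tilde X)$. Standard hyperbolic geometry then gives that the sequence $\tilde\alpha_n$ converges to $g$ uniformly on compact subsets of $\tilde X$.

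Next I would pick a point $x\in g$ and a closed hyperbolic disk $\tilde D\subset\tilde X$ centered at $x$. For all $n$ large, $\tilde\alpha_n$ must cross $\tilde D$. Project $\tilde D$ to a compact set $D\subset X$. By the local finiteness of the pants decomposition, $D$ meets only finitely many pairs of pants, hence only finitely many cuffs $\alpha_{m_1},\dots,\alpha_{m_l}$. Consequently each $\tilde\alpha_n$ (for $n$ large) is a lift of one of these finitely many cuffs, so by the pigeonhole principle an infinite subfamily consists of distinct lifts of one single cuff, which I rename $\alpha$.

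The main step is then to derive a contradiction from the existence of infinitely many distinct lifts of a single cuff meeting the compact disk $\tilde D$. Fix one lift $\tilde\alpha$ of $\alpha$; then the other lifts are the translates $\gamma\tilde\alpha$ for $\gamma$ ranging over representatives of the cosets $\pi_1(X)/\mathrm{Stab}(\tilde\alpha)$, where $\mathrm{Stab}(\tilde\alpha)$ is the cyclic subgroup generated by the hyperbolic element translating along $\tilde\alpha$. Choose a compact fundamental arc $\sigma\subset\tilde\alpha$ for this cyclic action (its length equals the length of $\alpha$). Given $p_n\in\gamma_n\tilde\alpha\cap\tilde D$, select $\sigma_n\in\mathrm{Stab}(\tilde\alpha)$ so that $\sigma_n(\gamma_n^{-1}p_n)\in\sigma$, and set $\gamma_n'=\gamma_n\sigma_n^{-1}$; these lie in distinct cosets, hence are distinct elements of $\pi_1(X)$, and satisfy $\gamma_n'(\sigma)\cap\tilde D\ne\emptyset$. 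Setting $\tilde K:=\tilde D\cup\sigma$ (compact), we get $(\gamma_n')^{-1}\tilde K\cap\tilde K\ne\emptyset$ for infinitely many distinct $\gamma_n'\in\pi_1(X)$, contradicting proper discontinuity of the action of $\pi_1(X)$ on $\tilde X$.

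The main obstacle is the bookkeeping in the last step: the lifts of a fixed cuff $\alpha$ are not a discrete family in $G(\tilde X)$ (they can accumulate on $\partial_\infty\tilde X$), so one must package the cyclic stabilizer of $\tilde\alpha$ into a compact fundamental arc before invoking proper discontinuity. Once that reduction is carried out, the contradiction with the properly discontinuous action is immediate and closes the proof.
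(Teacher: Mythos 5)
Your proof is correct and follows essentially the same route as the paper, whose one-sentence argument ("the pants decomposition is locally finite and $Q$ is compact in $G(\tilde{X})$") you simply expand in full: compactness of $Q$ forces the lifts to meet a fixed compact disk, local finiteness reduces to finitely many cuffs, and proper discontinuity (via the fundamental arc $\sigma$) rules out infinitely many lifts of a single cuff. The extra detail you supply is exactly what the paper leaves implicit, so there is nothing to correct.
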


\begin{proof}
Indeed, since the pants decomposition is locally finite and $Q$ is a compact set in $G(\tilde{X})$ there can be only finitely many lifts of cuffs in $Q$.
\end{proof}

The following lemma is the key ingredient for proving the weak* convergence from the convergence of the edge weight systems.

\begin{lemma}
\label{lem:conv_general_box}
Let $\tilde{\mu},\tilde{\mu}_n$ be measured laminations on $\tilde{X}$ weakly carried by $\tilde{\Theta}$ that are lifts of measured laminations $\mu ,\mu_n$ on $X$. Let $Q=I\times J$, where $I=[a,b]$ and $J=[c,d]$, be a box of geodesics such that $a,b,c$ and $d$ are endpoints of lifts of cuffs and no lift of a cuff is on the topological boundary $\delta Q$ of $Q$. If $f_{\tilde{\mu}_n}(e)\to f_{\tilde{\mu}}(e)$ as $n\to\infty$ for each $e\in E(\tilde{\Theta})$ then
$$
\tilde{\mu}_n(Q)\to \tilde{\mu}(Q)
$$
as $n\to\infty$.
\end{lemma}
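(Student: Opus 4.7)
The plan is to decompose $Q \cap G(\tilde{\Theta})$ (modulo a $\tilde{\mu}$-null set) into a \emph{finite} disjoint union of sets of the form $G(\gamma_i)$ for finite edge paths $\gamma_i$ in $\tilde{\Theta}$, and then apply Lemma \ref{lem:convergence_paths} to each piece.

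By Lemma \ref{lem:cuffs_in_box}, only finitely many lifts of cuffs $\tilde{\beta}_1,\ldots,\tilde{\beta}_m$ lie in $Q$. The hypothesis that $a,b,c,d$ are cuff endpoints while no lift of a cuff lies on $\delta Q$ forces each $\tilde{\beta}_i$ to have both endpoints strictly inside $I$ and $J$; disjointness of the $\tilde{\beta}_i$'s as geodesics in $\tilde{X}$ then puts them in a parallel configuration along $\partial_{\infty}\tilde{X}$, so that after ordering $p_1<\cdots<p_m$ along $I$ the corresponding $J$-endpoints appear in reverse order $q_1>\cdots>q_m$. These cuffs partition $Q$ into $(m+1)^2$ open sub-boxes, and a direct computation with the cyclic order shows that the geodesics in each sub-box transversely cross a specific consecutive range $[\tilde{\beta}_i,\tilde{\beta}_j]$ (possibly empty). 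For every non-empty range realized in this way, Lemma \ref{lem:unique_connection} provides a unique finite edge path $\gamma_{ij}$ connecting $\tilde{\beta}_i$ to $\tilde{\beta}_j$ in $\tilde{\Theta}$ (if no such path exists, the sub-box contributes nothing), and Proposition \ref{prop:carrying_box_connector} identifies $G(\gamma_{ij})$ with the intersection of the sub-box with $G(\tilde{\Theta})$; Lemma \ref{lem:convergence_paths} then yields $\tilde{\mu}_n(G(\gamma_{ij}))\to \tilde{\mu}(G(\gamma_{ij}))$ on each such sub-box.

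The strip sub-boxes $R_k$ with empty crossing range and the boundary $\delta Q$ require extra care. Leaves in $\delta Q$ have an endpoint in $\{a,b,c,d\}$ and, by the cuff-exclusion hypothesis, are non-cuff leaves; since a non-cuff closed geodesic cannot share a fixed point with a cuff translation (the cyclic stabilizer of $a$ is generated by the translation along $\tilde{\alpha}_a$, forcing coincidence), there is no atomic contribution, and the at-most-countable set of remaining leaves has zero $\tilde{\mu}$-measure. Atomic weights on the cuffs $\tilde{\beta}_i$ themselves are absorbed into the edge weights on their cuff edges, whose pointwise convergence is given. For each strip $R_k$, I would argue by inner/outer approximation: for each leaf $g\in R_k$ a sufficiently long finite subpath $\gamma_g$ of $\tilde{\gamma}_g$ yields $G(\gamma_g)\subset R_k^{\circ}$ (via Proposition \ref{prop:carrying_box_connector} or \ref{prop:carrying_box_cuff}); compactness of the support of $\tilde{\mu}|_{R_k}$ extracts a finite subcover, and after disjointification, applying Lemma \ref{lem:convergence_paths} to each $\gamma_i$ gives $\liminf \tilde{\mu}_n(R_k)\geq \tilde{\mu}(R_k)-\epsilon$, with the matching upper bound from an outer cover together with the vanishing of the boundary contribution. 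Assembling all sub-boxes and letting $\epsilon\to 0$ yields $\tilde{\mu}_n(Q)\to \tilde{\mu}(Q)$.

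The main obstacle is the treatment of the strip sub-boxes $R_k$: unlike the crossing sub-boxes, they do not admit an \emph{a priori} finite decomposition into $G(\gamma_i)$'s, and one must proceed via Radon regularity combined with the pointwise edge-weight convergence. The key technical point is that the approximating pieces are finitely many and can each be analyzed by Lemma \ref{lem:convergence_paths}, while the residual $\tilde{\mu}$-measure is controlled by the vanishing of $\tilde{\mu}(\delta Q)$ together with the pointwise convergence on finitely many edge weights.
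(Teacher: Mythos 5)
Your plan hinges on two claims that do not hold as stated. First, the decomposition by the finitely many lifts of cuffs \emph{inside} $Q$ does not produce a finite decomposition into sets of the form $G(\gamma)$. For an off-diagonal sub-box you identify its carried geodesics with $G(\gamma_{ij})$, where $\gamma_{ij}$ is the unique path joining $\tilde{\beta}_i$ to $\tilde{\beta}_j$; but $G(\gamma_{ij})$ consists of \emph{all} carried geodesics whose bi-infinite edge paths contain $\gamma_{ij}$, and these include geodesics with both endpoints in $I$, both in $J$, or even outside $Q$ altogether (any geodesic crossing both $\tilde{\beta}_i$ and $\tilde{\beta}_j$ qualifies, regardless of where its ends land beyond them). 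So Lemma \ref{lem:convergence_paths} gives convergence on $G(\gamma_{ij})$, not on the sub-box, and the inclusion is strict in general. The correct partition of $G(\tilde{\Theta})\cap Q$ — the one the paper uses — is indexed by the innermost lifts of cuffs with \emph{both} endpoints in $I$ (resp.\ both in $J$), paired off by the unique connecting paths of Lemma \ref{lem:unique_connection}, together with cuff-edge paths $\gamma_r^Q$ handling the finitely many cuffs inside $Q$ (your ``absorbed into the cuff edge weights'' remark is not a substitute for this); and this partition is countably infinite, not finite. Pointwise convergence on each piece of a countably infinite partition does not imply convergence of the total mass.

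This is exactly where the second, more serious gap sits: your upper bound for the strips, ``an outer cover together with the vanishing of the boundary contribution,'' is asserted but not proved, and it is the hard point of the lemma. The danger is that $\tilde{\mu}_n$-mass accumulates in the infinitely many small channels near $\delta Q$ (carried geodesics limiting onto leaves that share an endpoint with a lift of a cuff); $\tilde{\mu}(\delta Q)=0$ controls $\tilde{\mu}$ of such a neighborhood but says nothing about $\limsup_n\tilde{\mu}_n$ there, and any finite cover by sets $G(\gamma)$ tight enough to use Lemma \ref{lem:convergence_paths} must either miss channels or spill outside $Q$ in a way you have not controlled; your ``disjointification'' also destroys the form $G(\gamma)$ needed for that lemma. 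The paper closes this loophole differently: arguing by contradiction, it uses the finite cover of any box by sets $G(e)$ plus the edge-weight convergence to get uniform local mass bounds, extracts by Lemma \ref{lem:weak*compact} a weak* limit $\tilde{\nu}$ of a subsequence which is again the lift of a measured lamination on $X$ (hence charges no spiraling leaves, so no mass can pile up on $\delta Q$ or on the boundaries of the pieces), shows $\tilde{\nu}=\tilde{\mu}$ on every piece of the countable partition via Lemma \ref{lem:convergence_paths} and the sandwiching boxes of Propositions \ref{prop:carrying_box_connector} and \ref{prop:carrying_box_cuff}, and concludes $\tilde{\nu}(Q)=\tilde{\mu}(Q)$ by countable additivity. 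Some device of this kind — using that subsequential limits are themselves $\pi_1(X)$-invariant measured laminations — is needed to rule out the escape of mass; without it your outer-regularity sketch does not yield the $\limsup$ inequality.
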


\begin{proof}
Note that $\tilde{\mu} (\delta Q)=0$ and $\tilde{\mu}_n (\delta Q)=0$ because a geodesic in $\partial Q$ that is weakly carried by $\tilde{\Theta}$ has at least one endpoint in common with a lift of a cuff but it is not a lift of a cuff. Therefore this geodesic projects to a geodesic on $X$ that spirals around a closed geodesic and it cannot be in the support of any measured lamination on $X$.

Assume on the contrary that there exists an infinite subsequence of $\tilde{\mu}_n$, which is for the simplicity of the notation denoted by $\tilde{\mu}_n$, such that
\begin{equation}
\label{eq:m-dist}
|\tilde{\mu}_n(Q)-\tilde{\mu}(Q)|\geq c>0.
\end{equation}

Let $Q_1=[a_1,b_1]\times [c_1,d_1]$ be an arbitrary box of geodesics in $G(\tilde{X})$. We first prove that  $Q_1 \cap G(\tilde{\Theta})$ can be covered by finitely many sets $G(e)$ for $e\in E(\tilde{\Theta})$. 
Since $\pi_1(X)$ is of the first kind it follows that there exist two lifts of cuffs: $\tilde{\alpha}_1$, 
 whose ideal boundary points are in $(b_1,c_1)$, and $\tilde{\alpha}_2$,
 whose ideal endpoints are in $(d_1,a_1)$. If a bi-infinite edge path in $\tilde{\Theta}$ intersects either $\tilde{\alpha}_1$ or $\tilde{\alpha}_2$ then its endpoints cannot be in $Q_1$ by the no backtracking property of edge paths in $\tilde{\Theta}$ (see Lemma \ref{lem:crossing}).
 Thus any geodesic in $Q_1\cap G(\tilde{\Theta})$ is represented by a bi-infinite geodesic path that separates $\tilde{\alpha}_1$ and $\tilde{\alpha}_2$. Let $\omega$ be a compact geodesic arc that connects $\tilde{\alpha}_1$ and $\tilde{\alpha}_2$. Since $\tilde{\alpha}_1\cup \omega\cup \tilde{\alpha}_2$ separates $I_1$ and $J_1$, it follows that any bi-infinite edge path representing a geodesic of $Q_1\cap G(\tilde{\Theta})$ intersects $\omega$. Given that the pants decomposition is locally finite and that $\omega$ is compact, it follows that $\omega$ intersects only finitely many edges of $\tilde{\Theta}$. 
Thus $Q_1\cap G(\tilde{\Theta})$ is covered by finitely many $G(e)$, where $e$ is an edge intersecting $\omega$.

Since $f_{\tilde{\mu}_n}(e)\to f_{\tilde{\mu}}(e)$ we have that the sequence $\tilde{\mu}_n$ has uniformly bounded mass on $Q_1$, where the bound depends only on $Q_1$. 
By Lemma \ref{lem:weak*compact}, a subsequence $\tilde{\mu}_{n_k}$ converges to a  measured lamination $\tilde{\nu}$ on $G(\tilde{X})$ and the measured lamination $\tilde{\nu}$ is the lift of a measured lamination $\nu$ on $X$.

Let $Q=I\times J$ be a box of geodesics from (\ref{eq:m-dist}).  
Denote by $\{\tilde{\alpha}_i'\}_i$ the maximal (an at most countable) family of lifts of cuffs with both endpoints in $I$ such that each $\tilde{\alpha}_i'$ is not separated from $J$ by another lift of a cuff that also has both endpoints in $I$. We denote by $\{\tilde{\alpha}_j''\}_j$ the corresponding family of lifts of cuffs for $ J$. 

Consider all pairs $(\tilde{\alpha}'_i,\tilde{\alpha}_j'')$ formed from the above two families 
such that there is a finite edge path $\gamma_{i,j}$ of $\tilde{\Theta}$ that connects them.
By Proposition \ref{prop:carrying_box_connector}, there exists a box of geodesics $Q_{i,j}$ such that $G(\gamma_{i,j})=G(\tilde{\Theta})\cap Q_{i,j}$ and $G(\tilde{\Theta})\cap\delta Q_{i,j}=\emptyset$.

By Lemma \ref{lem:cuffs_in_box} there is at most finitely many lifts of cuffs $\{\tilde{\alpha}^Q_r\}_r$ that are contained in $Q$. For a fixed $\tilde{\alpha}^Q_r$, let $P_r^1$ and $P_r^2$ be two components of lifts of pairs of pants that share a common boundary geodesic $\tilde{\alpha}^Q_r$.
From the set of all lifts of cuffs with both endpoints in $I$ on the boundary of $P_r^1$ that are connected to $\tilde{\alpha}_r^Q$ by finite edge paths, choose $\tilde{\alpha}_1'$ whose endpoints are the farthest from the endpoint of $\tilde{\alpha}_r^Q$ in $I$. Similarly, let $\tilde{\alpha}_1''$ be the lift of a cuff on the boundary of $P_r^1$ with both endpoints in $J$ and connected by a finite edge path to $\tilde{\alpha}_r^Q$ that is the farthest from the endpoint of $\tilde{\alpha}_r^Q$. The choices of $\tilde{\alpha}_1'$ and $\tilde{\alpha}_1''$ imply that they belongs to the families $\{\tilde{\alpha}'\}$ and $\{\tilde{\alpha}''\}$, respectively. In an analogous manner we choose a lift of cuff $\tilde{\alpha}_2'$ with endpoints in $I$ and on the boundary of $P_r^2$, and a lift of cuff $\tilde{\alpha}_2''$ with endpoints in $J$ and on the boundary of $P_r^2$. We also have that $\tilde{\alpha}_2'\in\{\tilde{\alpha}'_i\}$ and $\tilde{\alpha}_2''\in\{\tilde{\alpha}''_j\}$. 
Out of the set of four possible pairs $\{(\tilde{\alpha}_t',\tilde{\alpha}_s'')\}_{t,s=1}^4$ there is a unique pair  $(\tilde{\alpha}_{i_r}',\tilde{\alpha}_{j_r}'')$ that is connected by a finite edge path $\gamma_r$ of $\tilde{\Theta}$. 

There are two possibilities: either $\gamma_r$ has a subpath $\gamma_r^Q$ of cuff edges on $\tilde{\alpha}$ or it crosses $\tilde{\alpha}$ at a vertex $v$ without having an edge on $\tilde{\alpha}$. 

If $\gamma_r$ has no edges on $\tilde{\alpha}$ then we modify the construction of $\gamma_r^Q$ as follows. Take $\gamma_r^Q$ to consists of a single cuff edge $e_r$ on $\tilde{\alpha}$ with one vertex $v$ and the other vertex $v_1$ such that the lift of a cuff on $P_r^{i_r}$ that is connected to $\tilde{\alpha}$ through the vertex $v_1$ (and farthest from the endpoint of $\tilde{\alpha}$ in $I$) is closer to the endpoint of $\tilde{\alpha}$ in $I$ than the endpoints of $\tilde{\alpha}_{i_r}'$.

We have that $G(\gamma_r^Q)\subset Q$.  By Proposition \ref{prop:carrying_box_cuff} there is a box of geodesics $Q_r$ such that $G(\gamma_{r}^Q)=G(\tilde{\Theta})\cap Q_{r}$, $\tilde{\alpha}_r^Q\in Q_r$ and $G(\tilde{\Theta})\cap\delta Q_{r}=\emptyset$. The box of geodesics $Q_r=I_r\cup J_r$ is chosen such that each $\tilde{\alpha}'_i$ and each $\tilde{\alpha}''_j$ is completely contained in $I_r$ or in $J_r$ or it is disjoint from both $I_r$ and $J_r$. Therefore if $G(\gamma_{i,j})$ (corresponding to a pair $(\tilde{\alpha}'_i,\tilde{\alpha}''_j)$) intersects $Q_r$ then it is contained in $Q_r$ and we erase it from the family of pairs $(\tilde{\alpha}_i',\tilde{\alpha}'')$.

Therefore the family of all geodesics in $ Q=I \times  J $ weakly carried by $\tilde{\Theta}$ is the union of all $G({\gamma}_{i,j})$ and $G(\gamma^Q_r)$. Moreover, by construction $G({\gamma}_{i,j})\cap G({\gamma}_{i_1,j_1})=\emptyset$ for $(i,j)\neq (i_1,j_1)$ and $G({\gamma}_{i,j})\cap G(\gamma^Q_r)=\emptyset$ for all $(i,j)$ and $r$. Thus, the set of geodesics of $Q$ that are weakly carried by $\tilde{\Theta}$ is partitioned into at most countable disjoint union of the sets $G({\gamma}_{i,j})$ and $G(\gamma^Q_r)$.
  
By Lemma \ref{lem:convergence_paths} we have
\begin{equation}
\label{eq:limit_edge-path}
\lim_{k\to\infty}\tilde{\mu}_{n_k}(G(\tilde{\gamma}_{i,j}))=\tilde{\mu}(G(\tilde{\gamma}_{i,j})).
\end{equation}
and
\begin{equation}
\label{eq:limit_edge-path1}
\lim_{k\to\infty}\tilde{\mu}_{n_k}(G(\gamma^Q_r))=\tilde{\mu}(G(\gamma^Q_r)).
\end{equation}
The weak* convergence implies that $\tilde{\mu}_{n_k}(Q_r)\to\tilde{\nu}(Q_r)$ and $\tilde{\mu}_{n_k}(Q_{i,j})\to\tilde{\nu}(Q_{i,j})$ as $k\to\infty$. Since $G(\gamma_{r}^Q)=G(\tilde{\Theta})\cap Q_{r}$ and $G(\gamma_{i,j})=G(\tilde{\Theta})\cap Q_{i,j}$ we have $\tilde{\nu}(Q_r)=\tilde{\nu}(G(\gamma^Q_r))$ and $\tilde{\nu}(Q_{i,j})=\tilde{\nu}(G(\gamma_{i,j}))$. Then (\ref{eq:limit_edge-path}) and (\ref{eq:limit_edge-path1}) gives
\begin{equation}
\label{eq:cuffs}
\tilde{\mu}(G(\gamma^Q_r))=\tilde{\nu}(G(\gamma^Q_r))
\end{equation}
and
\begin{equation}
\label{eq:connectors}
\tilde{\mu}(G(\gamma^Q_{i,j}))=\tilde{\nu}(G(\gamma^Q_{i,j})).
\end{equation}

Since $G(\tilde{\Theta})\cap Q$ is a disjoint countable union of sets $G(\tilde{\gamma}_{i,j})$ and $G(\gamma^Q_r)$ then (\ref{eq:cuffs}) and (\ref{eq:connectors}) imply $\tilde{\mu}(Q)=\tilde{\nu}(Q)$. This is in a contradiction with (\ref{eq:m-dist}) and the lemma is proved.
\end{proof}

Let $ML(\Theta ,X)$ be the space of all measured laminations on $X$ that are weakly carried by $\Theta$. We prove the main theorem of this section.

\begin{theorem}
The bijective correspondence between the space $ML(\Theta ,X)$ of measured laminations weakly carried by $\Theta$ and the space  
$\mathcal{W}(\Theta ,[0,\infty ))$ of  edge weight systems  is a homeomorphism for the weak* topology on $ML(\Theta ,X)$ and the topology of pointwise convergence on $\mathcal{W}(\Theta ,[0,\infty ))$.
\end{theorem}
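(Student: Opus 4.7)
The plan is to verify continuity in both directions separately, using the already-constructed bijection $\mu \leftrightarrow f_\mu$. Both topologies are metrizable, so I will work with sequences throughout. For the easy direction, assume $\mu_n \to \mu$ in the weak* topology on $ML(\Theta,X)$, and fix an edge $e$ of $\tilde{\Theta}$. The set $G(e)$ of carried geodesics through $e$ decomposes as a finite union $G(e) = \bigcup_i G(\gamma_i)$ of carrying sets for finite paths $\gamma_i$ — connector-to-cuff paths or cuff paths containing $e$. By Propositions \ref{prop:carrying_box_connector} and \ref{prop:carrying_box_cuff}, each $G(\gamma_i)$ equals $G(\tilde{\Theta}) \cap Q_i$ for a box $Q_i$ whose topological boundary meets $G(\tilde{\Theta})$ in at most geodesics that share an endpoint with a lift of a cuff. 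Such geodesics project to leaves spiraling to a cuff of the pants decomposition and cannot lie in the support of any measured lamination on $X$. Hence $\tilde{\mu}(\delta Q_i) = 0$, and Lemma \ref{lem:box_conv} yields $\tilde{\mu}_n(Q_i) \to \tilde{\mu}(Q_i)$; summing gives $f_{\tilde{\mu}_n}(e) \to f_{\tilde{\mu}}(e)$.

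For the hard direction, assume $f_{\tilde{\mu}_n}(e) \to f_{\tilde{\mu}}(e)$ for every $e \in E(\tilde{\Theta})$. I first observe that $\{\tilde{\mu}_n\}$ has uniformly bounded mass on every compact subset $K \subset G(\tilde{X})$. As in the opening paragraph of the proof of Lemma \ref{lem:conv_general_box}, any box $Q_1$ is such that $Q_1 \cap G(\tilde{\Theta})$ is covered by finitely many $G(e_1),\dots,G(e_N)$ (since the geodesic arc crossing $Q_1$ between two separating cuff lifts meets only finitely many edges of $\tilde{\Theta}$). Thus $\tilde{\mu}_n(Q_1) \le \sum_{j=1}^N f_{\tilde{\mu}_n}(e_j)$, and the right-hand side converges, hence is bounded. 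Covering $K$ by finitely many such boxes gives the claim.

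With uniform bounds on compact sets, Lemma \ref{lem:weak*compact} lets me pass from an arbitrary subsequence of $\{\tilde{\mu}_n\}$ to a further subsequence $\{\tilde{\mu}_{n_k}\}$ converging weak* to some measured lamination $\tilde{\nu}$ (which is $\pi_1(X)$-invariant and projects to $\nu$ on $X$). I will show $\tilde{\nu} = \tilde{\mu}$. Fix a box $Q = I \times J$ of the special form in Lemma \ref{lem:conv_general_box}: corners are endpoints of lifts of cuffs and no lift of a cuff lies on $\delta Q$. The same spiraling argument as above gives $\tilde{\mu}(\delta Q) = 0 = \tilde{\nu}(\delta Q)$. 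Lemma \ref{lem:conv_general_box} yields $\tilde{\mu}_{n_k}(Q) \to \tilde{\mu}(Q)$, while Lemma \ref{lem:box_conv} applied to the weak* convergence $\tilde{\mu}_{n_k} \to \tilde{\nu}$ yields $\tilde{\mu}_{n_k}(Q) \to \tilde{\nu}(Q)$. Hence $\tilde{\mu}$ and $\tilde{\nu}$ agree on every such special box.

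To conclude $\tilde{\mu} = \tilde{\nu}$, I use that $\pi_1(X)$ is of the first kind so fixed points of its hyperbolic elements, and in particular endpoints of lifts of cuffs, are dense in $\partial_\infty \tilde{X}$. Consequently the special boxes form a $\pi$-system that separates points and generates the Borel $\sigma$-algebra of $G(\tilde{X})$; by the standard monotone class / uniqueness of extension argument applied on each box $Q_1$ of finite mass, $\tilde{\mu} = \tilde{\nu}$ as Radon measures. Since every subsequence of $\{\tilde{\mu}_n\}$ has a sub-subsequence converging weak* to the same limit $\tilde{\mu}$, the whole sequence converges, and the projected $\mu_n \to \mu$ in the weak* topology on $ML(X)$. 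The main obstacle of the argument is this hard direction: one must control a limiting $\tilde{\nu}$ not known a priori to be weakly carried by $\tilde{\Theta}$ and identify it with $\tilde{\mu}$; the leverage comes entirely from Lemma \ref{lem:conv_general_box}, so the remaining work is a careful assembly of the generating boxes and the spiraling-leaf argument guaranteeing null measure on their boundaries.
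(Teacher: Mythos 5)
Your proposal is correct, and one of its two halves coincides with the paper's argument: for the direction ``weak* convergence implies pointwise convergence of edge weights'' you decompose $G(e)$ into finitely many sets $G(\gamma_i)$, use Propositions \ref{prop:carrying_box_connector} and \ref{prop:carrying_box_cuff} to realize these as box intersections with no carried geodesics on the box boundaries, and apply Lemma \ref{lem:box_conv}; this is exactly what the paper does. The other direction is where you diverge. The paper finishes more directly: it takes a continuous $\xi$ with compact support, uses a partition of unity and the density of endpoints of lifts of cuffs to approximate $\xi$ by step functions whose steps are the special boxes of Lemma \ref{lem:conv_general_box}, and concludes $\int\xi\, d[\tilde{\mu}-\tilde{\mu}_n]\to 0$ box by box. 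You instead prove uniform mass bounds on compacta (essentially reproving the covering step already inside the proof of Lemma \ref{lem:conv_general_box}), extract weak* limits of subsequences via Lemma \ref{lem:weak*compact}, identify every such limit $\tilde{\nu}$ with $\tilde{\mu}$ by matching values on the special boxes (using Lemma \ref{lem:conv_general_box} on one side and Lemma \ref{lem:box_conv} on the other, with $\tilde{\nu}(\delta Q)=0$ justified because Lemma \ref{lem:weak*compact} guarantees $\tilde{\nu}$ is a measured lamination on $X$, so the spiraling-leaf argument applies to it), and then invoke a $\pi$-system/uniqueness-of-Radon-measures argument plus metrizability and the subsequence principle. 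Both routes rest on the same key Lemma \ref{lem:conv_general_box}, so the difference is in the final assembly: the paper's step-function approximation is shorter and avoids any discussion of limits not a priori carried by $\tilde{\Theta}$, while your compactness-plus-uniqueness scheme is somewhat redundant (Lemma \ref{lem:conv_general_box} already controls the full sequence on the generating boxes) but has the virtue of making explicit the uniform mass bounds and the measure-theoretic uniqueness that the step-function argument uses implicitly; just make sure, when you invoke the $\pi$-system argument, to note that $G(\tilde{X})$ is a countable union of special boxes of finite mass so the uniqueness theorem applies, and that intersections of special boxes remain special.
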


\begin{proof}
Let $\xi :G(\tilde{X})\to\mathbb{R}$ be a continuous function with compact support. Let $\tilde{\mu}_n$ and $\tilde{\mu}$ be lifts to $\tilde{X}$ of measured laminations on $X$ such that $f_{\tilde{\mu}_n}(e)\to f_{\tilde{\mu}}(e)$ as $n\to\infty$ for each $e\in E(\tilde{\Theta})$. We need to prove that
\begin{equation}
\label{eq:weak*_conv}
\int_{G(\tilde{X})}\xi d[\tilde{\mu}-\tilde{\mu}_n]\to 0
\end{equation}
as $n\to\infty$. By using a partition of unity, we can assume that the support of $\xi$ is contained in a box of geodesics. 

Since the lifts of cuffs are coming from a geodesic pants decomposition of $X$ it follows that the endpoints of the lifts of cuffs are dense in $\partial_{\infty}\tilde{X}$. 
By slightly increasing the size of the support box of $\xi$ we can assume that the four vertices of the box are the endpoints of the lifts of cuffs and that the boundary of the box does not contain lifts of cuffs. 
By the density of the endpoints of the lifts of cuffs in $\partial_{\infty}\tilde{X}$,
the support box is divided into small boxes whose vertices are also of the above type. We approximate the function $\xi$ with a step function whose steps are the boxes of the partition.  Lemma \ref{lem:conv_general_box} implies (\ref{eq:weak*_conv}). We obtained that $\tilde{\mu}_n\to\tilde{\mu}$ in the weak* topology as $n\to\infty$.

We assume now that $\tilde{\mu}_n\to\tilde{\mu}$ in the weak* topology as $n\to\infty$. 
Let $e\in\tilde{\Theta}$. If $e$ is a cuff edge then Proposition \ref{prop:carrying_box_cuff} gives a box of geodesics $Q_e$ such that $G(e)=Q_e\cap G(\tilde{\Theta})$ and $\delta Q_e\cap G(\tilde{\Theta})=\emptyset$. Then $\tilde{\mu}_n(G(e))=\tilde{\mu}_n(Q_e)\to\tilde{\mu}(Q_e)=\tilde{\mu}(G(e))$ by the weak* convergence and Lemma \ref{lem:box_conv}. Thus $
f_{\tilde{\mu}_n}(e)\to f_{\tilde{\mu}}(e)
$ when $e$ is a cuff edge.

If $e$ is a connector edge of $\tilde{\Theta}$ then there exists finitely many finite edge paths 
$\{\gamma_1,\ldots ,\gamma_j\}$ consisting of only connector edges and containing $e$ such that they connect two lifts of cuffs on the lift of a pair of pants containing $e$. Proposition \ref{prop:carrying_box_connector} implies that $\tilde{\mu}_n(G(\gamma_i))\to\tilde{\mu}(G(\gamma_i))$ for all $i$ as $n\to\infty$. Since $G(\gamma_i)\cap G(\gamma_{i_1})=\emptyset$ for all $i\neq i_1$ and $G(e)=\cup_{i=1}^jG(\gamma_i)$, we have that $\tilde{\mu}_n(G(e))\to\tilde{\mu}(G(e))$ which is the same as 
$
f_{\tilde{\mu}_n}(e)\to f_{\tilde{\mu}}(e)
$
as $n\to\infty$.
\end{proof}

\section{Hyperbolic surfaces with bounded pants decomposition}

Throughout this section $X$ is an infinite hyperbolic surface equipped with a fixed locally finite geodesic pants decomposition $\{ P_n\}$ such that the lengths of cuffs are bounded between $1/M$ and $M$ for some $M\geq 1$.  In addition we assume that $X$ has no cusp--i.e.,  each $P_n$ has three cuffs. The case when $X$ has cusps is considered in Section 8. We consider a fixed pants train track $\Theta$ on $X$.

In each pair of pants $P_n$, the train track $\Theta$ has exactly three edge paths that connects pairs of cuffs. We choose three geodesic arcs $o_i^n$, $i\in\{ 1,2,3\}$, with both 
endpoints orthogonal to the cuffs of $P_n$ that connect the three pairs of cuffs that are also connected by $\Theta$.
Then $o_i^n$
 divide $P_n$ into two 
 right angled hexagons. 
Let $\Theta_0$ be the union of cuffs and orthogonal geodesic arcs $o_i^n$ over all $P_n$ in $X$. The lift $\tilde{\Theta}_0$ of $\Theta_0$ to the universal covering $\tilde{X}$ is the union of the boundaries of right-angled hexagons and the hexagons tile $\tilde{X}$.

If the added geodesic arcs $o_i^n$ orthogonal to cuffs of $P_n$ are not pants seams then the two hexagons share the three arcs and consequently they are isometric because by the hexagon formula all sides have equal lengths(see Beardon \cite[page 161, Theorem 7.19.2]{Beardon}). The lengths of the hexagon sides that lie on the cuffs of $P_n$ are equal to half the cuffs lengths and thus are pinched between between $1/(2M)$ and $M/2$.
By the hexagon formula we get that the other three side lengths of the hexagons are pinched between $1/M_1$ and $M_1$ for some $M_1=M_1(M)>1$. Let $m=\max\{ 2M,M_1\}$. Then the lengths of sides of such hexagons are pinched between
 $1/m$ and $m$.

We also estimate the size of hexagons obtained from the pairs of pants when pants seams are used. The following lemma  controls the geometry of the complementary hexagons of $\tilde{\Theta}_0$ when one of the added orthogonal arcs in $P_n$ is a pants seam. 

\begin{lemma}
\label{lem:seams_bound}
Fix $M>1$. Let $P$ be a geodesic pair of pants with the cuffs $\alpha_1,\alpha_2,\alpha_3$ such that 
$$
\frac{1}{M}\leq l(\alpha_i)\leq M
$$
for $i=1,2,3$ where $l(\alpha_i)$ is the hyperbolic length of $\alpha_i$. Let $l_1$ be the length of the shortest geodesic arc connecting $\alpha_1$ to itself and separating $\alpha_2$ and $\alpha_3$. Let $l_2,l_3$ be the lengths of the shortest geodesic arcs connecting $\alpha_1$ to $\alpha_2,\alpha_3$ respectively. 

Then the arcs $l_1,l_2,l_3$ divide $P$ into two right angled hexagons whose side lengths are between $1/M_1$ and $M_1$ for some $M_1>1$ which depends only on $M$. 
\end{lemma}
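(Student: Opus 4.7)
The plan is a compactness-and-continuity argument based on the fact that the isometry type of a geodesic pair of pants is determined by its triple of cuff lengths $(c_1,c_2,c_3):=(l(\alpha_1),l(\alpha_2),l(\alpha_3))$, which the hypothesis confines to the compact cube $[1/M,M]^{3}$. First I would verify that the three arcs do decompose $P$ into two right-angled hexagons and enumerate their sides, then express every side length as a continuous strictly positive function on this cube and invoke compactness.

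For the decomposition, cutting $P$ along $l_1$ splits it into two topological annuli $P_2\supset\alpha_2$ and $P_3\supset\alpha_3$, each with one boundary being a cuff and the other the closed loop $l_1\cup A$, where $A$ is an arc of $\alpha_1$. Since $l_2$ is the unique common perpendicular from $\alpha_1$ to $\alpha_2$, it lies in $P_2$; cutting $P_2$ along $l_2$ opens it into a disk whose boundary, traversed once, consists of six arcs in cyclic order: a subarc of $\alpha_1$, the arc $l_1$, a second subarc of $\alpha_1$, one copy of $l_2$, the entire cuff $\alpha_2$ (visited from the foot of $l_2$ around to itself), and the other copy of $l_2$. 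All six corners are right angles, since $l_1\perp\alpha_1$ and $l_2\perp\alpha_1,\alpha_2$ as minimizing geodesics. The analogous cut of $P_3$ along $l_3$ yields the second hexagon.

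For the length bounds, the side lengths appearing are $l_1,l_2,l_3$, the cuffs $l(\alpha_2),l(\alpha_3)$, and the subarcs of $\alpha_1$ determined by the feet of $l_1,l_2,l_3$ on $\alpha_1$. The pants seams $l_2,l_3$ are given by the standard right-angled hexagon formula invoked earlier, $l_1$ admits an analogous explicit formula (for example after doubling $P$ across $\alpha_1$ and applying the hexagon formula to the resulting configuration), and the positions of all feet on $\alpha_1$ depend continuously on $(c_1,c_2,c_3)$, so the subarc lengths are continuous in the parameters as well. Every side length is thus a continuous function on the compact cube $[1/M,M]^{3}$ and is automatically bounded above. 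The main obstacle will be the lower bound on the $\alpha_1$-subarcs, which amounts to showing that the feet of $l_1,l_2,l_3$ on $\alpha_1$ remain pairwise distinct throughout the parameter range; using the reflection symmetry of $P$ across its three pants seams, the two feet of $l_1$ lie on opposite sides of both the foot of $l_2$ and the foot of $l_3$ on $\alpha_1$, and coincidence of any two feet would force a degenerate configuration incompatible with $c_i\in[1/M,M]$. Once continuity and strict positivity on the compact cube are established, each side length attains a positive minimum and a finite maximum, and taking $M_1$ to dominate all these extrema gives the required constant $M_1=M_1(M)$.
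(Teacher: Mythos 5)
Your argument is correct, but it follows a genuinely different route from the paper. The paper works entirely with explicit hyperbolic trigonometry: it introduces the auxiliary seam $l_1'$ (the shortest arc between $\alpha_2$ and $\alpha_3$), uses the reflection of $P$ in $l_1'\cup l_2\cup l_3$ to show that $l_1'$ meets $l_1$ perpendicularly at its midpoint, cuts the two hexagons into four right-angled pentagons, and reads off bounds for every side from the pentagon formula (e.g. $\tanh l_2\cosh\tfrac12 l(\alpha_2)\tanh x=1$ and $\tanh l(\alpha_1')\cosh l_2\tanh\tfrac12 l(\alpha_2)=1$), which yields constants $M_1(M)$ explicitly. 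You instead verify the combinatorics of the decomposition directly (your hexagon enumeration and the right-angle count are correct) and then run a compactness-and-continuity argument over the cube $[1/M,M]^3$ of cuff lengths. That is legitimate: the isometry type of $P$ is determined by the cuff lengths, each side length is positive for every point of the cube (your distinctness-of-feet point is right, and in fact needs no appeal to the range of $c_i$ -- two distinct geodesic arcs perpendicular to $\alpha_1$ at the same foot would coincide, so the four feet are always pairwise distinct, and the reflection symmetry places the two feet of $l_1$ on opposite sides of the feet of $l_2$ and $l_3$), and continuity plus compactness then gives uniform bounds. The trade-off is that your constants are non-effective, and the continuity of the side lengths in $(c_1,c_2,c_3)$ -- the one step you assert rather than prove -- is itself most easily justified by exactly the trigonometric identities the paper writes down, at which point the explicit estimate is available anyway; conversely, your scheme is softer and would generalize with no extra computation to any other finite collection of geometrically defined arcs in $P$. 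One further small loose end shared with the paper: both treatments take for granted that $l_1$, $l_2$, $l_3$ are pairwise disjoint (your claim that $l_2\subset P_2$), which is the standard fact that minimizing perpendicular arcs in a pair of pants do not cross; it deserves at least a one-line justification in either approach.
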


\begin{proof}
Denote by $l_1'$ the shortest geodesic arc in $P_n$ that connects $\alpha_2$ and $\alpha_3$. Then $l_1'\cup l_2\cup l_3$ divides $P_n$ into two isometric right angled hexagons as above. There is a reflection of $P_n$ in $l_1'\cup l_2\cup l_3$ that isometrically sends one hexagon onto the other. The arc $l_1$ is orthogonal at both of its endpoints to $\alpha_1$ and the reflection of $P_n$ sends $l_1$ onto itself (by the uniqueness of the shortest arc $l_1$ in its homotopy class). It follows that the angle between $l_1'$ and $l_1$ is $\pi /2$ and that $l_1'$ bisects $l_1$ (see Figure 8).

\begin{figure}[h]
%\ShowGrid
\leavevmode \SetLabels
\L(.27*.8) $\alpha_1'$\\
\L(.7*.8) $\alpha_1''$\\
\L(.7*.2) $\alpha_1'''$\\
\L(.26*.2) $\alpha_1''''$\\
\L(.29*.48) $l_2$\\
\L(.35*.35) $\alpha_2$\\
\L(.43*.4) $x$\\
\L(.5*.7) $l_1$\\
\L(.5*.4) $l_1'$\\
\L(.6*.6) $\alpha_3$\\
\L(.69*.49) $l_3$\\
\endSetLabels
\begin{center}
\AffixLabels{\centerline{\epsfig{file =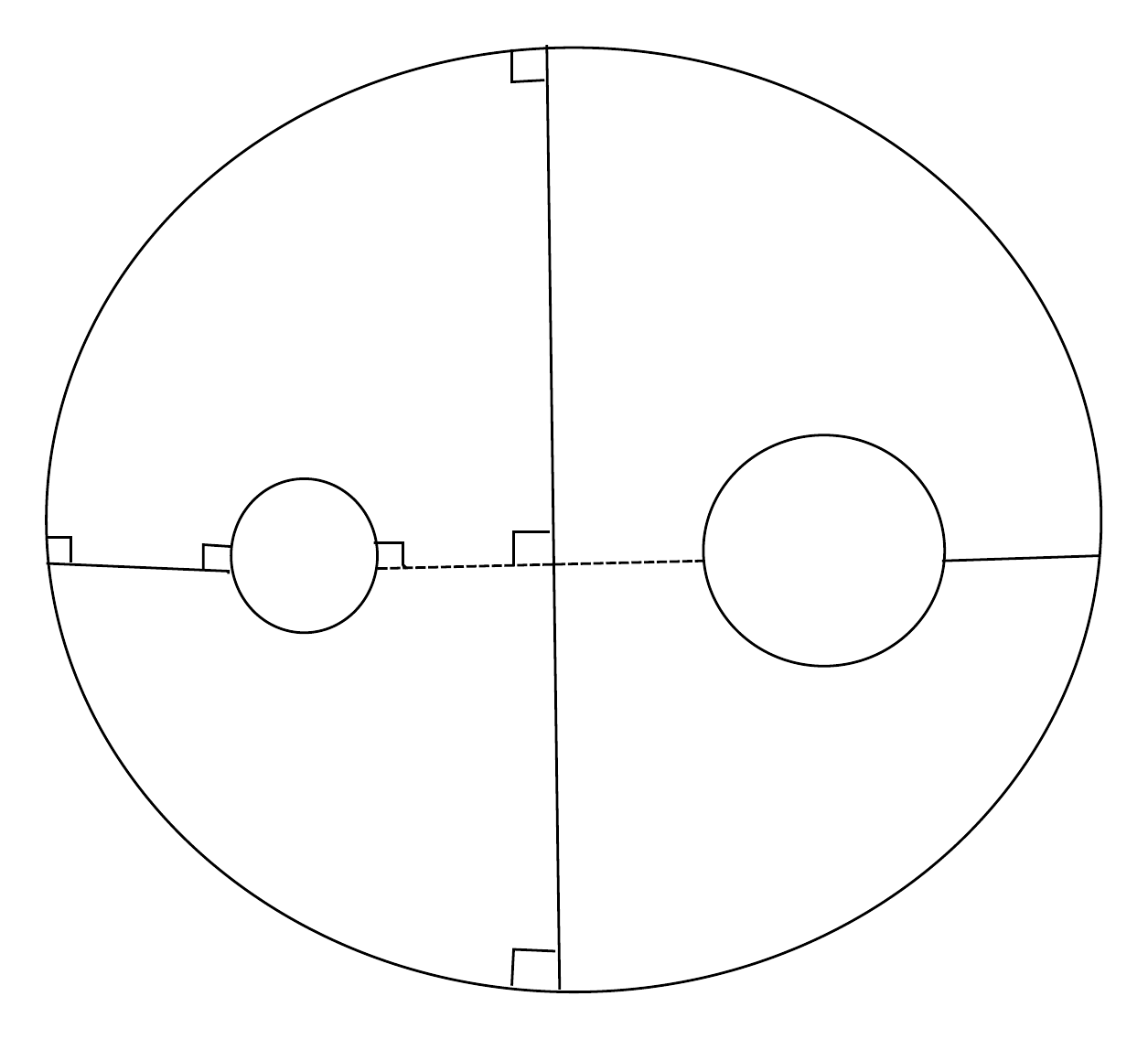,width=8.0cm,angle=0} }}
\vspace{-20pt}
\end{center}
\caption{Bounded geometry hexagons from pants seams. $l_1'$ is the dotted arc.} 
\end{figure}

The endpoints of the arcs $l_1,l_2,l_3$ divide the cuff $\alpha_1$ into four arcs 
$\alpha_1'$, $\alpha_1''$, $\alpha_1'''$ and $\alpha_1''''$. The arc $l_1'$ divides the two symmetric  hexagons into four right angled pentagons (see Figure 8). Consider the pentagon with sides $\alpha_1'$, $l_2$, $\frac{1}{2}l(\alpha_2)$, $x$ and $\frac{1}{2}l_1$, where $x$ is a part of $l_1'$ from $\alpha_2$ to the point $l_1'\cap l_1$. The pentagon formula (see Beardon \cite[page 159, Theorem 7.18.1]{Beardon}) gives
$$
\tanh l_2\cosh \frac{1}{2}l(\alpha_2)\tanh x=1
$$ 
and since $l_2$ and $\frac{1}{2}l(\alpha_2)$ are pinched between two positive constants, it follows that $x$ is also pinched between two positive constants. The pentagon formula also gives
$$
\tanh l(\alpha_1')\cosh l_2\tanh \frac{1}{2}l(\alpha_2)=1
$$ 
which implies that $l(\alpha_1')$ is pinched between two positive constants.

All other sides of the pentagons are similarly pinched between two positive constants.
 Therefore $P_n$ is divided into two right angled hexagons whose side lengths are pinched between two positive constants and the lemma is proved.
\end{proof}

For the definiteness let $M_1>1$ be such that the lengths of the sides of the two hexagons that $P_n$ is divided into are between $1/M_1$ and $M_1$ for all choices of the dividing arcs in $P_n$.   

\begin{lemma}
\label{lem:quasiisometry}
The lift $\tilde{\Theta}_0$ of $\Theta_0$ with the induced path metric coming from the hyperbolic metric on $\tilde{X}$ is quasi-isometric to $\tilde{X}$ under the inclusion map. 
\end{lemma}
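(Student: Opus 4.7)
The plan is to verify the two defining conditions of a quasi-isometry for the inclusion $\iota:\tilde{\Theta}_0\to\tilde{X}$: coarse bi-Lipschitz control on distances and cobounded image. The inclusion is manifestly $1$-Lipschitz, since any path in $\tilde{\Theta}_0$ is also a path in $\tilde{X}$ of the same length, so $d_{\tilde{X}}(\iota(x),\iota(y))\leq d_{\tilde{\Theta}_0}(x,y)$ for all $x,y\in\tilde{\Theta}_0$. By Lemma \ref{lem:seams_bound} together with the hexagon-formula discussion preceding it, $\tilde{X}$ is tiled by right-angled hexagons whose side lengths lie in $[1/M_1,M_1]$. Right-angled hexagons with this side-length restriction form a compact parameter space on which diameter and hyperbolic area are continuous and strictly positive, so there exist constants $D=D(M_1)$ and $A_0=A_0(M_1)>0$ with the property that each hexagon of the tiling has diameter $\leq D$ and area $\geq A_0$. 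The diameter bound immediately gives the cobounded condition: every point of $\tilde{X}$ lies in some hexagon, hence within distance $D$ of $\tilde{\Theta}_0$.

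For the reverse quasi-Lipschitz inequality, fix $x,y\in\tilde{\Theta}_0$ and let $\gamma$ be the hyperbolic geodesic from $x$ to $y$, of length $L=d_{\tilde{X}}(x,y)$. Since two distinct geodesics in $\mathbb{H}^2$ meet in at most one point, $\gamma$ crosses each side of the tiling at most once, so $\gamma$ traverses a finite sequence of hexagons $H_1,\ldots,H_N$. Replacing each subarc $\gamma\cap H_i$ by one of the two subarcs of $\partial H_i$ with the same endpoints produces a path $\sigma\subset\tilde{\Theta}_0$ joining $x$ to $y$ of length at most $6M_1\cdot N$, since each hexagon has perimeter at most $6M_1$. (If $\gamma$ itself runs along a side of the tiling, no replacement is needed.) Hence $d_{\tilde{\Theta}_0}(x,y)\leq 6M_1\cdot N$, and the proposition reduces to a linear estimate $N\leq C_1 L+C_2$ with constants depending only on $M_1$.

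The linear bound on $N$ is obtained by an elementary area argument exploiting the bounded geometry of the tiling. Cover $\gamma$ by $k\leq L+1$ closed hyperbolic balls $B_1,\ldots,B_k$ of radius $1$, centered at points of $\gamma$ spaced at distance $\leq 1$ apart. Every traversed hexagon $H_i$ contains some point $\gamma(t)$, which lies in some $B_j$; since $H_i$ has diameter $\leq D$, it is contained in the enlarged ball $B_j^+$ of radius $1+D$ with the same center. The hexagons of the tiling have pairwise disjoint interiors and area at least $A_0$, so the number that fit inside $B_j^+$ is at most $A_{1+D}/A_0$, where $A_r$ denotes the hyperbolic area of a disk of radius $r$. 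Summing over $j$ gives $N\leq k\cdot A_{1+D}/A_0\leq C_1 L+C_2$, which together with the preceding inequality yields $d_{\tilde{\Theta}_0}(x,y)\leq 6M_1(C_1 L+C_2)$. The main obstacle is precisely this final counting step, as we must convert the coarse hyperbolic length $L$ into a bound on the combinatorial quantity $N$; this is accomplished by leveraging the positive uniform lower bound on hexagon area that comes from the bounded-geometry hypothesis on the pants decomposition, while the remaining parts follow the standard pattern for the inclusion of the $1$-skeleton of a bounded-geometry cell complex into the ambient space.
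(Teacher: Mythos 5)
Your proof is correct, but the key quantitative step is run differently from the paper's. The paper argues hexagon by hexagon along the geodesic $l$ from $x$ to $y$: each subarc $l_i=l\cap \Sigma_i$ is replaced by a boundary path whose length is at most a fixed multiple of $\rho(l_i)$ — using the right-triangle identity $\cosh\rho(l_i)=\cosh\rho(a)\cosh\rho(b)$ when $l_i$ cuts a corner between adjacent sides, and the lower bound $\rho(l_i)\geq 1/M_1$ together with the perimeter bound when it joins non-adjacent sides — and then sums, obtaining $p(x,y)\leq \max\{2,5M_1^2\}\,\rho(x,y)$ with no additive constant. You instead bound the number $N$ of hexagons traversed by a packing argument (cover the geodesic by unit balls, use the uniform diameter bound and the uniform lower bound on hexagon area to get $N\leq C_1L+C_2$) and then multiply by the perimeter bound $6M_1$; this yields an affine bound, which is all the lemma needs. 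Your route is the standard bounded-geometry argument and avoids the paper's case analysis of adjacent versus non-adjacent sides (and the slightly delicate lower bound on the distance between non-adjacent sides), at the cost of a weaker (affine rather than multiplicative) estimate. Two cosmetic points: the assertion that the geodesic crosses each side at most once is cleaner if phrased via convexity of right-angled hexagons (so $\gamma\cap H_i$ is connected and each hexagon is traversed once), and your constants need no compactness argument — the area of a right-angled hexagon is exactly $\pi$ by Gauss--Bonnet, and by convexity its diameter is at most half its perimeter, hence at most $3M_1$.
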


\begin{proof}
Since $\tilde{X}$ is tiled by the hexagons whose boundary sides have lengths pinched between two positive constants (by Lemma \ref{lem:seams_bound} and the remark above it) it follows that every point of $\tilde{X}$ is on a bounded distance from $\tilde{\Theta}_0$.

Let $\rho (x,y)$ be the hyperbolic distance between $x,y\in\tilde{X}$. Given $x,y\in\tilde{\Theta}_0$, let $p(x,y)$ be the hyperbolic length of the shortest path on $\tilde{\Theta}_0$ connecting $x$ and $y$. We need to prove that $\frac{1}{A}\rho (x,y)-B\leq p(x,y)\leq A\rho (x,y)+B$ for some constants $A>0$ and $B\geq 0$. It is immediate that $\rho (x,y)\leq p(x,y)$ and it remains to prove the right hand side inequality.

Let $l$ be the hyperbolic geodesic arc in $\tilde{X}$ with endpoints $x$ and $y$. Let $l_i$, $i=1,2,\ldots , n$ be the subarcs of $l$ that are obtained by intersecting $l$ with the hexagon tiling of $\tilde{X}$. Then $\rho (x,y)=\sum_{i=1}^n\rho (l_i)$ where $\rho (l_i)$ is the hyperbolic length of $l_i$. Denote by $\Sigma_i$ the hyperbolic hexagon that contains $l_i$ and let $s,s'$ be the boundary sides that $l_i$ connects. 

If $s$ and $s'$ are adjacent then there are arcs $a\subset s$ and \
$b\subset s'$  such that $a,b,l_i$ form a geodesic right angled triangle with $l_i$ opposite the right angle which is at a vertex of $\Sigma_i$.  Since $\cosh \rho (l_i) =\cosh \rho (a)\cosh \rho (b)$ (see Beardon \cite[page 146, Theorem 7.11.1]{Beardon}) it follows that $\rho (l_i)>\rho (a)$ and $\rho (l_i)>\rho (b)$ which implies $\rho (l_i)>\frac{1}{2} (\rho (a)+\rho (b))$. Therefore we can replace $l_i$ with the path $a\cup b$ on $\tilde{\Theta}_0$ that has the same endpoints and whose length is less than twice the length of $l_i$.

Assume that $s$ and $s'$ are not adjacent boundary sides of $\Sigma_i$. 
Since $l_i$ is at least as long as a side, then $\rho (l_i)\geq \frac{1}{M_1}$ and since $a$ follows at most five sides, then $\rho (l_i) \leq 5M_1$.
 Thus 
$\rho (a)\leq 5M_1=\frac{5M_1^2}{M_1}\leq 5M_1^2\rho (l_i)$ and the arc $l_i$ can be replaced by an arc $a$ on $\tilde{\Theta}_0$ whose length is less than ${5M_1^2}$ of the length of $l_i$.

By concatenating the above paths, we obtain a path on $\tilde{\Theta}_0$ which connects $x$ and $y$ and whose length is at most $\max \{ 2, {5M_1^2}\}$ times the length of $l$. Thus the inclusion of $\tilde{\Theta}_0$ is a quasi-isometry.
\end{proof}

We use $\Theta_0$ in order to define a pants train track $\Theta'$ such that edge paths of its lift $\tilde{\Theta}'$ are on a bounded distance from paths in $\tilde{\Theta}_0$. 

\begin{lemma}
\label{lem:quasi2}
Given $\Theta$ as above, there is a choice of a homeomorphic pants train track $\Theta'$ and a constant $d>0$ such that each edge path of the lifted train track $\tilde{\Theta}'$ is on a distance at most $d$ from a path in $\tilde{\Theta}_0$.
\end{lemma}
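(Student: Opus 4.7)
The plan is to construct $\Theta'$ by modifying the geometric realization of $\Theta$ inside each pair of pants so that the connector edges closely follow the orthogonal arcs $o_i^n$, while keeping the combinatorics (choice of standard train track and choice of smoothings at the cuffs) identical to that of $\Theta$. Since $\Theta$ and $\Theta'$ will share the same combinatorial data, $\Theta'$ is a homeomorphic pants train track on $X$. The key resource is Lemma \ref{lem:seams_bound} and the paragraph preceding it, which shows that cutting each $P_n$ along the $o_i^n$ produces right-angled hexagons whose sides all have hyperbolic lengths between $1/M_1$ and $M_1$; hence each $P_n$ and each lifted pair of pants $\tilde P_n$ has uniformly bounded diameter.

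First, for each cuff $\alpha_k$ I choose a basepoint $a_k'$ on $\alpha_k$ that lies within hyperbolic distance $M$ of the endpoints on $\alpha_k$ of the orthogonal arcs $o_i^n$ belonging to the two (or one) pair(s) of pants adjacent to $\alpha_k$. This is possible because all such endpoints lie on $\alpha_k$ and the hyperbolic length of $\alpha_k$ is at most $M$. Second, inside each $P_n$, I realize the three connector edge paths of $\Theta'$ as smooth arcs obtained from the three arcs $o_i^n$ by (i) sliding each endpoint along its cuff to the chosen basepoint, and (ii) performing a local $C^1$ adjustment near each basepoint so that the arc becomes tangent to the cuff with the left/right smoothing prescribed by $\Theta$. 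Because these modifications take place inside a bounded-geometry hexagonal region, each connector edge of $\Theta'$ lies inside a hyperbolic $d_0$-neighborhood of the corresponding $o_i^n$ for some $d_0 = d_0(M, M_1)$.

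Third, I observe that the cuff edges of $\Theta'$ are literally contained in $\Theta_0$, since $\Theta_0$ contains all cuffs $\alpha_k$ as geodesics and the cuff edges are just subarcs of cuffs between consecutive basepoints. Therefore any finite edge path of $\tilde\Theta'$ admits a traveling companion in $\tilde\Theta_0$: replace each lifted connector edge by the corresponding lifted $o_i^n$, keep each lifted cuff edge as-is (inside a lift of a cuff of $\Theta_0$), and interpolate with subarcs of length at most $M$ along the lifts of cuffs to connect the endpoints of consecutive pieces (which may be off by the basepoint-shift of step one). The resulting path lies in $\tilde\Theta_0$ and is within hyperbolic distance $d := \max(d_0, M)$ of the original edge path of $\tilde\Theta'$, and $d$ depends only on $M$ and $M_1$, hence is uniform.

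The main obstacle is step two: reconciling the orthogonal meeting angles of the $o_i^n$ with the tangential smoothing requirements of a train track at each basepoint. The concern is whether the $C^1$-adjustment near a basepoint can be made inside a uniformly small neighborhood of the cuff without producing self-intersections of the train track or violating the prescribed smoothing pattern for the connector edges arriving from the two sides of the cuff. This is handled by keeping the adjustment inside the bounded-geometry collar of $\alpha_k$ and using the fact that there are only finitely many combinatorial types of standard train tracks with smoothings, so a uniform choice of model modification can be transported to every pair of pants.
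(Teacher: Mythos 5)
The main gap is in your second step: you presume that, inside each $P_n$, the connector part of the chosen standard train track is a disjoint union of three arcs, one for each pair of cuffs joined by $\Theta$, so that $\Theta'$ can be obtained from the three orthogonals $o_i^n$ by sliding endpoints along the cuffs and smoothing at the basepoints. That is not what the standard train tracks of Section 4 look like: they have up to three switches in the interior of $P_n$ and, in general, closed loops (this is used explicitly in the proof of Lemma \ref{lem:lift_standard}), and the three cuff-to-cuff edge paths share branches. Consequently the object you build (three disjoint tangential arcs) has different combinatorics from the standard track chosen by $\Theta$ in such a $P_n$, so your $\Theta'$ is not a homeomorphic pants train track, which is the first assertion of the lemma. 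The same issue breaks your companion-path recipe: an edge of $\tilde{\Theta}'$ lying on the lift of a loop is not ``the corresponding lifted $o_i^n$'' for any $i$; a lifted edge path may traverse lifts of such a loop many times, running alongside a lift of a cuff for an arbitrarily long stretch, and the fellow-traveling path in $\tilde{\Theta}_0$ must then be routed along that lift of a cuff rather than along orthogonals. Replacing connector edges by orthogonals and interpolating by cuff arcs of length at most $M$, as you propose, does not cover this case.

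A secondary weakness is the final appeal to ``transporting a uniform model modification to every pair of pants'': since the $P_n$ have varying moduli, this transport is exactly where uniform geometric control is needed, and the paper secures it with Bishop's uniformly biLipschitz maps from a fixed model pair of pants (affine on cuffs, sending orthogonals to orthogonals), followed by uniformly bounded twists in one-sided collars to match the basepoints coming from the two sides of each cuff. In your direct approach you could instead bypass the transport: because the cuff lengths lie in $[1/M,M]$, Lemma \ref{lem:seams_bound} gives hexagons with side lengths in $[1/M_1,M_1]$, so each $P_n$ has uniformly bounded diameter, and hence \emph{any} realization of the full standard train track (loops and interior switches included) inside $P_n$ is automatically within uniformly bounded distance of $o_1^n\cup o_2^n\cup o_3^n\cup\partial P_n\subset\Theta_0$; the remaining work is then precisely the combinatorial bookkeeping of the previous paragraph, including routing the companion path along lifts of cuffs where the edge path traverses loops. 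As written, neither the uniformity of your $C^1$ adjustment nor the loop case is justified, so the proof is incomplete.
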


\begin{proof}
The geodesic pairs of pants of $X$ have cuffs with lengths between $1/M$ and $M$ for some $M\geq 1$. We fix a ``model'' geodesic pair of pants $P$ whose  all cuffs have length $1$ and on each cuff $\alpha_j$ we fix a base point $a_j$. Then we realize all standard train tracks $\{ s_i\}_{i=1}^{i_0}$ on $P$ such that their edges are smooth rectifiable arcs and the vertices on the cuffs are at the base points $\{a_j\}_{j=1}^3$. By Bishop \cite{Bishop}, there exists a biLipschitz map $f_n$ from $P$ to any pairs of pants $P_n$ of $X$ which is affine on cuffs and maps orthogonals between pairs of different cuffs in $P$ to orthogonals between pairs of different cuffs in $P_n$. Inside each $P_n$ we have a standard train track induced from $\Theta$ which is homotopic to $f_n(s_{i(n)})$ for a standard train track $s_{i(n)}$ in the model $P$. The orthogonal arcs $o_i^n$ between cuffs of $P_n$ from the definition of $\Theta_0$ are chosen such that they are homotopic to the finite edge paths of the standard train tracks $s_{i(n)}$ of $\Theta$ in $P_n$. Since the biLipschitz constants of $f_n:P\to P_n$ are bounded above, it follows that the finite edge paths in $f_n(s_{i(n)})$ are on a bounded distance from the corresponding $o_i^n$ with the bound uniform in $n$. 

Let $P_n$ and $P_{n_1}$ be two pairs of pants (possibly equal) that are glued along a cuff $\alpha_k$ in $X$. Then $f_{n}(a_j)$ and $f_{n_1}(a_{j_1})$ on $\alpha_j$ might be different. We choose a small one-sided collar neighborhood of $\alpha_j$ in $P_n$ and perform a smooth twist setwise fixing the cuff $\alpha_k$ that moves $f_n(a_j)$ to $f_{n_1}(a_{j_1})$ by a uniformly bounded map. We modify all standard train tracks $f_n(s_{i(n)})$ in this fashion so that the basepoints coming from the two sides of each $\alpha_k$ agree. With this process we obtain a new pants train track $\Theta'$ in $X$ which is homotopic to the original pants train track $\Theta$. 
The finite edge paths in the new standard train tracks in each $P_n$ are on a bounded distance from the corresponding orthogonals $o_i^n$ by the construction. Thus any infinite edge path in $\Theta'$ is on a bounded distance from a path in $\Theta_0$.
\end{proof}

Using the above  lemma we obtain

\begin{theorem}
\label{thm:quasi-no-cusps}
Let $X$ be an infinite Riemann surface without cusps equipped with a geodesic pants decomposition $\{ P_n\}_n$ whose cuff lengths are pinched between two positive constants and let $\Theta$ be a corresponding pants train track. Then a measured lamination weakly carried by the train track $\Theta$ is bounded if and only if the corresponding edge weight system has a finite supremum norm. 
\end{theorem}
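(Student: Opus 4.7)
The plan is to bridge the two notions of boundedness through the mass that $\tilde\mu$ assigns to boxes of geodesics of a fixed Liouville size. Since $\mathbb{H}(\tilde X)$ acts transitively on boxes of any prescribed Liouville measure (it preserves the cross-ratio of the four ideal vertices), the uniform weak* boundedness of $\mu$ is equivalent to the existence of a constant $C>0$ with $\tilde\mu(Q)\leq C$ for every box $Q$ of some fixed Liouville measure: approximating compactly supported continuous $\xi$ on $G(\tilde X)$ by indicators of such boxes handles one direction, and choosing $\xi$ close to the indicator of a single box and taking the supremum over all isometries handles the other. This equivalence is the translation we will use on both sides.

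For the forward direction, assume $\mu$ is bounded and fix any edge $e\in E(\tilde\Theta)$. Propositions \ref{prop:carrying_box_connector} and \ref{prop:carrying_box_cuff} express $G(e)$ as a finite union $\bigcup_{i=1}^{N}G(\gamma_i)$, where each $\gamma_i$ is a finite edge path through $e$ that either lies entirely on a single lift of a cuff or connects two lifts of cuffs within a single lifted pair of pants, and each $G(\gamma_i)=G(\tilde\Theta)\cap Q_i$ for a box $Q_i$ whose Liouville measure is bounded solely in terms of $M$. The count $N$ is itself bounded in terms of $M$ because a standard train track in a pair of pants has bounded combinatorial complexity and cuff collars have bounded width. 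The boundedness hypothesis then gives $f_\mu(e)=\tilde\mu(G(e))\leq \sum_i \tilde\mu(Q_i)\leq NC$ uniformly in $e$, so $\|f_\mu\|_\infty<\infty$.

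For the reverse direction, assume $\|f_\mu\|_\infty=:W<\infty$, and let $Q=I\times J$ be a box of fixed Liouville measure. I would re-run the decomposition from the proof of Lemma \ref{lem:conv_general_box}: $Q\cap G(\tilde\Theta)$ partitions into a disjoint union of sets $G(\gamma_{i,j})$ (connector edge paths between cuff-lifts $\tilde\alpha'_i\subset I$ and $\tilde\alpha''_j\subset J$) together with sets $G(\gamma_r^Q)$ (cuff-edge subpaths on cuff-lifts $\tilde\alpha_r^Q\subset Q$). Each piece is contained in some $G(e)$, hence has $\tilde\mu$-mass at most $W$. To conclude $\tilde\mu(Q)\leq N'W$, I need a uniform bound on the number of pieces in this partition, depending only on $M$ and on the Liouville measure of $Q$. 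This is the point where bounded geometry enters decisively: as in the proof of Lemma \ref{lem:conv_general_box}, every geodesic of $Q\cap G(\tilde\Theta)$ is represented by a bi-infinite edge path crossing a fixed compact geodesic arc $\omega$ that separates the relevant cuff-lifts; by Lemma \ref{lem:quasiisometry} and Lemma \ref{lem:quasi2}, the edges of $\tilde\Theta$ have uniformly bounded hyperbolic length and the hexagonal tiling of $\tilde X$ has uniformly bounded geometry, so $\omega$ meets only a bounded number of edges of $\tilde\Theta$, which bounds both the number of $\gamma_{i,j}$ and the number of $\gamma_r^Q$.

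The hard part will be the quantitative count in the reverse direction. The qualitative decomposition of $Q\cap G(\tilde\Theta)$ into $G(\gamma_{i,j})$'s and $G(\gamma_r^Q)$'s is already present in Lemma \ref{lem:conv_general_box}, but the bound on the number of pieces must depend only on the shape of $Q$ and not on its position in $G(\tilde X)$. This is precisely the content that the bounded pants decomposition is designed to deliver, via Lemmas \ref{lem:seams_bound}, \ref{lem:quasiisometry} and \ref{lem:quasi2}; without such a bound, the equivalence itself would fail because regions with short cuffs could support unboundedly many edges of $\tilde\Theta$ crossing $\omega$.
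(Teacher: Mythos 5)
Your forward direction is essentially sound (it is a box version of the paper's argument: the paper translates boundedness into the Thurston-type norm $\sup_D\tilde{\mu}(G(D))$ over unit balls $D\subset\tilde{X}$, while you use a sup over boxes of fixed Liouville measure; the two criteria are equivalent and the covering of $G(e)$ by boundedly many sets from Propositions \ref{prop:carrying_box_connector} and \ref{prop:carrying_box_cuff} works, modulo the small point that those propositions bound the Liouville measure of the four side boxes rather than of $Q_i$ itself, and that a box of bounded Liouville measure must then be subdivided into boxes of your fixed measure). The genuine gap is in the reverse direction, at exactly the step you flag as the hard part. You take the arc $\omega$ from the proof of Lemma \ref{lem:conv_general_box}: a geodesic segment joining two lifts of cuffs $\tilde{\alpha}_1,\tilde{\alpha}_2$ having both endpoints in the two complementary gaps of the box $Q$. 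For the count of edges meeting $\omega$ to be bounded you need the length of $\omega$ to be bounded in terms of $M$ and the Liouville measure of $Q$ only, i.e. you need cuff lifts with both endpoints inside each gap to exist at a uniformly bounded distance from the box's "core". Lemmas \ref{lem:quasiisometry} and \ref{lem:quasi2} do not give this: they control edge lengths and the hexagon tiling, but say nothing about how deep into a half-plane one must go before finding a cuff lift entirely contained in it. In Lemma \ref{lem:conv_general_box} the arc $\omega$ is used only qualitatively, for one fixed box, so no such uniformity is needed there; your argument needs it for all $\mathbb{H}(\tilde{X})$-translates of a box simultaneously, and as written the bound $N'=N'(M,\lambda(Q))$ is unproved. (The statement can in fact be proved for bounded pants decompositions, but it requires a separate geometric argument and is not a citation of those lemmas.)

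The fix is the ingredient you list but never actually use: by Lemmas \ref{lem:quasiisometry} and \ref{lem:quasi2} every geodesic weakly carried by $\tilde{\Theta}$ lies within a uniform distance $d$ of its bi-infinite edge path. Then you do not need cuff lifts in the gaps at all: every geodesic of $Q$ crosses a transversal whose length depends only on $\lambda(Q)$ (for instance the common perpendicular segment between the two geodesics spanning the gaps of $Q$), so the corresponding edge path enters the $d$-neighborhood of that transversal and hence contains one of a uniformly bounded number of edges of $\tilde{\Theta}$; this gives $\tilde{\mu}(Q)\le N'\|f_{\tilde{\mu}}\|_\infty$ directly, with no appeal to the $G(\gamma_{i,j})$, $G(\gamma_r^Q)$ partition. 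This is precisely how the paper argues, with closed balls of radius $1$ in place of boxes: geodesics of the support meeting $D$ force their edge paths into the concentric ball of radius $d+1$, which meets a uniformly bounded number of edges. So replace the combinatorial separation count along $\omega$ by the metric proximity of carried geodesics to their edge paths, and the reverse direction closes.
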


\begin{proof}
Lemma \ref{lem:quasi2} implies  that an edge path in $\tilde{\Theta}$ is on a bounded distance from a simple path in $\tilde{\Theta}_0$. A simple path in $\tilde{\Theta}_0$ is a quasigeodesic in $\tilde{X}$ by Lemma \ref{lem:quasiisometry}. Therefore there exists $d>0$ such that each bi-infinite edge path in $\tilde{\Theta}$ is at  distance at most $d$ from the corresponding geodesic of $\tilde{X}$.

Recall that a measured lamination $\tilde{\mu}$ is bounded if for every continuous $\xi :G(\tilde{X})\to\mathbb{R}$ with a compact support we have that $\|\tilde{\mu}\|_{\xi}<\infty$. For a closed hyperbolic ball $D$ in $\tilde{X}$ denote by $G(D)$ the set of geodesics of $\tilde{X}$ that  intersect $D$. The support of $\xi$ is covered by finitely many $G(D_k)$ for $k=1,\ldots ,n$, where $D_k$ is a closed ball of radius one. Since $\|\xi\|_{\infty}<\infty$ we easily conclude that
a measured lamination $\tilde{\mu}$ is {\it bounded} if and only if $\|\tilde{\mu}\|_{Th} :=\sup_{D}\tilde{\mu} (G(D))<\infty$ where the supremum is over all closed  hyperbolic balls $D\subset\tilde{X}$ of radius $1$ (see \cite{GHL}, \cite{Sar} and \cite{BonahonSaric}). 

Let $\tilde{\mu}$ be a bounded measured lamination and denote by $f_{\tilde{\mu}}:E(\tilde{\Theta} )\to\mathbb{R}$ the corresponding edge weight system. If $e\in E(\tilde{\Theta} )$ then a hyperbolic ball $D_1$ of radius $d$ with the center on $e$ intersects all geodesics in $G(e)$ by the above. Therefore $f_{\tilde{\mu}}(e)\leq C(d)\|\tilde{\mu}\|_{Th}$ where $C(d)$ is the smallest number of closed hyperbolic balls of radius $1$ that is needed to cover a hyperbolic ball of radius $d$.  This bound is uniform in all $e\in E(\tilde{\Theta})$ and we obtain $\| {f}_{\tilde{\mu}}\|_{\infty}\leq C(d)\|\tilde{\mu}\|_{Th}$. 

Assume now that $\| f_{\tilde{\mu}}\|_{\infty} <\infty$. If $D$ is a closed hyperbolic ball of radius $1$ denote by $G(\tilde{\mu} ,D)$ the set of geodesics of the support of $\tilde{\mu}$ that intersect $D$. Since each geodesic of $G(\tilde{\mu} ,D)$ is on a distance at most $d$ from the corresponding bi-infinite edge path in $\tilde{\Theta}$, it follows that a  ball $D_1$ of radius $d+1$ concentric to $D$ intersects $\tilde{\Theta}$ in a finite set of edges $\{ e_1,e_2,\ldots ,e_k\}$ such that $G(\tilde{\mu} ,D)\subset \cup_{i=1}^kG(e_i)$. Then $\tilde{\mu} (D)\leq\sum_{i=1}^k {f}_{\tilde{\mu}}(e_i)$. The number of edges $k$ is uniformly bounded by some constant $k'$ independently of the choice of $D$ by Lemma \ref{lem:quasi2}. Thus we obtain $\tilde{\mu} (D)\leq k'\| \tilde{f}_{\tilde{\mu}}\|_{\infty}$ for all $D$ and thus $\|\tilde{\mu}\|_{Th}\leq k'\| f_{\tilde{\mu}}\|_{\infty}$. 
\end{proof}

\section{The hyperbolic surfaces with cusps and bounded pants decompositions}

We assume that a Riemann surface $X$ has a bounded geodesic pants decomposition $\{ P_n\}$ and possibly infinitely many cusps. We define a train track $\Theta$ on $X$ starting from the bounded pants decomposition $\{ P_n\}$. In each pair of pants we introduce geodesic arcs orthogonal to its cuffs. In the case when a pair of pants has three cuffs we divide it into two right angled hexagons that have sides pinched between two positive constants as in the previous section. When a pair of pants has two cuffs and a cusp, then we draw two geodesic arcs that divide it into a right angled hexagon and a right angled bigon with a cusp (see Figure 9). We need the following lemma

\begin{figure}[h]
%\ShowGrid
\leavevmode \SetLabels
\L(.3*.8) $\alpha_1'$\\
\L(.67*.8) $\alpha_1''$\\
\L(.28*.2) $\alpha_1'''$\\
\L(.25*.4) $l_2$\\
\L(.4*.33) $\alpha_2$\\
\L(.48*.4) $x$\\
\L(.52*.7) $l_1$\\
\endSetLabels
\begin{center}
\AffixLabels{\centerline{\epsfig{file =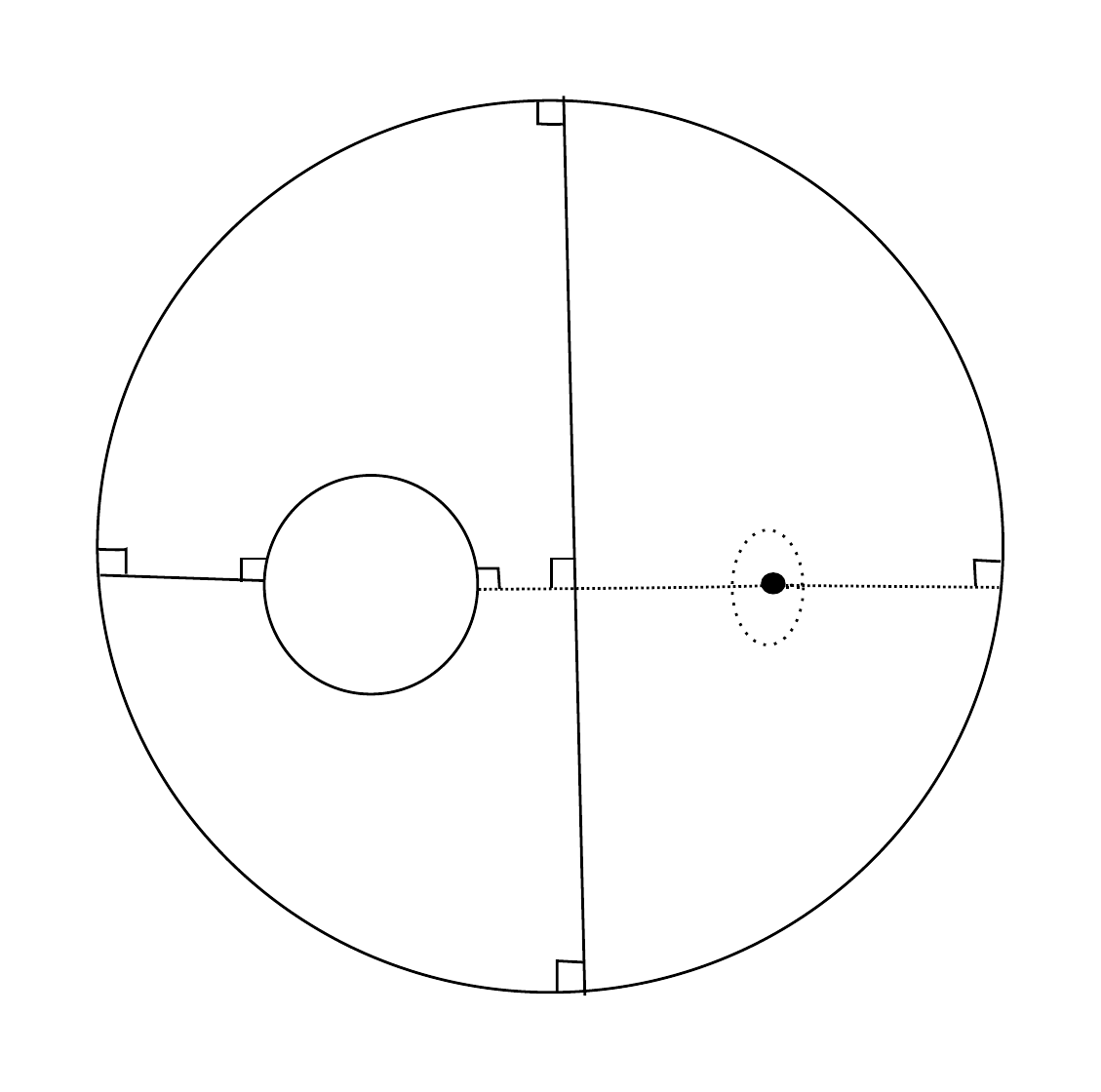,width=8.0cm,angle=0} }}
\vspace{-20pt}
\end{center}
\caption{The pair of pants with one cusp. The reflection in the dotted geodesics and $l_2$ is a symmetry of $P$. The dotted ellipse represents a horocycle of length $1$ around the puncture.} 
\end{figure}

\begin{lemma}
\label{lem:one_cusp}
Consider a pair of pants with two cuffs $\alpha_1,\alpha_2$ and one cusp such that $1/M\leq l(\alpha_i)\leq M$ for $i=1,2$. Let $l_2$ be the length of the shortest geodesic arc connecting $\alpha_1$ and $\alpha_2$, and let $l_1$ be the length of the shortest geodesic arc connecting $\alpha_1$ to itself. The cuff $\alpha_1$ is divided by the endpoints of $l_1$ and $l_2$ into arcs $\alpha_1'$, $\alpha_1''$ and $\alpha_1''$ as in Figure 9. Then there exists $M_1>1$ which depends only on $M$ such that
$$
1/M_1\leq l(\alpha_1'),l(\alpha_1''),l(\alpha_1''')\leq M_1.
$$
\end{lemma}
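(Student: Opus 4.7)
The plan is to follow closely the pentagon-based argument used in the proof of Lemma~\ref{lem:seams_bound}, modifying it to accommodate the ideal vertex at the cusp. First I would introduce the two remaining seams of $P$: let $r_1$ be the complete orthogonal geodesic ray from $\alpha_1$ into the cusp, and $r_2$ the analogous ray from $\alpha_2$ into the cusp. Together with $l_2$, the arcs $l_2, r_1, r_2$ cut $P$ into two isometric right-angled pentagons each having four finite right-angled vertices and one common ideal vertex at the cusp. The reflection $\sigma$ of $P$ swapping the two pieces fixes $l_2$, $r_1$, $r_2$ pointwise.

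By uniqueness of the shortest arc from $\alpha_1$ to itself separating $\alpha_2$ from the cusp, $\sigma(l_1)=l_1$ as a set and the two endpoints $b,b'$ of $l_1$ on $\alpha_1$ are swapped. Since $l_1$ separates $\alpha_2$ from the cusp while $r_2$ joins them, $l_1$ meets $r_2$ in a single transverse point $p$; pointwise fixing of $r_2$ combined with preservation of $l_1$ under $\sigma$ forces $l_1\perp r_2$ at $p$, and $p$ is the midpoint of $l_1$. The involution $\sigma|_{\alpha_1}$ has exactly two fixed points $v_{12}:=l_2\cap\alpha_1$ and $v_{1c}:=r_1\cap\alpha_1$, which are therefore antipodal on $\alpha_1$. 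Setting $x:=d_{\alpha_1}(v_{12},b)$ measured along the arc of $\alpha_1\setminus\{v_{12},v_{1c}\}$ containing $b$, symmetry yields $d_{\alpha_1}(v_{12},b')=x$ on the opposite arc, so in the labeling of Figure~9,
\[
l(\alpha_1')=l(\alpha_1'')=x,\qquad l(\alpha_1''')=l(\alpha_1)-2x.
\]

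The heart of the argument is to apply the right-angled pentagon formula of Beardon~\cite[Thm.~7.18.1]{Beardon} to the pentagon $\Sigma$ with vertices $v_{12}, b, p, v_{2c}:=r_2\cap\alpha_2, v_{12}':=l_2\cap\alpha_2$ and consecutive side-lengths
\[
x,\ \tfrac{1}{2}l(l_1),\ y,\ \tfrac{1}{2}l(\alpha_2),\ l(l_2),
\]
where $y:=d(p,v_{2c})$; all five vertex angles are right by the orthogonalities already identified. The formula applied to the three consecutive sides centred at $l(l_2)$ gives
\[
\tanh(x)\,\cosh(l(l_2))\,\tanh\!\bigl(\tfrac{1}{2}l(\alpha_2)\bigr)=1.
\]
To convert this into a bound on $x$ I would use the pants-with-one-cusp identity (the right-angled hexagon formula with one cuff degenerated to a puncture),
\[
\cosh(l(l_2))=\frac{\cosh(\tfrac{1}{2}l(\alpha_1))\cosh(\tfrac{1}{2}l(\alpha_2))+1}{\sinh(\tfrac{1}{2}l(\alpha_1))\sinh(\tfrac{1}{2}l(\alpha_2))},
\]
which, for $l(\alpha_i)\in[1/M,M]$, is pinched between two positive constants depending only on $M$. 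Substituting back, $\tanh(x)$ is pinched inside a compact subinterval of $(0,1)$, so $x=l(\alpha_1')=l(\alpha_1'')$ is pinched between two positive constants depending only on $M$. To handle $l(\alpha_1''')=l(\alpha_1)-2x$, substituting the explicit expression for $\cosh(l(l_2))$ into the pentagon relation gives the tidy identity
\[
\tanh(x)=\tanh\!\bigl(\tfrac{1}{2}l(\alpha_1)\bigr)\cdot\frac{\cosh(\tfrac{1}{2}l(\alpha_1))\cosh(\tfrac{1}{2}l(\alpha_2))}{\cosh(\tfrac{1}{2}l(\alpha_1))\cosh(\tfrac{1}{2}l(\alpha_2))+1},
\]
whose second factor is strictly less than $1$ and, by continuity on the compact square $[1/M,M]^2$, bounded away from $1$. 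Hence $\tfrac{1}{2}l(\alpha_1)-x$, and therefore $l(\alpha_1''')$, is bounded below by a positive constant depending only on $M$; the upper bound $l(\alpha_1''')\le l(\alpha_1)\le M$ is trivial.

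The main obstacle I anticipate is the careful geometric verification that $l_1\cap r_2=\{p\}$ with $l_1\perp r_2$ there (rather than $l_1$ meeting $r_1$ or missing both seams), which rests on the topological fact that $l_1$ separates $\alpha_2$ from the cusp. Once the pentagon $\Sigma$ is correctly identified the rest is a direct calculation paralleling Lemma~\ref{lem:seams_bound}.
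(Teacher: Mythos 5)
Your proposal is correct and takes essentially the approach the paper intends: the paper's proof of Lemma \ref{lem:one_cusp} is just a one-line appeal to the argument of Lemma \ref{lem:seams_bound}, and your seam decomposition, the symmetry argument showing the $\alpha_2$-to-cusp seam meets $l_1$ orthogonally at its midpoint, and the right-angled pentagon relation $\tanh(x)\cosh(l_2)\tanh\bigl(\tfrac{1}{2}l(\alpha_2)\bigr)=1$ combined with the degenerate hexagon formula for $\cosh(l_2)$ are exactly the details being invoked there. Your additional identity for $\tanh(x)$, used to bound $l(\alpha_1''')=l(\alpha_1)-2x$ away from zero, correctly handles the one feature (the arc through the foot of the cusp seam) that the three-cuff computation does not literally cover.
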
 

\begin{proof}
The proof is a standard application of hyperbolic geometry similar to the proof of Lemma \ref{lem:seams_bound} (see Figure 8). 
\end{proof}

The last case is when a geodesic pair of pants has one cuff and two cusps. Then there is a single geodesic arc which connects the cuff to itself that is orthogonal to the cuff at both of its endpoints and that divides the pair of pants into two right angled bigons with cusps.

\begin{figure}[h]
%\ShowGrid
\leavevmode \SetLabels
\L(.25*.6) $\alpha_1'$\\
\L(.72*.6) $\alpha_1''$\\
\L(.47*.7) $l_1$\\
\endSetLabels
\begin{center}
\AffixLabels{\centerline{\epsfig{file =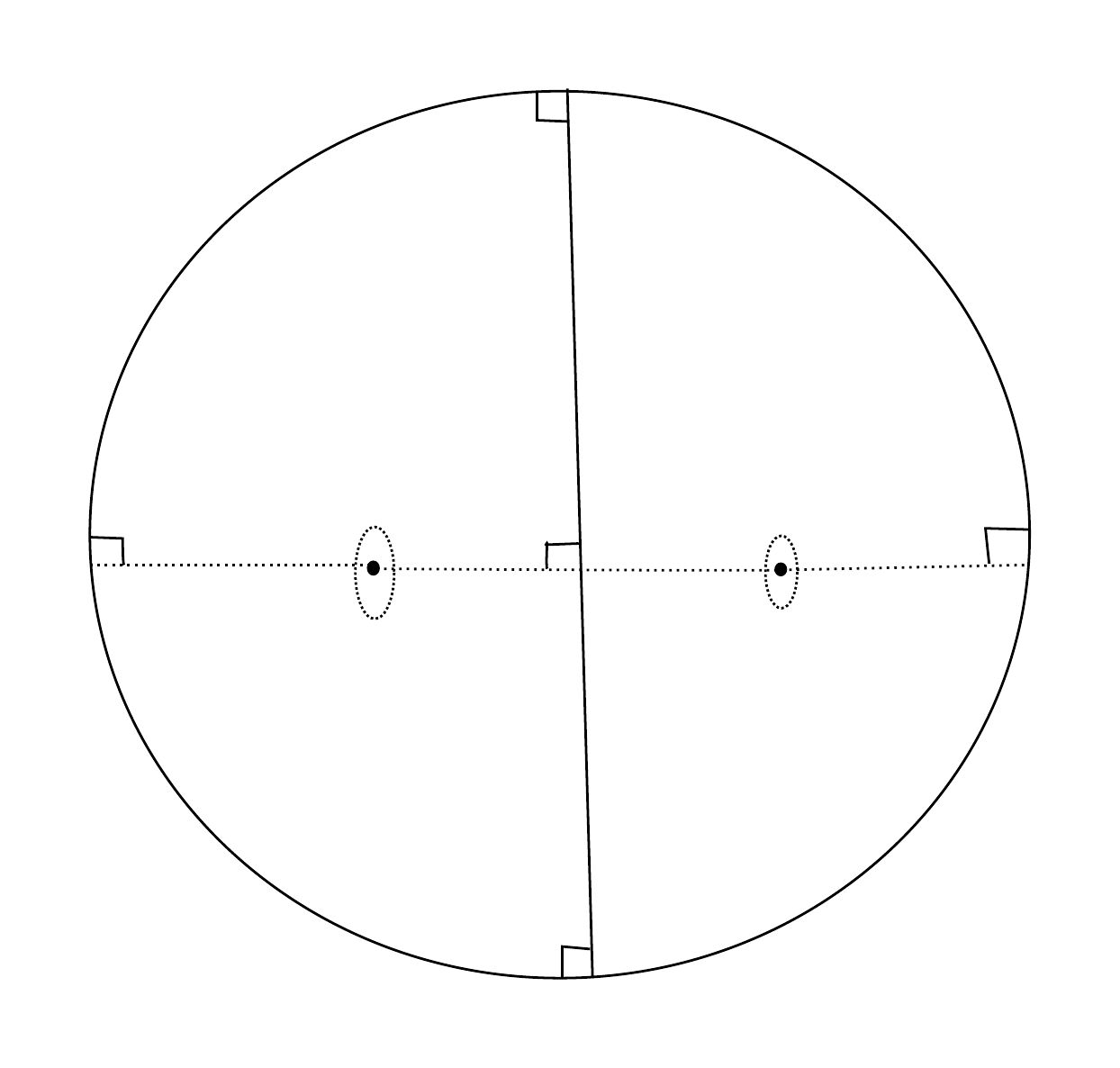,width=8.0cm,angle=0} }}
\vspace{-20pt}
\end{center}
\caption{The pair of pants with two cusps. The reflection in $l_1$ is a symmetry as well as the reflection in the dotted geodesics. The dotted ellipses represent horocycles around the two cusps.} 
\end{figure}

\begin{lemma}
\label{lem:two_cusps}
Consider a geodesic pair of pants with one cuff $\alpha_1$ and two cusps such that
$$
1/M\leq l(\alpha_1)\leq M
$$
for some $M>1$. Let $l_1$ be the length of the shortest geodesic arc connecting $\alpha_1$ to itself and separating the two cusps (see Figure 10). The arc $l_1$ divides $\alpha_1$ into two subarcs $\alpha_1'$ and $\alpha_1''$. Then 
$$
l(\alpha_1')=l(\alpha_1'')=\frac{1}{2}l(\alpha_1)
$$
and 
there exists $M_1>1$ depending only on $M$ such that 
$$
1/M_1\leq l_1\leq M_1.
$$
\end{lemma}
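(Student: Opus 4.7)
The plan is to deduce both claims from the fact that a hyperbolic pair of pants with one cuff of length $\ell:=l(\alpha_1)$ and two cusps is uniquely determined up to isometry by the single length parameter $\ell$. From this uniqueness one obtains an orientation-preserving isometric involution $\sigma$ of $P$ that interchanges the two cusps and fixes $\alpha_1$ setwise. Being an order-two orientation-preserving isometry, $\sigma$ restricts to the circle $\alpha_1$ as a rotation by $\pi$; in particular it has no fixed points on $\alpha_1$ and exchanges every pair of antipodal points of $\alpha_1$.

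For the first conclusion I would observe that $\sigma(l_1)$ is again a shortest geodesic arc from $\alpha_1$ to itself separating the two cusps. Such a minimizing arc in its free homotopy class is unique (it is the unique common perpendicular of $\alpha_1$ to itself that realizes the infimum of length in that class, by standard strict convexity of the hyperbolic length function on arcs with endpoints on a geodesic). Hence $\sigma(l_1)=l_1$ as a set, and $\sigma$ interchanges the two endpoints of $l_1$ on $\alpha_1$. By the first paragraph these endpoints are then antipodal on $\alpha_1$, so that $l(\alpha_1')=l(\alpha_1'')=\ell/2$.

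For the second conclusion I would use again that, since $P$ is determined up to isometry by $\ell$ alone, every geometric invariant of $P$ is a real-analytic function of $\ell$. In particular $l_1=l_1(\ell)$ is a strictly positive continuous function on $(0,\infty)$. Restricting to the compact interval $[1/M,M]$ and applying the extreme value theorem, $l_1(\ell)$ attains both a finite positive minimum and a finite maximum there; taking $M_1$ to be the maximum of that upper bound, the reciprocal of that lower bound, and $1$ yields the desired two-sided estimate $1/M_1\leq l_1\leq M_1$.

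The main subtlety I expect is justifying that $\sigma(l_1)=l_1$, i.e.\ that the shortest separating orthogeodesic is unique. This can be bypassed entirely by computing $l_1$ directly from the Fuchsian representation of $\pi_1(P)$: normalize the two parabolic generators in $PSL(2,\mathbb{R})$ so that their product is hyperbolic with translation length $\ell$, and then extract $l_1$ as (half of) the translation length of the hyperbolic element of $\pi_1(P)$ whose axis covers the double of $l_1$ across $\alpha_1$. This gives an explicit closed-form continuous expression for $l_1(\ell)$ from which both conclusions of the lemma follow simultaneously and makes the constant $M_1$ in principle effective.
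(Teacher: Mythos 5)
Your proof is correct in substance, but it follows a different route from the paper's, whose (deliberately terse) argument is the same explicit hyperbolic--trigonometry computation as in Lemma \ref{lem:seams_bound}: using the symmetries of $P$ (the reflection in $l_1$ and the reflection in the seams), the endpoints of $l_1$ together with the feet of the perpendiculars from the two cusps cut $\alpha_1$ into four arcs of length $l(\alpha_1)/4$ and cut $P$ into four congruent Lambert quadrilaterals with one ideal vertex and finite sides $l(\alpha_1)/4$ and $l_1/2$; the relation $\sinh(l_1/2)\,\sinh(l(\alpha_1)/4)=1$ then gives both conclusions at once with an explicit $M_1=M_1(M)$ (the same quadrilateral reappears in Lemma \ref{lem:dist_bdd}). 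Your symmetry argument for the bisection is a legitimate substitute for the paper's reflection symmetry; two small points should be made explicit: to get $\sigma(l_1)=l_1$ you need $l_1$ and $\sigma(l_1)$ to lie in the same homotopy class, which holds because there is a single isotopy class of essential simple arcs from $\alpha_1$ to itself separating the two cusps (and a shortest such arc is simple), and once you know $\sigma$ swaps the two components of $P\setminus l_1$ it swaps $\alpha_1'$ and $\alpha_1''$, which already gives the bisection without needing $\sigma^2=\mathrm{id}$ or the antipodal description. For the bound on $l_1$, your continuity-plus-compactness argument is where the approaches genuinely differ: the assertion that every invariant of $P$ is (real-analytic, or even just continuous) in $\ell$ does not follow formally from uniqueness of $P$ up to isometry, so the explicit computation you sketch is in fact needed to make this step rigorous --- and once one has a closed formula (either from traces or from the quadrilateral relation above) the compactness step is superfluous and the constant $M_1$ becomes effective, which is exactly what the paper's computational approach buys. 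One further slip: the closed geodesic obtained by doubling $l_1$ lies in the double of $P$ across $\alpha_1$, so the hyperbolic element whose translation length equals $2l_1$ belongs to the Fuchsian group of the double, not to $\pi_1(P)$ itself.
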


\begin{proof}
The proof is a standard application of hyperbolic geometry similar to the proof of Lemma \ref{lem:seams_bound} (see Figure 9). \end{proof}

 We are grateful to an anonymous referee for providing us with a concise proof of the following lemma.

\begin{lemma}
\label{lem:dist_bdd}
Let $Q$ be a Lambert quadrilateral with a zero angle and two finite sides $a$ and $b$ whose lengths are between $1/M$ and $M$, for $M>1$. Let $Q'$ be the subset of $Q$ obtained by removing a neighborhood of the vertex at infinity that has a horocyclic boundary of length $1/2$. Then the distance between  any point of $Q'$ from $a\cup b$ is at most $M_3(M)>0$.
\end{lemma}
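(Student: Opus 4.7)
The plan is to work in the upper half-plane model of $\mathbb{H}$ and to control $Q'$ explicitly in Euclidean coordinates. I first apply a hyperbolic isometry sending the ideal vertex of $Q$ to $\infty$, so both infinite sides of $Q$ become vertical rays; a further Euclidean dilation places the vertical side containing $P_1$ on the positive $y$-axis with $P_1=(0,1)$. Since $a$ is perpendicular to that vertical side at $P_1$, its Euclidean tangent there is horizontal, forcing $a$ to lie on the unit semicircle $x^2+y^2=1$; hence $P_2=(x_2,\sqrt{1-x_2^2})$ for some $x_2\in(0,1)$. Integrating $ds=d\phi/\cos\phi$ along this semicircle gives $|a|=\operatorname{arctanh}(x_2)$, so $x_2=\tanh|a|$.

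Now $b$ is a geodesic meeting $a$ perpendicularly at $P_2$ and meeting the other (vertical) infinite side perpendicularly at $P_3$. Using the criterion $r_1^2+r_2^2=d^2$ for two Euclidean circles centered on $\mathbb{R}$ to meet orthogonally, one finds that $b$ is the arc of the semicircle of center $(1/x_2,0)$ and radius $\sqrt{1-x_2^2}/x_2$, and $P_3=(1/x_2,\sqrt{1-x_2^2}/x_2)$ is its Euclidean apex; the other infinite side is the vertical ray above $P_3$. The hypothesis $|a|\in[1/M,M]$ then gives
$$
x_2=\tanh|a|\in[\tanh(1/M),\,\tanh(M)]\subset(0,1),
$$
so $x_2$ and $1/x_2$ are pinched between constants depending only on $M$.

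The horocycle around $\infty$ at Euclidean height $y=h$ has hyperbolic length $1/(hx_2)$ inside the strip $0\le x\le 1/x_2$; setting it equal to $1/2$ yields $h=2/x_2$. Therefore
$$
Q'\subset R:=\bigl[0,\,1/x_2\bigr]\times\bigl[\sqrt{1-x_2^2},\,2/x_2\bigr],
$$
whose Euclidean width and top height are bounded above and whose bottom height $\sqrt{1-x_2^2}\ge\operatorname{sech}(M)$ is bounded below, all in terms of $M$. For any $(x,y),(x',y')\in R$,
$$
\cosh\rho\bigl((x,y),(x',y')\bigr)=1+\frac{(x-x')^2+(y-y')^2}{2yy'}
$$
is bounded above by an explicit constant $K(M)$, so $\operatorname{diam}_{\mathbb{H}}R\le \cosh^{-1}K(M)=:M_3(M)$. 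Since $P_1\in a\cup b\subset Q'\subset R$, every $p\in Q'$ satisfies $\rho(p,a\cup b)\le \rho(p,P_1)\le M_3(M)$, as required. The only mildly delicate step is pinning down the Euclidean coordinates of $b$ and $P_3$ from the two perpendicularity constraints; everything else is routine, and I suspect this is where the referee's concise version saves the most ink.
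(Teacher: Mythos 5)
Your proof is correct and takes essentially the same route as the paper's: an explicit upper half-plane computation with the ideal vertex at $\infty$, the identity $x_2=\tanh|a|$ (the paper's $r_1=\tanh\rho(a)$ in a different normalization), and the explicit Euclidean height of the length-$1/2$ horocycle. The only cosmetic difference is the last step, where the paper bounds the distance by vertical projection onto $a\cup b$ (giving $\log(2/y_0)$ and then estimating $y_0$ from below), while you bound the hyperbolic diameter of a bounding Euclidean rectangle via the $\cosh$-distance formula; both yield a constant depending only on $M$.
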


\begin{proof}
We place $Q$ in the upper half-plane $\mathbb{H}$ such that the vertex of $Q$ with the zero angle is $\infty\in\hat{\mathbb{R}}=\partial_{\infty}\mathbb{H}$. The two finite sides $a$ and $b$ are arcs on the circles 
$|z|=r_1$ and $|z-1|=r_2$, and the infinite sides are on the vertical lines with real parts $0$ and $1$. In order to have a right angle at the vertex $z_0=x_0+iy_0$ of $Q$ that is the intersection of $|z|=r_1$ and $|z-1|=r_2$ it is necessary that $r_1^2+r_2^2=1$, where $0<r_1<1$ and $0<r_2<1$. 
An elementary computation gives $x_0=r_1^2$ and $y_0=\sqrt{r_1^2-r_1^4}$. 
Since the horocycle has length $1/2$, it is given by a horizontal Euclidean arc at the height $2$ connecting the two infinite sides. 

Note that the vertex $z_0$ is the point in $Q$ with the smallest height. Therefore
the maximal distance of a point in $Q$ to the union of finite sides is given by the distance between the vertex $z_0$ and the horocyclic side of $Q$. In other words, the maximal distance is $\log \frac{2}{y_0}$. Therefore we need to estimate $y_0$ from the below.

Consider the Euclidean ray $r\subset\mathbb{H}$ from the origin through the vertex $z_0$. The set of points on $r$ are equidistant from the positive $y$-axis (see Beardon \cite[Section 7.20]{Beardon}). Let $\varphi$ be the angle that $r$ subtends with the positive $y$-axis. If $\rho (a)$ is the length of the $a$ side, then (see \cite[Section 7.20]{Beardon})
$$
\sin \varphi=\tanh \rho (a).
$$ 
On the other hand, a right-angled triangle with vertices $0$, $z_0$ and $x_0$ gives $\sin\varphi =\cos (\frac{\pi}{2}-\varphi )=r_1$ which implies
$$
r_1=\tanh \rho (a).
$$
Since $\frac{1}{M}\leq \rho (a)\leq M$ it follows that $\tanh \frac{1}{M}\leq r_1\leq \tanh M$. 
We obtain
$$
\log \frac{2}{y_0}=\log 2-\log r_1-\frac{1}{2}\log (1-r_1^2)\leq \log 2-\log \tanh\frac{1}{M}-\frac{1}{2}
\log (1-\tanh^2M)=M_3(M).
$$
\end{proof}

\begin{lemma}
\label{lem:horocycle-geodesic}
Let $C$ be a horocycle of length $1$ on a Riemann surface $X$. Then the hyperbolic and the horocyclic distances on $C$ are bi-Lipschitz functions of each other.
\end{lemma}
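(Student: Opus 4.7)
The plan is to lift $C$ to the universal cover $\tilde{X}=\mathbb{H}$ and reduce the statement to an explicit one-variable inequality.

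First, normalize so that the cusp around which $C$ winds corresponds to the point $\infty \in \partial_{\infty}\mathbb{H}$ in the upper half-plane model, and the parabolic stabilizer in $\pi_1(X)$ is generated by $T:z\mapsto z+1$. A horocycle in $X$ around this cusp lifts to a horizontal line $y=c$ in $\mathbb{H}$, and the hyperbolic length of the arc from $(0,c)$ to $(1,c)$ is $1/c$. Since $C$ has length $1$, its lift is the line $y=1$, and the induced arc length from $(x_1,1)$ to $(x_2,1)$ is $|x_1-x_2|$. Because $C$ has length $1<2$, the standard collar lemma for cusps shows that the region $N=\{y>1\}/\langle T\rangle$ is an embedded cusp neighborhood with $\partial N=C$; in particular $C$ is an embedded simple closed curve.

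Second, I compute the hyperbolic distance in $X$ between two points $p,q\in C$ with horocyclic distance $s\in[0,1/2]$. Fix lifts $\tilde p=(a,1)$ and $\tilde q_n=(a+s+n,1)$, $n\in\mathbb{Z}$. Any path from $p$ to $q$ in $X$ that leaves $N$ must cross $C$ at least twice, so it is no shorter than a path staying in $N$; consequently
\[
d_X(p,q)=\min_{n\in\mathbb{Z}} d_{\mathbb{H}}(\tilde p,\tilde q_n).
\]
Using $\cosh d_{\mathbb{H}}(z_1,z_2)=1+\tfrac{|z_1-z_2|^2}{2\,y_1 y_2}$, the function $n\mapsto d_{\mathbb{H}}(\tilde p,\tilde q_n)$ is minimized when $|s+n|$ is smallest, which for $s\in[0,1/2]$ happens at $n=0$. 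Hence
\[
d_X(p,q)=\operatorname{arccosh}\!\bigl(1+\tfrac{s^2}{2}\bigr)=:f(s).
\]

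Third, I verify that $f$ is bi-Lipschitz on $[0,1/2]$. Differentiating,
\[
f'(s)=\frac{s}{\sqrt{(1+s^2/2)^2-1}}=\frac{1}{\sqrt{1+s^2/4}},
\]
which is continuous and satisfies $\tfrac{4}{\sqrt{17}}\le f'(s)\le 1$ for $s\in[0,1/2]$. By the mean value theorem,
\[
\tfrac{4}{\sqrt{17}}\,s\le f(s)\le s\qquad\text{for all }s\in[0,1/2],
\]
which gives the required bi-Lipschitz equivalence between horocyclic and hyperbolic distances on $C$.

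The only subtle step is justifying that $d_X(p,q)$ is realized by a path that stays in $N$, i.e.\ that the minimization over the orbit of $\tilde q$ under $\langle T\rangle$ genuinely yields the $X$-distance. This uses only that $N$ is an embedded cusp neighborhood (guaranteed by $\ell(C)=1<2$) and the observation that exiting and re-entering $N$ costs at least twice the distance from $C$ to the exit point, which cannot shorten the path.
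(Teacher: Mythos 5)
Your computational core is essentially the paper's: the same normalization (cusp at $\infty$, parabolic generator $z\mapsto z+1$, lift of $C$ the horocycle at height $1$) and the same explicit half-plane computation; your $\operatorname{arccosh}\bigl(1+\tfrac{s^2}{2}\bigr)$ is the paper's $\log\frac{\sqrt{4+s^2}+s}{\sqrt{4+s^2}-s}$, and the derivative bound on $[0,1/2]$ is correct. The difference is that the paper simply compares the horocyclic parameter $s$ with the hyperbolic distance in the universal cover between the lifts $i$ and $i+s$, $0<s\le 1$ (which is the form in which the lemma is used in Lemma \ref{lem:quasi-cusps}), whereas you aim at the intrinsic surface distance $d_X(p,q)$ and reduce it to $\min_{n}d_{\mathbb{H}}(\tilde p,\tilde q_n)$.

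That reduction is where your argument has a genuine gap. The assertion that $d_X(p,q)$ is realized by a path staying in $N$ is false as stated (for nearby points of $C$ the minimizing geodesic dips below the horocycle), and, more to the point, the observation that ``exiting and re-entering $N$ costs at least twice the distance to the exit point'' does not exclude a competing path whose lift joins $\tilde p$ to $g\tilde q$ with $g\notin\langle T\rangle$, i.e.\ a shortcut through a region of $X$ where $C$ returns close to itself; embeddedness of the open neighborhood $N$ alone does not forbid its closure from nearly touching itself. What does close the gap is the quantitative fact you cite but never use: since the horoball of boundary length $2$ is precisely invariant, for every $g\in\pi_1(X)\setminus\langle T\rangle$ the set $g(\{y>1/2\})$ is a disk disjoint from $\{y>1/2\}$, hence of Euclidean diameter at most $1/2$; consequently every lift $g\tilde q$ of $q$ other than the $\tilde q_n$ has imaginary part at most $1/2$, so $d_{\mathbb{H}}(\tilde p,g\tilde q)\ge \log 2$, which exceeds $\operatorname{arccosh}\bigl(1+\tfrac{s^2}{2}\bigr)\le \operatorname{arccosh}(9/8)<0.5$ for $s\le 1/2$. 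With that inserted, your identification $d_X(p,q)=\min_n d_{\mathbb{H}}(\tilde p,\tilde q_n)$ is justified and your proof is correct — indeed slightly stronger than the paper's, which never addresses the surface metric and works purely in the universal cover.
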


\begin{proof}
We identify $\tilde{X}$ with the upper half-plane $\mathbb{H}$ such that the cyclic subgroup of $\pi_1(X)$ fixing the cusp corresponding to $C$ is generated by $z\mapsto z+1$. The lift of $C$
is the Euclidean horizontal line $h$ through $i\in\mathbb{H}$. 
The semiopen Euclidean horizontal arc on $h$ with endpoints $i$ and $1+i$ injectively covers $C$.

To prove the lemma, we need to compare the distances between $i$ and $i+s$ for $0<s\leq 1$.  
The horocyclic distance is $s$. The hyperbolic distance is 
$$
\rho (i,i+s)=\log\frac{\sqrt{4+s^2}+s}{\sqrt{4+s^2}-s}
$$
which gives
$$
\rho (i,i+s)=\log (1+\frac{2s}{\sqrt{4+s^2}-s})
$$
which is a bi-Lipschitz function of $s$ for $s\in (0,1]$.
\end{proof}

The train track $\Theta$ is defined using the standard train tracks in the fixed pants decomposition. The set $\Theta_0$ consists of all the cuffs and a maximal choice of shortest orthogonal arcs connecting cuffs in each pair of pants, where two cuffs in a pair of pants are  connected by orthogonal arcs of $\Theta_0$ if there is an edge path of $\Theta$ connecting the cuffs. We note that the cuffs and the orthogonal arcs to the cuffs do not meet horocyclic neighborhoods of cusps whose boundary has length $1$. Indeed, it is well known that any geodesic on a hyperbolic surface that enters the horoball neighborhood of a cusp with boundary length $1$ is not simple. Therefore the cuffs do not enter this neighborhood. To see that orthogonal arcs do not enter this neighborhood, it is enough to form a double of a single pair of pants and note that the orthogonal arcs together with their doubles form simple closed geodesics. Thus they do not enter the horocyclic neighborhoods with boundaries of length $1$ of the cusps.

Let $X'$ denote $X$ minus open horocyclic neighborhood around each cusp whose boundary has length $1$. We note that horocyclic neighborhoods are pairwise disjoint and their boundaries are closed horocyclic curves. Our goal is to prove that $\Theta$ as a subset of $X'$ has properties that are analogous to the properties of the train tracks in surfaces without cusps (see Section 7).

Let $\tilde{X}'$ be the lift of $X'$ to the universal covering $\tilde{X}$ of $X$. Equivalently $\tilde{X}'$ is obtained from $\tilde{X}$ by removing open horoballs at the lift of each cusp on $\partial_{\infty}\tilde{X}$. Let $\tilde{\Theta}$ and $\tilde{\Theta}_0$ be  lifts of $\Theta$ and $\Theta_0$ in $\tilde{X}'$. 

We first prove that $\tilde{\Theta}_0$ is quasi-isometric to $\tilde{X}'$ for the restriction of the hyperbolic metric.

\begin{lemma}
\label{lem:quasi-cusps} 
Given a hyperbolic surface $X$ with cusps, 
let $X'$ be obtained from $X$ by removing a horocyclic neighborhood with boundary length $1$ of each cusp. Let $\tilde{X}'$ be the lift of $X'$ to $\tilde{X}$ and $\tilde{\Theta}_0$ be the lift of $\Theta_0$ equipped with path metric. Then $\tilde{\Theta}_0$ is quasi-isometric to $\tilde{X}'$.
\end{lemma}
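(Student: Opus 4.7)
The approach parallels Lemma \ref{lem:quasiisometry}, with the additional input of Lemmas \ref{lem:one_cusp}--\ref{lem:horocycle-geodesic} to control the cusp regions. I would proceed in two main steps: first establish that $\tilde{\Theta}_0$ is coarsely dense in $\tilde{X}'$, and second show that the inclusion $\tilde{\Theta}_0\hookrightarrow\tilde{X}'$ has linear distortion.

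For the coarse density, I would make precise the tiling of $\tilde{X}'$ induced by $\tilde{\Theta}_0$. Each lift of a pair of pants $P_n$ is divided by the orthogonal arcs of $\Theta_0$ into pieces of three possible types: right-angled hexagons with side lengths pinched between two positive constants (when $P_n$ has three cuffs, by Lemma \ref{lem:seams_bound}); one such hexagon together with a right-angled bigon with a cusp (when $P_n$ has one cusp, by Lemma \ref{lem:one_cusp}); or two such bigons (when $P_n$ has two cusps, by Lemma \ref{lem:two_cusps}). Each bigon-with-cusp splits along its reflection axis into two congruent Lambert quadrilaterals of the type treated in Lemma \ref{lem:dist_bdd}, with finite sides of length bounded in terms of $M$. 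Intersecting with $\tilde{X}'$ (that is, discarding the horoball of horocyclic length $1$ at each cusp) replaces each Lambert quadrilateral $Q$ by its truncation $Q'$. Hexagonal tiles have uniformly bounded diameter, and $Q'$-tiles have diameter uniformly bounded via Lemma \ref{lem:dist_bdd}. Hence every point of $\tilde{X}'$ lies within a uniform distance $D=D(M)$ of $\tilde{\Theta}_0$.

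For the linear distortion, let $p$ denote the path metric on $\tilde{\Theta}_0$ and $\rho'$ the restricted path metric on $\tilde{X}'$. The bound $\rho'(x,y)\leq p(x,y)$ is immediate. For the reverse, fix $x,y\in\tilde{\Theta}_0$, take a shortest path $\gamma$ in $\tilde{X}'$ from $x$ to $y$, and partition $\gamma$ into sub-arcs $\gamma_1,\ldots,\gamma_n$, each contained in a single tile. Inside a hexagonal tile I would apply the right-triangle estimate from the proof of Lemma \ref{lem:quasiisometry} (based on $\cosh\rho(l)=\cosh\rho(a)\cosh\rho(b)$) to replace $\gamma_i$ by a path in $\tilde{\Theta}_0$ along the hexagon boundary of length at most a multiplicative constant times $\mathrm{length}(\gamma_i)$. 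Inside a $Q'$-tile I would use Lemma \ref{lem:dist_bdd}: both endpoints of $\gamma_i$ lie within $M_3$ of $a\cup b\subset\tilde{\Theta}_0$, so they can be joined by an arc of $a\cup b$ of bounded length. Since the inradii of all tiles are bounded below by a constant $r=r(M)>0$, the number $n$ of tiles traversed satisfies $n\leq \mathrm{length}(\gamma)/r+1$, and concatenating the boundary replacement arcs yields a path in $\tilde{\Theta}_0$ from $x$ to $y$ of length at most $A\rho'(x,y)+B$.

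The main technical point arises when $\gamma$ runs along a horocyclic boundary of $\tilde{X}'$, which can happen when the direct hyperbolic geodesic from $x$ to $y$ enters a removed horoball and the $\tilde{X}'$-shortest path is forced to ``hug'' the boundary. Along such a horocyclic segment, the $\tilde{X}'$-length equals the horocyclic length; moreover consecutive $Q'$-tiles meeting this segment are translates under the parabolic stabilizer of the corresponding cusp, and each covers horocyclic length $1/2$. Thus the number of $Q'$-tiles traversed is linear in the horocyclic length, and Lemma \ref{lem:horocycle-geodesic} combined with Lemma \ref{lem:dist_bdd} ensures that the $a\cup b$-arcs used to replace these horocyclic segments contribute a total length linear in $\mathrm{length}(\gamma)$. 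This uniform control near the cusps is the essential new ingredient relative to the cusp-free argument of Lemma \ref{lem:quasiisometry}, and is the main obstacle that Lemmas \ref{lem:dist_bdd} and \ref{lem:horocycle-geodesic} were designed to overcome.
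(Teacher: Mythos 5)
Your overall strategy (coarse density from the bounded-size tiles, then replacing the $\tilde X'$-shortest path tile by tile using the right-triangle estimate in hexagons and Lemmas \ref{lem:one_cusp}--\ref{lem:horocycle-geodesic} near the cusps) is the same as the paper's, and the coarse density half is fine. The gap is in the distortion half, at the step ``since the inradii of all tiles are bounded below by $r$, the number $n$ of tiles traversed satisfies $n\leq \mathrm{length}(\gamma)/r+1$.'' A lower bound on inradius does not bound the number of traversals linearly in length: a path can enter a tile through one side and leave through an adjacent side near their common vertex, picking up arbitrarily little length in that tile, so a traversal need not contribute $r$ to $\mathrm{length}(\gamma)$. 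Since your replacement inside each $Q'$-tile is only an \emph{additive} constant (an arc of $a\cup b$ of bounded length), an unjustified linear bound on the number of $Q'$-tile crossings leaves the inequality $p(x,y)\leq A\rho'(x,y)+B$ unproved; this is exactly the delicate point of the lemma, and the inradius argument does not close it.

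The paper avoids counting tiles by getting a \emph{multiplicative} bound on each piece. It partitions the shortest path by the complementary components of $\tilde\Theta_0$ in $\tilde X'$ (not by the finer pentagon tiling), and inside a cusp component it exploits the row structure you allude to in your last paragraph: the component is tiled by pentagons (four geodesic sides, one horocyclic side) stacked along the horocycle, so if a subpath $l_i$ meets $n\geq 3$ of them, every intermediate pentagon is crossed from one of its ``vertical'' geodesic sides to the other, and Lemmas \ref{lem:one_cusp}, \ref{lem:two_cusps} and \ref{lem:horocycle-geodesic} give a uniform lower bound $c_0$ for each such side-to-side crossing (whether it is geodesic, horocyclic, or mixed). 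Hence $|l_i|\geq c_0(n-2)$, while $x_i$ and $x_{i+1}$ are joined in $\tilde\Theta_0$ by at most $2n$ pentagon sides of bounded length, giving $|l_i|\geq c_1\,p(x_i,x_{i+1})$; the short cases $n\leq 2$ are handled separately by the right-triangle estimate or by the $c_0$ bound. Corner-clipping is harmless there because only the first and last pentagons can be clipped. If you want to keep your tile-by-tile scheme, you must replace the inradius count by this side-to-side crossing argument (or otherwise prove a correct linear bound on the number of $Q'$-tile traversals, including crossings near the cuff where $Q'$-tiles meet hexagons); as written the key inequality is unsupported.
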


\begin{proof}
The set of boundary points of $\tilde{X}'$ that are on a finite distance from any interior point consists of horocycles that are based at the lifts of the punctures of $X$. By the above construction, the complementary regions of ${\Theta}_0$ in $X'$ are either right-angled hexagons or right-angled bigons minus a horocyclic neighborhood of a cusp with length $1$ boundary horocycle. Therefore the complementary regions to the lift $\tilde{\Theta}_0$ of $\Theta_0$ in $\tilde{X}'$ consists of right angled hexagons and  simply connected regions in $\tilde{X}'$ whose one boundary is a horocycle and the other boundary is a union of infinitely many geodesic arcs orthogonal to each other at their endpoints. The second complementary region has one ideal endpoint on $\partial_{\infty}\tilde{X}$ which is the point at which the horocycle is based, i.e.-the two boundary sides are asymptotic to a single point at infinity in both directions (see Figure 10).

By Lemmas \ref{lem:one_cusp} and \ref{lem:two_cusps} every edge of $\Theta_0$ has length between two positive constants which implies that every point of $X'$ is a bounded distance away from a point on $\Theta_0$ by Lemma \ref{lem:dist_bdd}. It follows that every point of $\tilde{X}'$ is on a bounded distance from $\tilde{\Theta}_0$. This distance is realized by a geodesic arc.

Let $p(x,y)$ be the path distance along $\tilde{\Theta}_0$ between two points $x,y\in\tilde{\Theta}_0$ and let $\rho (x,y)$ be the path distance between $x,y\in \tilde{X}'$. It remains to prove that $\frac{1}{A}\rho (x,y)-B\leq p(x,y)\leq A\rho (x,y)+B$ for fixed $A,B >0$ and all $x,y\in\tilde{\Theta}_0$.  It is clear that $\rho (x,y)\leq p(x,y)$ and we need to prove the opposite inequality. 

Let $l$ be the shortest path in $\tilde{X}'$ between $x,y\in\tilde{\Theta}_0$. Then $l$ is a finite union of geodesic arcs and pieces of the horocycles that are on the boundary of $\tilde{X}'$. 
We partition $l$ into union $\cup_{i=1}^pl_i$ of subpaths such that $l_i\cap l_{i+1}$ is a common endpoint and each $l_i$ is the intersection of $l$ with the closure of a single component of $\tilde{X}'-\tilde{\Theta}_0$. Let $x_i,x_{i+1}\in\tilde{\Theta}_0$ be the endpoints of $l_i$, in particular $x_1=x$ and $x_{p+1}=y$. If $l_i$ is inside a right-angled complementary hexagon then, as in the proof of Lemma \ref{lem:quasiisometry}, $l_i$ can be replaced by a biLipschitz path on $\tilde{\Theta}_0$ with the same endpoints.

Assume that $l_i$ is in the complementary region of $\tilde{X}'-\tilde{\Theta}_0$ that is a single component of the lift of the punctured bigon minus a horocyclic neighborhood of the cusp. The component is divided into pentagons with four geodesic sides and one horocyclic side by the lifts of the pentagons from either  Figure 8 or 9. 

\begin{figure}[h]
%\ShowGrid
\leavevmode 
\SetLabels
\L(.64*.17)$x_i$\\
\L(.33*.19)$x_{i+1}$\\
\L(.5*.23)$l_i$\\
\endSetLabels
\begin{center}
\AffixLabels{\centerline{\epsfig{file =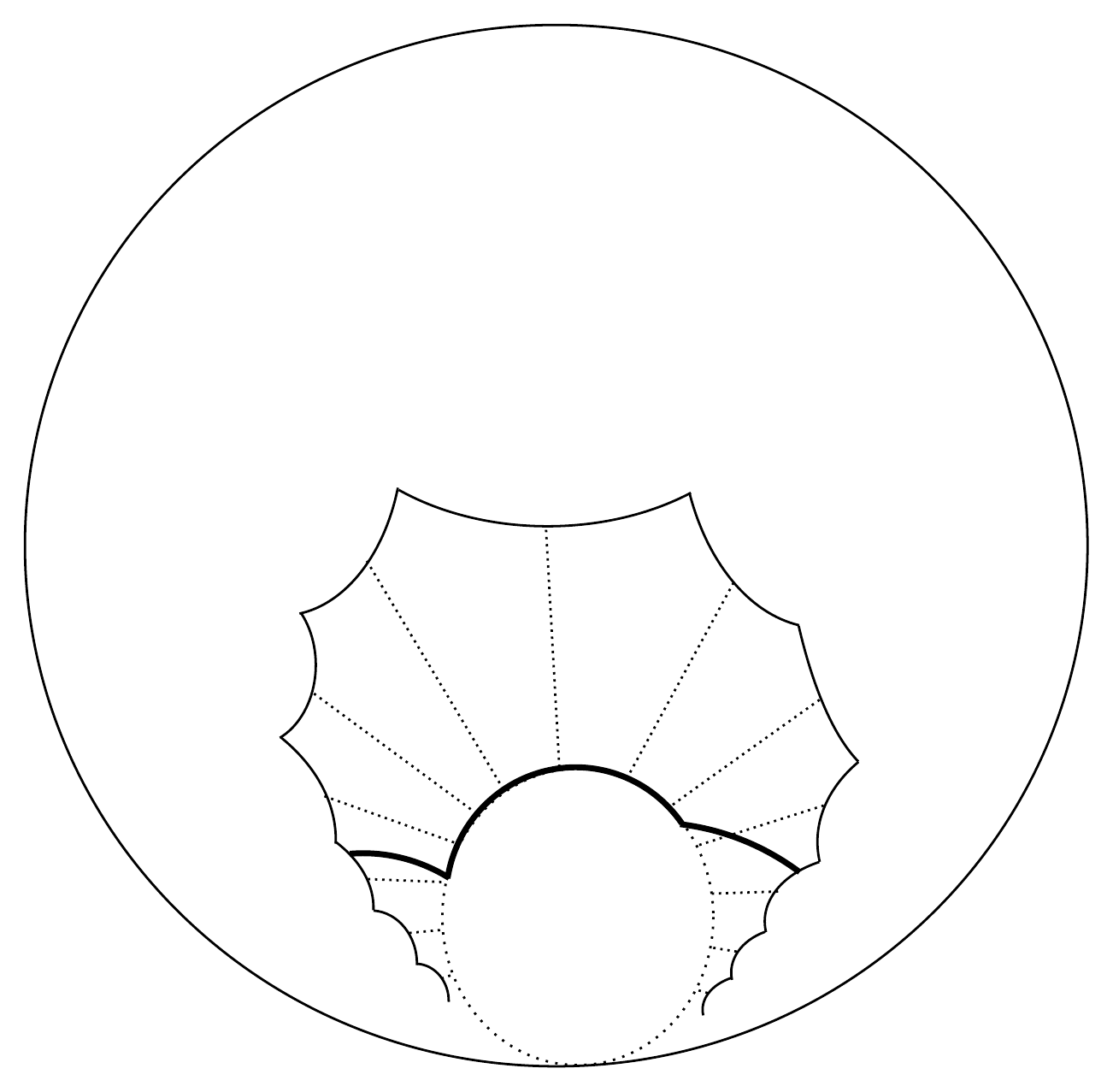,width=9.0cm,angle=0} }}
\vspace{-20pt}
\end{center}
\caption{The lift of a punctured bigon minus a horocyclic neighborhood of the cusp. The pentagon decomposition is given. The solid sides belong to $\tilde{\Theta}_0$. The arc $l_i$ is composed of geodesic arcs and a horocyclic arc.} 
\end{figure}

We assume that $l_i$ intersects $n\geq 3$ such pentagons, denoted by $P_1,P_2,\ldots ,P_n$ in that order. With the possible exception of $P_1$ and $P_n$, the path $l_i$ connects the two geodesic sides (dotted lines in Figure 10) that are orthogonal to the horocycle (dotted circle tangent to the boundary in Figure10). The part of $l_i$ that connects these two geodesic boundary sides is either a geodesic arc or a part of the horocycle or a combination of both. However, there is a lower bound $c_0$ on the length of each such part of $l_i$ by Lemmas  \ref{lem:one_cusp}, \ref{lem:two_cusps} and \ref{lem:horocycle-geodesic}. Thus the length of $l_i$ is at least $c_0(n-2)$. On the other hand, the points $x_i$ and $x_{i+1}$ are connected by at most $2n$ sides of the pentagons that are in $\tilde{\Theta}_0$ (solid sides in Figure 10). By Lemmas  \ref{lem:one_cusp} and \ref{lem:two_cusps}, there is $c>0$ an upper bound on the lengths of these sides and we have $2cn\geq p(x_i,x_{i+1})$. Therefore
$$
|l_i|\geq c_0(n-2)=\frac{c_0(n-2)}{2cn}2cn\geq c_1p(x_i,x_{i+1})
$$
where $|l_i|$ is the length of $l_i$ and $c_1=\frac{c_0}{6c}$.

Next we assume that $l_i$ intersects $n\leq 2$ pentagons. Then either $x_i$ and $x_{i+1}$ belong to adjacent geodesic segments in $\tilde{\Theta}_0$ which are orthogonal to each other (solid geodesic segments in Figure 10) or $l_i$ joins two geodesic segments of $\tilde{\Theta}_0$ that are separated by another geodesic segment of $\tilde{\Theta}_0$. In the former case, the length of $l_i$  is at least the length of the geodesic segment connecting $x_i$ to $x_{i+1}$, which is the hypothenuse of the right angled hyperbolic triangle with two other sides on $\tilde{\Theta}_0$. As in the proof of Lemma \ref{lem:quasiisometry}, we have $|l_i|\geq\frac{1}{2}p(x_i,x_{i+1})$.  In the later case, the length of $l_i$ is at least the length of the geodesic segment of $\tilde{\Theta}_0$ separating the two other geodesic segments that contain its endpoints $x_i$ and $x_{i+1}$. Then $|l_i|\geq c_0=\frac{c_0}{3c}3c\geq c_2p(x_i,x_{i+1})$ where $c_2=\frac{c_0}{3c}$.

Thus we can replace each $l_i$ by a path on $\tilde{\Theta}_0$ connecting endpoints of $l_i$ such that the length of $l_i$ is greater than a constant times the length of the path. The constant only depends on the bound $M$ for the pants decompostion. Thus $\tilde{\Theta}_0$ is quasi-isometric to $\tilde{X}'$.
\end{proof}

The proof of the following lemma is analogous to Lemma \ref{lem:quasi2}.

\begin{lemma}
\label{lem:quasi3}
Given $\Theta$ as above, there is a choice of a homeomorphic pants train track $\Theta'$ and a constant $d>0$ such that each edge path of the lifted train track $\tilde{\Theta}'$ is on a distance at most $d$ from a path in $\tilde{\Theta}_0$.
\end{lemma}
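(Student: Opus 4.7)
The plan is to repeat the strategy of Lemma \ref{lem:quasi2} using a small finite family of model pairs of pants tailored to the three possible cusp configurations. Fix three model geodesic pairs of pants $P^{(c)}$ for $c\in\{0,1,2\}$, where $c$ records the number of cusps, each of whose cuffs has hyperbolic length $1$ and each equipped with a fixed base point $a_j$ on every cuff. In each $P^{(c)}$ I would realize all standard train tracks $\{s_i^{(c)}\}$ as unions of smooth rectifiable arcs whose endpoints on cuffs are precisely the $a_j$'s and whose edges lie entirely outside the horocyclic neighborhoods of length $1$ around each cusp.

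For each $P_n$ in the pants decomposition of $X$, let $c_n$ denote its number of cusps and invoke the Bishop biLipschitz theorem (\cite{Bishop}) to obtain a biLipschitz map $f_n:P^{(c_n)}\to P_n$ with constant depending only on $M$, sending each cusp to a cusp, each cuff affinely to the corresponding cuff, and each shortest orthogonal arc between a designated pair of cuffs in $P^{(c_n)}$ to the corresponding shortest orthogonal arc in $P_n$. The standard train track of $\Theta$ in $P_n$ is homotopic to $f_n(s_{i(n)}^{(c_n)})$ for some $i(n)$, and each finite edge path of the model track connecting two cuffs (or a cuff to itself, when $c_n=2$) is carried by $f_n$ onto a path homotopic to the corresponding orthogonal arc $o_i^n$ used to define $\Theta_0$. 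Uniform control on the biLipschitz constant of $f_n$ then implies that each such finite edge path of $f_n(s_{i(n)}^{(c_n)})$ lies within a uniformly bounded distance of $o_i^n$.

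To produce $\Theta'$, the base points $f_n(a_j)$ on each cuff $\alpha_k$ coming from the two adjacent pairs of pants must be reconciled: as in Lemma \ref{lem:quasi2}, perform a smooth twist setwise fixing $\alpha_k$ supported in a one-sided collar neighborhood that moves $f_n(a_j)$ to $f_{n_1}(a_{j_1})$ by a uniformly bounded amount, using that cuff lengths are pinched in $[1/M,M]$. Concatenating these local modifications yields a pants train track $\Theta'$ homotopic to $\Theta$ and whose edges differ from the arcs of $\Theta_0$ by a uniformly bounded amount. Lifting to $\tilde{X}'$, each edge path in $\tilde{\Theta}'$ then lies within a uniform distance $d$ of some path in $\tilde{\Theta}_0$.

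The principal obstacle, absent in Lemma \ref{lem:quasi2}, is to verify that the Bishop biLipschitz extension applies with constants depending only on $M$ when the model and target pairs of pants contain cusps, and that the modified standard train tracks can be realized so as to avoid the horocyclic neighborhoods of length $1$ around the cusps. Both points rest on the bounded geometry of such a pair of pants outside these horocyclic regions, as quantified by Lemmas \ref{lem:one_cusp} and \ref{lem:two_cusps}, together with the fact that our earlier constructions of $\Theta_0$ and $\Theta$ already place all relevant arcs outside those neighborhoods.
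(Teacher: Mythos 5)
Your proposal is essentially the paper's own argument: the paper simply declares the proof of this lemma to be analogous to Lemma \ref{lem:quasi2}, and what you have written is precisely that analogue, carried out with cusped model pairs of pants, Bishop's biLipschitz maps, and the same base-point reconciliation by bounded twists in collars. Your added care that the model tracks and the arcs of $\Theta_0$ stay outside the length-$1$ horocyclic neighborhoods (via Lemmas \ref{lem:one_cusp} and \ref{lem:two_cusps}) is exactly the point needed to make the analogy work, so the proposal is correct and matches the intended proof.
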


Our main result in this section is

\begin{theorem}
\label{thm:edge_weight_with_cusps}
Let $X$ be a conformally hyperbolic Riemann surface with a bounded geodesic pants decomposition $\{ P_n\}$ and possibly infinitely many cusps. Then the lift $\tilde{\mu}$ to $\tilde{X}$ of a measured lamination $\mu$ on $X$ that is weakly carried by $\tilde{\Theta}$ is bounded if and only if its edge-weight system $f_{\tilde{\mu}}:E(\tilde{\Theta} )\to\mathbb{R}$ has a finite supremum norm.
\end{theorem}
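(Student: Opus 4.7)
The plan is to mirror the proof of Theorem~\ref{thm:quasi-no-cusps}, replacing $\tilde X$ by the truncated space $\tilde X'$ wherever appropriate, and using the fact (noted in the paragraph preceding Lemma~\ref{lem:dist_bdd}) that every simple geodesic of $X$ avoids the deleted horoballs, so the support of any measured lamination lifts to $\tilde X'$. First I would replace $\Theta$ by the homeomorphic pants train track $\Theta'$ from Lemma~\ref{lem:quasi3}; since the two train tracks are combinatorially equivalent, they weakly carry the same measured laminations with identical edge weight systems, so it suffices to establish the statement for $\Theta'$.

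The key geometric fact is that there exists $d>0$ such that each bi-infinite edge path of $\tilde\Theta'$ arising from a simple geodesic $g$ lies at $\tilde X$-distance at most $d$ from $g$. By Lemma~\ref{lem:quasi3} the edge path shadows, within a fixed distance, a path in $\tilde\Theta_0$, and that path is a quasi-geodesic in $\tilde X'$ by Lemma~\ref{lem:quasi-cusps}. Since $g$ is simple it lies entirely in $\tilde X'$ and is length-minimizing there between the same ideal endpoints, so a Morse-type stability argument produces the bound in $\tilde X'$; the bound descends to $\tilde X$ at once because $\rho_{\tilde X}\le\rho_{\tilde X'}$ on $\tilde X'$. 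More concretely, and this is probably the safer route, one argues pants-by-pants: a connector edge of $\tilde\Theta'$ sits inside a single lift $\tilde P$ of a pair of pants of bounded geometry, and any geodesic whose edge path contains that edge enters and leaves $\tilde P$ through a specific pair of boundary components, hence stays in a uniform neighborhood of the corresponding orthogonal arc produced from Lemmas~\ref{lem:seams_bound}, \ref{lem:one_cusp} and \ref{lem:two_cusps}; a cuff edge is treated analogously, using Lemma~\ref{lem:dist_bdd} to handle the punctured-bigon pieces.

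With this geometric claim the proof copies that of Theorem~\ref{thm:quasi-no-cusps}. I would again use the characterization $\|\tilde\mu\|_{Th}:=\sup_{D}\tilde\mu(G(D))<\infty$ with $D$ ranging over closed unit hyperbolic balls in $\tilde X$, which is equivalent to boundedness of $\tilde\mu$. For the forward implication, for every edge $e\in E(\tilde\Theta')$ each simple geodesic in $G(e)$ meets the ball $D_e$ of radius $d$ centered on $e$, so
\[
f_{\tilde\mu}(e)\;\le\;\tilde\mu(G(D_e))\;\le\;C(d)\,\|\tilde\mu\|_{Th},
\]
with $C(d)$ the number of unit balls needed to cover a $d$-ball, a bound uniform in $e$. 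For the converse, given a unit ball $D\subset\tilde X$, every geodesic in $G(D)\cap\mathrm{supp}(\tilde\mu)$ has its edge path passing within distance $d$ of the center of $D$; by Lemma~\ref{lem:quasi3} together with the uniform bounded geometry around $\tilde\Theta_0$ there are at most a uniform constant $k'$ edges of $\tilde\Theta'$ within that distance, and summing the contributions gives $\tilde\mu(G(D))\le k'\,\|f_{\tilde\mu}\|_\infty$.

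The main obstacle is the geometric claim of the second paragraph, since $\tilde X'$ is not Gromov hyperbolic (two points on a horocyclic boundary can be close in $\tilde X$ but far in $\tilde X'$), so a general Morse statement is not available off the shelf. I would therefore carry out the pants-by-pants alternative in full, which exploits only the uniform hyperbolic geometry of the hexagonal and punctured-bigon pieces delivered by the bounded pants decomposition together with the horocyclic distance bound of Lemma~\ref{lem:dist_bdd}, thereby avoiding any appeal to stability of quasi-geodesics in the truncated space.
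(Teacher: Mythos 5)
Your overall plan is the paper's: observe that support geodesics avoid the deleted horoballs, establish a uniform constant $d$ so that each carried geodesic and its bi-infinite edge path are within distance $d$ of one another, and then repeat verbatim the $\|\cdot\|_{Th}$ counting of Theorem~\ref{thm:quasi-no-cusps} in both directions. In fact the paper's proof of this theorem is exactly your \emph{first} route: it deduces the uniform distance claim from Lemma~\ref{lem:quasi-cusps} (quasi-isometry of $\tilde{\Theta}_0$ with $\tilde{X}'$) together with Lemma~\ref{lem:quasi3}, i.e.\ stability of quasi-geodesics relative to the truncated metric, and then finishes ``as in Theorem~\ref{thm:quasi-no-cusps}''. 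Your reason for abandoning that route is not sound: the parenthetical observation that two points of a horocycle can be close in $\tilde{X}$ but far in $\tilde{X}'$ only says that the inclusion $\tilde{X}'\hookrightarrow\tilde{X}$ is distorted, which has no bearing on the intrinsic Gromov hyperbolicity of $\tilde{X}'$. In the bounded-geometry situation at hand $\tilde{X}'$ \emph{is} Gromov hyperbolic (unlike higher-dimensional neutered spaces, where horospheres are flats); for instance, $X'$ is uniformly bi-Lipschitz to the convex core of the surface obtained by replacing each cusp with a short boundary geodesic, whose universal cover is a convex subset of $\mathbb{H}^2$, so the Morse argument you describe is in fact available and is what the author implicitly uses.

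Your ``safer'' pants-by-pants alternative is a legitimate, more hands-on way to get the same uniform distance claim from Lemmas~\ref{lem:seams_bound}, \ref{lem:one_cusp}, \ref{lem:two_cusps} and \ref{lem:dist_bdd}, but the pivotal sentence is stated too strongly and hides the actual work. If the edge path contains long runs of cuff edges (large twisting), the crossing segment of the geodesic in a lift $\tilde{P}$ of a pair of pants enters and leaves through the two boundary lifts at points that can be arbitrarily far from the feet of the orthogonal arc, so it does \emph{not} stay in a uniform neighborhood of that arc; what is true, and what you need, is the two-sided statement that this segment fellow-travels the union of the orthogonal arc with the cuff segments spanned by the cuff edges of the path, and, conversely, that every geodesic whose path contains a given (connector or cuff) edge passes within uniform distance of that edge. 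Both are provable by uniform hyperbolic trigonometry in the bounded pieces, but proving them is essentially the same content as the distance claim itself, so in a complete write-up this step must be carried out rather than asserted; note also that the converse counting inequality uses the containment of the geodesic in a neighborhood of its own path, not just the containment you state as the key fact. With that local analysis supplied (or with the Morse route reinstated), the rest of your argument coincides with the paper's.
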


\begin{proof}
Assume that $\|f_{\tilde{\mu}}\|_{\infty}<\infty$. Every geodesic of the support of $\tilde{\mu}$ is inside of $\tilde{X}'$ by the choice of the horocyclic neighborhoods of the cusps of $X$. Then Lemmas \ref{lem:quasi-cusps} and \ref{lem:quasi3} implies that each geodesic of the support of $\tilde{\mu}$ is on a distance at most $d>0$ from the corresponding bi-infinite edge path in $\tilde{\Theta}$. Then the proof is finished as in Theorem \ref{thm:quasi-no-cusps}.

The other direction is identical to the proof of Theorem \ref{thm:quasi-no-cusps}.
\end{proof}

\section{The uniform weak* topology and edge-weight systems}

We keep the assumption that $X$ is an infinite hyperbolic surfaces with bounded pants decomposition $\{ P_n\}$ and at most countably many cusps. 
In the previous two sections we characterized bounded measured laminations on $X$ in terms of the edge weight systems on $\Theta$. In this section we describe the convergence in the uniform weak* topology on the measured laminations carried by $\Theta$ in terms of their corresponding edge weight systems. The proofs are based on the extensions of the ideas in the proofs of the weak* convergence. However the uniform weak* convergence imposes additional difficulties.

We first prove the convergence of the edge weight systems in the supremum norm.

\begin{proposition}
\label{prop:unif_edge}
Let $\tilde{\mu}_n,\tilde{\mu}$ be lifts to $\tilde{X}$ of measured laminations of $X$ that are weakly carried by $\Theta$. If 
$$
\tilde{\mu}_n\to\tilde{\mu}
$$
in the uniform weak* topology
then
$$\|f_{\tilde{\mu}_n}-f_{\tilde{\mu}}\|_{\infty}\to 0
$$
as $n\to\infty$.
\end{proposition}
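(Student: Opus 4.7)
The plan is to exploit Propositions~\ref{prop:carrying_box_connector}--\ref{prop:carrying_box_cuff} (uniform Liouville bounds on the nested boxes that encode $G(e)$) together with the isometry group $\mathbb{H}(\tilde{X})$ that appears in the definition of the uniform weak$^*$ seminorms, in order to reduce the whole uncountable family $\{G(e):e\in E(\tilde{\Theta})\}$ to a finite bank of test functions. The first step is, for each edge $e$ of $\tilde{\Theta}$, to invoke Proposition~\ref{prop:carrying_box_connector} or \ref{prop:carrying_box_cuff} to obtain nested boxes $Q_e'\subset Q_e$ with $G(e)=G(\tilde{\Theta})\cap Q_e=G(\tilde{\Theta})\cap Q_e'$ and the four components of $Q_e\setminus Q_e'$ each of Liouville measure pinched between $1/m$ and $m$, where $m=m(M)$ depends only on the pants bound $M$. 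The crucial observation is that $G(\tilde{\Theta})\cap(Q_e\setminus Q_e')=\emptyset$, so $\tilde{\mu}$ and $\tilde{\mu}_n$ assign zero mass to the buffer: for any compactly supported continuous $\xi\colon G(\tilde{X})\to[0,1]$ with $\xi\equiv 1$ on $Q_e'$ and $\xi\equiv 0$ off $Q_e$, one has $\int\xi\,d\tilde{\mu}=f_{\tilde{\mu}}(e)$ and $\int\xi\,d\tilde{\mu}_n=f_{\tilde{\mu}_n}(e)$ exactly, with no error term.

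The second and main step is to show that a \emph{finite} number of such test functions suffices for all edges, by transporting them with isometries. The Liouville bounds from the first step force the box-pairs $\{(Q_e,Q_e')\}_e$, viewed modulo the action of $\mathbb{H}(\tilde{X})$ on $G(\tilde{X})$, into a compact subspace of the (at most $5$-dimensional) shape space of box-pairs. Fixing $\epsilon>0$, I pick finitely many model pairs $(Q_1,Q_1'),\ldots,(Q_N,Q_N')$ together with, for each edge $e$, an index $i(e)$ and an isometry $\varphi_e\in\mathbb{H}(\tilde{X})$ realising $\varphi_e^{-1}(Q_e')\subset (Q_{i(e)}')^{+\epsilon}$ and $\varphi_e^{-1}(Q_e)\supset (Q_{i(e)})^{-\epsilon}$, where the superscripts denote $\epsilon$-fattening and $\epsilon$-shrinking in endpoint coordinates on $\partial_\infty\tilde{X}$. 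Taking $\epsilon$ smaller than half the minimum buffer thickness over the $N$ models (which is uniformly positive by compactness), I then pick compactly supported continuous $\xi_i\colon G(\tilde{X})\to[0,1]$ with $\xi_i\equiv 1$ on $(Q_i')^{+\epsilon}$ and $\xi_i\equiv 0$ off $(Q_i)^{-\epsilon}$. Then $\xi_{i(e)}\circ\varphi_e^{-1}$ is admissible in the sense of the first step, so that $\int\xi_{i(e)}\circ\varphi_e^{-1}\,d\tilde{\mu}=f_{\tilde{\mu}}(e)$ and similarly for $\tilde{\mu}_n$.

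Subtracting and using the definition of the uniform weak$^*$ seminorm yields
\[
|f_{\tilde{\mu}_n}(e)-f_{\tilde{\mu}}(e)|=\left|\int \xi_{i(e)}\circ\varphi_e^{-1}\,d(\tilde{\mu}_n-\tilde{\mu})\right|\le \|\tilde{\mu}_n-\tilde{\mu}\|_{\xi_{i(e)}}\le \max_{1\le j\le N}\|\tilde{\mu}_n-\tilde{\mu}\|_{\xi_j}.
\]
The right-hand side is independent of $e$ and, being a maximum over finitely many seminorms, tends to zero as $n\to\infty$ by the assumed uniform weak$^*$ convergence. Taking the supremum over $e\in E(\tilde{\Theta})$ then gives $\|f_{\tilde{\mu}_n}-f_{\tilde{\mu}}\|_\infty\to 0$.

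The hard part will be the second step: establishing the precompactness of $\{(Q_e,Q_e')\}$ modulo $\mathbb{H}(\tilde{X})$ and the careful choice of ``dominating'' and ``dominated'' models, so that the translated test functions $\xi_{i(e)}\circ\varphi_e^{-1}$ are simultaneously admissible for Step~1 for all $e$. This is where the bounded pants decomposition enters essentially, through the uniform Liouville bounds of Propositions~\ref{prop:carrying_box_connector}--\ref{prop:carrying_box_cuff}. The first step is a direct consequence of the support of $\tilde{\mu}$ lying in $G(\tilde{\Theta})$, and the third step is merely the definition of the uniform weak$^*$ topology applied to the finite collection $\xi_1,\ldots,\xi_N$.
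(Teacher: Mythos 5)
Your Steps 1 and 3 are sound (modulo a small fix: for a connector edge $e$ Propositions~\ref{prop:carrying_box_connector} does not give a single box pair with $G(e)=G(\tilde{\Theta})\cap Q_e$; it gives box pairs for the at most four connector paths $\gamma_i$ through $e$, with $G(e)=\bigcup_i G(\gamma_i)$ a disjoint union, so you must run the argument for each $\gamma_i$ and sum). The genuine gap is Step 2, which you yourself flag as ``the hard part'': the claim that the Liouville bounds of Propositions~\ref{prop:carrying_box_connector}--\ref{prop:carrying_box_cuff} force the pairs $(Q_e,Q_e')$ into a compact subspace of the shape space modulo $\mathbb{H}(\tilde{X})$ is false as stated. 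The four buffer boxes having Liouville measure in $[1/m,m]$ does not control the shape of the configuration: for example, the inner intervals $I',J'$ may collapse to points, and the Liouville measure of $Q_e$ itself (or of the intermediate box $I'\times J$) can blow up, while all four buffer measures stay pinched --- take $I=[0,c-\delta]$, $I'=[a',c-2\delta]$, $J=[c,d]$, $J'$ fixed and let $\delta\to 0$; the buffer $[c-2\delta,c-\delta]\times[c,d]$ has measure tending to $\log 2$, yet the configuration degenerates. So the constraint region is not precompact, and the existence of your finite bank of models with the one-sided sandwich inclusions $\varphi_e^{-1}(Q_e')\subset (Q_{i(e)}')^{+\epsilon}$, $(Q_{i(e)})^{-\epsilon}\subset\varphi_e^{-1}(Q_e)$ does not follow. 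Note also that Liouville measure is isometry-invariant and monotone under box inclusion, so any uniform net would in particular require a uniform upper bound on the measures of the inner boxes $Q_e'$ over all edges --- information that comes from the bounded-geometry construction of the boxes (the defining lifts of cuffs lie at distances bounded above and below in terms of $M$), not from the statements of Propositions~\ref{prop:carrying_box_connector}--\ref{prop:carrying_box_cuff} that you invoke. (A further, smaller, vagueness: ``$\epsilon$-fattening in endpoint coordinates'' is not an isometry-invariant notion, so the quantifiers in your net construction need care.)

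It is worth noting that the paper sidesteps exactly this uniformity. Instead of building a finite family of test functions dominating all edges at once, it argues by contradiction: if $\|f_{\tilde{\mu}_n}-f_{\tilde{\mu}}\|_\infty\nrightarrow 0$, pick bad edges $e_{n_k}$, transport them to a basepoint by isometries $\varphi_{n_k}$, pass to subsequences so that the edges are of one of finitely many combinatorial types and the transported boxes $\varphi_{n_k}(Q^k_i)$, $\varphi_{n_k}((Q^k_i)')$ converge to nondegenerate nested boxes (here the lower Liouville bound on the buffers and bounded geometry are used only along a sequence, where subsequence extraction is available), then use Lemma~\ref{lem:weak*compact} and the uniform weak* hypothesis to identify the weak* limits of $(\varphi_{n_k})_*\tilde{\mu}_{n_k}$ and $(\varphi_{n_k})_*\tilde{\mu}$ and conclude $|f_{\tilde{\mu}_{n_k}}(e_{n_k})-f_{\tilde{\mu}}(e_{n_k})|\to 0$, a contradiction. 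Your route would be a stronger, genuinely different statement (a uniform finite net of test functions), but as proposed its central compactness claim is unproven and its stated justification is insufficient; to repair it you would either have to extract the extra geometric control from the box construction itself, or retreat to the sequential/contradiction scheme the paper uses.
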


\begin{proof}
Assume on the contrary that $\|f_{\tilde{\mu}_n}-f_{\tilde{\mu}}\|_{\infty}\nrightarrow 0.
$  Then there exists a subsequence $\tilde{\mu}_{n_k}$ of $\tilde{\mu}_n$ and a sequence of edges $e_{n_k}\in E(\tilde{\Theta})$ such that $| f_{\tilde{\mu}_{n_k}}(e_{n_k})-f_{\tilde{\mu}}(e_{n_k})|\geq c>0$ for some fixed $c>0$ and all $k\geq 1$. By taking a subsequence of $e_{n_k}$, if necessary, we can assume that all $e_{n_k}$ are either connector edges or cuff edges.

Let $\xi:G(\tilde{X})\to\mathbb{R}$ be a continuous function with a compact support and $\varphi_{n_k}\in\mathbb{H}(\tilde{X})$. By the uniform weak* convergence of $\tilde{\mu}_{n_k}$ to $\tilde{\mu}$ we have that
$$
\lim_{k\to\infty}|\int_{G(\tilde{X})}\xi\circ \varphi_{n_k}d[\tilde{\mu}_{n_k}-\tilde{\mu}]|=0.
$$
The uniform weak* convergence implies that the measured laminations $\tilde{\mu}_{n_k}$ are uniformly bounded on compact subsets of $G(\tilde{X})$. Lemma \ref{lem:weak*compact} implies that the push forward measures $\tilde{\nu}_{n_k}=(\varphi_{n_k})_{*}\tilde{\mu}_{n_k}$ and $\tilde{\nu}_{n_k}'=(\varphi_{n_k})_{*}\tilde{\mu}$ have subsequences that converge in the weak* topology to  measured laminations $\tilde{\nu}$ and $\tilde{\nu}'$ weakly carried by $\tilde{\Theta}$ which are lifts of  measured laminations on $X$. 
The above limit implies 
$$
\lim_{ k\to\infty}|\int_{G(\tilde{X})}\xi d[\tilde{\nu}_{n_k}-\tilde{\nu}_{n_k}']|=0.
$$
and thus $\tilde{\nu}=\tilde{\nu}'$.

We separate the rest of the proof into two cases based on the type of the edges $e_{n_k}$. 

\vskip .2 cm

\noindent {\it Case 1.}
Assume first that all $e_{n_k}$ are connector edges. Since there is only finitely many types of standard train tracks in a pair of pants (see Figures 2 and 3), there is only finitely many possible relative positions of $e_{n_k}$ with respect to the other connector edges and lifts of cuffs in a component ${P}_{n_k}$ of the lift of a pair of pants that contains $e_{n_k}$.  After taking a subsequence, we can assume that all $e_{n_k}$ are lifts of the connector edges in the pairs of pants with the same standard train track, in the same relative position in the fixed standard train track and with the same smoothing at the vertices on the cuffs of the pairs of pants. The connector edge $e_{n_k}$ is contained in an at most four finite connector edge path $\{\gamma_1^k,\ldots ,\gamma_j^k\}$ that connect two lifts of cuffs on the boundary of $P_{n_k}$ by considering lifts of the standard train tracks in Figures 2, 3 and 4. It follows that
$$
G(e_{n_k})=\cup_{i=1}^j G(\gamma_i^k)
$$
and $G(\gamma_i^k)\cap G(\gamma_{i'}^k)=\emptyset$ for $i\neq i'$. 

By Proposition \ref{prop:carrying_box_connector}  there exist boxes of geodesics $(Q_{i}^{k})'$ contained in the interior of the boxes of geodesics $Q_i^k$ such that
$$
G(\gamma_i^k)=G(\tilde{\Theta})\cap (Q_i^k)'
$$
and
$$
G(\tilde{\Theta})\cap [Q_i^k\setminus (Q_i^k)']=\emptyset .
$$

Fix $v\in\tilde{X}$ and choose $\varphi_{n_k}$ such that one vertex of $\varphi_{n_k}(e_{n_k})$ is mapped onto $v$. Since $X$ is of bounded geometry, it follows that a subsequence of $\varphi_{n_k}(Q_i^k)$ and $\varphi_{n_k}((Q_i^k)')$ converges to boxes of geodesics $Q_i$ and $Q_i'$ for $i=1,\ldots ,j$. By the lower bound on the Liouville measure of the four boxes of geodesics in  $Q_i^k\setminus (Q_i^k)'$ we obtain that $Q_i'$ is contained in the interior of $Q_i$ for all $i=1,\ldots ,j$. We choose a box of geodesics $Q_i''$ that contains $Q_i'$ in its interior $(Q_i'')^{\circ}$ and is contained in the interior $(Q_i')^{\circ}$ of $Q'_i$ for all $i$. 

By the convergence of $\varphi_{n_k}(Q_i^k)$ and $\varphi_{n_k}((Q_i^k)')$ to $Q_i$ and $Q_i'$, it follows that there exists $k_0>0$ such that for all $k\geq k_0$, 
$$\varphi_{n_k}((Q_i^k)')\subset (Q_i'')^{\circ}$$ and 
$$Q_i''\subset [\varphi_{n_k}(Q_i^k)]^{\circ}.$$
This implies that 
$$\delta Q_i''\subset [\varphi_{n_k}(Q_i^k)]^{\circ}\setminus \varphi_{n_k}((Q_i^k)').
$$
Since $\tilde{\nu}_{n_k}(\varphi_{n_k}(Q_i^k)\setminus \varphi_{n_k}((Q_i^k)'))=0$ and
$\tilde{\nu}_{n_k}'(\varphi_{n_k}(Q_i^k)\setminus \varphi_{n_k}((Q_i^k)'))=0$ we conclude that
$$
\tilde{\nu}(\delta Q_i'')=0.
$$
This gives that $\tilde{\nu}_{n_k}(Q_i'')\to\tilde{\nu} (Q_i'')$ and $\tilde{\nu}_{n_k}'(Q_i'')\to\tilde{\nu} (Q_i'')$ as $k\to\infty$ by Lemma \ref{lem:box_conv}.

On the other hand, we have $\tilde{\nu}_{n_k}(Q_i'')=\tilde{\mu}_{n_k}(Q_i^k)=\tilde{\mu}_{n_k}(G(\gamma_i^k))$ and $\tilde{\nu}_{n_k}'(Q_i'')=\tilde{\mu}(Q_i^k)=\tilde{\mu}(G(\gamma_i^k))$. Together with the above, we have 
$$
|f_{\tilde{\mu}_{n_k}}(e_{n_k})-f_{\tilde{\mu}}(e_{n_k})|=|\sum_{i=1}^j[\tilde{\mu}_{n_k}(G(\gamma_i^k))-\tilde{\mu}(G(\gamma_i^k))]|\to 0\
$$
as $k\to\infty$ which is a contradiction.

\vskip .2 cm 
 
 \noindent {\it Case 2.}
 It remains to consider the case when $e_{n_k}$ are cuff edges.  Let $\tilde{\alpha}$ be a fixed oriented geodesic of $\tilde{X}$ with the initial point $x$ and the end point $y$. Let $v\in\tilde{\alpha}$ be a fixed point. 
 Let $\varphi_{n_k}$ be an isometry of $\tilde{X}$ that maps $\tilde{\alpha}_{n_k}$ (which contains $e_{n_k}$) to $\tilde{\alpha}$ such that $v$ is one endpoint of $\varphi_{n_k}(e_{n_k})$  and the other endpoint $v_{n_k}\in\varphi_{n_k}(e_{n_k})\subset \tilde{\alpha}$ is between $v$ and $y$.  By Proposition \ref{prop:carrying_box_cuff} there exist two boxes of geodesics $Q_{n_k}'$ and $Q_{n_k}$ with $Q_{n_k}'$ contained in the interior of $Q_{n_k}$ such that
 $$
 G(e_{n_k})=G(\tilde{\Theta})\cap Q_{n_k}=G(\tilde{\Theta})\cap Q_{n_k}'
 $$
 and 
 $$
 G(\tilde{\Theta})\cap (Q_{n_k}\setminus Q_{n_k}')=\emptyset .
 $$
 
 After taking a subsequence of $\varphi_{n_k}$, for simplicity denoted by $\varphi_{n_k}$, we can assume that $\varphi_{n_k}(Q_{n_k}')\to Q'$ and  $\varphi_{n_k}(Q_{n_k})\to Q$ with the lower bound $1/m>0$ on the Liouville measure of the four component boxes of $Q\setminus Q'$ (see Proposition \ref{prop:carrying_box_cuff}). Let $Q''$ be a box of geodesics that contains $Q'$ in its interior and is contained in the interior of $Q$. Then there exists $k_0$ such that for all $k\geq k_0$ we have 
 $$
 \varphi_{n_k}(Q_{n_k}')\subset (Q'')^{\circ}
 $$
 and 
 $$
  Q''\subset [\varphi_{n_k}(Q_{n_k})]^{\circ} ,
 $$
 where $Q^{\circ}$ denotes the interior of a box of geodesics $Q$. 
 
 Since
 $$
 \tilde{\nu}_{n_k} (\varphi_{n_k}(Q_{n_k})\setminus  \varphi_{n_k}(Q_{n_k}'))= \tilde{\nu}_{n_k}' (\varphi_{n_k}(Q_{n_k})\setminus  \varphi_{n_k}(Q_{n_k}'))=0
 $$
 and 
 $$
 \delta Q''\subset [\varphi_{n_k}(Q_{n_k})]^{\circ}\setminus  \varphi_{n_k}(Q_{n_k}')
 $$
 it follows that 
 $$
 \tilde{\nu}(\delta Q'')=0.
 $$
 
 By the weak* convergence of $\tilde{\nu}_{n_k}$ and $\tilde{\nu}_{n_k}'$ to $\tilde{\nu}$ we have
 $$
 \tilde{\nu}_{n_k}(Q''),\tilde{\nu}_{n_k}'(Q'')\to\tilde{\nu}(Q'')
 $$
 as $k\to\infty$. Since $\tilde{\nu}_{n_k}(Q'')=\tilde{\mu}_{n_k}(\varphi_{n_k}(Q''))=f_{\tilde{\mu}_{n_k}}(e_{n_k})$ and $\tilde{\nu}_{n_k}'(Q'')=\tilde{\mu}(\varphi_{n_k}(Q''))=f_{\tilde{\mu}}(e_{n_k})$ we have $|f_{\tilde{\mu}_{n_k}}(e_{n_k})-f_{\tilde{\mu}}(e_{n_k})|\to 0$ as $k\to\infty$ which is a contradiction.
\end{proof}

We prove that the convergence of the edge weight systems in the supremum norm implies the convergence of measured laminations in the uniform weak* topology.

\begin{proposition}
\label{prop:edge_unif}
Let $\tilde{\mu}_n,\tilde{\mu}$ be lifts to $\tilde{X}$ of measured laminations of $X$ that are weakly carried by $\Theta$. If 
$$\|f_{\tilde{\mu}_n}-f_{\tilde{\mu}}\|_{\infty}\to 0
$$
then
$$
\tilde{\mu}_n\to\tilde{\mu}
$$
in the uniform weak* topology
as $n\to\infty$.
\end{proposition}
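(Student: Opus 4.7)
My plan is to argue by contradiction, using the normalization--extraction template that drives the proof of Proposition \ref{prop:unif_edge}. Suppose $\|f_{\tilde{\mu}_n}-f_{\tilde{\mu}}\|_{\infty}\to 0$ but $\tilde{\mu}_n\not\to\tilde{\mu}$ in the uniform weak* topology; then there exist a continuous $\xi:G(\tilde{X})\to\mathbb{R}$ of compact support, $\varepsilon>0$, a subsequence $\{\tilde{\mu}_{n_k}\}$, and isometries $\varphi_{n_k}\in\mathbb{H}(\tilde{X})$ with $|\int\xi\circ\varphi_{n_k}\,d[\tilde{\mu}_{n_k}-\tilde{\mu}]|\geq\varepsilon$. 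The first step is to push forward: set $\tilde{\nu}_k=(\varphi_{n_k})_{*}\tilde{\mu}_{n_k}$ and $\tilde{\nu}_k'=(\varphi_{n_k})_{*}\tilde{\mu}$, so that the obstruction reads
$$
\left|\int_{G(\tilde{X})}\xi\,d[\tilde{\nu}_k-\tilde{\nu}_k']\right|\geq\varepsilon.
$$
Both $\tilde{\nu}_k$ and $\tilde{\nu}_k'$ are measured laminations of $\tilde{X}$, invariant under the conjugated groups $\varphi_{n_k}\pi_1(X)\varphi_{n_k}^{-1}$ (not, in general, lifts from $X$).

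Next I would extract weak* limits. By Theorem \ref{thm:edge_weight_with_cusps}, the hypothesis gives a uniform bound on the Thurston norms $\|\tilde{\mu}_{n_k}\|_{Th}$; since the Thurston norm is invariant under isometries of $\tilde{X}$, the same bound is inherited by $\tilde{\nu}_k$ and $\tilde{\nu}_k'$, so both sequences have uniformly bounded mass on compact subsets of $G(\tilde{X})$. Lemma \ref{lem:weak*compact} now yields, along a further subsequence, weak* limits $\tilde{\nu}$ and $\tilde{\nu}'$ that are measured laminations of $\tilde{X}$; continuity of $\xi$ with compact support passes the obstruction to the limit, giving $|\int\xi\,d[\tilde{\nu}-\tilde{\nu}']|\geq\varepsilon$ and in particular $\tilde{\nu}\neq\tilde{\nu}'$. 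The remainder of the proof aims to contradict this by establishing $\tilde{\nu}=\tilde{\nu}'$.

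To prove $\tilde{\nu}=\tilde{\nu}'$, I would verify $\tilde{\nu}(Q)=\tilde{\nu}'(Q)$ on every box of geodesics $Q$ satisfying $\tilde{\nu}(\delta Q)=\tilde{\nu}'(\delta Q)=0$, since such boxes form a determining family for Radon measures on $G(\tilde{X})$. Fix such a $Q$; using the density of endpoints of lifts of cuffs of $\varphi_{n_k}\pi_1(X)\varphi_{n_k}^{-1}$ in $\partial_{\infty}\tilde{X}$, pick boxes $Q_k\supset Q$ whose corners are such endpoints, with $Q_k\to Q$ and $\delta Q_k$ containing no lift of a corresponding cuff. Squeezing $Q$ between $Q_k$ and a slightly smaller approximant and applying Lemma \ref{lem:box_conv} to the weak* convergences $\tilde{\nu}_k\to\tilde{\nu}$, $\tilde{\nu}_k'\to\tilde{\nu}'$ yields $\tilde{\nu}_k(Q_k)\to\tilde{\nu}(Q)$ and $\tilde{\nu}_k'(Q_k)\to\tilde{\nu}'(Q)$. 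On the other hand,
$$
\tilde{\nu}_k(Q_k)-\tilde{\nu}_k'(Q_k)=\tilde{\mu}_{n_k}(\varphi_{n_k}^{-1}(Q_k))-\tilde{\mu}(\varphi_{n_k}^{-1}(Q_k)),
$$
where $\varphi_{n_k}^{-1}(Q_k)$ has corners at endpoints of lifts of cuffs of $X$ and its geodesics meet a compact set $\varphi_{n_k}^{-1}(D)$ congruent to a fixed $D\subset\tilde{X}$ through which all geodesics of $Q$ pass. The bounded pants decomposition---via Lemma \ref{lem:cuffs_in_box} and the quasi-isometry results of Sections 7 and 8---gives a uniform upper bound in $k$ on the number of lifts of cuffs inside $\varphi_{n_k}^{-1}(Q_k)$, so the partition of $G(\tilde{\Theta})\cap\varphi_{n_k}^{-1}(Q_k)$ supplied by Lemma \ref{lem:conv_general_box} consists of uniformly finitely many finite edge paths of uniformly bounded length. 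By Lemma \ref{lem:convergence_paths}, each $\tilde{\mu}_{n_k}(G(\gamma))-\tilde{\mu}(G(\gamma))$ is a linear combination with universally bounded coefficients of boundedly many values of $f_{\tilde{\mu}_{n_k}}-f_{\tilde{\mu}}$; summing over $\gamma$ gives $|\tilde{\nu}_k(Q_k)-\tilde{\nu}_k'(Q_k)|\leq C\,\|f_{\tilde{\mu}_{n_k}}-f_{\tilde{\mu}}\|_{\infty}\to 0$ with $C$ independent of $k$. Combined with the two weak* limits above, this forces $\tilde{\nu}(Q)=\tilde{\nu}'(Q)$, hence $\tilde{\nu}=\tilde{\nu}'$ and the sought contradiction.

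The central obstacle is precisely the uniform bound on the partition size of $\varphi_{n_k}^{-1}(Q_k)$. In the generality of Sections 5--6 this partition can be countably infinite and its combinatorial complexity can blow up as $\varphi_{n_k}$ drags the box into geometrically degenerate regions of $X$; the bounded pants decomposition hypothesis of the theorem, together with the bounded-distance estimates of Sections 7--8 that already underlie Theorem \ref{thm:edge_weight_with_cusps}, is exactly what keeps the partition uniformly finite in $k$ and lets the linear-combination estimate close.
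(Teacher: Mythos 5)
Your overall skeleton is the paper's: argue by contradiction, push forward by the isometries $\varphi_{n_k}$, use Lemma \ref{lem:weak*compact} to extract weak* limits $\tilde{\nu}\neq\tilde{\nu}'$, and then derive a contradiction by proving $\tilde{\nu}=\tilde{\nu}'$ on boxes of geodesics. The gap is exactly at the step you flag as the central obstacle: the claim that, thanks to the bounded pants decomposition, the partition of $G(\tilde{\Theta})\cap\varphi_{n_k}^{-1}(Q_k)$ furnished by the proof of Lemma \ref{lem:conv_general_box} consists of uniformly finitely many finite edge paths of uniformly bounded length, so that $|\tilde{\nu}_k(Q_k)-\tilde{\nu}_k'(Q_k)|\leq C\,\|f_{\tilde{\mu}_{n_k}}-f_{\tilde{\mu}}\|_{\infty}$ with $C$ independent of $k$. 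This is false even for a single fixed box and even with cuff lengths pinched between $1/M$ and $M$. Lemma \ref{lem:cuffs_in_box} bounds only the number of lifts of cuffs lying \emph{inside} the box (one endpoint in $I$, the other in $J$); the partition in Lemma \ref{lem:conv_general_box} is indexed by pairs $(\tilde{\alpha}_i',\tilde{\alpha}_j'')$ taken from the families of innermost lifts of cuffs with \emph{both} endpoints in $I$, respectively both in $J$, joined by finite edge paths. Since endpoints of lifts of cuffs are dense in $\partial_{\infty}\tilde{X}$, those families accumulate at the endpoints of $I$ and $J$ no matter how bounded the geometry is, the set of contributing pairs is in general countably infinite, and the connecting paths $\gamma_{i,j}$ have unbounded combinatorial length, so the piecewise-linear expressions of Lemma \ref{lem:convergence_paths} involve an unbounded number of edge weights. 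Summing the per-part errors therefore gives no uniform constant $C$, and the estimate does not close. (A smaller caveat: your uniform Thurston-norm bound via Theorem \ref{thm:edge_weight_with_cusps} also presupposes $\|f_{\tilde{\mu}}\|_{\infty}<\infty$, i.e.\ the bounded setting of Theorem \ref{thm:uniform_cont}, which is the intended context but is not literally in the hypothesis.)

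The paper avoids needing any such uniform bound. After passing to a subsequence it takes a Hausdorff limit $\{\tilde{\alpha}_k^{\infty}\}_k$ of the configurations $\varphi_{j_n}(\tilde{\alpha}_k)$ of lifts of cuffs (this is where bounded geometry enters), chooses $Q$ with corners at endpoints of the limiting geodesics, and for each \emph{fixed} part of the resulting countable partition constructs nested boxes $(Q^{\infty})'\subset (Q^{\infty})''\subset Q^{\infty}$ with $\tilde{\nu}(\delta (Q^{\infty})'')=0$; on each fixed part, the sup-norm convergence of the edge weights (with a constant depending on that part alone) yields $\tilde{\nu}((Q^{\infty})'')=\tilde{\nu}'((Q^{\infty})'')$, and only afterwards is the box assembled, using countable additivity of the \emph{limit} measures $\tilde{\nu},\tilde{\nu}'$, whose supports in $Q$ lie in the disjoint union of these parts. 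To repair your argument you would need a tail estimate controlling, uniformly in $k$, the total mass of the infinitely many small parts accumulating at $\delta Q_k$; that is precisely what the paper's limiting-configuration argument substitutes for, and the uniform finiteness you invoke is not available.
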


\begin{proof} Assume on the contrary that $\tilde{\mu}_n$ does not converge to $\tilde{\mu}$ in the uniform weak* topology. Namely there exist a continuous function $\xi :G(\tilde{X})\to\mathbb{R}$ with a compact support, a sequence $\varphi_{j}$ of isometries of $\tilde{X}$ and $c>0$ such that 
\begin{equation}
\label{eq:not_uniform}
|\int_{G(\tilde{X})}\xi\circ\varphi_{j}d[\tilde{\mu}_{j}-\tilde{\mu}]|\geq c>0.
\end{equation}
Define $\tilde{\nu}_{j}=(\varphi_{j})_{*}(\tilde{\mu}_{j})$ and $\tilde{\nu}_{j}'=(\varphi_{j})_{*}(\tilde{\mu})$. Since both sequences $\tilde{\nu}_{j}$ and $\tilde{\nu}_{j}'$ are bounded on compact subsets of $G(\tilde{X})$, there is a choice of their subsequences that converge to measured laminations $\tilde{\nu}$ and $\tilde{\nu}'$ in the weak* topology (see Lemma \ref{lem:weak*compact}). We keep the same indexing of subsequences for  simplicity.  Then from (\ref{eq:not_uniform}) we have 
\begin{equation}
\label{eq:different_limits}
|\int_{G(\tilde{X})}\xi d[\tilde{\nu}-\tilde{\nu}']|\geq c>0
\end{equation}
which implies that $\tilde{\nu}\neq\tilde{\nu}'$. 

Let $Q=I\times J$ be a box of geodesics in $G(\tilde{\Theta})$. 
Let $\{\tilde{\alpha}_k\}_k$ be the countable collection of lifts of cuff of the pants decomposition. Then there exists a subsequence $\varphi_{j_n}$ such that $\{\varphi_{j_n}(\tilde{\alpha}_k)\}_n$ converges in the Hausdorff topology on closed subsets of $G(\tilde{X})$ to either an empty set or a configuration $\{\tilde{\alpha}_k^{\infty}\}_k$ such that the components of the complement are infinite geodesic polygons. 

First 
consider the case when $\{\varphi_{j_n}(\tilde{\alpha}_k)\}_n$ converges to $\{\tilde{\alpha}_k^{\infty}\}_k$.  The shortest distance between two boundary geodesics of a complement is between $1/M$ and $M$, where $M$ is the corresponding bound for the original lifts of cuffs $\{\tilde{\alpha}_k\}_k$. By taking further subsequences, we can arrange that each complementary region of $\{\tilde{\alpha}_k^{\infty}\}_k$ is the limit of complementary regions of $\{\varphi_{j_n}(\tilde{\alpha}_k)\}_n$ that are lifts of pairs of pants of the same type ($3$, $2$ or $1$ cuff) and the standard train track in each pair of pants (accumulating to one complementary region) is of the same type and has the same tangency at the cuff. 

By slightly increasing the size of the box $Q=I\times J$, we can assume that the vertices of $Q$ are the endpoints of some geodesics in $\{\tilde{\alpha}_k^{\infty}\}_k$  and that no geodesic of $\{\tilde{\alpha}_k^{\infty}\}_k$  is on $\delta Q$. The bounded geometry implies that there is only finitely many geodesics of $\{\tilde{\alpha}_k^{\infty}\}_k$  in $Q$ (see Lemma \ref{lem:cuffs_in_box}). Since the complementary regions of $\{\tilde{\alpha}_k^{\infty}\}_k$ are limits of the complementary regions of $\{\varphi_{j_n}(\tilde{\alpha}_k)\}_n$ of the same kind and with the lifts of the same type standard train tracks and the same tangency, we have a notion of the limiting vertices and connector edges with the same combinatorics. The finite edge paths connecting lifts of cuffs under $\varphi_{j_n}$ converge to finite edge paths connecting boundary geodesics in the complements of $\{\tilde{\alpha}_k^{\infty}\}_k$.

There exists at most countably many geodesics $(\tilde{\alpha}_i^{\infty})'$ from $\{\tilde{\alpha}_k^{\infty}\}_k$ with both endpoints in $I$ that are not separated from $J$ by another such geodesic. Similarly there is an at most countably many geodesics $(\tilde{\alpha}_j^{\infty})''$ from $\{\tilde{\alpha}_k^{\infty}\}_k$ with both endpoints in $J$ that are not separated from $I$ by another such geodesic. Then there is an at most countable collection of pairs $\{ ((\tilde{\alpha}_i^{\infty})',(\tilde{\alpha}_j^{\infty})'')\}_{i,j}$ that are connected by limits $\gamma_{i,j}^{\infty}$ of the image under $\varphi_{j_n}$ of finite edge paths in $\tilde{\Theta}$. 

Any geodesic $\tilde{\alpha}_r^{\infty}$ of $\{\tilde{\alpha}_k^{\infty}\}_k$ that is in the interior of $Q$ is the limit of $\varphi_{j_n}(\tilde{\alpha}_{k_r})$. By Proposition \ref{prop:carrying_box_cuff} there exist two boxes of geodesics $Q_{k_r}'$ and $Q_{k_r}$ such that $Q_{k_r}'\subset Q_{k_r}^{\circ}$, $G(\tilde{\Theta})\cap (Q_{k_r}\setminus (Q_{k_r})')=\emptyset$ and $\tilde{\alpha}_k^r\in Q_{k_r}'$. Then the limits $(Q_r^{\infty})'$ and $Q_r^{\infty}$ satisfy $(Q_r^{\infty})'\subset (Q_r^{\infty})^{\circ}$, $\tilde{\alpha}_r^{\infty}\in (Q_r^{\infty})'$ and $G(\tilde{\Theta})\cap (Q_r^{\infty}\setminus (Q_r^{\infty})')=\emptyset$. 
Then, in an analogous fashion as in the proof of Proposition \ref{prop:unif_edge}, there is a box of geodesics $(Q_r^{\infty})''$ such that it contains $(Q_r^{\infty})'$ in its interior and is contained in the interior of $Q_r^{\infty}$, and for all $n\geq n_0>0$ we have $\varphi_{j_n}(Q_{k_r}')\subset [(Q_r^{\infty})'']^{\circ}$ and $\varphi_{j_n}(G(\tilde{\Theta}))\cap\delta (Q_R^{\infty})''=\emptyset$. It then follows that when $n\to\infty$
$$
\tilde{\nu}_{j_n}((Q_r^{\infty})'')\to\tilde{\nu} ((Q_r^{\infty})'')
$$
and
$$
\tilde{\nu}_{j_n}'((Q_r^{\infty})'')\to\tilde{\nu}' ((Q_r^{\infty})'').
$$
By $\|f_{\tilde{\mu}_n}-f_{\tilde{\mu}}\|_{\infty}\to 0$ as $n\to\infty$ and the fact that $\tilde{\mu}_n$ and $\tilde{\mu}$ are piecewise linear functions of the edge weights of the edge paths representing $Q_{k_r}$  and its immediate neighbors, it follows that $|\tilde{\nu}_{j_n}((Q_r^{\infty})'')-\tilde{\nu}_{j_n}'((Q_r^{\infty})'')|\to 0$ as $n\to\infty$. Thus
$$
\tilde{\nu}((Q_r^{\infty})'')=\tilde{\nu}'((Q_r^{\infty})'').
$$

In an analogous fashion we obtain that $\tilde{\nu}=\tilde{\nu}'$ on all boxes of geodesics $(Q_{i,j}^{\infty})''$ that correspond to the limits of $\varphi_{j_n} (Q_{i,j})$, where $Q_{i,j}$ and $Q_{i,j}'$ are defined in Proposition \ref{prop:carrying_box_connector}. 

The support of $\tilde{\nu}$ and $\tilde{\nu}'$ on $Q$ is contained in $\cup_{n\geq n_0} \varphi_{j_n}(G(\tilde{\Theta}))$ and thus it is in the disjoint union of $(Q_{i,j}^{\infty})''$ and $(Q_r^{\infty})''$. Thus $\tilde{\nu}(Q)=\tilde{\nu}'(Q)$. This equality holds for all $Q$ that have vertices at the endpoints of the geodesics $\{\tilde{\alpha}_k^{\infty}\}_k$. The endpoints of 
$\{\tilde{\alpha}_k^{\infty}\}_k$ are dense in $\partial_{\infty}\tilde{X}$ and we obtain $\tilde{\nu}=\tilde{\nu}'$ which is a contradiction.

In the case when $\{\varphi_{j_n}(\tilde{\alpha}_k)\}_n$ converges to an empty set we immediately get $\tilde{\nu}=\tilde{\nu}'=0$ which is again a contradiction.
\end{proof}

From the above two propositions we obtain

\begin{theorem}
\label{thm:uniform_cont}
A sequence $\tilde{\mu}_n$ of bounded measured laminations weakly carried by $\tilde{\Theta}$ converges to a bounded measured lamination $\tilde{\mu}$ weakly carried by $\tilde{\Theta}$ in the uniform weak* topology if and only if the corresponding edge weight systems $f_{\tilde{\mu}_n}:E(\tilde{\Theta} )\to\mathbb{R}_{\geq 0}$ of $\tilde{\mu}_n$ converge to the edge weight system $f_{\tilde{\mu}}:E(\tilde{\Theta} )\to\mathbb{R}_{\geq 0}$ of $\tilde{\mu}$ in the supremum norm.
\end{theorem}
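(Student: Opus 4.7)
The plan is to derive this theorem as a direct synthesis of the two preceding propositions together with the characterization of boundedness given earlier in the section. More precisely, the bijective correspondence $ML_b(X,\Theta) \leftrightarrow \mathcal{W}_b(\Theta,[0,\infty))$ underlying the statement is already supplied: for surfaces without cusps Theorem \ref{thm:quasi-no-cusps} identifies bounded measured laminations weakly carried by $\Theta$ with edge weight systems of finite supremum norm, and Theorem \ref{thm:edge_weight_with_cusps} extends this to the case of (possibly infinitely many) cusps. Thus $\tilde{\mu} \mapsto f_{\tilde{\mu}}$ is a well-defined bijection between the two spaces that appear in the theorem, and the only remaining issue is the equivalence of the two convergence notions on this common domain.

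The equivalence itself is nothing but the combination of the two propositions proved immediately above. The forward implication ``$\tilde{\mu}_n \to \tilde{\mu}$ in the uniform weak* topology implies $\|f_{\tilde{\mu}_n} - f_{\tilde{\mu}}\|_\infty \to 0$'' is exactly Proposition \ref{prop:unif_edge}, while the converse ``$\|f_{\tilde{\mu}_n} - f_{\tilde{\mu}}\|_\infty \to 0$ implies $\tilde{\mu}_n \to \tilde{\mu}$ in the uniform weak* topology'' is exactly Proposition \ref{prop:edge_unif}. Assembling the bijection with these two one-sided statements yields the claimed homeomorphism, so at this stage no genuinely new argument is required.

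Consequently there is no new obstacle to confront here; the substantive difficulties have already been resolved in the two propositions. Those propositions, in turn, required controlling Hausdorff-type limits of the pushed-forward carrying boxes $\varphi_{n_k}(Q_i^k)$ and $\varphi_{n_k}((Q_i^k)')$ of Propositions \ref{prop:carrying_box_connector} and \ref{prop:carrying_box_cuff} under an arbitrary sequence of isometries $\varphi_{n_k} \in \mathbb{H}(\tilde{X})$, and exploiting the bounded geometry of the pants decomposition $\{P_n\}$ to guarantee a uniform lower bound on the Liouville measure of the four complementary strips of $Q_i^k \setminus (Q_i^k)'$. With these limit procedures already justified and with the characterization of boundedness via $\|f_{\tilde{\mu}}\|_\infty$ in hand, Theorem \ref{thm:uniform_cont} follows immediately from Propositions \ref{prop:unif_edge} and \ref{prop:edge_unif}.
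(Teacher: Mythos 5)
Your proposal is correct and matches the paper exactly: the paper deduces Theorem \ref{thm:uniform_cont} directly from Proposition \ref{prop:unif_edge} (uniform weak* convergence implies supremum-norm convergence of edge weights) and Proposition \ref{prop:edge_unif} (the converse), with the underlying bijection supplied by Theorems \ref{thm:quasi-no-cusps} and \ref{thm:edge_weight_with_cusps}. No further argument is needed.
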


\end{document}